\documentclass[11pt, oneside]{article}

\usepackage{amsmath,amsfonts,amssymb,amsthm,enumerate,mathrsfs,amscd}

\usepackage[top=15mm, bottom=15mm, left=44mm, right=44mm]{geometry}

\usepackage[unicode,breaklinks=true,colorlinks=true]{hyperref}

\usepackage[dvipsnames]{xcolor}

\numberwithin{equation}{section}
\theoremstyle{plain}
\newtheorem{theorem}{Theorem}[section]

\newtheorem{lemma}[theorem]{Lemma}
\newtheorem{proposition}[theorem]{Proposition}

\newtheorem{example}[theorem]{Example}

\theoremstyle{remark}
\newtheorem{remark}{Remark}[section]

\theoremstyle{definition}
\newtheorem{definition}{Definition}[section]

\newcommand{\bke}[1]{\left ( #1 \right )}

\newcommand{\norm}[1]{\left \| #1 \right \|}
\newcommand{\bka}[1]{{\langle #1 \rangle}}

\newcommand\al{\alpha}

\newcommand\e {\varepsilon}

\newcommand\De{\Delta}

\newcommand\Om{\Omega}
\newcommand{\R}{\mathbb{R}}

\newcommand{\N}{\mathbb{N}}

\renewcommand{\div}{\mathop{\mathrm{div}}\nolimits}

\newcommand{\pd}{\partial}
\newcommand{\nb}{\nabla}

\newcommand{\I}{\infty}

\newcommand{\EQN}[1]{\begin{equation*}\begin{split} #1 \end{split}\end{equation*}}
\newcommand{\bb}{\mathbf{b}}

\newcommand{\eps}{\varepsilon}
\newcommand{\loc}{\mathrm{loc}}

\title{Existence, uniqueness, and regularity results for elliptic equations
with drift terms in critical weak spaces
}
\author{Hyunseok Kim\thanks{%
Department of Mathematics, Sogang University, Seoul, 121-742, Korea (kimh@sogang.ac.kr).
The research of Kim was supported by Basic Science Research Program through the National Research Foundation of Korea (NRF) funded by the Ministry of Education (No. NRF-2016R1D1A1B02015245).}
\and Tai-Peng Tsai\thanks{%
Department of Mathematics, University of British Columbia, Vancouver, V6T 1Z2, Canada (ttsai@math.ubc.ca). The research of Tsai was supported in part by Natural Sciences and Engineering Research Council of Canada (NSERC) Grant 261356-13.}
}
\date{}

\begin{document}
\baselineskip 16pt

\maketitle

\begin{abstract}
We consider Dirichlet problems for linear elliptic equations of second order in divergence form on a bounded or exterior smooth domain $\Om$ in $\R^n$, $n \ge 3$, with drifts $\bb$ in the critical weak $L^n$-space $L^{n,\infty}(\Om ; \R^n )$. First, assuming that the drift $\bb$ has nonnegative weak divergence in $L^{n/2, \infty }(\Om )$, we establish existence and uniqueness of weak solutions in $W^{1,p}(\Om )$ or $D^{1,p}(\Om )$ for any $p$ with $n' = n/(n-1)< p < n$.
By duality, a similar result also holds for the dual problem.
Next, we  prove $W^{1,n+\eps}$ or $W^{2, n/2+\delta}$-regularity of weak solutions of the dual problem  for some $\eps, \delta >0$ when the domain $\Om$ is bounded.
By duality, these results enable us to obtain a quite general uniqueness result as well as an existence result for weak solutions belonging to  $\bigcap_{p< n' }W^{1,p}(\Om )$.
Finally, we prove a uniqueness result for exterior problems, which implies in particular that (very weak) solutions are unique in both $L^{n/(n-2),\infty}(\Om )$ and $L^{n,\infty}(\Om )$.

{\it Keywords}: elliptic equations, rough drifts, regularity, uniqueness

{\it Mathematics Subject Classification (2010)}: 35J15, 35J25

\end{abstract}

\section{Introduction}
\label{Sec1}

Let $\Om$ be a bounded or exterior domain in $\R^n$, where $n \ge 3$. In this paper, we consider %
the following Dirichlet problem for linear elliptic equations of second order in divergence form:
\begin{equation}
\label{bvp}
\left\{\begin{array}{rr}
 -\De u + \div( u \mathbf{b} ) = f \quad \text{in }\Om ,\,\,\,\, \\[4pt]
 u =0 \quad\text{on } \pd \Om ,\\
 u(x) \to 0 \quad\text{as } |x| \to \infty.
 \end{array}
\right.
\end{equation}
Here the drift $\mathbf{b}= (b_1 , ..., b_n )$ is a given vector field on $\Omega$ and the data $f$ is a suitable scalar distribution on $\Om$.
We also consider the dual problem of \eqref{bvp}:
\begin{equation}
\label{bvp-dual}
\left\{\begin{array}{rr}
 -\De v - \bb \cdot \nb v = g \quad \text{in }\Om ,\,\,\,\, \\[4pt]
 v =0 \quad\text{on } \pd \Om ,
 \\
 v(x) \to 0 \quad\text{as } |x| \to \infty .
 \end{array}
\right.
\end{equation}
Of course, the decaying condition at infinity in \eqref{bvp} or \eqref{bvp-dual} should be neglected if $\Omega$ is bounded. Similarly, the boundary condition should be neglected  if $\Om$ is the whole space $\R^n$, which will be regarded as a special exterior domain.

\medskip

Suppose that $\Omega$ is a bounded domain with smooth boundary $\partial \Omega$.
Then it follows from the classical $L^p$-theory of elliptic equations (see \cite[Theorems 8.3, 8.6]{GilTru} and \cite[Theorem 4]{DK} e.g.) that if $\bb \in L^\infty (\Omega ; \R^n )$ and $1<p< \infty$, then for each $f \in W^{-1,p}(\Omega )$ there exists a unique weak solution $u$ in $W_0^{1,p}(\Omega )$ of the problem \eqref{bvp}. A similar result also holds for the dual problem \eqref{bvp-dual}.
These $W^{1,p}$-results have been extended to general elliptic equations with more singular drift terms $\bb$; for instance, see Droniou \cite{Dr}, Moscariello \cite{Mo}, Kim-Kim \cite{KK}, and Kang-Kim \cite{KangK}. In particular, it was shown in \cite[Theorem 1.1]{KK} that if $\bb \in L^n (\Omega ; \R^n )$ and $1<p< n$, then for each $f \in W^{-1,p}(\Omega )$ there exists a unique weak solution in $W_0^{1,p}(\Omega )$ of the problem \eqref{bvp}. Such a weak solution in $W_0^{1,p}(\Omega )$ of \eqref{bvp} will be called a $p$-weak solution or simply a weak solution if $p=2$ (see Definition \ref{def2.1}).

The class $L^n (\Omega ; \R^n )$ for the drift $\bb$ is {\it optimal} among the Lebesgue $L^r$-spaces for existence of $p$-weak solutions of \eqref{bvp}, as shown by the following simple example from \cite{Mo}.

\begin{example}\label{eg:noncoercive-LnI} Consider the problem \eqref{bvp}, where
$$
\Omega= B_1 (0) = \{ x \in \R^n: |x| < 1\}, \quad \bb (x)= - \frac{M x}{|x|^2}, \quad\mbox{and}
\quad f =- \, {\rm div}\, \bb .
$$
Note that $\bb \in L^r (\Omega ; \R^n )$ if and only if $r<n$.
Assume that $2 < p<n$ and $(n-p)/p \le M < (n-2)/2$.
Then $u (x) =|x|^{-M} - 1$ is a weak solution in $W_0^{1,2}(\Omega )$ of \eqref{bvp} but does not belong to $ W^{1,p}(\Omega)$. On the other hand, it was shown in \cite[Theorem 1.1]{Mo} that there exists at most one weak solution in $W_0^{1,2}(\Omega )$ of \eqref{bvp}. Hence there can be no $p$-weak solutions of \eqref{bvp} even though $f \in W^{-1,p}(\Omega )$.
\end{example}

For $1 \le p< \infty$, we denote by $L^{p,\infty} (\Omega )$ the weak $L^p$-space over $\Omega$, which is one of the standard Lorentz spaces $L^{p,q}(\Omega )$. Then $b(x) = 1/|x|$ is a typical example of functions in $L^{n,\infty} (B_1 (0) )$ but not in $L^n (B_1 (0) )$ (see Section 3.1 for more details). Hence Example \ref{eg:noncoercive-LnI} also shows that $p$-weak solutions of \eqref{bvp} may fail to exist for general drifts $\bb$ in the critical weak space $ L^{n,\infty} (\Omega ; \R^n )$.
This suggests us to impose an additional condition on the drift $\bb$ for better regularity of weak solutions of \eqref{bvp} and \eqref{bvp-dual}  when $\bb \notin L^n  (\Omega ; \R^n )$. For instance, motivated partially by the fluid mechanics of incompressible flows, we may assume that $\bb\in L^{n,\infty}  (\Omega ; \R^n ) $ and  $\div \bb =0$ weakly in $\Om$. In this case, the interior regularity   and Liouville property of  weak solutions  have been extensively studied  by
Zhikov \cite{Zhi}, Kontovourkis \cite{Ko}, Nazarov-Uraltseva \cite{NU},
 Zhang \cite{ZhangQi}, Chen-Strain-Tsai-Yau \cite{CSTY},
Seregin-Silvestre-\v{S}ver\'{a}k-Zlato\v{s} \cite{SSSZ},
Filonov \cite{Fi}, Ignatova-Kukavica-Ryzhik \cite{IKR}, and Filonov-Shilkin \cite{FSh1,FSh2}.

\medskip

The main purpose of the paper is to study existence, uniqueness, and regularity of weak solutions or $p$-weak solutions of the problem \eqref{bvp} and its dual \eqref{bvp-dual}, when the drift $\bb$ in $L^{n,\infty} (\Omega ; \R^n )$ satisfies the additional condition
\begin{equation}\label{coercivity cond-b}
 \div \mathbf{b} \ge 0 \quad\mbox{(weakly) in} \,\, \Om;
\end{equation}
that is,
\[
-\int_\Omega \bb \cdot \nb \phi \, dx \ge 0 \quad\mbox{for all nonnegative}\,\, \phi \in C_c^1 (\Om ).
\]
Note that the nonnegativity condition (\ref{coercivity cond-b})  implies the coercivity of the (non-symmetric) bilinear form associated with \eqref{bvp} and \eqref{bvp-dual}:
\begin{equation}\label{bilinear form}
B(u,v)= \int_\Om \bke{\nb u - u \mathbf{b}} \cdot \nb
v \, dx .
\end{equation}

Let us now  summarize    the main results that are obtained in the paper.

First of all, assuming that $\Omega$ is   bounded, $\bb \in L^{n,\infty} (\Omega ; \R^n )$, and $\div \mathbf{b} \ge 0$ in $\Om$, we shall prove existence and uniqueness of $p$-weak solutions of \eqref{bvp} for the case when $2 \le p< n $ (see Theorem \ref{th4-q version}). Existence of a unique weak solution of \eqref{bvp} immediately follows from the Lax-Milgram theorem because  the bilinear form $B$ in  \eqref{bilinear form} is bounded and coercive.
To prove existence of $p$-weak solutions of \eqref{bvp} for the case $2 < p< n $, we apply the classical iteration technique due to J. Moser and then utilize several results from the theory of real interpolation. First, by Moser's iteration method, we show that if $2 < p< n $ and $f \in W^{-1,p}(\Omega )$, then the weak solution $u \in W_0^{1,2}(\Om ) $ of \eqref{bvp} satisfies the higher integrability $u \in L^{p^*}(\Om )$, where $p^* = np/(n-p)$ is the Sobolev conjugate of $p$. Next, by H\"{o}lder's inequality in weak spaces, we deduce that $u \bb \in L^{p,\infty} (\Omega ; \R^n )$. Then since $-\De u = f- \div( u \mathbf{b} )$ in $\Omega$, it follows from the Calderon-Zygmund result in weak spaces (Proposition \ref{CZ-estimates}) that $\nabla u \in L^{p,\infty} (\Omega ; \R^n )$.
Finally, by the Marcinkiewicz interpolation theorem (Lemma \ref{marcinkiewicz-inter}), we can conclude that $\nabla u \in L^{p} (\Omega ; \R^n )$. This outlines our proof that if $2 \le p< n $, then for each $f \in W^{-1,p}(\Omega )$ there exists a unique $p$-weak solution $u$ of \eqref{bvp}. By duality, we also deduce that if $n' = n/(n-1)< p \le 2$, then for each $g \in W^{-1,p}(\Omega )$ there exists a unique $p$-weak solution $v$ of \eqref{bvp-dual}.

Again, by Moser's method, we can show (Lemma \ref{th4-higher integrability}) that if $2 < p< n $ and $g \in W^{-1,p}(\Omega )$, then there exists a unique weak solution $v \in W_0^{1,2}(\Om ) \cap L^{p^*}(\Om )$ of \eqref{bvp-dual}. Since $v \in W_0^{1,2}(\Om ) $, it follows from a standard bilinear estimate (see Lemma \ref{th1} e.g.) that the convection term $\bb \cdot \nabla v$ in \eqref{bvp-dual} belongs to $ W^{-1,2}(\Om )$. But no further regularity of $\bb \cdot \nabla v$ follows from the higher $L^{p^*}$-integrability of $v$, which is contrary to the convection term $\div( u \mathbf{b} )$ in \eqref{bvp}. This is why we need an additional assumption (see \eqref{coercivity cond-b-stronger} below) on the drift $\bb$.
Note that if $\bb (x) = - Mx /|x|^2$ as in Example \ref{eg:noncoercive-LnI}, then $\div \bb (x) = -M(n-2)/|x|^2 $ in $\R^n \setminus \{0\}$ and so $\div \bb \in L^{n/2,\infty} (\R^n )$.
Hence it is quite natural to assume that the drift $\bb$ in $ L^{n,\infty} (\Omega ; \R^n )$ satisfies
\begin{equation}\label{coercivity cond-b-stronger}
\div \mathbf{b}\in L^{n/2,\infty} (\Omega ) \quad\mbox{and}\quad \div \bb \ge 0 \quad\mbox{in} \,\, \Om;
\end{equation}
that is, there exists a nonnegative function $c \in L^{n/2,\infty} (\Omega )$ such that
\[
-\int_\Omega \bb \cdot \nb \phi \, dx = \int_\Omega c \, \phi \, dx \quad\mbox{for all}\,\, \phi \in C_c^1 (\Om ).
\]

Assume now that the drift $\bb$ in $ L^{n,\infty} (\Omega ; \R^n )$ satisfies (\ref{coercivity cond-b-stronger}).
Then since
$$
-\De v = g+ \mathbf{b} \cdot \nabla v =g+ {\rm \div} \, (v \mathbf{b} ) - v \, {\rm div}\, \mathbf{b}\quad\mbox{in}\,\, \Om,
$$
we can deduce that $\nabla v \in L^{p} (\Omega ; \R^n )$. By duality, it also follows that if $n' < p < 2$, then for each $f \in W^{-1,p}(\Omega )$ there exists a unique $p$-weak solution $u$ of \eqref{bvp}. Moreover, it will be shown that if $f\in L^q (\Om )$ and $1<q < n/2$, then $u \in W^{2,q}(\Om )$. A similar result also holds for weak solutions of the dual problem \eqref{bvp-dual}.
Furthermore, all these results hold even when the domain $\Om$ is an exterior domain in $\R^n$. In particular, if $n' < p< n$, then there exists unique $p$-weak solutions of both problems \eqref{bvp} and \eqref{bvp-dual} on exterior domains $\Om$ in $\R^n$.
See Theorems \ref{th4-q version} and \ref{th4-q version-strong} for more details.

\medskip

Next, assuming that $\Omega$ is bounded, $\bb \in L^{n,\infty} (\Omega ; \R^n )$, $\div \mathbf{b}\in L^{n/2,\infty} (\Omega )$, and $\div \bb \ge 0$ in $\Om$, we shall prove $W^{1,p}$-regularity of weak solutions of the dual problem \eqref{bvp-dual} for some $p>n$. Suppose that $g \in W^{-1,p}(\Omega )$ for some
$n < p< \infty $. Then it follows from Theorem \ref{th4-q version} that the problem \eqref{bvp-dual} has a unique weak solution $v$ which belongs to $W_0^{1,q}(\Om )$ for any $q<n$.
Since $\bb \in L^{n,\infty} (\Omega ; \R^n )$, it follows from H\"{o}lder and Sobolev inequalities in Lorentz spaces that
\begin{equation}\label{int-bilinear est}
\|\bb \cdot \nb w \|_{ W^{-1,p}(\Omega )} \le C (n,p,\Om ) \| \bb \|_{L^{n,\infty} (\Omega  )} \|w \|_{W_0^{1,p}(\Om )}
\end{equation}
for all $w \in W_0^{1,p}(\Om )$.
If $\bb$ were in $ L^{n} (\Omega ; \R^n )$ or more generally, if $\bb$ could be approximated in $ L^{n,\infty} (\Omega ; \R^n )$ by smooth vector fields, then using the estimate (\ref{int-bilinear est}) (see the proofs of
\cite[Lemmas 3.3, 3.4]{KK}), we could derive the following $\eps$-inequality:
$$
\|\bb \cdot \nb w \|_{ W^{-1,p}(\Omega )} \le \eps \|w \|_{W_0^{1,p}(\Om )} + C_\eps (n,p,\Om , \bb ) \|w\|_{L^p (\Om )},
$$
from which we deduce that $v \in W_0^{1,p}(\Om )$, by applying a standard method such as the Leray-Schauder principle \cite{KK} or the method of continuity \cite{KangK}.
However the lack of a density property of $ L^{n,\infty} (\Omega )$ prevents us from deriving such an $\eps$-inequality for general $\bb$ in $ L^{n,\infty} (\Omega ; \R^n )$.
To overcome this difficulty, we first show, once again by Moser's iteration technique, that $v$ is globally H\"{o}lder continuous on $\Om$, that is, $v \in C^\al(\overline{\Om})$ for some $0< \alpha \le 1-n/p$.
We then deduce that $v \in W^{1,n +\eps }(\Om )$ for some $0 < \eps \le p-n$, by making crucial use of the Calderon-Zygumend estimates as well as the following interpolation inequality due to Miranda \cite{Miranda} and Nirenberg \cite{Nirenberg}:
$$
 \norm{\nb w}_{L^r(\Om)} \le C(n, \alpha , q , \Omega ) \left( \norm{ w}_{W^{2,q}(\Om)}+\norm{ w}_{C^\al(\bar \Om)} \right),
$$
where $1 \le q<n$, $0< \alpha <1$, and $ r = (2-\al)q /(1-\al )$. It will be also shown that if $g \in L^q (\Om )$ for some $n/2<q<\infty$, then $v \in W^{2,n/2+\delta }(\Om )$ for some $0 < \delta \le q-n/2$. See Theorem \ref{th4} and its proof for more details.

As an important application of these regularity results for \eqref{bvp-dual}, we obtain a quite general uniqueness result for very weak solutions of \eqref{bvp}. By a \emph{very weak solution} of \eqref{bvp}, we mean a function $u $ on $\Om$ such that
$$
u \in L^r (\Om )\quad\mbox{for some} \,\, \frac{n}{n-1}< r< \infty
$$
and
$$%
- \int_\Om u \bke{ \De \phi + \mathbf{b} \cdot \nb
\phi} \, dx = \bka{f,\phi}
$$%
for all $\phi \in C^2 ( \overline{\Om}) $ with $\phi|_{\partial\Omega}=0 $.
Then by duality, it follows from Theorem \ref{th4} that very weak solutions of \eqref{bvp} are unique in $L^r (\Om )$ for some $r $ less than but close to $(n/2)'= n/(n-2)$. Note that if $1<p<n$, then every $p$-weak solution of \eqref{bvp} is a very weak solution of \eqref{bvp} belonging to $L^{p^*}(\Om )$. Hence it also follows that $p$-weak solutions of \eqref{bvp} are unique if $p $ is less than but sufficienlty close to $n'= n/(n-1)$. Existence of such a $p$-weak solution seems to be open for general $f \in W^{-1,p}(\Om )$, when $p<n'$. Nevertheless, by a duality argument, we shall show that if $f \in W^{-1,p}(\Om )$ for all $p<n'$, then the problem \eqref{bvp} has a unique weak solution  $u \in \cap_{p<n'}W_0^{1,p}(\Om )$.
See Theorem \ref{th5} for precise statements.

\medskip

Last but not least,
assuming still that $\bb \in L^{n,\infty} (\Omega ; \R^n )$, $\div \mathbf{b}\in L^{n/2,\infty} (\Omega )$, and $\div \bb \ge 0$ in $\Om$, we shall prove that very weak solutions of \eqref{bvp} are unique in $L^{n,\infty}(\Om )$ even when $\Om$ is an exterior domain. Indeed, this   is an immediate consequence of  a more general  uniqueness result, Theorem \ref{th6}.  Our uniqueness result is motivated by an
open question about uniqueness of stationary solutions of the incompressible Navier-Stokes equations on exterior domains. Let $X^{n,\infty} (\Om )$ be the space of all vector fields $\mathbf{u}$ on $\Om$ such that $\mathbf{u} \in L^{n,\infty} (\Om ; \R^n )$ and $\nabla \mathbf{u} \in L^{n/2,\infty} (\Om ; \R^{n^2} )$. Then the class $X^{n,\infty} (\Om )$ turns out to be critical for solvability of the Navier-Stokes equations because $X^{n,\infty} (\R^n )$ is invariant under the natural scaling $\mathbf{u}_\lambda (x) = \lambda \mathbf{u}(\lambda x)$ with respect to the Navier-Stokes equations on $\R^n$.
In fact, Kozono-Yamazaki \cite{KoYa0} proved existence of a weak solution in $X^{n,\infty} (\Om )$ of the stationary Navier-Stokes equations under a smallness assumption on the exterior force.
A proof of uniqueness of such a solution may be reduced to showing uniqueness of a weak solution $\mathbf{v}$ in  $X^{n,\infty} (\Om )$ of the following problem:
\begin{equation}
\label{stokes-bvp}
\left\{\begin{array}{rr}
 -\De \mathbf{v} + \div( \mathbf{b} \otimes \mathbf{v}) +\nabla p = \mathbf{f}, \quad \div \mathbf{v}=0 \quad \text{in }\Om ,\,\,\,\, \\[4pt]
 \mathbf{v} =0 \quad\text{on } \pd \Om ,\\
 \mathbf{v} (x) \to 0 \quad\text{as } |x| \to \infty,
 \end{array}
\right.
\end{equation}
where $\mathbf{b}$ is a given vector field in $X^{n,\infty} (\Om ; \R^n )$ with $\div \mathbf{b} =0$.
Unfortunately, it remains still open to prove uniqueness of weak solutions in $X^{n,\infty} (\Om )$ of the linearized Navier-Stokes problem (\ref{stokes-bvp}). However by Theorem \ref{th6}, a stronger uniqueness result holds for the scalar problem \eqref{bvp}; consequently, very weak solutions of \eqref{bvp} are unique in $L^{n,\infty}(\Om )$, provided that the drift $\bb$ in $ L^{n,\infty} (\Omega ; \R^n )$ satisfies (\ref{coercivity cond-b-stronger}). Uniqueness of very weak solutions in  $L^{n/(n-2),\infty}(\Om )$, which is the natural space for the Laplace equation on exterior domains,  also follows from the general uniqueness result, Theorem \ref{th6}.

\medskip

The rest of the paper is organized as follows. All the main results in the paper are stated in the following Section \ref{Sec2}.
Section \ref{Sec3} is devoted to stating and proving preliminary results including some classical results on Lorentz spaces, estimates involving weak $L^n$-functions, the Miranda-Nirenberg interpolation inequality, and the Calderon-Zygmund estimates in Lebesgue and Lorentz spaces.
In Section \ref{Sec4}, we prove several existence and regularity results for weak solutions of \eqref{bvp} and \eqref{bvp-dual}. Global H\"older estimates for weak solutions of \eqref{bvp-dual} are derived in Section \ref{Sec5}. Finally, in Sections \ref{Sec6} and \ref{Sec7}, we provide complete proofs of all the main results.

\section{Main results}
\label{Sec2}

Before stating our main results, let us introduce some standard function spaces.

Let $\Om$ be any domain in $\R^n$, where $n \ge 3$.
For $m \in \N \cup \{\infty\}$, let
$C_c^m (\Om )$ be the space of all functions in $C^m (\R^n )$ with compact supports in $\Om$
and let $C^m (\overline{\Om}) $ be the space of the restrictions to $\overline\Om$ of all functions in $ C^m (\R^n)$. The space of all functions in $C^m (\overline{\Omega})$ with compact supports is then denoted by $C_c^m (\overline{\Omega})$. Let $L^{q}_{\loc}(\overline \Om )$ be the space of all $u \in L^{q}_{\loc}(\Om )$ such that $u \in L^{q}(\Omega_R )$ for all $R>0$, where $\Omega_R =\Omega \cap B_R (0)= \{ x \in \Omega : |x|< R\}$.

For $m \in \N$ and $1 \le p \le \infty$, let $D^{m,p}(\Om )$ be the space of all functions $u$ in $L_{\loc}^p (\Om )$ such that $D^\alpha u$ exists and belongs to $L^p (\Om )$ for all indices $\alpha$ with $|\alpha |=m$.
If $1 \le p < n$, we denote by $p^*$ the Sobolev conjugate of $p$: $p^* = np/(n-p)$.
Then the intersection space $D^{1,p}(\Omega )\cap L^{p^*}(\Omega )$, equipped with the norm $\|\nabla \cdot \|_{L^p (\Omega )}+ \|\cdot \|_{L^{p^*}(\Omega )}$, is a Banach space.
For $1\le p<n $, let $\hat{D}^{1,p}_0 (\Om )$ be the closure of $C_c^\infty (\Om )$ in the space $D^{1,p}(\Omega )\cap L^{p^*}(\Omega )$.
It immediately follows from the Sobolev embedding theorem that $\| u \|_{L^{p^* }(\Om )} \le C(n,p) \|\nb u \|_{L^p (\Om ; \R^n )}$ for all $u \in \hat{D}^{1,p}_0 (\Om )$.
Hence $\hat{D}^{1,p}_0 (\Om )$ may be equipped with an equivalent norm $\|\nabla \cdot \|_{L^p (\Omega )}$.
If $\Omega$ is bounded, then the norm on $\hat{D}^{1,p}_0 (\Om )$ is also equivalent to the norm on the inhomogeneous Sobolev space $W^{1,p}(\Om )$. Hence it follows that if $\Omega$ is bounded, then $\hat{D}^{1,p}_0 (\Om )$ coincides with the space $W_0^{1,p}(\Om )$, which is the closure of $C_c^\infty (\Om )$ in $W^{1,p}(\Omega )$.
On the other hand, it is well-known that if $\Om$ is an exterior domain with Lipschitz boundary $\partial\Om$, then each $u \in D^{1,p}(\Om )$ has a well-defined trace on $\partial\Om$.
For  an exterior Lipschitz domain $\Om$  in $\R^n$,   we denote by $ D^{1,p}_0 (\Om )$
the space of all $u \in D^{1, p }(\Om )$ with $u =0$ on $\partial \Om$. Then it  can be shown (see \cite[Section II.6]{Ga} e.g.) that if $\Om$ is an exterior Lipschitz domain in $\R^n$, then
$\hat{D}^{1,p}_0 (\Om ) = D^{1,p}_0 (\Om )   \cap L^{p^*}(\Om )$ for $1 \le p<n$.

\begin{definition}\label{def2.1} %
Assume that $\mathbf{b} \in L_{\loc}^{1}(\Om;\R^n)$ and $1<p<n$. Then a function $u \in \hat{D}_0^{1,p}(\Om )$ is called a \emph{weak solution} in $\hat{D}_0^{1,p}(\Om )$ or a $p$-\emph{weak solution} of \eqref{bvp} if it satisfies
$$
u \mathbf{b} \in L_{\loc}^{1}(\Om;\R^n)
$$
and
\begin{equation}\label{eq1.3}
\int_\Om \bke{\nb u - u \mathbf{b}} \cdot \nb
\phi \, dx = \bka{f,\phi} \quad \text{for all } \phi \in C^1_{c}(\Om).
\end{equation}
Weak solutions in $\hat{D}_0^{1,2}(\Om )$ of \eqref{bvp} are simply called \emph{weak solution}s.
In addition, a $p$-weak solution $u$ of \eqref{bvp} will be called a \emph{strong solution} if it satisfies $\nabla^2 u \in L_{loc}^{1}(\Om; \R^{n^2} )$. Weak and $p$-weak solutions of the dual problem \eqref{bvp-dual} can be similarly defined;  that is, $v \in \hat{D}_0^{1,p}(\Om )$ is   a $p$-\emph{weak solution} (simply a \emph{weak solution} if $p=2$) of \eqref{bvp-dual} if it satisfies
\[
\mathbf{b}\cdot\nabla v   \in L_{\loc}^{1}(\Om )
\]
and
\begin{equation}\label{bvp-dual-wk}
\int_\Om \nb v\cdot
\bke{\nb \psi - \psi \mathbf{b}}   \, dx = \bka{g,\psi} \quad \text{for all } \psi \in C^1_{c}(\Om).
\end{equation}

\end{definition}

\medskip
The first purpose of the paper is to establish the following two results for existence and uniqueness of $p$-weak solutions and strong solutions of the problem \eqref{bvp} and its dual \eqref{bvp-dual}.

\begin{theorem}
\label{th4-q version}
Let $\Om $ be a bounded or exterior $C^1$-domain in $\R^n$, $n \ge 3$. Assume that
$\mathbf{b} \in L^{n,\I}(\Om;\R^n)$, $\div \mathbf{b} \ge 0$ in $\Om$, and $n /(n-1) < p<n$.
Assume in addition that ${\rm div}\, {\mathbf b} \in L^{n/2,\I} (\Om )$ if $n/(n-1)< p<2$.
\begin{enumerate}[(i)]
\item For each ${\mathbf F} \in L^{p}(\Om; \R^n )$, there exists a unique $p$-weak solution $u $ of \eqref{bvp} with $f= \div {\mathbf F}$. Moreover, we have
$$
 \|\nb u \|_{L^p (\Om )} \le C (n,p ,\Om ) M ( p,{\mathbf b}) \|{\mathbf F}\|_{L^{p}(\Om )},
$$
where
$$
 M ( p, {\mathbf b}) =\begin{cases}
 \ 1 & \text{if } p =2 \\
 \ 1+ \|{\mathbf b}\|_{L^{n,\I} (\Om )} & \text{if } 2 < p< n \\
 \ 1+ \|{\mathbf b}\|_{L^{n,\I} (\Om )}+ \|{\rm div}\, {\mathbf b}\|_{L^{n/2,\I} (\Om )} & \text{if } \frac{n}{n-1}< p<2 .
\end{cases}
$$

\item For each ${\mathbf G} \in L^{p'}(\Om; \R^n )$, there exists a unique $p'$-weak solution $v $ of \eqref{bvp-dual} with $g= \div {\mathbf G}$. Moreover, we have
$$
 \|\nb v \|_{L^{p'} (\Om )} \le C (n,p ,\Om ) M ( p,{\mathbf b}) \|{\mathbf G}\|_{L^{p'}(\Om )}.
$$
\end{enumerate}
\end{theorem}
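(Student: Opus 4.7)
The plan follows the roadmap sketched in the introduction, dividing the problem into four stages according to the range of $p$.

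\smallskip

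\noindent\emph{Stage 1: $p=2$.} For both (i) and (ii) I invoke the Lax--Milgram theorem on the Hilbert space $\hat{D}_0^{1,2}(\Om)$ with the bilinear form $B(u,v)=\int_\Om(\nb u-u\bb)\cdot\nb v\,dx$. Boundedness of $B$ follows from H\"older and Sobolev inequalities in Lorentz spaces applied to $\bb\in L^{n,\I}$ and $u\in L^{2^{*},2}$. Coercivity is a consequence of the identity $\int_\Om u\bb\cdot\nb u\,dx=-\tfrac12\int_\Om u^{2}\div\bb\,dx\le 0$, which yields $B(u,u)\ge\|\nb u\|_{L^2}^{2}$. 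This gives both existence and uniqueness for $p=2$ with the stated estimate.

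\smallskip

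\noindent\emph{Stage 2: part (i) for $2<p<n$.} Starting from the $L^{2}$-weak solution $u$, I adapt Moser's iteration to the critical drift $\bb\in L^{n,\I}$ to obtain the higher integrability $u\in L^{p^{*}}(\Om)$. H\"older in Lorentz spaces then gives $u\bb\in L^{p,\I}(\Om;\R^{n})$, and writing $-\De u=\div\bF-\div(u\bb)$ together with the Calder\'on--Zygmund estimate in weak spaces (Proposition \ref{CZ-estimates}) yields $\nb u\in L^{p,\I}$. To upgrade from $L^{p,\I}$ to $L^{p}$, I run the same chain of implications for two auxiliary exponents $2<p_{0}<p<p_{1}<n$ and apply the Marcinkiewicz interpolation theorem (Lemma \ref{marcinkiewicz-inter}). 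Uniqueness is automatic: the difference of two $p$-weak solutions is a $2$-weak solution with zero data, hence vanishes by Stage 1.

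\smallskip

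\noindent\emph{Stage 3: part (ii) for $2<p'<n$ and the duality argument.} Here the hypothesis $\div\bb\in L^{n/2,\I}$ becomes essential: rewriting $\bb\cdot\nb v=\div(v\bb)-v\div\bb$ turns the dual equation into $-\De v=g+\div(v\bb)-v\div\bb$. Moser iteration again delivers $v\in L^{(p')^{*}}$; the zeroth-order term $v\div\bb$ lies in $L^{q,\I}$ with $q=np'/(n+p')$ by H\"older in Lorentz, and together with $\div(v\bb)$ it fits into the right-hand side of an elliptic equation to which Proposition \ref{CZ-estimates} and Marcinkiewicz interpolation apply to produce $\nb v\in L^{p'}$. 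The two remaining ranges---(i) for $n/(n-1)<p<2$ and (ii) for $n/(n-1)<p'<2$---follow by a duality argument: the operators of (i) and (ii) are formal adjoints on the pair $(\hat D_{0}^{1,p},\hat D_{0}^{1,p'})$, and the a priori bounds just proved give closed range, so surjectivity in one range is equivalent to the injectivity-plus-estimate statement in the conjugate range.

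\smallskip

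\noindent\emph{Main obstacle.} The hardest step is making Moser's iteration rigorous when $\bb$ lies only in $L^{n,\I}$. Unlike $L^{n}$, this space is not self-dense, so the standard approximation argument is unavailable; one must instead use truncated powers of $u$ (or $v$) as test functions and absorb the resulting critical Lorentz contributions by exploiting smallness of $\|\bb\chi_{E}\|_{L^{n,\I}}$ on appropriate super-level sets. A secondary difficulty is the exterior-domain case, where $W_{0}^{1,p}$ is replaced by $\hat D_{0}^{1,p}=D_{0}^{1,p}\cap L^{p^{*}}$ and one must verify that all the Calder\'on--Zygmund, Sobolev, and duality tools transfer with controlled constants---a routine but nontrivial check that relies on the homogeneous character of $\hat D_{0}^{1,p}$ and the decay condition at infinity.
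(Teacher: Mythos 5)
Your overall strategy matches the paper's: Lax--Milgram for $p=2$, Moser iteration for $L^{p^*}$-integrability, H\"older in Lorentz spaces, Calder\'on--Zygmund in weak spaces (Proposition~\ref{CZ-estimates}), Marcinkiewicz interpolation, and duality for the conjugate range. However, your diagnosis of the ``main obstacle'' is wrong, and the mechanism you propose to overcome it would not work. You claim that the hardest step is making Moser's iteration rigorous for $\bb\in L^{n,\infty}$ and propose to ``absorb the resulting critical Lorentz contributions by exploiting smallness of $\|\bb\chi_E\|_{L^{n,\infty}}$ on appropriate super-level sets.'' The space $L^{n,\infty}$ does not have an absolutely continuous quasi-norm: a fixed $\bb\in L^{n,\infty}$ restricted to a set of arbitrarily small measure need not have small $L^{n,\infty}$-norm (take $\bb(x)=x/|x|^2$ restricted to $B_\eps$; its quasi-norm is $\eps$-independent). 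This is precisely the obstruction the paper flags in the introduction (``the lack of a density property of $L^{n,\infty}(\Om)$''). The reason Moser's iteration actually works is different and in fact makes the step \emph{easier} than you expect: the sign hypothesis $\div\bb\ge 0$ makes the drift contribution drop out completely. Testing with $w^\beta$ (powers of the positive/negative part), one has $\int w^\beta\,\bb\cdot\nb w\,dx=\tfrac{1}{\beta+1}\int\bb\cdot\nb(w^{\beta+1})\,dx\le 0$, so the iteration yields $\|u\|_{L^{p^*}}\le C(n,p)\|\mathbf F\|_{L^p}$ with a constant independent of $\bb$; the quantity $\|\bb\|_{L^{n,\infty}}$ enters only afterward through the Calder\'on--Zygmund estimate applied to $-\De u=\div(\mathbf F-u\bb)$. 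Your sketch should be corrected accordingly; as written, the key ingredient of the proof is missing.

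Two secondary points. First, in the $n'<p<2$ regime for the dual problem, the zeroth-order term $v\div\bb$ is not directly of divergence form; the paper first solves a Poisson problem (Lemma~\ref{sol-Laplace in Rn}(ii)) to represent $-v\,\div\bb$ as $\div\mathbf G_0$ with $\mathbf G_0\in L^{q,\infty}$, before invoking Proposition~\ref{CZ-estimates}. Your phrase ``it fits into the right-hand side'' glosses over this conversion, which is needed because the Calder\'on--Zygmund lemma used here treats right-hand sides in divergence form. Second, the exterior-domain case is not a ``routine check.'' Uniqueness on exterior domains (Lemmas~\ref{th4-unique-q version} and~\ref{th4-unique-q version-dual}) itself requires a multi-step bootstrap with cutoffs at dyadic scales, establishing $u\in L^\infty(\Om)$ and $u\in L^{\beta+1}(\Om)$ before a Caccioppoli argument with $\eta_k\to 1$ can close. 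This uniqueness is essential even for the existence proof: without it one cannot identify the solution operators $T_{p_1}$ and $T_{p_2}$ on the overlap $L^{p_1}\cap L^{p_2}$, which is what allows the Marcinkiewicz interpolation to be applied to a single well-defined operator.
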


\begin{theorem}\label{th4-q version-strong}
Let $\Om $ be a bounded or exterior $C^{1,1}$-domain in $\R^n$, $n \ge 3$. Assume that $\mathbf{b} \in L^{n,\I}(\Om;\R^n)$, ${\rm div}\, {\mathbf b} \in L^{n/2,\I} (\Om )$, $\div \mathbf{b} \ge 0$ in $\Om$, and $1< q< n/2 $.
\begin{enumerate}[(i)]
\item For each $f \in L^{q}(\Om )$, there exists a unique $q^*$-weak solution $u $ of \eqref{bvp}. Moreover, we have
$$
 u \in D^{2,q} (\Om )
 \quad\mbox{and}\quad
 \|\nb u\|_{ L^{q^*} (\Om ) }+ \|\nb^2 u\|_{ L^{q} (\Om ) } \le C (n,q ,\Om ) M_{{\mathbf b}}^2 \|f\|_{L^q (\Omega )} ,
$$
where $M_{{\mathbf b}} = 1+ \|{\mathbf b}\|_{L^{n,\I} (\Om )}+ \|{\rm div}\, {\mathbf b}\|_{L^{n/2,\I} (\Om )}$.
\item For each $g\in L^{q}(\Om)$, there exists a unique $q^*$-weak solution $v $ of \eqref{bvp-dual}. Moreover, we have
$$
 v \in D^{2,q} (\Om )
 \quad\mbox{and}\quad
 \|\nb v\|_{ L^{q^*} (\Om ) }+ \|\nb^2 v\|_{ L^{q} (\Om ) } \le C (n,q ,\Om ) M_{{\mathbf b}}^2 \|g\|_{L^q (\Omega )} .
$$
\end{enumerate}
\end{theorem}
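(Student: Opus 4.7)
The plan is to derive both statements from Theorem~\ref{th4-q version} by a single bootstrap against the classical $L^{q}$ Calder\'on-Zygmund theory for the Dirichlet Laplacian on $C^{1,1}$-domains. Throughout, set $q^{*} = nq/(n-q)$ and $q^{**} = nq/(n-2q)$, so that the hypothesis $1<q<n/2$ forces $q^{*} \in (n/(n-1),n)$ together with the identities $1/q^{*} = 1/q - 1/n$ and $1/q^{**} = 1/q - 2/n$. In particular, the embedding $L^{q}(\Om) \hookrightarrow W^{-1,q^{*}}(\Om)$ holds: any $f \in L^{q}(\Om)$ may be represented as $\div \mathbf{F}$ for some $\mathbf{F} \in L^{q^{*}}(\Om;\R^{n})$ with $\|\mathbf{F}\|_{L^{q^{*}}} \le C(n,q,\Om)\|f\|_{L^{q}}$, for instance by solving $\Delta\Phi = f$ and setting $\mathbf{F} = \nabla\Phi$.

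For part~(i), Theorem~\ref{th4-q version}(i) applied with $p = q^{*}$ then delivers the unique $q^{*}$-weak solution $u \in \hat{D}^{1,q^{*}}_{0}(\Om)$ of \eqref{bvp} together with $\|\nabla u\|_{L^{q^{*}}} \le C\,M_{\mathbf{b}}\,\|f\|_{L^{q}}$ and, via the Sobolev embedding built into $\hat{D}^{1,q^{*}}_{0}$, also $\|u\|_{L^{q^{**}}} \le C\,M_{\mathbf{b}}\,\|f\|_{L^{q}}$. I would next recast \eqref{bvp} in non-divergence form,
\EQN{
-\Delta u = f - \mathbf{b}\cdot\nabla u - (\div \mathbf{b})\,u \quad\text{in } \Om,
}
which is legitimate because $\div \mathbf{b}$ is a genuine function in $L^{n/2,\I}(\Om)$. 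By the H\"older inequality in Lorentz spaces (Section~\ref{Sec3}) the exponent arithmetic $\tfrac{1}{n}+\tfrac{1}{q^{*}} = \tfrac{1}{q} = \tfrac{2}{n}+\tfrac{1}{q^{**}}$ gives
\EQN{
\|\mathbf{b}\cdot\nabla u\|_{L^{q}} + \|(\div \mathbf{b})\,u\|_{L^{q}} \le C\,M_{\mathbf{b}}\bigl(\|\nabla u\|_{L^{q^{*}}} + \|u\|_{L^{q^{**}}}\bigr) \le C\,M_{\mathbf{b}}^{2}\,\|f\|_{L^{q}},
}
so that $\Delta u \in L^{q}(\Om)$ with $\|\Delta u\|_{L^{q}} \le C\,M_{\mathbf{b}}^{2}\|f\|_{L^{q}}$. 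The classical $L^{q}$-Calder\'on-Zygmund estimate for the Dirichlet Laplacian on bounded or exterior $C^{1,1}$-domains then produces $u \in D^{2,q}(\Om)$ with $\|\nabla^{2}u\|_{L^{q}} \le C\|\Delta u\|_{L^{q}}$, and uniqueness is inherited from Theorem~\ref{th4-q version}(i).

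Part~(ii) is completely parallel: Theorem~\ref{th4-q version}(ii) supplies the unique $q^{*}$-weak solution $v \in \hat{D}^{1,q^{*}}_{0}(\Om)$ of \eqref{bvp-dual} with $\|\nabla v\|_{L^{q^{*}}} \le C\,M_{\mathbf{b}}\,\|g\|_{L^{q}}$, and the dual equation rewrites as $-\Delta v = g + \mathbf{b}\cdot\nabla v$; here no $(\div \mathbf{b})\,v$ term intervenes, and the same Lorentz-H\"older estimate places the right-hand side in $L^{q}(\Om)$ with norm bounded by $C\,M_{\mathbf{b}}^{2}\|g\|_{L^{q}}$, after which Calder\'on-Zygmund closes the argument. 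The main obstacle, in my view, is not the bootstrap itself but pinning down two auxiliary tools under the precise hypotheses: the \emph{strong}-type Lorentz-H\"older inequalities that make $\mathbf{b}\cdot\nabla u$ and $(\div\mathbf{b})\,u$ bona fide $L^{q}$ functions (one factor in weak $L^{n}$ or $L^{n/2}$, the other in strong $L^{q^{*}}$ or $L^{q^{**}}$), and the full $D^{2,q}$ Calder\'on-Zygmund theory on \emph{exterior} $C^{1,1}$-domains, where the usual interior argument must be supplemented by decay information at infinity coming from $u \in \hat{D}^{1,q^{*}}_{0}(\Om)$.
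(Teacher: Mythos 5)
Your bootstrap strategy (recasting the equation in non-divergence form and invoking $W^{2,q}$-Calder\'on-Zygmund) is essentially the paper's starting move, but there is a genuine gap precisely at the step you flag as the "main obstacle": the Lorentz-H\"older inequality does \emph{not} deliver the strong-type bound $\mathbf{b}\cdot\nabla u \in L^q$. With $\mathbf{b}\in L^{n,\infty}(\Om)=L^{n,\infty}$ and $\nabla u\in L^{q^*}=L^{q^*,q^*}$, Lemma~\ref{holder inequalies} places the product in $L^{q,s}$ only for those $s$ with $1/s \le 1/\infty + 1/q^* = 1/q^*$, i.e.\ $s\ge q^*$. Since $q^* > q$, the best you get is $\mathbf{b}\cdot\nabla u\in L^{q,q^*}\subset L^{q,\infty}$, which is a strictly larger space than $L^q=L^{q,q}$. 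The same obstruction appears for $(\div\mathbf{b})\,u$ with $\div\mathbf{b}\in L^{n/2,\infty}$ and $u\in L^{q^{**},q^{**}}$: you land in $L^{q,q^{**}}\supsetneq L^q$. So the chain "$\Delta u\in L^q$, hence $u\in D^{2,q}$ by Calder\'on-Zygmund" does not close as written.

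The paper resolves this by accepting the weak-type conclusion and interpolating afterwards. Concretely: the weak-type Calder\'on-Zygmund estimates in Lorentz spaces (Proposition~\ref{CZ-estimates} for bounded domains, Proposition~\ref{CZ-estimates in exterior domains} for exterior domains) give $u\in W^{2,q,\infty}(\Om)$ with norm controlled by $C\,M_\mathbf{b}^2\|f\|_{L^q}$. This holds for every $q$ in the open interval $(1,n/2)$, so the solution operator $T:f\mapsto u$ is bounded from $L^q(\Om)$ into $W^{2,q,\infty}(\Om)$ for all such $q$. One then applies the Marcinkiewicz interpolation theorem (Lemma~\ref{marcinkiewicz-inter}), choosing $q_0<q<q_1$ in $(1,n/2)$, to upgrade the weak-type endpoint bounds to the strong-type bound $T:L^q(\Om)\to W^{2,q}(\Om)$. (For exterior domains there is an extra technical point the proposal also omits: the operators built at the different exponents $q_i$ must be identified on the overlap via the uniqueness lemma before interpolating.) Your proposal would work verbatim if Theorem~\ref{th4-q version} gave you $\nabla u\in L^{q^*,q}$ rather than merely $L^{q^*}$, but the theorem does not supply that second Lorentz index, and that missing index is exactly why the extra interpolation pass is required.
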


\begin{remark}
Results similar to Theorems \ref{th4-q version} and \ref{th4-q version-strong}  can be found in the literature under   stronger assumptions on the drift $\bb$. For instance, the simplest case  $\bb ={\mathbf 0}$ was  studied in  \cite{AmGiGi} for both bounded and exterior domains $\Om$ in $\R^n$. See also    \cite{GilTru}, \cite{KK}, and  \cite{KangK} for the case when $\bb \in L^\infty (\Om ; \R^n )$ or more generally $\bb \in L^n (\Om ; \R^n )$, where  $\Om$ is a bounded domain.
\end{remark}

\begin{remark}
The $C^1$-regularity of the domain  $\Om$ is needed in our proof of Theorem \ref{th4-q version} that relies crucially on  the Calderon-Zygmund estimate for  weak solutions in $W^{1,p}$, $1<p<\infty$, of the Poisson equations on bounded  $C^1$-domains (see Subsection \ref{sec:CZ-est}). However it was shown by Jerison-Kenig  \cite[Theorem 0.5]{JK} that if $\Omega $ is   any bounded Lipschitz domain in $\R^3$, then such an estimate still holds for  $p_1 ' <p< p_1$, where   $p_1 $ is some number with $3< p_1 < \infty$.  Thus   Theorem \ref{th4-q version}   remains true if $\Omega$ is a general bounded \emph{Lipschitz} domain in $\R^3$.
\end{remark}

\medskip

Next, we prove $W^{1,n+\eps}$- or $W^{2,n/2+\delta }$-regularity of weak solutions of \eqref{bvp-dual} for some $\eps , \delta>0$.

\begin{theorem}\label{th4} Let $\Om $ be a bounded $C^{1,1}$-domain in $\R^n$, $n \ge 3$. Assume that $\mathbf{b} \in L^{n,\I}(\Om;\R^n)$, ${\rm div}\, {\mathbf b} \in L^{n/2,\I} (\Om )$, and $\div \mathbf{b} \ge 0$ in $\Om$. Then any weak solution $v $ of \eqref{bvp-dual} has the following regularity properties:
\begin{enumerate}[(i)]
\item If $g \in W^{-1,p}(\Om)$ and $n<p<\infty$, then
$$
v \in W^{1,n+\e}_0(\Om) \quad\mbox{and}\quad \|v\|_{W^{1,n+\e}_0(\Om)} \le C \|g\|_{W^{-1,p} (\Omega )}
$$
for some $ 0< \e \le p-n$ and $C>1$, depending only on $n, p, \Omega $, $ \|{\mathbf b}\|_{L^{n,\I}(\Om )} $, and $\|{\rm div}\, {\mathbf b}\|_{L^{n/2,\I} (\Om )}$.
\item If $g \in L^{q}(\Om)$ and $n/2 <q<\infty$, then
$$
v \in W^{2,n/2+\delta}(\Om) \quad\mbox{and}\quad \|v\|_{W^{2,n/2+\delta}(\Om)} \le C \|g\|_{L^q (\Omega )}
$$
for some $ 0 < \delta \le q-n/2 $ and $C>1$, depending only on $n, q, \Omega $, $ \|{\mathbf b}\|_{L^{n,\I}(\Om )} $, and $\|{\rm div}\, {\mathbf b}\|_{L^{n/2,\I} (\Om )}$.
\end{enumerate}
\end{theorem}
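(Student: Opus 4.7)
The plan is to deduce both parts by combining global H\"older continuity of $v$ (the main technical input, to be established in Section \ref{Sec5}) with a Poisson/drift decomposition, and then invoking the Calder\'on--Zygmund estimate together with the Miranda--Nirenberg interpolation inequality stated in the introduction. For part (i), I first apply Theorem \ref{th4-q version} to obtain a unique weak solution $v \in \bigcap_{q<n} W_0^{1,q}(\Om)$ controlled by $\|g\|_{W^{-1,p}(\Om)}$; second, the global H\"older estimate of Section \ref{Sec5} yields $v \in C^\al(\overline{\Om})$ for some $\al \in (0,1)$ depending only on $n,p,\Om,\|\mathbf{b}\|_{L^{n,\I}(\Om)},\|\div \mathbf{b}\|_{L^{n/2,\I}(\Om)}$; third, I split $v = v_1 + v_2$, where $v_1$ solves the pure Poisson Dirichlet problem $-\De v_1 = g$ in $\Om$. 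Classical Calder\'on--Zygmund on $C^{1,1}$-domains gives $v_1 \in W_0^{1,p}(\Om) \hookrightarrow C^{1-n/p}(\overline{\Om})$ by Morrey's embedding (since $p>n$), so $v_2 := v - v_1 \in C^\be(\overline{\Om})$ with $\be := \min\{\al, 1-n/p\} > 0$, and $v_2$ solves $-\De v_2 = \mathbf{b}\cdot \nb v$ with zero Dirichlet data.

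Next I upgrade $v_2$ to $W^{2,q_0}(\Om)$ for some $q_0 < n/2$ close to $n/2$. Since $\Om$ is bounded, $\mathbf{b} \in L^{n,\I}(\Om) \subset L^{n-\eta}(\Om)$ and, by Theorem \ref{th4-q version}, $\nb v \in L^{n-\eta}(\Om)$ for every small $\eta > 0$. H\"older's inequality thus yields $\mathbf{b}\cdot \nb v \in L^{q_0}(\Om)$ for $q_0$ arbitrarily close to (but below) $n/2$, and the Calder\'on--Zygmund estimate for the Dirichlet Poisson problem on $C^{1,1}$-domains gives $v_2 \in W^{2,q_0}(\Om)$. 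The Miranda--Nirenberg interpolation inequality
$$
\|\nb v_2\|_{L^r(\Om)} \le C \bke{ \|v_2\|_{W^{2,q_0}(\Om)} + \|v_2\|_{C^\be(\overline{\Om})} }, \qquad r = \frac{(2-\be)q_0}{1-\be},
$$
then delivers $r > n$ once $q_0$ is chosen sufficiently close to $n/2$, because the limiting exponent $n(2-\be)/[2(1-\be)]$ strictly exceeds $n$ for any $\be > 0$. Combined with $\nb v_1 \in L^p(\Om)$ and $p > n$, this proves $v \in W_0^{1,n+\e}(\Om)$ for some $\e \in (0, p-n]$.

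Part (ii) then follows by a short bootstrap: for $g \in L^q(\Om)$ with $q > n/2$ one has $g \in W^{-1,p}(\Om)$ for some $p > n$ (take $p = q^* = nq/(n-q)$ if $n/2 < q < n$, and any $p > n$ otherwise), so part (i) provides $\nb v \in L^{n+\e}(\Om)$; then $\mathbf{b}\cdot \nb v \in L^s(\Om)$ for some $s > n/2$ by H\"older's inequality in Lorentz spaces with $1/s = 1/n + 1/(n+\e)$, and rewriting $-\De v = g + \mathbf{b}\cdot \nb v \in L^{n/2+\de}(\Om)$ for some $\de > 0$, together with the Calder\'on--Zygmund estimate on $C^{1,1}$-domains, yields $v \in W^{2,n/2+\de}(\Om)$.

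The main obstacle is the global H\"older estimate invoked from Section \ref{Sec5}. The criticality of $\mathbf{b} \in L^{n,\I}(\Om)$ makes Moser's iteration delicate up to the boundary: the scaling in the Sobolev inequality is borderline, and the assumption $\div \mathbf{b} \ge 0$ together with $\div \mathbf{b} \in L^{n/2,\I}(\Om)$ is essential both for obtaining the correct sign after testing with suitable powers of $v$ and for controlling the zeroth-order term arising from the identity $\mathbf{b}\cdot \nb v = \div(v\mathbf{b}) - v \div \mathbf{b}$.
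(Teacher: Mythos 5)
Your proposal matches the paper's own proof: it uses the same decomposition $v = v_1 + v_2$ with $-\De v_1 = g$ and $-\De v_2 = \mathbf{b}\cdot\nb v$, the same global H\"older estimate from Proposition \ref{th4-0}, the same Calder\'on--Zygmund input, and the same application of the Miranda--Nirenberg inequality with $q_0$ chosen close enough to $n/2$ so that $(2-\be)q_0/(1-\be) > n$. Part (ii) is also handled by the same bootstrap from part (i); the only cosmetic difference is that you set $\be = \min\{\al,1-n/p\}$ whereas the paper notes $\al \le 1-n/p$ up front, and in part (ii) you should state the conclusion as $\mathbf{b}\cdot\nb v \in L^{s'}(\Om)$ for all $s' < n(n+\e)/(2n+\e)$ rather than at the endpoint, since the Lorentz--H\"older lemma there only yields $L^{s,n+\e}$ rather than $L^s$ at the critical exponent; on the bounded domain this costs nothing.
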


\begin{remark}
By Theorem \ref{th4}, there exists a weak solution $v$ of \eqref{bvp-dual} such that $v \in W^{1,n+\e}_0(\Om)$ for some $\eps>0$. Then by the Morrey embedding theorem, we deduce that $v \in C^\alpha (\overline{\Om})$ for some $0< \alpha < 1$. But, the H\"older regularity of $v$ could have been proved without assuming that ${\rm div}\, {\mathbf b} \in L^{n/2,\I} (\Om ).$ See Proposition \ref{th4-0}. In fact, the global H\"older regularity of $v$ is an essential ingredient of our proof of Theorem \ref{th4} by means of the Miranda-Nirenberg interpolation theorem (Lemma \ref{MNineq}).
\end{remark}

\begin{remark}
On one hand, $C^{1,1}$-regularity of the domain $\Om$ is reasonable for Part (ii) of Theorem \ref{th4} because it involves $W^{2,s}$-regularity of solutions of elliptic equations of second order. On the other hand, our proof of Part (i) is based crucially on the Calderon-Zygmund result for solutions in $W^{2,s}$ of the Poisson equation; that is, if $v \in W_0^{1,2}(\Om )$ and $ \De v \in L^s (\Om )$ for $s$ close to $n/2$, then $v \in W^{2,s}(\Om )$. This requires us to assume $C^{1,1}$-regularity of the domain $\Om$ for Part (i) too.
\end{remark}

\medskip

As an important consequence of Theorem \ref{th4}, we can prove existence and uniqueness results for $p$-weak solutions or very weak solutions in $L^q (\Omega )$ of \eqref{bvp}, where $ p< n/(n-1)$ and $q< n/(n-2)$. Note that
$$
n' =\frac{n}{n-1} \quad\mbox{and}\quad (n')^* = \frac{n}{n-2} = \left( \frac n2 \right)' .
$$
For the simplicity of presentation, let us define
\[
W_0^{1,p-}(\Om )=\bigcap_{q<p}W_0^{1,q}(\Om )\quad\mbox{and}\quad W^{-1,p-}(\Om )=\bigcap_{q<p}W^{-1,q}(\Om ).
\]

\begin{theorem}\label{th5} Let $\Om $ be a bounded $C^{1,1}$-domain in $\R^n$, $n \ge 3$. Assume that $\mathbf{b} \in L^{n,\I}(\Om;\R^n)$, ${\rm div}\, {\mathbf b} \in L^{n/2,\I} (\Om )$, and $\div \mathbf{b} \ge 0$ in $\Om$.

\begin{enumerate}[(i)]
 \item There exists a number $ n ' <r< (n/2)' $, close to $(n/2)'$ and
depending only on $n, \Omega $, $ \|{\mathbf b}\|_{L^{n,\I}(\Om )} $, and $\|{\rm div}\, {\mathbf b}\|_{L^{n/2,\I} (\Om )}$, such that
if $u \in L^r (\Omega )$ satisfies
\begin{equation}\label{eq1.3-0}
\int_\Om u \bke{ \De \phi + \mathbf{b} \cdot \nb
\phi} \, dx = 0 \quad \text{for all } \phi \in C^2 (\overline{\Om}) \,\,\mbox{with}\,\, \phi|_{\partial\Omega}=0 ,
\end{equation}
then $u=0$ identically on $\Omega$.
 \item For each $f \in W^{-1, n'-}(\Om)$, there exists a unique weak solution $u$ in $W_0^{1,n'-}(\Om )$ of \eqref{bvp}.\end{enumerate}
\end{theorem}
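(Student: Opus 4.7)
For Part (i), the plan is to run a duality argument against the dual problem \eqref{bvp-dual}. Fix $q_0 > n/2$ (say $q_0 = n$) and invoke Theorem~\ref{th4}(ii) to obtain a fixed $\delta > 0$, depending only on $n, \Om, \|\bb\|_{L^{n,\I}}$, and $\|\div\bb\|_{L^{n/2,\I}}$, such that for every $g \in L^{q_0}(\Om)$ the weak solution $v_g$ of \eqref{bvp-dual} lies in $W^{2, n/2+\delta}(\Om) \cap W_0^{1,2}(\Om)$ with a linear bound on its $W^{2, n/2+\delta}$-norm. Set $r = (n/2+\delta)' \in (n', (n/2)')$, which lies close to $(n/2)'$ when $\delta$ is small. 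For any $u \in L^r(\Om)$ satisfying \eqref{eq1.3-0}, approximate $v_g$ in $W^{2, n/2+\delta}$ by $\phi_k \in C^2(\overline\Om)$ with $\phi_k|_{\pd\Om}=0$ (standard density on $C^{1,1}$ domains, using that $v_g$ has zero trace in $W^{1, n/2+\delta}$). Test \eqref{eq1.3-0} against $\phi_k$ and pass to the limit, noting that $\De\phi_k \to \De v_g$ in $L^{n/2+\delta}=L^{r'}$ and, via H\"older in Lorentz spaces combined with $\nb v_g \in L^{(n/2+\delta)^*}$, $\bb\cdot\nb\phi_k \to \bb\cdot\nb v_g$ also in $L^{r'}$; these integrals converge against $u \in L^r$. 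Since $\De v_g + \bb\cdot\nb v_g = -g$, this yields $\int_\Om ug\,dx = 0$ for every $g \in C_c^\infty(\Om)$, and density of $C_c^\infty(\Om)$ in $L^{r'}(\Om)$ forces $u\equiv 0$.

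For Part (ii), uniqueness is an immediate corollary of Part (i): any weak solution $u \in W_0^{1, n'-}(\Om)$ of the homogeneous problem embeds via Sobolev into $L^{q^*}(\Om)$ for every $q<n'$, hence $u \in L^r(\Om)$ for $r$ as above; a cutoff-and-integrate-by-parts argument (boundary terms vanish because $u$ has zero trace) converts the weak formulation into \eqref{eq1.3-0}, whence Part (i) gives $u=0$. For existence, approximate $f$ by $f_k \in C_c^\infty(\Om)$ with $f_k \to f$ in $W^{-1,q}(\Om)$ for every $q<n'$, and let $u_k \in W_0^{1,2}(\Om)$ be the weak solution associated with $f_k$ (Theorem~\ref{th4-q version}). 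Fix $q<n'$ and pick $s\in(n,q')$; for $G\in C_c^\infty(\Om;\R^n)$, let $v_G$ solve \eqref{bvp-dual} with data $-\div G\in W^{-1,s}(\Om)$, so by Theorem~\ref{th4}(i), $v_G\in W_0^{1,n+\eps}(\Om)$ with some $\eps=\eps(s)>0$ and $\|v_G\|_{W_0^{1,n+\eps}}\le C\|G\|_{L^s}$. Testing the primal equation for $u_k$ with $\phi=v_G$ and the dual equation for $v_G$ with $\psi=u_k$ (both extensions are valid by density and by $W_0^{1,n+\eps}\subset W_0^{1,2}$ on bounded $\Om$), and equating the two resulting left-hand sides, produces
\[
\int_\Om G\cdot\nb u_k \, dx = \bka{f_k, v_G}.
\]
Hence $|\int_\Om G\cdot\nb u_k \, dx|\le\|f_k\|_{W^{-1,(n+\eps)'}}\|v_G\|_{W_0^{1,n+\eps}}\le C\|f_k\|_{W^{-1,(n+\eps)'}}\|G\|_{L^s}$. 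Taking supremum over $\|G\|_{L^s}\le 1$ gives $\|\nb u_k\|_{L^{s'}}\le C\|f_k\|_{W^{-1,(n+\eps)'}}$, and since $s'>q$ and $\Om$ is bounded, also $\|\nb u_k\|_{L^q}\le C\|f_k\|_{W^{-1,(n+\eps)'}}$. Because $(n+\eps)'<n'$ and $f_k\to f$ in $W^{-1,(n+\eps)'}$, the right-hand side is uniformly bounded in $k$; a diagonal extraction yields a weak limit $u\in W_0^{1,n'-}(\Om)$, and passing to the limit in the weak formulation (with Rellich compactness handling the drift term against $\phi\in C_c^1(\Om)$) identifies $u$ as the desired weak solution.

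The main obstacle is the loss of regularity in Theorem~\ref{th4}(i): for $g\in W^{-1,s}$ one obtains $v\in W_0^{1,n+\eps}$ only for \emph{some} $\eps\in(0,s-n]$, not the full $v\in W_0^{1,s}$. The plan accommodates this by working with the suboptimal dual exponent $(n+\eps)'$ rather than $q$ itself; this is admissible precisely because $f$ is assumed to lie in $W^{-1,q}$ for \emph{every} $q<n'$, which in particular gives $f\in W^{-1,(n+\eps)'}$ with a finite norm. A secondary delicate point is the density of $C^2(\overline\Om)$-functions vanishing on $\pd\Om$ in $W^{2,n/2+\delta}(\Om)\cap W_0^{1,n/2+\delta}(\Om)$, which is standard for $C^{1,1}$ domains.
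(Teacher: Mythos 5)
Your Part (i) is essentially the paper's argument: both choose $r=(n/2+\delta)'$ with $\delta$ from Theorem~\ref{th4}(ii), solve the dual problem for $g\in C_c^\infty(\Om)$, approximate $v_g$ in $W^{2,n/2+\delta}(\Om)$ by smooth functions vanishing on $\pd\Om$, test \eqref{eq1.3-0} against them and pass to the limit. One cosmetic caveat: with $q_0=n$ the theorem guarantees only $\delta\le n/2$, and at the endpoint $\delta=n/2$ you would get $r=n'$ rather than $r>n'$; the paper sidesteps this by taking $q_0=n/2+1$ so that $\delta\le 1<n/2$, and you could equivalently just shrink $\delta$ (a smaller $\delta$ only weakens the regularity you are using, which is harmless).

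Your Part (ii) is correct but takes a genuinely different route. The paper constructs $u$ in one shot via the Riesz representation theorem: the dual solution operator $L$ from Theorem~\ref{th4} maps $L^{(p^*)'}(\Om)$ boundedly into $W_0^{1,s^*}(\Om)$ with $(s^*)'<n'$, so $g\mapsto\langle f,Lg\rangle$ is a bounded functional on $L^{(p^*)'}(\Om)$; this produces $u\in L^{p^*}(\Om)$ as a very weak solution (unique by Part (i)), and a separate elliptic regularity step using the Lorentz-space Calder\'on--Zygmund estimate (Proposition~\ref{CZ-estimates}) then upgrades $u$ to $W_0^{1,p,\infty}(\Om)\subset W_0^{1,q}(\Om)$ for $q<p$. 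You instead approximate $f$ by $f_k\in C_c^\infty$, solve in the $W_0^{1,2}$-framework, derive uniform $W_0^{1,q}$-bounds on $u_k$ by pairing the primal equation for $u_k$ against $v_G$ and the dual equation for $v_G$ against $u_k$ (using Theorem~\ref{th4}(i) to control $\|v_G\|_{W_0^{1,n+\eps}}$ by $\|G\|_{L^s}$), and extract a weak limit via Rellich and a diagonal argument. Both routes rely on the same core input---Theorem~\ref{th4}'s extra regularity of the dual solution---but yours trades the paper's regularity-upgrade step for a limiting argument, and it must explicitly note that a single sequence $f_k\to f$ can be arranged to converge in \emph{every} $W^{-1,q}(\Om)$, $q<n'$ (this is where the hypothesis $f\in W^{-1,n'-}(\Om)$ is really used, as you correctly flag at the end). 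Minor items worth making explicit in a final write-up: the testing of the primal/dual weak formulations against $v_G$ and $u_k$ requires a density argument justified by Lemma~\ref{th1-general}; and after the weak limit one still needs to check $u\mathbf b\in L^1_{\loc}(\Om)$, which again follows from Lemma~\ref{th1-general} once $u\in W_0^{1,q}(\Om)$ for some $q<n'$.
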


\begin{remark} On one hand,
Theorem \ref{th5} (ii) is a partial extension of Theorem \ref{th4-q version} (i) to the case  $p=n'$ when $\Om$ is bounded. On the other hand, suppose that $f= \div {\mathbf F}$ and ${\mathbf F} \in L^{n' , \I}(\Om; \R^n )$. Then since ${\mathbf F} \in L^{p }(\Om; \R^n )$ for any $p< n'$, it follows from Theorem \ref{th5} that there exists a unique weak solution $u$ in $W_0^{1,n'-}(\Om )$ of \eqref{bvp}. But our proof of Theorem \ref{th5} can not be adapted to prove the following very reasonable regularity of $u$:
$$
u \in L^{(n')^* , \I}(\Om ) \quad\mbox{and}\quad \nabla u \in L^{n' ,\infty}(\Om ;\R^n ),
$$
which seems to be an   open problem.
\end{remark}

\medskip

Finally, we prove the following uniqueness result for very weak solutions of \eqref{bvp} when $\Omega$ is an exterior domain in $\R^n$.

\begin{theorem}\label{th6} Let $\Om $ be an exterior $C^{1,1}$-domain in $\R^n$, $n \ge 3$. Assume that $\mathbf{b} \in L^{n,\I}(\Om;\R^n)$, ${\rm div}\, {\mathbf b} \in L^{n/2,\I} (\Om )$, and $\div \mathbf{b} \ge 0$ in $\Om$. Assume in addition that
\begin{enumerate}[(i)]
 \item $u\in L^{n/(n-2),\I}_\loc (\overline{\Om})$;
 \item $u \in L^{p_1 } (\Om ) + L^{p_2 } (\Om )$ for some $p_1 , p_2$ satisfying
 $ n' <  p_1 \le p_2  <\infty$; and
 \item $u$ satisfies
\begin{equation}\label{eq1.3-01}
\int_\Om u \bke{ \De \phi + \mathbf{b} \cdot \nb
\phi} \, dx = 0 \quad \text{for all } \phi \in C_c^2 (\overline{\Om}) \,\,\mbox{with}\,\, \phi|_{\partial\Omega}=0 .
\end{equation}
\end{enumerate}

\noindent Then $u=0$ identically on $\Omega$.
\end{theorem}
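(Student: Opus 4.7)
The plan is to argue by duality. Fix an arbitrary $\chi\in C_c^\infty(\Om)$; it suffices to prove $\int_\Om u\,\chi\,dx=0$ for every such $\chi$, since this together with (i) forces $u=0$ a.e. on $\Om$. I would construct a family $\phi_R$ of compactly supported test functions vanishing on $\partial\Om$ for which $\De\phi_R+\bb\cdot\nb\phi_R=-\chi+\mathcal R_R$ with $\mathcal R_R$ supported on an annulus at infinity, substitute $\phi_R$ into the weak formulation (iii), and show that the remainder tends to zero.

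For the construction I would use the exterior strong solution of the dual problem. Apply Theorem~\ref{th4-q version-strong}(ii) on the exterior $C^{1,1}$-domain $\Om$ with $g=\chi$: for any $q\in(1,n/2)$ one obtains a strong solution $V\in D^{2,q}(\Om)\cap\hat D_0^{1,q^*}(\Om)$ of $-\De V-\bb\cdot\nb V=\chi$ in $\Om$, $V|_{\partial\Om}=0$. Smoothness of $\chi$ together with a bootstrap using Theorem~\ref{th4} on bounded subdomains and interior Calder\'on--Zygmund estimates upgrades $V$ to $W^{2,r}_{\loc}(\overline\Om)\cap C(\overline\Om)$ for every finite $r$. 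Take a radial cutoff $\eta_R$ with $\eta_R\equiv 1$ on $B_R$, $\supp\eta_R\subset B_{2R}$, $|\nb\eta_R|\lec R^{-1}$, $|\De\eta_R|\lec R^{-2}$, and set $\phi_R=\eta_R V$. Then $\phi_R$ has compact support in $\overline\Om$, vanishes on $\partial\Om$, and a direct computation gives
\begin{equation*}
\De\phi_R+\bb\cdot\nb\phi_R=-\eta_R\chi+\mathcal R_R,\quad \mathcal R_R=2\nb\eta_R\cdot\nb V+V\De\eta_R+V\,\bb\cdot\nb\eta_R,
\end{equation*}
with $\mathcal R_R$ supported in $A_R:=B_{2R}\setminus B_R$.

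By a standard density argument using (i) and $\bb\in L^{n,\I}(\Om)$, the weak formulation (iii) extends to $\phi=\phi_R$: approximate $\phi_R$ by $\phi_{R,k}\in C_c^2(\overline\Om)$ with $\phi_{R,k}|_{\partial\Om}=0$ in $W^{2,r}$-norm on a bounded neighborhood of $\supp\phi_R$ for some $r>n/2$, then pass to the limit using (i) to bound $u$ in $L^{r'}_{\loc}$ with $r'<n/(n-2)$ and Lorentz-H\"older for the drift term. Choosing $R$ large enough that $\supp\chi\subset B_R$ (so $\eta_R\chi=\chi$), this gives
\begin{equation*}
\int_\Om u\,\chi\,dx=\int_{A_R} u\,\mathcal R_R\,dx.
\end{equation*}

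The main obstacle is showing this remainder tends to zero as $R\to\infty$. Using (ii), decompose $u=u_1+u_2$ with $u_i\in L^{p_i}(\Om)$; H\"older then bounds each of the three pieces of $\int u_i\mathcal R_R$ by a product of $\|u_i\|_{L^{p_i}(A_R)}$ (which vanishes by dominated convergence) with an $L^{p_i'}$-norm of $\nb V$, $V$, or $V\bb$ on $A_R$, times a power of $R^{-1}$. When $p_i\in(n',n)$, choosing $q$ in the construction with $q^*=p_i'$ places $\nb V\in L^{p_i'}(\Om)$ globally and the annulus norm vanishes automatically. The delicate regime is $p_i\ge n$, where $p_i'\le n'$ lies outside the Sobolev range $(n',n)$ of $q^*$ and $\nb V\notin L^{p_i'}(\Om)$ globally; here one must invoke the exterior-domain decay $|V|\lec|x|^{2-n}$ and $|\nb V|\lec|x|^{1-n}$ (derived from Green-function / potential estimates together with the $L^{n,\I}$-smallness of $\bb$ at infinity) to bound $\|\nb V\|_{L^{p_i'}(A_R)}\lec R^{n/p_i'-(n-1)}$, which combines with the $R^{-1}$ prefactor to give a net factor $R^{-n/p_i}\to 0$; the $V$-terms are treated analogously using $|\De\eta_R|\lec R^{-2}$. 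Consequently $\int_\Om u\,\chi\,dx=0$ for every $\chi\in C_c^\infty(\Om)$, and hence $u=0$ on $\Om$.
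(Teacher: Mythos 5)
Your overall architecture is genuinely different from the paper's. The paper reduces the theorem to showing $u\in W^{1,r}_{\loc}(\overline\Om)$ for some $r>n'$ (via a cutoff and the \emph{bounded}-domain uniqueness result Theorem~\ref{th5}), and then invokes Lemma~\ref{th4-unique-q version}, which handles the far-field via a Caccioppoli-type iteration with cutoffs $\eta_k$ at scale $k\to\infty$ — an argument that only uses $\bb\in L^{n,\I}$ and condition~(ii), never any decay of $\bb$ or of a dual solution. You instead aim to test directly against truncations $\eta_R V$ of a global dual solution $V$. That route is workable in the regime where both $p_i\in (n',n)$, but it breaks down in the case $p_2\ge n$, and that is exactly where you introduce an unjustified claim.

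The gap: you assert $|V|\lec|x|^{2-n}$ and $|\nb V|\lec|x|^{1-n}$, citing ``Green-function/potential estimates together with the $L^{n,\I}$-smallness of $\bb$ at infinity.'' But $\bb\in L^{n,\I}(\Om)$ carries no smallness at infinity whatsoever: the quasi-norm $\|\bb\|_{L^{n,\I}(B_{2R}\setminus B_R)}$ is scale-invariant, and the admissible drift $\bb(x)=Mx/|x|^2$ (which satisfies all the hypotheses when $M>0$, since then $\div\bb=M(n-2)/|x|^2\ge 0$) has $\|\bb\|_{L^{n,\I}(B_{2R}\setminus B_R)}$ independent of $R$. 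Moreover, with such a drift the actual decay rate of solutions of $-\De V-\bb\cdot\nb V=0$ near infinity is $|x|^{-(n-2+M)}$ rather than $|x|^{-(n-2)}$, so the decay rate genuinely depends on $\bb$; establishing a usable rate would require a separate Green-function analysis with critical-drift perturbation, which is nowhere in the paper. Without such pointwise bounds, the exterior theory (Theorem~\ref{th4-q version-strong}) only puts $\nb V\in L^{q^*}(\Om)$ for $q^*\in(n',n)$ and $V\in L^{(q^*)^*}(\Om)$ for $(q^*)^*>n/(n-2)$, which is too weak to make $R^{-1}\|\nb V\|_{L^{p_i'}(A_R)}$, $R^{-2}\|V\|_{L^{p_i'}(A_R)}$, and $R^{-1}\|V\bb\|_{L^{p_i'}(A_R)}$ controllable when $p_i'\le n'$. (Using Hölder to trade down the exponent introduces a positive power of $R$ with no compensating rate, since $\|u_i\|_{L^{p_i}(A_R)}\to 0$ only by dominated convergence.) Thus the proposal does not close the case $p_2\ge n$, which is precisely the range needed, e.g., for the $L^{n,\I}(\Om)$-uniqueness corollary. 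I'd suggest replacing the final remainder estimate with the paper's two-step route: get local $W^{1,r}$-regularity and the boundary condition from the bounded-domain theory, then let Lemma~\ref{th4-unique-q version} absorb the behavior at infinity.
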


\begin{remark} It has been well-known that if $u$ is a smooth solution of the Laplace equation on an exterior domain $\Om$ in $\R^n, n \ge 3$, and $\lim_{x \to \infty} u(x)=0$, then $|u(x)| =O (|x|^{-(n-2)})$ as $|x| \to \infty$ in general.
This result suggests that $L^{n/(n-2),\infty}(\Omega)$ should be the natural class of (very weak) solutions of \eqref{bvp}. Uniqueness of solutions in  $L^{n/(n-2),\infty}(\Omega)$ of \eqref{bvp} immediately follows from Theorem \ref{th6}, since $L^{n/(n-2),\infty}(\Omega) \subset L^{p_1} (\Omega) +L^{p_2} (\Omega )$ for any $p_1 , p_2$ with $n/(n-1)  < p_1 <n/(n-2)<p_2 <\infty$.
\end{remark}

\begin{remark} Since $L^{n,\infty}(\Omega) \subset L^{p_1} (\Omega) +L^{p_2} (\Omega )$ for any $p_1 , p_2$ with $n' < p_1 <n<p_2 <\infty$,
it immediately follows from Theorem \ref{th6} that very weak solutions in $L^{n,\infty}(\Omega)$ of \eqref{bvp} are unique. However it is still open to prove an analogous uniqueness result for the linearized Navier-Stokes problem (\ref{stokes-bvp}).
\end{remark}

\begin{remark}
For the validity of all of our main theorems, Theorems \ref{th4-q version}--\ref{th6}, the drift $\bb$ in $ L^{n,\I}(\Om;\R^n)$ is  assumed to satisfy  $\div \bb \in L^{n/2,\infty}(\Omega)$ and $\div \bb \ge 0$ in $\Omega$. The stronger assumption $\div \bb = 0$ is not needed for any of the theorems. Moreover, for the case $2\le p < n$ of Theorem \ref{th4-q version}, we only assume that $\div \bb \ge 0$ in $\Omega$.
\end{remark}

\begin{remark}
 Several of our theorems are available for the problem \eqref{bvp} but not for its dual \eqref{bvp-dual}, and vice versa. See Remark \ref{holder regularity for u} for one example of the relevant difficulties.
\end{remark}

\section{Preliminaries}
\label{Sec3}

In this section we collect preliminary results, including some standard results for Lorentz spaces,
estimates involving weak $L^n$ functions, Miranda-Nirenberg interpolation inequalities, and Calderon-Zygmund estimates in Lebesgue and more generally in Lorentz spaces.

\subsection{Lorentz spaces}

Let $\Omega$ be any domain in $\R^n$. For $1 \le p<\infty$ and $1 \le q\le \infty$, let $L^{p,q}(\Om)$ denote the standard Lorentz space on $\Om$. Recall (see \cite{AdamsFournier,BL} e.g.) that
\begin{equation}\label{Lebesgue-Lorentz inclusion}
L^{p,1}(\Omega )\subset L^{p,p}(\Omega )=L^p (\Omega )\subset L^{p,\infty}(\Omega ) \subset L_{\loc}^r (\overline{\Om})
\end{equation}
 if $1 \le r <  p < \infty$. When $q=\infty$, $L^{p,\I}(\Om)$ coincides with the weak $L^p$-space over $\Omega$ and is equipped with the quasi-norm
$$
\| f\|_{L^{p,\I}(\Om)} = \sup_{t>0} \left[ t \, \mu_f (t)^{1/p} \right] ,
$$
where $\mu_f$ is the {\it distribution function} of $f$ defined by
\[
\mu_f (t) = | \{ x \in \Om : |f(x)|>t \}| \quad (t \ge 0).
\]
Using this quai-norm, we obtain basic inequalities for weak $L^p$-spaces: if $0 < r<p<\infty$, then
$$
\| f\|_{L^{p,\I}(\Om)} \le \|f\|_{L^p (\Om )}
$$
and
\begin{equation}\label{weak-basic-ineq}
 \left( \int_E |f|^r \, dx \right)^{1/r} \le \left( \frac{p}{p-r} \right)^{1/r} |E|^{1/r -1/p } \|f\|_{L^{p,\I}(\Om)}
\end{equation}
for all $E \subset \Omega$ with finite measure $|E|$.
A well-known application of weak $L^p$-spaces is the Marcinkiewicz interpolation theorem in the following simple form (see \cite[Theorem 9.8]{GilTru} or \cite[Theorem 1.3.1]{BL} e.g.):

\begin{lemma}[Marcinkiewicz interpolation]\label{marcinkiewicz-inter}
Let $T$ be a linear operator from $L^{p_0} (\Om) + L^{p_1} (\Om )$ into $L^{p_0,\I} (\Om) + L^{p_1,\I} (\Om )$, where $1 \le p_0 < p_1 < \infty$. Suppose that there are constants $M_0$ and $M_1$ such that
\[
\|T f \|_{L^{p_0 ,\I}(\Om)} \le M_0 \|f\|_{L^{p_0}(\Om )} \quad\mbox{and}\quad \|T f \|_{L^{p_1 ,\I}(\Om)} \le M_1 \|f\|_{L^{p_1}(\Om )}
\]
for all $f \in L^{p_0}(\Om ) \cap L^{p_1}(\Om )$. Then for any $p$ with $p_0 < p<p_1$, we have
\[
\|T f \|_{L^{p}(\Om)} \le C M_0^{1-\theta} M_1^{\theta} \|f\|_{L^{p}(\Om )} \quad\mbox{for all} \,\, f \in L^{p}(\Om ) ,
\]
where $1/p =(1-\theta)/p_0 + \theta/p_1$ and $C = C(p_0 , p_1 , p)$.
\end{lemma}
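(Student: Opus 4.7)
The plan is to use the classical truncation argument together with the layer-cake representation of the $L^p$-norm. Given $f \in L^p(\Om)$ with $p_0<p<p_1$, one has $f \in L^{p_0}(\Om)+L^{p_1}(\Om)$, so $Tf$ is well-defined. Starting from the identity $\|Tf\|_{L^p(\Om)}^p = p\int_0^\infty t^{p-1}\mu_{Tf}(t)\, dt$, the goal is to bound $\mu_{Tf}(t)$ by splitting $f$ into a ``large'' $L^{p_0}$-part and a ``small'' $L^{p_1}$-part, with the splitting threshold depending on the level $t$.

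Specifically, for each $t>0$ I would set $f = f_0^{(t)} + f_1^{(t)}$ with
$$f_0^{(t)} = f\,\mathbf{1}_{\{|f|>ct\}}, \qquad f_1^{(t)} = f\,\mathbf{1}_{\{|f|\le ct\}},$$
where $c>0$ is a constant to be optimized. Because $p>p_0$ one has $f_0^{(t)}\in L^{p_0}(\Om)$, and because $p<p_1$ one has $f_1^{(t)}\in L^{p_1}(\Om)$, so the two hypotheses of the lemma apply. Linearity of $T$ and subadditivity of the distribution function give
$$\mu_{Tf}(t) \le \mu_{Tf_0^{(t)}}(t/2) + \mu_{Tf_1^{(t)}}(t/2) \le \Big(\frac{2M_0}{t}\Big)^{p_0}\!\!\int_{\{|f|>ct\}}\!\!|f|^{p_0}\,dx + \Big(\frac{2M_1}{t}\Big)^{p_1}\!\!\int_{\{|f|\le ct\}}\!\!|f|^{p_1}\,dx.$$

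Substituting into the layer-cake integral and applying Fubini's theorem to swap the $t$ and $x$ integrations reduces each term to an explicit power-function integral: the first gives $\int_0^{|f(x)|/c} t^{p-1-p_0}\,dt = (|f(x)|/c)^{p-p_0}/(p-p_0)$ (convergent since $p>p_0$), the second gives $\int_{|f(x)|/c}^\infty t^{p-1-p_1}\,dt = (|f(x)|/c)^{p-p_1}/(p_1-p)$ (convergent since $p<p_1$). Adding these yields
$$\|Tf\|_{L^p(\Om)}^p \le p\left[\frac{(2M_0)^{p_0} c^{p_0-p}}{p-p_0} + \frac{(2M_1)^{p_1} c^{p_1-p}}{p_1-p}\right]\|f\|_{L^p(\Om)}^p.$$

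The last step is to optimize over $c>0$: differentiating the bracket in $c$ and setting the derivative to zero gives $c^{p_1-p_0} = (2M_0)^{p_0}/(2M_1)^{p_1}$, and plugging this back produces a constant of the form $C(p_0,p_1,p)\,M_0^{1-\theta}M_1^\theta$ with $\theta$ determined by $1/p=(1-\theta)/p_0+\theta/p_1$, as claimed. The step most prone to error — though entirely routine — is the correct tracking of the Fubini interchange and the verification that the scaling $s(t)=ct$ of the truncation threshold makes both $t$-integrals converge with exponents matching $\|f\|_{L^p}^p$; any other power scaling would leave residual factors and fail to reproduce the interpolation exponent $\theta$.
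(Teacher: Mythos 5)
Your proof is correct, and it reproduces the standard truncation proof of the Marcinkiewicz interpolation theorem: split $f$ at the level-dependent height $ct$, invoke the two weak-type bounds, swap the $t$- and $x$-integrations by Fubini, and optimize the splitting constant $c$ to land on the $M_0^{1-\theta}M_1^{\theta}$ dependence. The paper does not prove this lemma itself but cites \cite[Theorem 9.8]{GilTru} (whose proof is exactly this argument) and \cite[Theorem 1.3.1]{BL}, so your approach matches the cited source and nothing further is needed.
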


Another important fact for the Lorentz spaces is the following classical theorem from real interpolation theory (see \cite[Theorem 5.3.1]{BL} e.g.):
\begin{equation}\label{inter-Loren}
 L^{p,q}(\Omega) = \left( L^{p_0 }(\Omega), \; L^{p_1 }(\Omega) \right)_{\theta,q}
\end{equation}
whenever $1 \le p_0 < p < p_1 \le \infty$, $1 \le q \le \infty$,
 $0 < \theta < 1$, and $1/p =(1-\theta)/p_0 + \theta/p_1$. Consequently,

\begin{lemma}\label{inter-Loren-1}
Let $T$ be a linear operator from $L^{p_0} (\Om) + L^{p_1} (\Om )$ into itself, where $1 \le p_0 < p_1 \le \infty$. Suppose that there are constants $M_0$ and $M_1$ such that
\[
\|T f \|_{L^{p_0 }(\Om)} \le M_0 \|f\|_{L^{p_0}(\Om )} \quad\mbox{and}\quad \|T f \|_{L^{p_1 }(\Om)} \le M_1 \|f\|_{L^{p_1}(\Om )}
\]
for all $f \in L^{p_0}(\Om ) \cap L^{p_1}(\Om )$. Then for any $(p,q)$ with $p_0 < p<p_1$ and $1 \le q \le \infty$,
there is a constant $M$, depending only on $p_0$, $p_1$, $p$, $q$, $M_0$, and $M_1$, such that
\[
\|T f \|_{L^{p,q}(\Om)} \le M \|f\|_{L^{p,q}(\Om )} \quad\mbox{for all} \,\, f \in L^{p,q}(\Om ) .
\]
\end{lemma}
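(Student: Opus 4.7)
The plan is to deduce this boundedness result directly from the real interpolation characterization of Lorentz spaces stated in \eqref{inter-Loren}, together with the abstract interpolation property enjoyed by the real method $(\cdot,\cdot)_{\theta,q}$.

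First I would invoke the standard fact that the real method is an exact interpolation functor: if $(X_0, X_1)$ and $(Y_0, Y_1)$ are compatible Banach couples and $T: X_0+X_1 \to Y_0+Y_1$ is linear with $\|T f\|_{Y_i} \le M_i \|f\|_{X_i}$ for $i=0,1$, then for every $0<\theta<1$ and $1\le q\le \infty$ the operator $T$ extends boundedly from $(X_0,X_1)_{\theta,q}$ to $(Y_0,Y_1)_{\theta,q}$ with
\[
 \|T\|_{(X_0,X_1)_{\theta,q}\to (Y_0,Y_1)_{\theta,q}} \le C(\theta, q)\, M_0^{1-\theta}\, M_1^{\theta}.
\]
This is Theorem 3.1.2 (or equivalently 5.1.12) of Bergh--L\"ofstr\"om \cite{BL}, which I would cite rather than reprove.

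Next I would apply this with $X_i=Y_i=L^{p_i}(\Omega)$. Choose $\theta\in(0,1)$ so that $1/p = (1-\theta)/p_0 + \theta/p_1$ (which is possible precisely because $p_0<p<p_1$). Then \eqref{inter-Loren} gives the identification
\[
 L^{p,q}(\Omega) = \bigl( L^{p_0}(\Omega), L^{p_1}(\Omega) \bigr)_{\theta,q}
\]
with equivalent norms, and the abstract statement above immediately yields
\[
 \|T f\|_{L^{p,q}(\Omega)} \le M \, \|f\|_{L^{p,q}(\Omega)}
\]
for all $f\in L^{p,q}(\Omega)$, where $M$ depends only on $p_0, p_1, p, q, M_0, M_1$ through the constant $C(\theta,q)$ from the abstract statement, the equivalence-of-norms constants in \eqref{inter-Loren}, and the factor $M_0^{1-\theta}M_1^{\theta}$.

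There is no genuine obstacle here beyond citing the right pieces: the only point requiring mild care is to make sure the hypothesis ``$T$ maps $L^{p_0}(\Omega)+L^{p_1}(\Omega)$ into itself'' is used to place $T$ in the framework of the Banach couple $(L^{p_0}(\Omega), L^{p_1}(\Omega))$, so that the real-method interpolation theorem from \cite{BL} applies verbatim. Once that is in place, the conclusion is automatic; the lemma is really just a convenient repackaging of classical real interpolation combined with \eqref{inter-Loren}.
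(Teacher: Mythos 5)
Your proposal is correct and matches the paper's intent exactly: the paper simply states Lemma \ref{inter-Loren-1} as an immediate consequence (``Consequently'') of the identification \eqref{inter-Loren}, and the argument you supply --- applying the exact interpolation property of the real method from Bergh--L\"ofstr\"om to the couple $(L^{p_0}(\Omega), L^{p_1}(\Omega))$ and then invoking \eqref{inter-Loren} --- is precisely the reasoning being left implicit.
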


\medskip

To estimate $u \mathbf{b}$ with $\mathbf{b} \in L^{n,\I}(\Om;\R^n)$, we shall need H\"{o}lder's and Sobolev's inequalities in Lorentz spaces. The following H\"{o}lder inequality in Lorentz spaces was obtained by O'Neil \cite[Theorems 3.4, 3.5]{ON} (see also \cite[Proposition 2.1]{KoYa}).

\begin{lemma}\label{holder inequalies}
Assume that $f \in L^{p_1 , q_1}(\Om )$ and $g \in L^{p_2 , q_2}(\Om )$, where $1 \le p_1, p_2 < \infty$ satisfy $1/p = 1/p_1 + 1/p_2 \le 1$.
Assume further that $1/q_1 +1/q_2 \ge 1$ if $p=1$. Then
$$
f g \in L^{p,q} (\Om ) \quad\mbox{and}\quad \|f g\|_{L^{p,q} (\Om )} \le C(p) \|f\|_{L^{p_1 , q_1}(\Om )}\|g\|_{L^{p_2 , q_2}(\Om )}
$$
for any $q \ge 1$ with $1/ q_1 + 1/ q_2 \ge 1/q $.
\end{lemma}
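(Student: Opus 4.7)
The plan is to prove Lemma \ref{holder inequalies} via the decreasing rearrangement characterization of the Lorentz norms. First I would recall that, up to equivalent quasi-norms,
$$
\|h\|_{L^{p,q}(\Om )} \sim \left(\int_0^\infty \bke{t^{1/p} h^*(t)}^q \frac{dt}{t}\right)^{1/q} \quad (q<\infty), \qquad \|h\|_{L^{p,\I}(\Om )} = \sup_{t>0} t^{1/p} h^*(t),
$$
where $h^*$ denotes the non-increasing rearrangement of $|h|$ on $(0,\infty)$. This reduces everything to a one-dimensional weighted estimate.

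The crucial pointwise bound I would establish is
$$
(fg)^*(t) \le f^*(t/2)\, g^*(t/2) \qquad \text{for all } t>0.
$$
This follows from the observation that the sets $E_f=\{|f|>f^*(t/2)\}$ and $E_g=\{|g|>g^*(t/2)\}$ each have measure at most $t/2$, so on $\Om\setminus (E_f\cup E_g)$, a set whose complement has measure at most $t$, we have $|fg|\le f^*(t/2)g^*(t/2)$; by the definition of rearrangement, this forces the claimed inequality.

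With this bound in hand, I would split $t^{1/p}=t^{1/p_1}t^{1/p_2}$ using $1/p=1/p_1+1/p_2$ and write
$$
\|fg\|_{L^{p,q}(\Om )}^q \ \lec \ \int_0^\infty \bke{t^{1/p_1} f^*(t/2)}^q \bke{t^{1/p_2} g^*(t/2)}^q \frac{dt}{t}.
$$
After a change of variable $s=t/2$, I would apply the generalized H\"older inequality in $L^q(ds/s)$ with exponents $q_1/q$ and $q_2/q$, chosen compatibly with the hypothesis $1/q_1+1/q_2\ge 1/q$ (introducing an auxiliary third exponent to absorb the slack when the inequality is strict), to obtain
$$
\|fg\|_{L^{p,q}(\Om )} \ \le \ C(p)\, \|f\|_{L^{p_1,q_1}(\Om )} \|g\|_{L^{p_2,q_2}(\Om )}.
$$
The sub-case $q=\I$ is even simpler: one directly takes the supremum over $t>0$ of the pointwise bound and uses $t^{1/p}=t^{1/p_1}t^{1/p_2}$, without invoking H\"older.

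The main obstacle is the endpoint $p=1$ (that is, $1/p_1+1/p_2=1$). Here the simple pointwise inequality $(fg)^*(t)\le f^*(t/2)g^*(t/2)$ is too crude, because the $L^{1,q}$ quasi-norm is sensitive to logarithmic corrections; the standard remedy is to replace $f^*$ by the maximal function $f^{**}(t)=\frac1t\int_0^t f^*(s)\,ds$ and invoke O'Neil's sharper submultiplicative estimate
$$
(fg)^{**}(t) \ \le \ f^{**}(t)\, g^{**}(t) \;+\; \frac{1}{t}\int_0^t f^*(s)\,g^*(s)\, ds,
$$
which allows the Hölder step to be carried out only under the extra hypothesis $1/q_1+1/q_2\ge 1$ assumed precisely in this case. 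Hardy's inequality then converts the estimates back from $f^{**}$ to $f^*$. This endpoint analysis is the only delicate part of the argument; the cases $p>1$ follow cleanly from the rearrangement and H\"older steps above.
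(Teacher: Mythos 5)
The paper gives no proof of this lemma: it is stated as a known fact with a citation to O'Neil \cite[Theorems 3.4, 3.5]{ON} and Kozono--Yamazaki \cite{KoYa}, so there is no in-paper argument to compare against. Judged on its own, the skeleton of your sketch --- the rearrangement bound $(fg)^*(t)\le f^*(t/2)\,g^*(t/2)$, the splitting $t^{1/p}=t^{1/p_1}t^{1/p_2}$, and a weighted H\"older step on $(0,\infty,\,ds/s)$ --- is the standard direct route, and it in fact works uniformly for all $1\le p_1,p_2<\infty$.

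Two concrete steps as written would fail. First, in the H\"older step the ``auxiliary third exponent'' cannot absorb the slack $1/q_1+1/q_2>1/q$: a third factor $1$ would require an exponent $r$ with $q/q_1+q/q_2+1/r=1$, forcing $1/r<0$, and in any case $\|1\|_{L^r(ds/s)}=\infty$ for every finite $r$. The right device is the monotonicity of Lorentz spaces in the second index, $L^{p_i,q_i}\hookrightarrow L^{p_i,\tilde q_i}$ for $\tilde q_i\ge q_i$: choose $\tilde q_i\ge q_i$ with $1/\tilde q_1+1/\tilde q_2=1/q$ exactly and apply H\"older with the conjugate pair $\tilde q_1/q,\ \tilde q_2/q$ (equivalently, prove the bound with $q$ replaced by $q'=(1/q_1+1/q_2)^{-1}\le q$ and then embed $L^{p,q'}\hookrightarrow L^{p,q}$). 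Second, the proposed detour through $f^{**}$ at $p=1$ is not merely superfluous, it breaks: for $p=1$ the function $t\mapsto t f^{**}(t)=\int_0^t f^*$ is non-decreasing, so $\int_0^\infty\bigl(t f^{**}(t)\bigr)^q\,dt/t=\infty$ whenever $f\not\equiv 0$ and $q<\infty$, and Hardy's inequality cannot recover a finite $f^*$-based quantity from it. With the H\"older step fixed as above, the $f^*$-based argument you already gave for $p>1$ applies verbatim at $p=1$; the extra hypothesis $1/q_1+1/q_2\ge 1$ in that case is an artifact of O'Neil's $f^{**}$-phrased formulation, not something the $f^*$ quasi-norm used in this paper requires.
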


In terms of Lorentz spaces, the classical Sobolev inequality can be refined as follows (see \cite[Remark 7.29]{AdamsFournier} and \cite{PO}).

 \begin{lemma}\label{refined Sobolev} Let $1 \le p<n$. Then for every $u \in W^{1,p} (\R^n )$, we have
$$
u \in L^{p^* , p}({{\mathbb R}^n} ) \quad\mbox{and}\quad \|u \|_{L^{p^* , p}({{\mathbb R}^n} )} \le C(n,p) \| \nabla u \|_{L^{p}({{\mathbb R}^n} )}.
$$
\end{lemma}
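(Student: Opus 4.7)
My plan is to establish this classical refined Sobolev inequality, due to Alvino, by combining the P\'olya--Szeg\H{o} rearrangement principle with a one-sided weighted Hardy inequality. By a standard density argument it suffices to treat $u \in C_c^\infty(\R^n)$. Denote by $u^*\colon[0,\I) \to [0,\I)$ the one-dimensional decreasing rearrangement of $u$ and by $u^\sharp$ its Schwarz symmetrization on $\R^n$; the two are related through $u^\sharp(x) = u^*(\omega_n |x|^n/n)$. Using the standard representation $\|u\|_{L^{p^*,p}}^p = \int_0^\I [s^{1/p^*} u^*(s)]^p \frac{ds}{s}$ together with the identity $p/p^* - 1 = -p/n$, the left-hand side of the desired inequality becomes
\[
\|u\|_{L^{p^*,p}(\R^n)}^p = \int_0^\I s^{-p/n} u^*(s)^p \, ds.
\]

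The next step is to express $\|\nabla u^\sharp\|_{L^p}^p$ in a comparable form. Applying the chain rule to $u^\sharp(x) = u^*(\omega_n |x|^n/n)$, passing to polar coordinates, and changing variables via $s = \omega_n r^n/n$ yields
\[
\|\nabla u^\sharp\|_{L^p(\R^n)}^p = C_1(n,p) \int_0^\I |(u^*)'(s)|^p \, s^{p-p/n} \, ds
\]
for an explicit constant $C_1(n,p)$. The P\'olya--Szeg\H{o} rearrangement inequality then supplies $\|\nabla u^\sharp\|_{L^p} \le \|\nabla u\|_{L^p}$. Since $u^*$ is decreasing with $u^*(\I)=0$, I may write $u^*(s) = \int_s^\I |(u^*)'(t)|\,dt$, and by the one-sided weighted Hardy inequality
\[
\int_0^\I \Bigl( \int_s^\I g(t)\,dt \Bigr)^p s^{\alpha - 1}\,ds \le \Bigl(\frac{p}{\alpha}\Bigr)^p \int_0^\I g(s)^p s^{\alpha + p - 1}\,ds
\]
applied with $\alpha = 1 - p/n$ (positive precisely because $p<n$) and $g = |(u^*)'|$, chaining the three displayed estimates gives the required bound $\|u\|_{L^{p^*,p}} \le C(n,p) \|\nabla u\|_{L^p}$.

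I expect two minor technical wrinkles rather than a real obstacle. First, in the endpoint case $p = 1$ the P\'olya--Szeg\H{o} inequality must be justified via the coarea formula together with the isoperimetric inequality rather than a direct chain-rule argument; conveniently, here the Hardy step collapses into the exact identity $\int_0^\I s^{-1/n} u^*(s)\,ds = \tfrac{n}{n-1}\int_0^\I |(u^*)'(t)|\, t^{1-1/n}\,dt$, obtained by a single Fubini exchange, so no inequality is actually lost. Second, the constant $C_1(n,p)$ must be tracked carefully through the change of variables $s = \omega_n r^n/n$, which is mechanical. The substantive content of the lemma is thus fully encoded in P\'olya--Szeg\H{o} and Hardy, both classical.
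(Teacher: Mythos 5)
The paper does not actually prove Lemma \ref{refined Sobolev}; it simply cites \cite[Remark 7.29]{AdamsFournier} and \cite{PO}, so there is no in-text argument to compare against. Your symmetrization proof is a correct, self-contained derivation and is the classical route due to Alvino: the three displayed identities (the Lorentz-norm representation $\|u\|_{L^{p^*,p}}^p=\int_0^\I s^{-p/n}u^*(s)^p\,ds$, the polar-coordinate computation $\|\nabla u^\sharp\|_{L^p}^p = n^{p}\omega_n^{p/n}n^{-p/n}\int_0^\I |(u^*)'(s)|^p s^{p-p/n}\,ds$, and the dual weighted Hardy inequality with $\alpha=1-p/n>0$) fit together exactly, and the constant exponents all check out ($\alpha-1=-p/n$, $\alpha+p-1=p-p/n$). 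Your two flagged wrinkles are the right ones, and for $p=1$ the Fubini identity indeed replaces the strict Hardy inequality so no constant is lost. The only point worth stating explicitly rather than leaving implicit is that the representation $u^*(s)=\int_s^\I|(u^*)'(t)|\,dt$ presupposes local absolute continuity of $u^*$ on $(0,\I)$; for $u\in C_c^\I(\R^n)$ (indeed for any $u\in W^{1,p}$) this is a standard consequence of the coarea machinery and is in any case already an ingredient of the P\'olya--Szeg\H{o} inequality you invoke, so it is a matter of citation rather than of logic. The density step is also fine because $L^{p^*,p}(\R^n)$ is complete and convergence in its quasi-norm implies convergence in measure.
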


\subsection{Basic estimates}

We first establish the basic bilinear and trilinear estimates, which have been well-known for smooth domains $\Om$ in $ {\mathbb R}^n$ (see~\cite[Lemma 9]{KiKo} e.g.).
\begin{lemma}
\label{th1} Let $\Om$ be a bounded Lipschitz domain in ${\mathbb R}^n$, $n \ge 2$. Assume that ${\mathbf b} \in L^{n,\infty}(\Om ;\R^n )$ and $1 \le p<n$. Then for every $u \in W^{1,p}(\Om)$, we have
$$
u\mathbf{b} \in L^p (\Om; \R^n) \quad\mbox{and}\quad \norm{u \mathbf{b}}_{L^p (\Om) } \le C
\norm{\mathbf{b}}_{L^{n,\I}(\Om) }\norm{u}_{W^{1,p}(\Om)}
$$
for some constant $C=C (n, p,\Om )$. Moreover, for every $v \in W^{1,p'}(\Om )$,
$$
 \int_{\Om} |( u \mathbf{b}) \cdot \nb v| \, dx \le C \norm{\mathbf{b}}_{L^{n,\I}(\Omega ) } \norm{u}_{W^{1,p} (\Omega )} \norm{v}_{W^{1,p'}(\Omega )}
$$
for some constant $C=C (n, p, \Om )$.
\end{lemma}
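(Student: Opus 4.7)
The plan is to use Hölder's inequality in Lorentz spaces (Lemma \ref{holder inequalies}) together with the refined Sobolev embedding (Lemma \ref{refined Sobolev}). The heuristic is that $\mathbf{b}$ sits in $L^{n,\infty}$, and we only need to upgrade $u \in W^{1,p}$ to $u \in L^{p^*,p}$ (not merely $L^{p^*}$) in order for the second Lorentz indices to line up in Hölder.

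First I would reduce to the whole space by extension. Since $\Om$ is a bounded Lipschitz domain, there is a bounded linear extension operator $E \colon W^{1,p}(\Om) \to W^{1,p}(\R^n)$ with
\[
\|Eu\|_{W^{1,p}(\R^n)} \le C(n,p,\Om)\, \|u\|_{W^{1,p}(\Om)}.
\]
Applying Lemma \ref{refined Sobolev} to $Eu$ and restricting back to $\Om$ yields
\[
u \in L^{p^*, p}(\Om), \qquad \|u\|_{L^{p^*,p}(\Om)} \le C(n,p,\Om)\, \|u\|_{W^{1,p}(\Om)}.
\]
Now Hölder in Lorentz spaces (Lemma \ref{holder inequalies}) with $(p_1,q_1) = (p^*,p)$ and $(p_2,q_2) = (n,\infty)$ gives
\[
\tfrac{1}{p^*} + \tfrac{1}{n} = \tfrac{1}{p}, \qquad \tfrac{1}{p} + \tfrac{1}{\infty} = \tfrac{1}{p},
\]
so the product lies in $L^{p,p}(\Om) = L^p(\Om)$ with norm controlled by $\|u\|_{L^{p^*,p}} \|\mathbf{b}\|_{L^{n,\infty}}$; chaining the estimates yields the desired bound on $\|u\mathbf{b}\|_{L^p(\Om)}$. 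The hypothesis $1/q_1 + 1/q_2 \ge 1$ required when $p=1$ reads $1/1 + 1/\infty \ge 1$, so the edge case is fine.

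For the trilinear estimate, I would simply apply the ordinary Hölder inequality: since $\nabla v \in L^{p'}(\Om)$ with $\|\nabla v\|_{L^{p'}(\Om)} \le \|v\|_{W^{1,p'}(\Om)}$, the bilinear bound just established yields
\[
\int_\Om |(u\mathbf{b}) \cdot \nabla v|\, dx \le \|u\mathbf{b}\|_{L^p(\Om)} \|\nabla v\|_{L^{p'}(\Om)} \le C\, \|\mathbf{b}\|_{L^{n,\infty}(\Om)} \|u\|_{W^{1,p}(\Om)} \|v\|_{W^{1,p'}(\Om)}.
\]
I do not anticipate a serious obstacle here; the only point that requires minor care is tracking the second Lorentz index (using the refined embedding $W^{1,p} \hookrightarrow L^{p^*,p}$ rather than the classical $W^{1,p} \hookrightarrow L^{p^*}$), since pairing $L^{p^*}$ with $L^{n,\infty}$ via ordinary Hölder would only place the product in $L^{p,\infty}$, not in $L^p$.
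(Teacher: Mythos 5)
Your proof is correct and follows essentially the same route as the paper's: extend $u$ from the bounded Lipschitz domain to $\R^n$, invoke the refined Sobolev embedding $W^{1,p}\hookrightarrow L^{p^*,p}$ (Lemma \ref{refined Sobolev}) to get the right second Lorentz index, apply the Lorentz-space H\"older inequality (Lemma \ref{holder inequalies}) to land in $L^{p,p}=L^p$, and then deduce the trilinear estimate by ordinary H\"older. The only cosmetic difference is that the paper also extends $\mathbf{b}$ by zero and works entirely on $\R^n$ before restricting, whereas you restrict $u$ back to $\Om$ first — these are interchangeable, and your observation about why $L^{p^*,p}$ rather than $L^{p^*}$ is needed is exactly the point the paper relies on.
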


\begin{proof}
Let $u \in W^{1,p}(\Om)$ be given. Since $\Omega$ is a bounded Lipschitz domain, there exists $\overline{u} \in W^{1,p}({{\mathbb R}^n} )$ such that $\overline{u}=u$ on $\Omega$ and $\|\overline{u} \|_{W^{1,p}({{\mathbb R}^n} )} \le C (n,p,\Omega ) \|u\|_{W^{1,p}(\Omega)}$. Let $\overline{{\mathbf b}}$ be an extension of ${\mathbf b}$ to ${\mathbb R}^n$ defined by $\overline{{\mathbf b}}= {\mathbf 0}$ on ${{\mathbb R}^n} \setminus \Omega$.
By Lemmas \ref{holder inequalies} and \ref{refined Sobolev},
$$
\left\|\overline{u} \, \overline{{\mathbf b}}\right\|_{L^{p}({{\mathbb R}^n} )}
 \le C \|\overline{{\mathbf b}}\|_{L^{n,\infty} ({{\mathbb R}^n} )}\|\overline{u} \|_{L^{p^* , p}({{\mathbb R}^n} )} \le C \|\overline{{\mathbf b}}\|_{L^{n,\infty} ({{\mathbb R}^n} )} \|\nabla \overline{u} \|_{L^{p}({{\mathbb R}^n} )}.
$$
This proves the bilinear estimate. The trilinear estimate follows immediately, by H\"{o}lder's inequality.
\end{proof}

\medskip
The following result however holds for arbitrary domains $\Om$ in $\R^n$.
\begin{lemma}
\label{th1-general} Let $\Om$ be any
domain in ${\mathbb R}^n$, $n \ge 2$. Assume that ${\mathbf b} \in L^{n,\infty}(\Om ;\R^n )$ and $1 \le p<n$. Then for every $u \in \hat{D}_0^{1,p}(\Om)$, we have
$$
u\mathbf{b} \in L^p (\Om; \R^n) \quad\mbox{and}\quad \norm{u \mathbf{b}}_{L^p (\Om) } \le C
\norm{\mathbf{b}}_{L^{n,\I}(\Om) }\norm{\nb u}_{L^{p}(\Om)}
$$
for some constant $C=C (n, p )$. In addition, if $n \ge 3$ and $\div \mathbf{b} \ge 0$ in $\Om$, then
$$
-\int_{\Om} ( u \mathbf{b}) \cdot \nb u \, dx \ge 0 \quad\mbox{for all} \,\, u \in \hat{D}_0^{1,2} (\Om) .
$$
\end{lemma}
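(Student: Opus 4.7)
The plan is to reduce both parts to $\R^n$ via zero extensions and density, using the fact that $\hat{D}_0^{1,p}(\Om)$ is by definition the closure of $C_c^\infty(\Om)$ in $D^{1,p}(\Om) \cap L^{p^*}(\Om)$.

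For Part (1), I would first handle $u \in C_c^\infty(\Om)$. Extend $u$ and $\bb$ by zero to $\R^n$; call the extensions $\bar u$ and $\bar \bb$. Since $\bar u \in C_c^\infty(\R^n) \subset W^{1,p}(\R^n)$, Lemma \ref{refined Sobolev} yields $\bar u \in L^{p^*,p}(\R^n)$ with $\|\bar u\|_{L^{p^*,p}(\R^n)} \le C(n,p) \|\nb u\|_{L^p(\Om)}$. The Lorentz--H\"older inequality (Lemma \ref{holder inequalies}) applied with exponents $(p_1,q_1)=(p^*,p)$ and $(p_2,q_2)=(n,\infty)$, for which $1/p = 1/p^* + 1/n$ and $1/q_1+1/q_2 = 1/p$, then gives
\[
\|u\bb\|_{L^p(\Om)} = \|\bar u \,\bar \bb\|_{L^p(\R^n)} \le C(n,p) \|\bar u\|_{L^{p^*,p}(\R^n)} \|\bar \bb\|_{L^{n,\infty}(\R^n)} \le C(n,p)\|\bb\|_{L^{n,\infty}(\Om)}\|\nb u\|_{L^p(\Om)}.
\]
For general $u \in \hat{D}_0^{1,p}(\Om)$, choose $u_k \in C_c^\infty(\Om)$ with $u_k \to u$ in $D^{1,p}(\Om) \cap L^{p^*}(\Om)$. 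Applying the displayed bound to the differences $u_j - u_k$ shows $\{u_k\bb\}$ is Cauchy in $L^p(\Om;\R^n)$; extracting an a.e.\ convergent subsequence identifies the limit with $u\bb$, and the inequality transfers to $u$ by taking $k \to \infty$.

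For Part (2), I take $u \in \hat{D}_0^{1,2}(\Om)$ and choose $u_k \in C_c^\infty(\Om)$ converging to $u$ in $D^{1,2}(\Om)\cap L^{2^*}(\Om)$; this is where $n \ge 3$ matters, since it guarantees $2<n$ so Part (1) applies with $p=2$. Then $\nb u_k \to \nb u$ in $L^2(\Om;\R^n)$ and, by Part (1), $u_k\bb \to u\bb$ in $L^2(\Om;\R^n)$, so by H\"older's inequality
\[
\int_\Om (u_k\bb)\cdot \nb u_k\, dx \longrightarrow \int_\Om (u\bb)\cdot \nb u\, dx.
\]
For each $k$ the test function $\phi = u_k^2 \in C_c^1(\Om)$ is nonnegative, so the weak-divergence hypothesis $\div \bb \ge 0$ and $\nb(u_k^2)=2u_k\nb u_k$ give
\[
0 \le -\int_\Om \bb \cdot \nb(u_k^2)\, dx = -2\int_\Om (u_k\bb)\cdot \nb u_k\, dx.
\]
Passing to the limit yields $-\int_\Om (u\bb)\cdot \nb u\, dx \ge 0$.

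The main obstacle I anticipate is the bookkeeping that Lemma \ref{refined Sobolev} is stated for functions in $W^{1,p}(\R^n)$, whereas the zero extension of a generic element of $\hat{D}_0^{1,p}(\Om)$ need only satisfy $\bar u \in L^{p^*}(\R^n)$ and $\nb \bar u \in L^p(\R^n)$, which is not $W^{1,p}(\R^n)$ in general (think of unbounded $\Om$). Applying the lemma at the $C_c^\infty$ level first and then exploiting that the estimate controls the Lorentz norm by the gradient alone side-steps this issue cleanly and also justifies pulling the bound through to the limit.
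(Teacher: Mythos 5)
Your proof is correct and follows essentially the same path as the paper: apply Lemma \ref{refined Sobolev} to the smooth approximants $u_k$, pass to the limit to control the Lorentz norm, combine with the O'Neil--H\"older inequality (Lemma \ref{holder inequalies}) for the bilinear estimate, and use the nonnegative test function $u_k^2$ plus the $L^2$ convergence of $u_k\bb$ and $\nb u_k$ for the sign condition. The only cosmetic difference is that the paper first shows $\{u_k\}$ is Cauchy in $L^{p^*,p}(\Om)$ and then applies H\"older once to $u\bb$, whereas you apply H\"older at the $C_c^\infty$ level and show $\{u_k\bb\}$ is Cauchy in $L^p$ directly; both are equivalent and your observation about avoiding a naive application of the Sobolev--Lorentz inequality to the zero extension of a general $\hat{D}_0^{1,p}$ element is exactly the point that makes the argument clean.
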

\begin{proof}
Let $u \in \hat{D}_0^{1,p}(\Om)$ be given. By the definition of $\hat{D}_0^{1,p}(\Om)$, there exist functions $u_k \in C_c^\infty (\Om )$ such that $u_k \to u$ in $L^{p^*}(\Om )$ and $\nabla u_k \to \nabla u$ in $L^{p}(\Om; \R^n )$.
It follows from Lemma \ref{refined Sobolev} that $\|u_k\|_{L^{p^* , p}({{\mathbb R}^n} )} \le C (n,p) \|\nabla u_k \|_{L^{p}({{\mathbb R}^n} )}$ and $\{u_k\}$ is Cauchy in $L^{p^*, p}(\Omega )$.
Hence $u \in L^{p^*, p}(\Omega )$ and $\|u\|_{L^{p^* , p}(\Omega )} \le C (n,p) \|\nabla u \|_{L^{p}(\Omega )}$. By Lemma \ref{holder inequalies}, we have
$$
\norm{u \mathbf{b}}_{L^p (\Om) }
 \le C \| {\mathbf b} \|_{L^{n,\infty} (\Om ) }\|u \|_{L^{p^* , p}(\Om )} \le C
\norm{\mathbf{b}}_{L^{n,\I}(\Om) }\norm{\nb u}_{L^{p}(\Om)}
$$
for some $C=C(n,p)$. Assume in addition that $\div \mathbf{b} \ge 0$ in $\Om$. Then since $u_k^2 \in C_c^\infty (\Om )$ and $u_k^2 \ge 0$, it follows that
\[
-\int_{\Om} ( u_k \mathbf{b}) \cdot \nb u_k \, dx =-\int_{\Om} \mathbf{b} \cdot \nb \left(\frac 12 u_k^2 \right)\, dx \ge 0.
\]
Hence, if $n \ge 3$ and $p=2$, then since $u_k \bb \to u\bb$ and $\nabla u_k \to \nabla u$ in $L^2 (\Om ;\R^n )$, we have
\[
-\int_{\Om} ( u \mathbf{b}) \cdot \nb u \, dx = - \lim_{k\to\infty}\int_{\Om} ( u_k \mathbf{b}) \cdot \nb u_k \, dx \ge 0 .
\qedhere
\]
\end{proof}

\medskip

To prove our $W^{1,n+\e}$- and $W^{2, n/2+\delta}$-regularity results in Theorem \ref{th4}, we shall make crucial use of the following estimate, which is a special case of the Miranda-Nirenberg interpolation inequalities \cite{Miranda, Nirenberg}. %

\begin{lemma} \label{MNineq}
Let $\Om $ be a bounded Lipschitz domain in $\R^n$. Let $1 \le p<n$ and $0< \alpha < 1$. Then
 for every $u \in W^{2,p}(\Omega )\cap C^\alpha (\overline \Om)$, we have
\begin{equation}\label{eq2.4}
\nb u \in L^r (\Om; \R^n )\quad\mbox{and}\quad \norm{\nb u}_{L^r(\Om)} \le C \left(%
 \norm{ u}_{W^{2,p}(\Om)}+\norm{ u}_{C^\al(\overline \Om)} \right)
\end{equation}
for some $C=C(n, p , \alpha , \Omega )$, where $ r = (2-\al)p /(1-\al )$.
\end{lemma}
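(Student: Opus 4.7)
The plan is to reduce the global Miranda--Nirenberg inequality to a sharp pointwise interpolation bound on all of $\R^n$ and then integrate using the Hardy--Littlewood maximal function.

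\textbf{Step 1: Extension to $\R^n$.} Since $\Om$ is a bounded Lipschitz domain, Stein's total extension operator provides a bounded linear map $E:W^{2,p}(\Om)\cap C^\al(\overline\Om)\to W^{2,p}(\R^n)\cap C^\al(\R^n)$. Multiplying $Eu$ by a smooth cutoff supported in a fixed neighborhood of $\overline\Om$, I obtain $\td u\in W^{2,p}(\R^n)\cap C^\al(\R^n)$ with compact support, $\td u=u$ on $\Om$, and
$$
\|\td u\|_{W^{2,p}(\R^n)}+\|\td u\|_{C^\al(\R^n)}\le C\bke{\|u\|_{W^{2,p}(\Om)}+\|u\|_{C^\al(\overline\Om)}}.
$$
It therefore suffices to establish the inequality for $\td u$ on all of $\R^n$.

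\textbf{Step 2: Pointwise interpolation.} For a fixed index $j\in\{1,\ldots,n\}$, pick $\phi_j\in C_c^\I(B_1)$ with $\int\phi_j=0$ and $\int h_i\phi_j(h)\,dh=\de_{ij}$ (for instance $\phi_j(h)=c^{-1}h_j\chi(|h|)$ for a suitable radial cutoff $\chi$ and normalizing constant). Rescaling $\phi_{j,\rho}(h)=\rho^{-n-1}\phi_j(h/\rho)$ preserves $\int\phi_{j,\rho}=0$ and $\int h_i\phi_{j,\rho}(h)\,dh=\de_{ij}$ for every $\rho>0$. Taylor's formula
$$
\td u(x+h)=\td u(x)+\nb\td u(x)\cdot h+\int_0^1(1-t)\,D^2\td u(x+th)(h,h)\,dt,
$$
multiplied by $\phi_{j,\rho}(h)$ and integrated in $h$, extracts $D_j\td u(x)$ via the two moment conditions. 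Bounding the first-order difference by $[\td u]_{C^\al}|h|^\al$ and the remainder by $|h|^2|D^2\td u(x+th)|$, and then controlling the inner integral through the change of variables $y=th$ together with $\int_{|y|<t\rho}|D^2\td u(x+y)|\,dy\le C(t\rho)^n M(|D^2\td u|)(x)$, I arrive at
$$
|\nb\td u(x)|\le C[\td u]_{C^\al}\rho^{\al-1}+C\rho\,M(|D^2\td u|)(x)\qquad(\rho>0).
$$
Optimizing via $\rho^{2-\al}\sim[\td u]_{C^\al}/M(|D^2\td u|)(x)$ yields the pointwise interpolation
$$
|\nb\td u(x)|\le C[\td u]_{C^\al}^{1/(2-\al)}\,M(|D^2\td u|)(x)^{(1-\al)/(2-\al)}.
$$

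\textbf{Step 3: $L^r$ integration.} Since $r=(2-\al)p/(1-\al)$ satisfies $r(1-\al)/(2-\al)=p$, raising the pointwise bound to the $r$-th power, integrating over $\R^n$, and invoking the strong $L^p$-boundedness of $M$ (valid for $p>1$) gives
$$
\|\nb\td u\|_{L^r(\R^n)}\le C[\td u]_{C^\al}^{1/(2-\al)}\|D^2\td u\|_{L^p(\R^n)}^{(1-\al)/(2-\al)}.
$$
A final application of Young's inequality with the conjugate exponents $2-\al$ and $(2-\al)/(1-\al)$ (which sum correctly since $1/(2-\al)+(1-\al)/(2-\al)=1$) converts this into $\|\nb\td u\|_{L^r(\R^n)}\le C(\|\td u\|_{W^{2,p}(\R^n)}+\|\td u\|_{C^\al(\R^n)})$, and Step 1 then proves the lemma for $1<p<n$.

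\textbf{Main obstacle.} The delicate point is the endpoint $p=1$, where $M$ is only of weak type $(1,1)$ and so the integration in Step 3 cannot be carried out verbatim. To handle this case I would refine Step 2 by summing the pointwise bound over a Whitney-type dyadic family of scales, producing an estimate in which $M(|D^2\td u|)$ is replaced by a Riesz-potential-like expression of $D^2\td u$ weighted against $[\td u]_{C^\al}$ on each scale, which yields the strong $L^r$ bound even at $p=1$. Since all subsequent applications in the paper invoke Lemma~\ref{MNineq} in the range $p>1$ only, an acceptable alternative is simply to cite the original results of Miranda and Nirenberg for the endpoint.
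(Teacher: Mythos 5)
The paper never proves Lemma \ref{MNineq}: it simply cites Miranda \cite{Miranda} and Nirenberg \cite{Nirenberg}. Your argument is therefore a genuinely different route, and a good one: the combination of a moment-killing test function, Taylor expansion, and the Hardy--Littlewood maximal function is the modern way to derive Gagliardo--Nirenberg-type inequalities, and it actually produces the sharper multiplicative bound $\|\nabla u\|_{L^r}\le C[u]_{C^\alpha}^{1/(2-\alpha)}\|D^2 u\|_{L^p}^{(1-\alpha)/(2-\alpha)}$, from which the additive form follows by Young. Two technical points should be made explicit. First, Taylor's formula in Step 2 is being applied to $\td u\in W^{2,p}$ rather than $C^2$: this is legitimate but needs mollification, i.e.\ run the argument for $\td u*\eta_\e$ (which satisfies $[\td u*\eta_\e]_{C^\alpha}\le[\td u]_{C^\alpha}$), then pass to the limit using $\|M(|D^2\td u*\eta_\e|)-M(|D^2\td u|)\|_{L^p}\le C\|D^2\td u*\eta_\e-D^2\td u\|_{L^p}\to 0$ and a.e.\ subsequential convergence. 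Second, that a single extension operator works for both $W^{2,p}(\Om)$ and $C^\al(\overline\Om)$ deserves a word: Stein's operator is bounded on $W^{k,p}$ for all $k\ge 0$, $1\le p\le\infty$, hence on $L^\infty$ and $W^{1,\infty}$, and therefore on $C^\alpha=(L^\infty,W^{1,\infty})_{\alpha,\infty}$ by real interpolation. As you correctly note, your Step 3 fails at the endpoint $p=1$ where $M$ is only weak type $(1,1)$; your sketched dyadic/Riesz-potential remedy is not worked out, but since the paper itself cites the classical references and only invokes the lemma with $p>1$ (see the proof of Theorem \ref{th4}), restricting to $1<p<n$ and citing Miranda--Nirenberg for $p=1$ is an acceptable resolution.
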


This inequality is particularly useful when $1 \le p \le n/2$.
It should be emphasized that if $1 \le p \le  n/2$, then $r>2p \ge  p^*$; hence the Miranda-Nirenberg embedding $ W^{2,p}(\Omega )\cap C^\alpha (\overline \Om) \subset W^{1,r}(\Omega )$ does not follow from the standard Sobolev embedding $W^{2,p}(\Om )\subset W^{1,p^*}(\Om )$.

\subsection{The Calderon-Zygmund estimates}
\label{sec:CZ-est}

The following is the well-known Calderon-Zygmund result for $p$-weak solutions of the Poisson equation; see \cite[Theorem 1.1]{JK} and \cite[Theorem 9.15]{GilTru} for instance.

\begin{lemma}\label{CZ-estimates0}
Let $\Om $ be a bounded $C^1$-domain in $\R^n , n \ge 2$.
Assume that
$1<p<\infty$. Then for every $f \in W^{-1,p}(\Om)$, there exists a unique $u\in W^{1,p}_0 (\Om)$ such that
$$
\int_\Om \nb u \cdot \nb \phi \, dx = \bka{f,\phi} \quad \text{for all } \phi \in C_c^{1} (\Om).
$$
 Moreover, we have
$$
\|u\|_{W^{1,p}(\Omega )} \le C(n, p, \Omega ) \|f\|_{W^{-1,p}(\Omega )}.
$$
Assume in addition that $\Omega$ is a $C^{1,1}$-domain and $f \in L^{p} (\Omega)$.
Then
$$
u \in W^{2, p}(\Om) \quad\mbox{and}\quad \|u\|_{W^{2,p}(\Omega )} \le C(n, p, \Omega ) \|f\|_{L^{p}(\Omega )}.
$$
\end{lemma}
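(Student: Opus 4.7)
The statement is the classical Calderon–Zygmund theorem for the Poisson equation, so my plan follows the standard route: treat the whole–space case first via the Newtonian potential and singular integrals, then localize and flatten the boundary.

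First I would handle $\Om=\R^n$. For $f=\div \bF$ with $\bF\in C_c^\infty(\R^n;\R^n)$, define $u=\Gamma*f$ where $\Gamma$ is the fundamental solution of $-\De$. Then $-\De u=f$ in $\R^n$, and differentiating once under the convolution gives $\pd_i u = (\pd_i\Gamma)*f$; rewriting $f=\div\bF$ and integrating by parts yields $\pd_i u= -\sum_j (\pd_i\pd_j\Gamma)*F_j$, where $\pd_i\pd_j\Gamma$ is a Calderon–Zygmund kernel (homogeneous of degree $-n$, mean zero on spheres). The Calderon–Zygmund theorem for singular integrals of convolution type then gives $\|\nb u\|_{L^p(\R^n)}\le C_p\|\bF\|_{L^p(\R^n)}$ for $1<p<\infty$, and the result extends to $f\in W^{-1,p}$ by density. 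Uniqueness on $\R^n$ is automatic modulo harmonic polynomials, which must vanish in the appropriate class.

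Next I would transfer to a bounded $C^1$-domain $\Om$ by a standard localization/flattening argument. Choose a finite cover $\{U_i\}$ of $\overline\Om$ together with a subordinate partition of unity $\{\eta_i\}$ so that each $U_i\cap\Om$ is either interior or diffeomorphic via a $C^1$ map $\Phi_i$ to a half-ball. For each $i$, $v_i=\eta_i u$ satisfies $-\De v_i= \eta_i f-2\nb\eta_i\cdot\nb u-(\De\eta_i)u$ in $U_i\cap\Om$, with zero boundary data on $\pd\Om$. Interior pieces reduce to the whole–space estimate. For boundary pieces I would push $v_i$ forward to the half-space via $\Phi_i$, where it solves a divergence–form equation $-\div(A\nb \tilde v_i)=\tilde h$ with $A=D\Phi_i\,D\Phi_i^T/|\det D\Phi_i|$; for $C^1$-domains $A$ is merely continuous, so I would freeze coefficients at the origin, apply the half-space Calderon–Zygmund estimate to $-\De$ (obtained by the odd/even reflection trick reducing to the $\R^n$ case), and absorb the lower-order perturbation by taking $\mathrm{diam}\,U_i$ small enough that $\|A-A(0)\|_\infty$ is below the small-parameter threshold in the estimate. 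Summing over $i$ produces the global $W^{1,p}$ bound. Existence for general $f\in W^{-1,p}$ follows by a density/a priori-estimate argument, and uniqueness follows since any $u\in W_0^{1,p}$ with $\De u=0$ is forced to be zero by Weyl's lemma combined with the $L^p$ boundary trace.

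For the second claim on $C^{1,1}$-domains with $f\in L^p$, I would bootstrap from the $W^{1,p}$ solution. After flattening by a $C^{1,1}$ map the coefficients $A$ are now Lipschitz, so the equation $-\div(A\nb\tilde v_i)=\tilde h$ on a half-ball with zero Dirichlet data on the flat piece admits a standard $W^{2,p}$ estimate: differentiate tangentially (the tangential difference quotients remain $W^{1,p}$-bounded by the previous step), use the equation itself to recover the normal second derivative from the tangential ones and from $f$, and invoke the half-space Calderon–Zygmund estimate for $-\De$. Patching via the partition of unity delivers $\|u\|_{W^{2,p}(\Om)}\le C\|f\|_{L^p(\Om)}$. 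The main technical obstacle is the $C^1$-case of Part 1: because $A$ is only continuous, one cannot simply quote smooth-coefficient Schauder/$L^p$ theory, and the small-oscillation-of-coefficients argument (or, alternatively, the Jerison–Kenig layer potential approach) is what makes the result hold for the full range $1<p<\infty$ on merely $C^1$-domains rather than only on an interval around $p=2$.
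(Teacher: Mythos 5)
The paper does not actually prove this lemma; it is invoked as a known fact with references to \cite[Theorem 1.1]{JK} and \cite[Theorem 9.15]{GilTru}. Your sketch reconstructs a standard proof along exactly the lines underlying those references: whole-space Calderon–Zygmund singular-integral bounds for $\pd_i\pd_j\Gamma$, odd reflection to get the half-space estimate, localization and boundary flattening with a partition of unity, and coefficient-freezing with small-oscillation absorption (or, alternatively, the Jerison–Kenig layer-potential route) to handle the merely continuous flattened coefficients that a $C^1$ boundary produces. The $W^{2,p}$ part via tangential difference quotients and recovery of the normal second derivative from the equation on a flattened $C^{1,1}$ chart is also the standard argument. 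So the approach is correct; what you gain over the paper is a self-contained account, at the cost of length and of having to re-derive the $C^1$ small-oscillation mechanism.

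Two steps are stated too loosely and should be tightened. First, summing the localized estimates does not directly give $\|u\|_{W^{1,p}}\le C\|f\|_{W^{-1,p}}$: the commutator terms $2\nb\eta_i\cdot\nb u$ and $(\De\eta_i)u$ leave you with $\|u\|_{W^{1,p}(\Om)}\le C\left(\|f\|_{W^{-1,p}(\Om)}+\|u\|_{L^p(\Om)}\right)$, and the extra $\|u\|_{L^p}$ must be removed by the standard compactness-and-uniqueness (Peetre-type) contradiction argument using Rellich's embedding. Second, the uniqueness assertion "$u\in W^{1,p}_0$, $\De u=0$ forces $u=0$ by Weyl's lemma and the $L^p$ trace" is only immediate for $p\ge 2$, where $u\in W^{1,2}_0$ and the energy integral closes. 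For $p<2$ one should argue by duality: test $u$ against $\phi\in W^{1,p'}_0(\Om)\cap W^{2,p'}(\Om)$ solving $-\De\phi=g$ for $g\in C_c^\infty(\Om)$, which is available once the case $p'>2$ is established, to conclude $\int_\Om ug\,dx=0$ for all such $g$. These are routine fixes, but they are exactly the places where a careless write-up would break down.
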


\medskip

 The following result holds for arbitrary bounded domains $\Om$ in $\R^n$, which will be used in the proof of Lemma \ref{th4-boundedness-exterior}, for instance.

\begin{lemma}\label{dual-char}
Let $\Om$ be any bounded domain in ${\mathbb R}^n, n \ge 2$. Assume that $1<p<\infty$. Then for every $f \in W^{-1,p}(\Om)$, there exists ${\mathbf F} \in L^p (\Om ; \R^n )$ such that
$$
f= \div {\mathbf F} \quad\mbox{in}\,\, \Om \quad\mbox{and}\quad \|{\mathbf F}\|_{L^p (\Omega )} \le C\|f\|_{W^{-1,p}(\Om )}
$$
for some constant $C = C ( n,p,  \Om  )$.
\end{lemma}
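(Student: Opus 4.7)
The plan is to identify $f \in W^{-1,p}(\Om)$ with a bounded linear functional on the gradient image of $W_0^{1,p'}(\Om)$ inside $L^{p'}(\Om;\R^n)$, then apply Hahn-Banach and the Riesz representation theorem to obtain the desired $\mathbf{F}$.

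First, since $\Om$ is bounded, the Poincar\'e inequality gives $\norm{u}_{L^{p'}(\Om)} \le C(n,p,\Om) \norm{\nb u}_{L^{p'}(\Om)}$ for every $u \in W_0^{1,p'}(\Om)$, so the map $T \colon W_0^{1,p'}(\Om) \to L^{p'}(\Om;\R^n)$ defined by $T u = \nb u$ is a bounded linear injection onto a closed subspace $V = T(W_0^{1,p'}(\Om))$, and moreover the norm $\norm{\nb u}_{L^{p'}(\Om)}$ is equivalent to the standard $W^{1,p'}$-norm on $W_0^{1,p'}(\Om)$.

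Next, I would define a linear functional $\Phi$ on $V$ by $\Phi(\nb u) = -\bka{f,u}$ for $u \in W_0^{1,p'}(\Om)$; this is well defined since $T$ is injective. Using the equivalence of norms from the previous step,
\[
 |\Phi(\nb u)| \le \norm{f}_{W^{-1,p}(\Om)} \norm{u}_{W^{1,p'}(\Om)} \le C \norm{f}_{W^{-1,p}(\Om)} \norm{\nb u}_{L^{p'}(\Om)},
\]
so $\Phi$ is bounded on $V$ with norm at most $C\norm{f}_{W^{-1,p}(\Om)}$. By the Hahn-Banach theorem, $\Phi$ extends to a bounded linear functional on all of $L^{p'}(\Om;\R^n)$ with the same norm. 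Since $1<p'<\infty$, the Riesz representation theorem produces $\mathbf{F} \in L^p(\Om;\R^n)$ with $\norm{\mathbf{F}}_{L^p(\Om)} \le C\norm{f}_{W^{-1,p}(\Om)}$ such that the extension is given by $\mathbf{G}\mapsto \int_\Om \mathbf{G}\cdot \mathbf{F}\, dx$.

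Finally, specializing the extension to elements of $V$ gives
\[
 -\bka{f,u} = \int_\Om \nb u \cdot \mathbf{F}\, dx \quad \text{for all } u \in W_0^{1,p'}(\Om),
\]
in particular for all $u \in C_c^\infty(\Om)$, which is exactly the statement $f = \div \mathbf{F}$ in the sense of distributions on $\Om$. There is no real obstacle in the argument; the only point requiring a little care is ensuring $T$ has closed range and that $\nb u$ determines $u$ uniquely in $W_0^{1,p'}(\Om)$, both of which follow from Poincar\'e's inequality on bounded domains for the zero-trace class (no regularity of $\pd\Om$ needed).
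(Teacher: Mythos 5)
Your proof is correct, but it takes a genuinely different route from the paper's. The paper begins by invoking the Riesz representation theorem for $W^{-1,p}(\Om)$ (cited from Leoni) to write $f = g + \div \mathbf{G}$ with $g \in L^p(\Om)$ and $\mathbf{G} \in L^p(\Om;\R^n)$; it then extends $g$ by zero to a large ball $B_R \supset \overline{\Om}$, solves the Poisson problem $\De v = g$ in $W_0^{1,p}(B_R)\cap W^{2,p}(B_R)$ via the Calderon-Zygmund estimate (Lemma \ref{CZ-estimates0}), and sets $\mathbf{F} = (\nb v)|_\Om + \mathbf{G}$. You instead give the classical direct functional-analytic argument: use the Poincar\'e inequality to show that $u \mapsto \nb u$ embeds $W_0^{1,p'}(\Om)$ as a closed subspace of $L^{p'}(\Om;\R^n)$, transfer $f$ to a bounded functional on that subspace, extend by Hahn-Banach, and represent the extension by Riesz duality of $L^{p'}$. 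Your approach is the more elementary of the two --- it avoids both the representation theorem for $W^{-1,p}$ and the $W^{2,p}$ elliptic regularity --- and it is the standard textbook proof of this fact; the paper's route is heavier but is natural there because Lemma \ref{CZ-estimates0} is already available and the authors simply reuse it. One minor stylistic note: after extending $\Phi$ and representing it by $\mathbf{F}$, it is worth remarking (as you essentially do) that the identity $-\bka{f,u} = \int_\Om \nb u\cdot \mathbf{F}\,dx$ holds for all $u \in W_0^{1,p'}(\Om)$ by construction, so $f=\div\mathbf{F}$ not just as distributions but as elements of $W^{-1,p}(\Om)$, which is the stronger statement the lemma asserts.
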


\begin{proof} Let $f \in W^{-1,p}(\Om)$ be given. Then by the Riesz representation theorem for $W^{-1,p}(\Omega)$ (see \cite[Theorem 10.41, Corollary 10.49]{Le} e.g.), there exist $g \in L^{p}(\Om)$ and ${\mathbf G} \in L^p (\Om ; \R^n )$ such that
$$
f= g +\div {\mathbf G} \quad\mbox{in}\,\,\Om
$$
and
$$
\|g\|_{L^p (\Omega )} + \|{\mathbf G}\|_{L^p (\Omega )} \le C(n,p,  \Om  )\|f\|_{W^{-1,p}(\Om )}.
$$
Choose $x_0 \in \Om$ and $R=2 \,{\rm diam}\, \Om$ so that the ball $B_R= B_R (x_0 )$ contains $\overline{\Om}$. Next we extend $g$ to $B_R$
by defining zero outside $\Om$. Then by Lemma \ref{CZ-estimates0},
there exists $v \in W_0^{1,p}(B_R )\cap W^{2,p}(B_R )$ such that $\De v =g$ in $B_R$ and $\| v \|_{W^{2,p} (B_R )} \le C(n,p,R) \|g\|_{L^p (B_R )}$. Defining $\mathbf{F}= (\nb v )|_{\Om}+\mathbf{G}$, we complete the proof. \end{proof}

\medskip

The following result for the exterior problem seems to be standard nowadays at least for smooth domains. See e.g.~\cite[Theorem 2.10, Remark 2.11]{AmGiGi} for $C^{1,1}$-domains
(and results for other ranges of $p$). %

\begin{lemma}\label{CZ-estimates in exterior domains0}
\begin{enumerate}[(i)]
 \item Let $\Om $ be an exterior $C^1$-domain in $\R^n$, $n \ge 3$. Assume that $n/(n-1)<p<n$. Then for each ${\mathbf F} \in L^{p} (\Omega ; \R^n )$, there exists a unique $u\in \hat{D}^{1,p}_0 (\Om)$ such that
 \begin{equation}
 \label{bvp30}
\int_\Om \nb u \cdot \nb \phi \, dx = - \int_\Om {\mathbf F} \cdot \nb \phi \, dx \quad \text{for all } \phi \in C_c^{1} (\Om).
 \end{equation}
 Moreover, we have
\begin{equation}\label{pq-estimate0}
 \|\nb u \|_{L^{p} (\Om )} + \| u \|_{L^{p^* } (\Om )} \le C(n,p , \Om ) \|{\mathbf F}\|_{L^{p}(\Om )} .
\end{equation}
 \item Let $\Omega$ be an exterior $C^{1,1}$-domain in $\R^n , n \ge 3.$ Assume that $1<q< n/2$. Then for each $f \in L^{q} (\Omega)$, there exists a unique $u\in \hat{D}^{1,q^*}_0 (\Om) \cap D^{2,q} (\Om )$ such that
$$
-\Delta u = f \quad\mbox{a.e. in} \,\,\Omega .
$$
Moreover, we have
\begin{equation}\label{pq-estimate1}
 \|\nb^2 u\|_{ L^{q} (\Om ) } + \|\nb u\|_{ L^{q^* } (\Om ) } + \| u\|_{ L^{(q^*)^* } (\Om ) } \le C \|f\|_{L^{q} (\Omega )}
\end{equation}
for some $C= C(n,q ,\Om )$.
\end{enumerate}
\end{lemma}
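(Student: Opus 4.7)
The plan is to reduce this standard exterior-domain statement to the combination of whole-space Calderon-Zygmund / Hardy-Littlewood-Sobolev estimates and the bounded-domain estimates from Lemma~\ref{CZ-estimates0}, by the familiar ``whole-space potential plus harmonic corrector'' decomposition that underlies the proofs in \cite{AmGiGi}.

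For Part~(i), existence at $p=2$ follows from Lax-Milgram on the Hilbert space $\hat{D}^{1,2}_0(\Om)$ with inner product $\int_\Om \nb u\cdot\nb v\,dx$, since the functional $\phi\mapsto -\int_\Om\mathbf{F}\cdot\nb\phi\,dx$ is continuous and the Sobolev embedding gives the $L^{2^*}$-bound. For general $p\in(n',n)$ I would decompose $u=v+w$ into a whole-space potential and a harmonic corrector. Extending $\mathbf{F}$ by zero to $\bar{\mathbf{F}}\in L^p(\R^n;\R^n)$ and setting $v=\sum_{i=1}^n\pd_i N*\bar F_i$ with $N$ the Newton kernel gives $-\De v=\div\bar{\mathbf{F}}$ in $\R^n$, and the Calderon-Zygmund theorem together with the Hardy-Littlewood-Sobolev inequality yields $\|\nb v\|_{L^p(\R^n)}+\|v\|_{L^{p^*}(\R^n)}\lec \|\mathbf{F}\|_{L^p(\Om)}$. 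Next I construct $w$ harmonic in $\Om$ with $w|_{\pd\Om}=-v|_{\pd\Om}$ and $w\to 0$ at infinity: choose $R$ so $\Om^c\subset B_{R/2}$, solve the Dirichlet problem on the bounded $C^1$-domain $\Om\cap B_R$ via Lemma~\ref{CZ-estimates0}, and match to the explicit decaying harmonic expansion in spherical harmonics on $\R^n\setminus\bar B_R$. The restriction $p>n'$ is precisely what places the leading-order decay $\nb w=O(|x|^{-(n-1)})$ into $L^p$ and $w=O(|x|^{-(n-2)})$ into $L^{p^*}$ at infinity; setting $u=v+w$ completes existence.

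For uniqueness in~(i), if $u\in\hat{D}^{1,p}_0(\Om)$ satisfies the homogeneous equation, interior elliptic regularity makes $u$ smooth and harmonic in $\Om$, and the mean-value estimate $|u(x)|\lec |x|^{-n/p^*}\|u\|_{L^{p^*}(\Om)}$ (valid for $|x|$ large so that $B_{|x|/2}(x)\subset\Om$) yields $u(x)\to 0$ uniformly as $|x|\to\infty$; the classical maximum principle on $\Om\cap B_R$ combined with $u|_{\pd\Om}=0$ then forces $u\equiv 0$ by sending $R\to\infty$.

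Part~(ii) follows the identical template. Extend $f$ by zero to $\bar f\in L^q(\R^n)$ and set $v=N*\bar f$, so $-\De v=\bar f$ in $\R^n$ and Calderon-Zygmund together with Hardy-Littlewood-Sobolev give $\|\nb^2 v\|_{L^q(\R^n)}+\|\nb v\|_{L^{q^*}(\R^n)}+\|v\|_{L^{(q^*)^*}(\R^n)}\lec\|f\|_{L^q(\Om)}$. The harmonic corrector $w$ is constructed as in~(i), using this time the $W^{2,q}$-estimate on bounded $C^{1,1}$-domains from Lemma~\ref{CZ-estimates0} together with Part~(i) applied at the level $q^*$ to place $w$ in $\hat{D}^{1,q^*}_0(\Om)$; uniqueness follows from Part~(i). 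The main technical obstacle common to both parts is tracking the decay of the harmonic corrector so that it lies in the correct solution class at infinity, which is exactly what forces the ranges $p>n'$ and $q<n/2$.
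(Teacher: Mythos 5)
Your decomposition $u=v+w$ into a whole-space Newtonian potential and an exterior harmonic corrector is the classical route (as in \cite{AmGiGi}, which you cite) and differs in strategy from the paper's proof. The paper instead takes the $L^2$ Lax-Milgram solution $u$, introduces the whole-space potential $u_1$ of $-\div\mathbf{F}$ together with a compactly supported $W^{1,p}$-extension $u_2$ of the trace $u_1|_{\pd\Om}$, and sets $\bar u = u+u_1-u_2$; this reduces matters to a Poisson problem with compactly supported forcing $\nb u_2$. The estimate \eqref{pq-estimate0} is then obtained by splitting $\bar u = \eta\bar u + (1-\eta)\bar u$ with a cut-off $\eta$, applying the bounded-domain estimate of Lemma \ref{CZ-estimates0} to $\eta\bar u$ and the whole-space Calderon-Zygmund estimate to $(1-\eta)\bar u$. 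Uniqueness is handled by a Liouville-type argument (Lemma \ref{uniqueness in Lorentz spaces for exterior domains}) rather than your maximum-principle argument; both work. So your approach is genuinely different: you solve the exterior harmonic Dirichlet problem explicitly, while the paper avoids it by changing the unknown and cutting off.

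The gap in your sketch is the construction and, above all, the quantitative estimate of the corrector $w$. ``Matching to the explicit decaying harmonic expansion in spherical harmonics on $\R^n\setminus\bar B_R$'' is a transmission problem across $\partial B_R$ whose unique solvability you do not justify (it requires, e.g., invertibility of a Dirichlet-to-Neumann difference, or a Kelvin transform, or a direct variational solution of the exterior problem). More seriously, the decay rates $w = O(|x|^{-(n-2)})$, $\nb w = O(|x|^{-(n-1)})$ are only qualitative and merely place $w$ in the right space; to actually prove \eqref{pq-estimate0} you must control the amplitude of $w$ (say $\sup_{\partial B_R}|w|$) in terms of $\|\mathbf{F}\|_{L^p(\Om)}$. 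That bound requires chaining the trace estimate for $v|_{\pd\Om}$, the bounded-domain $W^{1,p}$-estimate for $w$ on $\Om\cap B_{2R}$, an interior $L^\infty$-bound on an annulus via the mean-value property of harmonic functions, and only then the far-field decay; none of this appears in your sketch. The paper's change of unknown plus cut-off is designed precisely to make this chain explicit without ever having to solve and quantitatively estimate the exterior harmonic Dirichlet problem.
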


\medskip
 For the sake of convenience of readers, we provide a sketch of the proof of Lemma \ref{CZ-estimates in exterior domains0}. We begin with a general uniqueness result.

\begin{lemma}\label{uniqueness in Lorentz spaces for exterior domains} Let $\Om $ be an exterior $C^1$-domain in $\R^n , n \ge 3$. Assume that
\begin{enumerate}[(i)]
 \item $u \in W_{\loc}^{1,r}(\overline{\Om})$ for some $r>1$, $u =0$ on $\partial \Om$;
 \item $u \in L^{p_1 } (\Om ) + L^{p_2 } (\Om )$ for some $1 < p_1 \le p_2 < \infty$; and
 \item $u$ satisfies
$$
\int_\Om \nb u \cdot \nb \phi \, dx = 0 \quad \text{for all } \phi \in C^1_{c}(\Om).
$$
\end{enumerate}

 \noindent
 Then $u =0$ identically on $\Om$.
\end{lemma}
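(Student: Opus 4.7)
The plan is to prove $u \equiv 0$ by combining three ingredients---(a) interior $C^\infty$-regularity of $u$, (b) continuity of $u$ up to the $C^1$-boundary $\partial \Om$, and (c) the pointwise decay $u(x) \to 0$ as $|x| \to \infty$ extracted from the integrability hypothesis~(ii)---and then invoking the classical maximum principle on the exterior domain.

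First, for the regularity, Weyl's lemma applied to the weak harmonicity hypothesis~(iii) will give $u \in C^\infty(\Om)$ with $\De u = 0$ classically in $\Om$. Near any point of $\partial \Om$, after locally flattening the $C^1$-boundary by a $C^1$-diffeomorphism, $u$ becomes a weak solution on a half-ball of a Laplace-type equation (with continuous coefficients inherited from the diffeomorphism) with zero Dirichlet data on the flat portion of the boundary; standard local boundary $L^p$- or H\"older regularity for such equations then yields that $u$ extends continuously to $\overline{\Om}$ with $u|_{\partial \Om} = 0$.

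Next, for the decay at infinity, I would write $u = u_1 + u_2$ with $u_i \in L^{p_i}(\Om)$ and choose $R_0 > 0$ so that $\R^n \setminus \Om \subset B_{R_0/4}$; then $B(x, |x|/4) \subset \Om$ whenever $|x| \ge R_0$. The mean value property for the harmonic function $u$, combined with H\"older's inequality applied separately to $u_1$ and $u_2$ on $B(x,|x|/4)$, will give
\[
|u(x)| \;\le\; \frac{1}{|B(x,|x|/4)|}\int_{B(x,|x|/4)}(|u_1|+|u_2|)\,dy \;\le\; C|x|^{-n/p_1}\|u_1\|_{L^{p_1}(\Om)} + C|x|^{-n/p_2}\|u_2\|_{L^{p_2}(\Om)},
\]
which tends to $0$ as $|x| \to \infty$ since $p_1, p_2 < \infty$. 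For $R > R_0$ set $M_R := \sup_{|y|=R}|u(y)|$; the weak maximum principle applied to the harmonic function $u$ on the bounded Lipschitz domain $\Om \cap B_R$---on whose boundary $u = 0$ on $\partial \Om$ and $|u| \le M_R$ on $\partial B_R$---will yield $\sup_{\Om \cap B_R}|u| \le M_R$. Letting $R \to \infty$ forces $M_R \to 0$ by the decay, and hence $u \equiv 0$ on $\Om$.

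The main obstacle I anticipate is the boundary continuity step when the exponent $r > 1$ in hypothesis~(i) is close to $1$: the modest local Sobolev assumption $u \in W^{1,r}_{\loc}(\overline{\Om})$ makes it nontrivial to promote the vanishing trace of $u$ on the $C^1$-boundary $\partial \Om$ to pointwise continuity on $\overline{\Om}$, and this is where I will rely on boundary $L^p$-estimates for the Dirichlet Laplacian rather than on elementary Sobolev embeddings.
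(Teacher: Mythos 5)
Your proof is correct but takes a genuinely different route from the paper's. After the same opening moves (Weyl's lemma in the interior and a boundary bootstrap to upgrade $u\in W^{1,r}_{\loc}(\overline\Om)$), the paper cuts $u$ off near infinity, identifies the far field with the Newtonian potential of a compactly supported smooth source via a Liouville theorem for entire harmonic functions in $L^{q_1}+L^{q_2}$, and concludes $u\in\hat D^{1,2}_0(\Om)$, whence $u=0$ by energy-space (Lax--Milgram) uniqueness. You instead establish continuity of $u$ up to $\partial\Om$ together with pointwise decay $u(x)\to0$ at infinity via the mean value property and H\"older, and then run the classical weak maximum principle on $\Om\cap B_R$ and let $R\to\infty$. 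Your route is more elementary and avoids the Liouville/Newtonian-potential machinery; the paper's stays inside the $L^2$-energy framework, produces membership in $\hat D^{1,2}_0(\Om)$ (the natural space for the surrounding existence theory), and would generalize to systems where no maximum principle is available. On the one delicate step you flag --- boundary continuity from low regularity --- I'd avoid the flattening you suggest, since a $C^1$-diffeomorphism leaves you with a variable-coefficient operator whose coefficients are merely continuous, and instead apply the Jerison--Kenig $W^{1,p}$-theory for the Laplacian on bounded $C^1$-domains directly to $\zeta u$ for a local cut-off $\zeta$: bootstrapping $r\to r^*\to\cdots$ a finite number of times puts $u\in W^{1,q}_{\loc}(\overline\Om)$ for some $q>n$, and Morrey embedding then gives H\"older continuity up to $\partial\Om$ with vanishing boundary values, which is exactly what you need for the maximum principle and is the same bootstrap the paper performs via Lemma \ref{CZ-estimates0}.
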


\begin{remark}
It follows from (\ref{Lebesgue-Lorentz inclusion}) and (\ref{inter-Loren}) that the condition (ii) is equivalent to the following condition:

$\qquad $ (ii)$' $ $ u \in L^{q_1 , \I } (\Om ) + L^{q_2 , \I } (\Om )$ for some $1 < q_1 \le q_2 < \infty$.

\end{remark}

\begin{proof}
[Proof of Lemma \ref{uniqueness in Lorentz spaces for exterior domains}]
By the uniqueness of weak solutions of the Laplace equation (see the proof of Lemma \ref{th4-existence} below), it suffices to show that
$$
u \in \hat{D}_0^{1,2}(\Om ).
$$

First of all, it follows from Weyl's lemma that $u \in C^\infty (\Om )$ and $\Delta u =0$ in $\Om$. Moreover, by a bootstrap argument based on Lemma \ref{CZ-estimates0}, we deduce that $u \in W_{\loc}^{1,q}(\overline{\Omega}) $ for any $q< \infty $ (for details see the proof of Lemma \ref{th4-unique-q version} below).

Let us choose any $\eta \in C_c^\infty (\R^n ; [0,1])$ such that $\eta =1$ on $\Om^c$. Then $v=(1-\eta ) u$ satisfies
$$
 - \De v = f:= u \De \eta \,+\, 2 \nabla u \cdot \nabla \eta \quad \text{in }\R^n .
$$
Let $w $ be the Newtonian potential of $f$ in $\R^n$, so that $-\De w =f$ in $\R^n$. Then since $f\in C_c^\infty (\R^n )$, it follows from the Calderon-Zygmund estimate and the Sobolev inequality that  $w \in D^{2,q} (\R^n )\cap D^{1,q^*}(\R^n )\cap L^{(q^{*})^* }(\R^n ) $ for any $1<q<n/2$. Note that $v-w$ is harmonic in $\R^n$ and belongs to $L^{q_1} (\R^n )+ L^{q_2}(\R^n )$ for some $q_1 , q_2$ with $1< q_1 <p_1 \le p_2 < q_2 < \infty$. Hence by the Liouville theorem (see the proof of \cite[Lemma 2.6]{KoYa0} e.g.), we deduce that $v=w$.
Since $u=v$ outside a large ball containing $\Om^c$ and $u=0$ on $\partial\Omega$, it follows that
$$
u \in \hat{D}_0^{1,q^{*}}(\Om )  = \left\{ v \in D^{1,q^*}(\Om )\cap L^{(q^{*})^{*}}(\Om ) \, :\, v =0 \,\,\mbox{on}\,\,\partial \Omega \right\}
$$
for any $1<q<n/2$. In particular, taking $q =2n/(n+2)$, we conclude that $
u\in \hat{D}_0^{1,2}(\Om )$.
This completes the proof. \end{proof}

\medskip

\begin{proof}
[Proof of Lemma \ref{CZ-estimates in exterior domains0}]
Assume that ${\mathbf F} \in C_c^\infty (\Omega ; \R^n ) $. Then by the Lax-Milgram theorem, there exists a unique $u\in \hat{D}^{1,2}_0 (\Om) $ satisfying (\ref{bvp30}).
By the interior regularity theory, we deduce that $u \in C^\infty (\Om )$ and $- \Delta u =\div {\mathbf F}$ in $\Om$.
Moreover, adapting the proof of Lemma \ref{uniqueness in Lorentz spaces for exterior domains}, we also deduce that
$u \in D^{2,q}(\Om \setminus B_R )\cap \hat{D}_{0}^{1,p}(\Om) $ for any $1<q<n/2$ and $1< p< n $, provided that  $R>0$ is so large that $\Om^c \subset B_R =B_R (0)$.

Let $u_1$ be the Newtonian potential of $-\div {\mathbf F}$ in $\R^n$. Then it follows from the Calderon-Zygmund theory that
$$
u_1 \in D^{1,p}(\R^n) \cap L^{p^*}(\R^n ) \quad\mbox{and}\quad \|\nb u_1 \|_{L^{p} (\R^n )} + \| u_1 \|_{L^{p^* } (\R^n )} \le C \|{\mathbf F}\|_{L^{p}(\R^n )}
$$
for any $1 < p< n $.

We now derive the crucial estimate (\ref{pq-estimate0}).
Let $2 \le p< n$ be given. Choose any $R>0$ and $\eta \in C_c^\infty (B_{2R} ; [0,1])$ such that $\Om^c \subset B_R$ and $\eta =1$ on $B_R$. Then since $u_1 \in W^{1,p}(\Om_{R}) $ and $\| u_1 \|_{W^{1, p}(\Om_{R})} \le C \|{\mathbf F}\|_{L^{p}(\Om )}$, it follows from the classical trace theorem that there exists $u_2 \in W^{1,p}(\Om )$ with support in $\overline{\Om}_{R}$ such that $u_2 =u_1$ on $\partial \Om$ and
$\| u_2 \|_{W^{1,p}(\Om )} \le C (n,p,R, \Om) \|{\mathbf F}\|_{L^{p}(\Om )}$.
Define $\overline{u}=u + u_1 -u_2$. Then
$$
\overline{u} \in \hat{D}_{0}^{1,p}(\Om) \quad\mbox{and}\quad - \Delta \overline{u} =\div \overline{\mathbf F} \quad\mbox{in}\,\,\Om ,
$$
where $\overline{\mathbf F} = \nabla u_2 $.
Moreover, since $\overline{u}\in D^{1,2}_0 (\Om) \cap L^{2^* }(\Om )$,
we have
$$
 \|\nb \overline{u} \|_{L^{2} (\Om )} + \| \overline{u} \|_{L^{2^* } (\Om )} \le C(n, \Om ) \|\overline{\mathbf F}\|_{L^{2}(\Om )} \le C (n,p,R, \Om) \|{\mathbf F}\|_{L^{p}(\Om )}.
$$
We also define $v= \eta \overline{u}$ and $w= (1-\eta ) \overline{u}$. Then since $\overline{\mathbf F}$ has support in $\overline{\Om}_{R}$, we have
$$
-\De v = f_1 := \eta \, \div \overline{\mathbf F} -\overline{u} \De \eta -2 \nabla \overline{u} \cdot \nabla \eta \quad\mbox{in } \Om_{2R}
$$
and
$$
 - \De w = f_2 := \overline{u} \De \eta + 2 \nabla \overline{u} \cdot \nabla \eta \quad \text{in }\R^n .
$$

Suppose that $p \le 2^*$. Then by the Sobolev inequality,
$$
\| f_1 \|_{W^{-1,p}(\Om_{2R})} \le C \left( \|\overline{\mathbf F}\|_{L^{p}(\Om )} + \|\nabla \overline{u} \|_{L^2 (\Om_{2R} )} \right) \le C (n,p,R, \Om) \|{\mathbf F}\|_{L^{p}(\Om )}.
$$
Hence by Lemma \ref{CZ-estimates0}, we obtain
$$
\| \overline{u} \|_{W^{1,p}(\Om_{R})} \le \| v\|_{W^{1,p}(\Om_{2R})} \le C (n,p,R, \Om) \|{\mathbf F}\|_{L^{p}(\Om )} .
$$
It follows from the definition of $\overline{u}$ that if $p \le 2^*$, then
$$
\| u \|_{W^{1,p}(\Om_{R})} \le C (n,p,R, \Om) \|{\mathbf F}\|_{L^{p}(\Om )}
$$
for all large $R>0$. This estimate can also be proved for $ p > 2^*$, by a standard bootstrap argument.
Note   that
$$
f_2 \in L^{np/(n+p)}(\R^n ) \quad\mbox{and}\quad \| f_2 \|_{L^{np/(n+p)}(\R^n )} \le C \|\nabla \overline{u}\|_{L^{p}(\Om_{2R} )} .
$$
Hence by the Calderon-Zygmund estimate again, we have
$$
 \| \nabla w\|_{L^{p}(\R^n ) } + \| w\|_{L^{p^*}(\R^n ) } \le C (n,p,R, \Om) \|{\mathbf F}\|_{L^{p}(\Om )} .
$$
We have derived (\ref{pq-estimate0}) for $2 \le p<n$. The estimate (\ref{pq-estimate0}) also holds for $n/(n-1)< p <2$ by a duality argument. Then by a standard density argument, we deduce that for each ${\mathbf F} \in L^{p} (\Omega ; \R^n )$, there exists a function $u$ in $\hat{D}^{1,p}_0 (\Om) $ satisfying (\ref{bvp30}) and (\ref{pq-estimate0}). The uniqueness of such a function $u$ immediately follows from Lemma \ref{uniqueness in Lorentz spaces for exterior domains}. This completes the first assertion of the lemma. The proof of the second one is similar and omitted.
\end{proof}

\subsection{The Calderon-Zygmund estimates in Lorentz spaces}

Particularly, the proof of Theorem \ref{th4-q version} is based on the Calderon-Zygmund estimates in Lorentz spaces for elliptic equations.

Let $\Om $ be any domain in $\R^n$. For $m \in \N$, $1 <  p<\infty$, and $1 \le q \le \infty$, let $W^{m,p,q}(\Om )$ be the space of all functions $u$ on $\Om$ such that
$D^\alpha u$ exists and belongs to $L^{p,q}(\Om )$ for all indices $\alpha$ with $|\alpha|\le m$.
The Sobolev-Lorentz space $W^{m,p,q}(\Om )$ is a Banach space equipped with the natural norm.
Its homogeneous version $D^{m,p,q}(\Om )$ is the space of all $u \in L_{\loc}^{p,q}(\Om )$ such that
$D^\alpha u \in L^{p,q}(\Om )$ for $|\alpha| = m$.

Assume first that $\Om $ is a bounded Lipschitz domain in $\R^n$. Then since $ W^{1,p,q}(\Om ) \subset W^{1,r}(\Om)$ for any $1<r<p$, every $u \in W^{1,p,q}(\Om )$ has a well-defined trace on $\partial\Omega$.
Let $W^{1,p,q}_0 (\Om)$ be a subspace of $ W^{1,p,q}(\Om ) $ consisting of all $u \in W^{1,p,q}(\Om )$ with $u=0$ on $\partial\Om$. It is remarked that if $\Om$ is sufficiently smooth, then the spaces $W^{m,p,q}(\Om )$ and $W^{1,p,q}_0 (\Om)$ have the same real interpolation property as (\ref{inter-Loren}); see \cite{AdFo} and \cite{KKP} for more details.

Assume next that $\Om$ is an exterior Lipschitz domain in $\R^n$. Then since $D^{m,p,q}(\Om ) \subset W^{m,p,q}( \Om_0 )$ for every bounded $\Omega_0 \subset \Om$, every $u \in D^{1,p,q}(\Om )$ has a well-defined trace on $\partial\Omega$. Denote by $ D^{1,p,q}_0 (\Om )$
the space of all $u \in D^{1, p , q }(\Om )$ with $u =0$ on $\partial \Om$. It can be shown (see \cite[Corollary 3.5]{KKP} e.g.) that if $1 <  p<n$ and $ u \in D^{1,p,q}_0 (\Om )$,  then there is a unique constant $c$ such that $u -c\in L^{p^* , q }(\Om )$ and $\|u-c \|_{L^{p^* ,q}(\Om )} \le C(n,p,q ) \|\nb u \|_{L^{p,q}(\Om )}$.

\medskip

The classical Calderon-Zygmund result can be easily interpolated by real method to obtain the following result (see \cite[Lemmas 3.3, 3.4]{KKP}).
\begin{lemma}%
\label{sol-Laplace in Rn}
\mbox{}
\begin{enumerate}[(i)]
 \item Let $1 < p < n$ and $1 \le q \le \infty$. Then for each ${\mathbf F}\in L^{p,q}(\R^n ; \R^n)$, there exists a unique $u \in D^{1,p,q}(\R^n) \cap L^{p^*,q}(\R^n)$ such that
$ -\Delta u = {\rm div} \,{\mathbf F}$ in $\R^n$. Moreover, we have
$$
\| u \|_{L^{p^* ,q}(\R^n )} + \| \nabla u \|_{L^{p ,q}(\R^n )} \le C \| {\mathbf F} \|_{L^{p ,q}(\R^n )}
$$
for some constant $C = C(n,p,q)$.

 \item Let $n \ge 3, 1 < p <n/2 $ and $1 \le q \le \infty$. Then for each $f\in L^{p,q}(\R^n) $, there exists a unique
$ u \in D^{2,p,q}(\R^n ) \cap D^{1,p^* ,q}(\R^n ) \cap L^{(p^{*})^* ,q}(\R^n ) $ such that
$-\Delta u = f$ in $\R^ n$.
 Moreover, we have
$$
\| u\|_{L^{(p^*)^* ,q}(\R^n )} + \| \nabla u \|_{L^{p^* ,q}(\R^n )} + \|\nabla^2 u\|_{L^{p ,q}(\R^n )} \le C \| f\|_{L^{p ,q}(\R^n )}
$$
for some constant $C=C(n,p,q) $.
\end{enumerate}
\end{lemma}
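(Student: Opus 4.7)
\medskip

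\noindent\textbf{Proof plan.} The overall strategy is to obtain the Lorentz estimates by real interpolation of the classical Calderon--Zygmund estimates on $\R^n$, which are already available in every Lebesgue space $L^r$ with $1<r<\infty$. For part (i), I would first work with $\mathbf{F}\in C_c^\infty(\R^n;\R^n)$ and define the candidate solution $u=S\mathbf{F}$ via the Newtonian potential, i.e.\ $u=-\partial_j N * F_j$ where $N$ is the Newtonian kernel. The kernel $\partial_j N$ is a Riesz potential of order one, and $\nabla\partial_j N$ is a Calderon--Zygmund kernel. Hence by Hardy--Littlewood--Sobolev and the classical Calderon--Zygmund theorem, for every $1<r<n$ the operator $S$ extends to a bounded map
\[
S:L^r(\R^n;\R^n)\to L^{r^*}(\R^n),\qquad \nabla S:L^r(\R^n;\R^n)\to L^r(\R^n;\R^n).
\]

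Fix now $1<p<n$ and $1\le q\le\infty$, and choose $p_0,p_1$ and $\theta\in(0,1)$ with $1<p_0<p<p_1<n$ and $1/p=(1-\theta)/p_0+\theta/p_1$; a direct computation gives $1/p^*=(1-\theta)/p_0^*+\theta/p_1^*$. By the real interpolation identity \eqref{inter-Loren},
\[
(L^{p_0},L^{p_1})_{\theta,q}=L^{p,q}(\R^n)\quad\text{and}\quad(L^{p_0^*},L^{p_1^*})_{\theta,q}=L^{p^*,q}(\R^n),
\]
so $S$ and $\nabla S$ extend boundedly to $L^{p,q}\to L^{p^*,q}$ and $L^{p,q}\to L^{p,q}$ respectively, with the desired norm bound. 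A density argument (noting $C_c^\infty$ is dense in $L^{p,q}$ for $q<\infty$, and handling the case $q=\infty$ by duality or by a direct interpolation estimate) produces the solution $u$ for arbitrary $\mathbf{F}\in L^{p,q}$. Part (ii) follows by exactly the same scheme applied to $Tf=N*f$: for $1<r<n/2$, the three operators $T$, $\nabla T$, $\nabla^2 T$ map $L^r(\R^n)$ boundedly into $L^{(r^*)^*}$, $L^{r^*}$, $L^r$ respectively, and interpolating between two such indices $p_0,p_1\in(1,n/2)$ straddling $p$ yields the Lorentz estimate.

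The step that needs the most care is uniqueness. If $u_1,u_2$ are two solutions in the class of part~(i), then $w:=u_1-u_2\in L^{p^*,q}(\R^n)$ is a distributional solution of $\Delta w=0$, hence smooth and harmonic on $\R^n$ by Weyl's lemma. Since $p<n$, the index $r:=p^*$ satisfies $r<\infty$, and by \eqref{Lebesgue-Lorentz inclusion} we have $w\in L^{r,\infty}(\R^n)$. Applying the mean value property on balls $B_R(x)$ together with \eqref{weak-basic-ineq} gives
\[
|w(x)|\;\le\;\frac{1}{|B_R|}\int_{B_R(x)}|w|\,dy\;\le\;C\,|B_R|^{-1/r}\|w\|_{L^{r,\infty}(\R^n)}\;=\;C\,R^{-n/r}\|w\|_{L^{r,\infty}(\R^n)},
\]
and letting $R\to\infty$ forces $w\equiv 0$. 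The same Liouville-type reasoning, applied now in the still larger space $L^{(p^*)^*,q}(\R^n)$ (finite second Sobolev conjugate because $p<n/2$), handles the uniqueness in part~(ii).

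The main obstacle I anticipate is purely bookkeeping: verifying that the interpolation parameters align correctly at the three different scales $L^p,L^{p^*},L^{(p^*)^*}$ simultaneously, and that the density/approximation step is valid uniformly in $q\in[1,\infty]$ (the case $q=\infty$ being the only one where $C_c^\infty$ is not dense, but where one can either interpolate the estimate directly or interpret the solution in a dual sense). The harmonic-function step is the conceptually delicate one but is cleanly reduced, by the weak-$L^r$ inequality quoted above, to the ordinary Liouville theorem.
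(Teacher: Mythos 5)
Your proof plan is sound and is exactly the real-interpolation argument the paper gestures at; in fact, the paper gives no proof of this lemma at all, citing only \cite[Lemmas 3.3, 3.4]{KKP} with the one-line remark that the statement follows from the classical Calderon--Zygmund theory ``by real method.'' Your Liouville step for uniqueness (mean value property combined with \eqref{weak-basic-ineq}) is correct, and the parameter check $1/p^*=(1-\theta)/p_0^*+\theta/p_1^*$ indeed holds. The only point I would tighten is the $q=\infty$ case: you flag it as a concern, and the cleanest resolution is the one you hint at rather than any dual reformulation. Namely, for $p_0<p<p_1$ one has $L^{p,q}(\R^n)\subset L^{p_0}(\R^n)+L^{p_1}(\R^n)$, and the operators $S$, $\nabla S$, $T$, $\nabla T$, $\nabla^2 T$ are already single-valued and bounded on this sum space (they are bounded on each $L^{p_i}$ and agree on the intersection, so they glue); the interpolation identity \eqref{inter-Loren} then yields boundedness on $L^{p,q}$ for every $q\in[1,\infty]$ with no density argument at all, and the identity $-\Delta u=\div\mathbf{F}$ passes to general $\mathbf{F}$ by approximating in $L^{p_0}+L^{p_1}$ rather than in $L^{p,q}$. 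With that adjustment your argument is complete and matches the scheme the paper itself uses in the proofs of Propositions \ref{CZ-estimates} and \ref{CZ-estimates in exterior domains}.
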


\medskip
The following two results can be also deduced by real interpolation from the Calderon-Zygmund estimates in Lemmas \ref{CZ-estimates0} and \ref{CZ-estimates in exterior domains0}.

\begin{proposition}\label{CZ-estimates} Let $\Om $ be a bounded $C^1$-domain in $\R^n$.
Assume that
$1<p<\infty$ and $1 \le q \le \infty$. Then for each ${\mathbf F} \in L^{p,q} (\Omega ; \R^n )$, there exists a unique $u\in W^{1,p,q}_0 (\Om)$ such that
\begin{equation}
 \label{bvp2}
\int_\Om \nb u \cdot \nb \phi \, dx = - \int_\Om {\mathbf F} \cdot \nb \phi \, dx \quad \text{for all } \phi \in C_c^{1} (\Om).
\end{equation}
 Moreover, we have
\begin{equation}
 \label{pq-estimate2}
\|u\|_{W^{1,p,q}(\Omega )} \le C(n, p, q, \Omega ) \|{\mathbf F}\|_{L^{p,q}(\Omega )}.
\end{equation}
Assume in addition that $\Omega$ is a $C^{1,1}$-domain and $f= \div {\mathbf F} \in L^{p,q} (\Omega)$. Then
$$
u \in W^{2, p,q}(\Om) \quad\mbox{and}\quad \|u\|_{W^{2,p,q}(\Omega )} \le C(n, p, q, \Omega ) \|f\|_{L^{p,q}(\Omega )}.
$$
\end{proposition}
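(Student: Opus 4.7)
The plan is to deduce Proposition \ref{CZ-estimates} from the classical $L^p$-version in Lemma \ref{CZ-estimates0} by real interpolation, mimicking the argument used on $\R^n$ in Lemma \ref{sol-Laplace in Rn}. For each $1 < r < \infty$, Lemma \ref{CZ-estimates0} provides a bounded linear solution operator $T : L^r (\Om ; \R^n ) \to W^{1,r}_0 (\Om )$ sending $\mathbf{F}$ to the unique weak solution $u$ of $-\De u = \div \mathbf{F}$ in $\Om$ with $u = 0$ on $\partial\Om$. The uniqueness statement in Lemma \ref{CZ-estimates0} shows that these operators are consistent on intersections, so they patch into a single linear map defined on $L^{p_0}(\Om;\R^n) + L^{p_1}(\Om;\R^n)$ for any $1 < p_0 < p_1 < \infty$, with values in $W^{1,p_0}_0(\Om) + W^{1,p_1}_0(\Om)$.

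Given $1 < p < \infty$ and $1 \le q \le \infty$, I would then choose $p_0 , p_1$ with $1 < p_0 < p < p_1 < \infty$ and $\theta \in (0,1)$ satisfying $1/p = (1-\theta)/p_0 + \theta/p_1$. By the real interpolation identity \eqref{inter-Loren} and the analogous identification
$$
\left( W^{1,p_0}_0 (\Om ),\ W^{1,p_1}_0 (\Om ) \right)_{\theta , q} = W^{1,p,q}_0 (\Om )
$$
for Sobolev--Lorentz spaces on smooth bounded domains, cited from \cite{AdFo,KKP} in the paragraph preceding the proposition, applying the real interpolation functor $(\cdot , \cdot )_{\theta , q}$ to the bounded maps $T : L^{p_j}(\Om ; \R^n) \to W^{1,p_j}_0 (\Om)$, $j = 0, 1$, yields a bounded operator
$$
T : L^{p,q}(\Om ; \R^n ) \to W^{1,p,q}_0 (\Om ),
$$
which gives both existence of a solution $u$ of \eqref{bvp2} and the estimate \eqref{pq-estimate2}. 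Uniqueness is then immediate: any $u \in W^{1,p,q}_0 (\Om)$ solving \eqref{bvp2} with $\mathbf{F} = 0$ belongs, by \eqref{Lebesgue-Lorentz inclusion}, to $W^{1,r}_0 (\Om)$ for every $1 < r < p$, and the $W^{1,r}$-uniqueness from Lemma \ref{CZ-estimates0} forces $u \equiv 0$.

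For the second assertion, assume $\Om$ is $C^{1,1}$ and $f = \div \mathbf{F} \in L^{p,q}(\Om)$. The second part of Lemma \ref{CZ-estimates0} provides a bounded solution operator $S : L^r (\Om) \to W^{2,r}(\Om) \cap W^{1,r}_0(\Om)$, again consistent on intersections. Choosing $1 < p_0 < p < p_1 < \infty$ as above and invoking the corresponding real interpolation identity for the $W^{2,p,q}$-scale with zero trace on a $C^{1,1}$-domain from \cite{AdFo,KKP}, I conclude that $S$ is bounded from $L^{p,q}(\Om)$ into $W^{2,p,q}(\Om)$, giving the stated $W^{2,p,q}$-regularity and estimate.

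The main technical point, and the step I expect to be the real obstacle, is the real interpolation identification of the Sobolev--Lorentz spaces $W^{1,p,q}_0 (\Om)$ and $W^{2,p,q}(\Om) \cap W^{1,p,q}_0(\Om)$ as intermediate spaces between their classical Sobolev counterparts, the subtlety being the compatibility of the homogeneous Dirichlet boundary condition with the $K$-functional on a merely $C^1$ (respectively $C^{1,1}$) bounded domain. Once these interpolation identities are granted (which is precisely why \cite{AdFo,KKP} are invoked in the setup of the subsection), the remainder of the proof reduces to a routine application of real interpolation to the classical Calderon--Zygmund solution operator.
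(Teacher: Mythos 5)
Your proposal is correct, but it routes through a genuinely different key fact than the paper does. You interpolate the solution operator directly between the spaces $W^{1,p_0}_0(\Om)$ and $W^{1,p_1}_0(\Om)$, and therefore need the nontrivial identity $(W^{1,p_0}_0(\Om), W^{1,p_1}_0(\Om))_{\theta,q} = W^{1,p,q}_0(\Om)$ (and its $W^{2}$-analogue) for bounded $C^1$/$C^{1,1}$ domains; as you yourself note, this identification is the real obstacle, since it requires a suitable extension/retraction argument compatible with the vanishing trace. The paper's proof (which is declared "exactly the same as" that of Proposition \ref{CZ-estimates in exterior domains}) deliberately sidesteps that identity. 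Instead, it treats the solution map $T(\mathbf F)$ and its gradient $S(\mathbf F)=\nb T(\mathbf F)$ (and, for the second-derivative estimate, each second partial derivative) as \emph{separate} linear operators into Lebesgue spaces, patches them via the uniqueness lemma on $L^{p_1}+L^{p_2}$, and interpolates each one using only the elementary Lebesgue-to-Lorentz identity \eqref{inter-Loren}. Membership of $u=T(\mathbf F)$ in $W^{1,p,q}_0(\Om)$ is then read off directly: $u$ and $\nb u$ each land in the correct Lorentz space by the component-wise interpolation, and the trace condition is inherited from $u\in W^{1,p_1}_0(\Om)+W^{1,p_2}_0(\Om)$. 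So both arguments work, but the paper's is more elementary and self-contained: it avoids the Sobolev--Lorentz interpolation identity altogether, whereas your proof has outsourced the crucial step to \cite{AdFo,KKP}.
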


\begin{proposition}%
\label{CZ-estimates in exterior domains}
\mbox{}

\begin{enumerate}[(i)]
 \item Let $\Om $ be an exterior $C^1$-domain in $\R^n$, $n \ge 3$. Assume that $n/(n-1)<p<n$ and $1 \le q \le \infty$. Then for each ${\mathbf F} \in L^{p,q} (\Omega ; \R^n )$, there exists a unique $u\in D^{1,p,q}_0 (\Om) \cap L^{p^* , q }(\Om )$
      such that
 \begin{equation}
 \label{bvp3}
\int_\Om \nb u \cdot \nb \phi \, dx = - \int_\Om {\mathbf F} \cdot \nb \phi \, dx \quad \text{for all } \phi \in C_0^{1} (\Om).
 \end{equation}
 Moreover, we have
\begin{equation}\label{pq-estimate}
 \|\nb u \|_{L^{p,q} (\Om )} + \| u \|_{L^{p^* ,q} (\Om )} \le C(n,p ,q,\Om ) \|{\mathbf F}\|_{L^{p,q}(\Om )} .
\end{equation}
 \item Let $\Omega$ be an exterior $C^{1,1}$-domain in $\R^n , n \ge 3.$ Assume that $1<r< n/2$ and $1 \le s \le \infty$. Then for each $f \in L^{r,s} (\Omega)$, there exists a unique
$$
 u \in D^{2,r,s} (\Om ) \cap D_0^{1,r^* ,s} (\Om )\cap L^{(r^* )^* ,s} (\Om )
$$
such that
$$
-\Delta u = f \quad\mbox{a.e. in} \,\,\Omega .
$$
Moreover, we have
$$
 \|\nb^2 u\|_{ L^{r,s} (\Om ) } + \|\nb u\|_{ L^{r^* ,s} (\Om ) } + \| u\|_{ L^{(r^*)^* ,s} (\Om ) } \le C \|f\|_{L^{r,s} (\Omega )}
$$
for some $C= C(n,r ,s,\Om )$.
\end{enumerate}
\end{proposition}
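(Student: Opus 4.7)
My plan is to deduce both parts of the proposition by real interpolation from the Lebesgue-space results in Lemma \ref{CZ-estimates in exterior domains0}, using the identification $L^{p,q}(\Om)=(L^{p_0}(\Om),L^{p_1}(\Om))_{\theta,q}$ from \eqref{inter-Loren} together with the interpolation principle of Lemma \ref{inter-Loren-1}, applied in its standard form that permits the source and target spaces to differ.

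For Part (i), fix $p\in(n/(n-1),n)$ and $q\in[1,\I]$, and pick $p_0,p_1$ with $n/(n-1)<p_0<p<p_1<n$. Lemma \ref{CZ-estimates in exterior domains0}(i) furnishes, for $i=0,1$, a bounded linear solution operator $T_i\colon L^{p_i}(\Om;\R^n)\to \hat D_0^{1,p_i}(\Om)$ satisfying $\|\nb T_i\mathbf{F}\|_{L^{p_i}}+\|T_i\mathbf{F}\|_{L^{p_i^*}}\le C\|\mathbf{F}\|_{L^{p_i}}$. Lemma \ref{uniqueness in Lorentz spaces for exterior domains} forces $T_0$ and $T_1$ to agree on $L^{p_0}\cap L^{p_1}$, so they glue to a single linear operator $T$ on $L^{p_0}+L^{p_1}$; for any $\mathbf{F}\in L^{p,q}$ a $K$-functional decomposition $\mathbf{F}=\mathbf{F}_0+\mathbf{F}_1$ with $\mathbf{F}_i\in L^{p_i}$ produces $u:=T_0\mathbf{F}_0+T_1\mathbf{F}_1$, which is independent of the decomposition and satisfies \eqref{bvp3}. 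The Sobolev-scaling identity $1/p^*=(1-\theta)/p_0^*+\theta/p_1^*$, automatic once $1/p=(1-\theta)/p_0+\theta/p_1$, matches the source and target interpolation exponents; applying real interpolation separately to $\mathbf{F}\mapsto u$ and $\mathbf{F}\mapsto\nb u$ then yields \eqref{pq-estimate}. Uniqueness follows from Lemma \ref{uniqueness in Lorentz spaces for exterior domains} combined with the inclusion $L^{p^*,q}(\Om)\subset L^{p^*,\I}(\Om)$.

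Part (ii) is handled by the same scheme starting from Lemma \ref{CZ-estimates in exterior domains0}(ii), with exponents $r_0,r_1$ satisfying $1<r_0<r<r_1<n/2$. The three Sobolev identities $1/r=(1-\theta)/r_0+\theta/r_1$, $1/r^*=(1-\theta)/r_0^*+\theta/r_1^*$, and $1/(r^*)^*=(1-\theta)/(r_0^*)^*+\theta/(r_1^*)^*$ hold with the same $\theta$, so a single interpolation delivers the three Lorentz bounds on $\nb^2 u$, $\nb u$, and $u$ simultaneously. The step I expect to require the most care is verifying that the interpolated $u$ satisfies the correct boundary trace $u|_{\pd\Om}=0$ and that the constant $c$ in the homogeneous-space characterization recalled before the proposition vanishes. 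For $q<\I$ both facts follow from density of $L^{p_0}\cap L^{p_1}$ in $L^{p,q}$, continuity of the trace on $W^{1,\min(p_0,p_1)}_\loc(\overline{\Om})$, and the fact that $\Om$ has infinite measure, which rules out nonzero constants in $L^{p^*,q}$; for $q=\I$ the same conclusions follow termwise from the $K$-functional decomposition $u=T_0\mathbf{F}_0+T_1\mathbf{F}_1$ introduced above.
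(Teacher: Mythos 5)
Your proposal is correct and follows essentially the same route as the paper's proof: pick exponents on either side of $p$ (resp.\ $r$), invoke Lemma \ref{CZ-estimates in exterior domains0} to obtain the Lebesgue-scale solution operators, glue them via the uniqueness result in Lemma \ref{uniqueness in Lorentz spaces for exterior domains}, and interpolate the maps $\mathbf F\mapsto u$ and $\mathbf F\mapsto\nb u$ (the paper's $T$ and $S$) using \eqref{inter-Loren} and Lemma \ref{inter-Loren-1}. The only cosmetic difference is that you spell out explicitly why the interpolated solution inherits zero trace and carries no additive constant, which the paper handles implicitly by noting $T(\mathbf F)\in D_0^{1,p_1}(\Om)+D_0^{1,p_2}(\Om)$ and $\nb T(\mathbf F)\in L^{p,q}$.
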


\begin{proof}
[Proof of Proposition \ref{CZ-estimates in exterior domains}]
 Assume that $n/(n-1)<p<n$ and $1 \le q \le \infty$. We first prove  existence of a function $u\in D^{1,p,q}_0 (\Om) \cap L^{p^* , q }(\Om )$ satisfying (\ref{bvp3}) and (\ref{pq-estimate}) for each $\mathbf{F} \in L^{p,q}(\Om; \R^n )$.
Choose $p_1 , p_2$ such that $n/(n-1) < p_1 < p < p_2 < n$. Then from Lemma \ref{CZ-estimates in exterior domains0}, it follows that for each $\mathbf{F} \in L^{p_i}(\Om; \R^n )$ there exists a unique $u=T_{p_i} ({\mathbf F}) \in \hat{D}^{1,p_i }_0 (\Om) $ satisfying (\ref{bvp3}). Moreover, the solution operator $T_i = T_{p_i}$ maps $L^{p_i}(\Om; \R^n )$ into $L^{p_i^* }(\Om )$ linearly and boundedly. Define $S_i (\mathbf{F})= \nb T_i (\mathbf{F})$ for all $\mathbf{F} \in L^{p_i}(\Om; \R^n )$. Then $S_i$ is a bounded linear operator from $L^{p_i}(\Om; \R^n )$ into $L^{p_i}(\Om; \R^n )$.
The key observation here is that $T_1 = T_2$ and $S_1 =S_2$ on $L^{p_1}(\Om; \R^n ) \cap L^{p_2}(\Om; \R^n )$, which follows from Lemma \ref{uniqueness in Lorentz spaces for exterior domains}. Hence the operators $T_i$ and $S_i$ can be extended uniquely to linear operators $T$ and $S$ from $L^{p_1}(\Om; \R^n ) + L^{p_2}(\Om; \R^n )$ into $ L^{p_1^* }(\Om ) + L^{p_2^* }(\Om )$ and $L^{p_1}(\Om; \R^n ) + L^{p_2}(\Om; \R^n )$, respectively.
By the real interpolation result (\ref{inter-Loren}), we then deduce that $T$ is bounded from $L^{p,q}(\Om; \R^n )$ into $L^{p^* ,q}(\Om )$ and $S$ is bounded from $L^{p,q}(\Om; \R^n )$ into $L^{p,q}(\Om; \R^n )$.

Let $\mathbf{F} \in L^{p,q}(\Om; \R^n )$ be given. Then since
$ T({\mathbf F}) \in D^{1,p_1 }_0 (\Om)+ D^{1,p_2 }_0 (\Om)$ and $\nb T({\mathbf F}) =S({\mathbf F}) \in L^{p,q}(\Om; \R^n )$, it follows that $T({\mathbf F}) \in D^{1,p ,q}_0 (\Om)$.
Hence $u=T({\mathbf F})$ is a function in $D^{1,p,q}_0 (\Om) \cap L^{p^* , q }(\Om )$ satisfying (\ref{bvp3}) and (\ref{pq-estimate}). Uniqueness of such a function follows again from Lemma \ref{uniqueness in Lorentz spaces for exterior domains}.
This completes the proof of Part (i) of the lemma.

To prove Part (ii), assume that $\Omega$ is a $C^{1,1}$-domain, $1<r< n/2$, and $1 \le s \le \infty$.
Choose $r_1 , r_2$ with $1<r_1 < r<r_2 <n/2$. Given $g \in L^{r_i}(\Om ) $, let $\mathbf{G} = -\nb N(g)$, where $N(g)$ is the Newtonian potential of $g$ over $\Om$.
Then it follows from the Calderon-Zygmund theory that $\mathbf{G} \in L^{r_i^*}(\Om )$, $\|\mathbf{G} \|_{L^{r_i^*}(\Om )} \le C \|g\|_{L^{r_i}(\Om )}$, and $g =\div \mathbf{G} $ in $\Om$.
Define $v=T_{r_i^*}({\mathbf G})$. Then since $v \in \hat{D}^{1,r_i^* }_0 (\Om)$
 and $-\De v = g \in L^{r_i}(\Om ) $ in $\Om$, it follows from Lemma \ref{CZ-estimates in exterior domains0} that $v \in D^{2, r_i}(\Om)$ and $ \| \nb^2 v \|_{L^{r_i }(\Om)} \le C \|g\|_{L^{r_i}(\Om )} $.
For each $i=1,2$, let us define $\tilde{T}_i (g)= T_{r_i^*}( -\nb N(g))$ for each $g \in L^{r_i}(\Om ) $.
Then by Lemma \ref{uniqueness in Lorentz spaces for exterior domains}, we deduce that $\tilde{T}_1 = \tilde{T}_2$ on $L^{r_1 }(\Om ) \cap L^{r_2 }(\Om )$. Hence there exists a unique linear operator $\tilde{T}$ on  $ L^{r_1}(\Om ) + L^{r_2}(\Om )$ that extends both $\tilde{T}_1$ and $\tilde{T}_2$. Moreover, since $\tilde{T}_i$ is bounded from $ L^{r_i}(\Om ) $ into $D^{2, r_i}(\Om) \cap \hat{D}^{1,r_i^* }_0 (\Om)$, it follows from the real interpolation theory that $\tilde{T}$ is bounded from $ L^{r,s}(\Om ) $ into $D^{2,r,s} (\Om ) \cap D_0^{1,r^* ,s} (\Om )\cap L^{(r^* )^* ,s} (\Om )$.
It is obvious that if $f \in L^{r,s}(\Om)$, then $  u = \tilde{T}(f)$ is a function satisfying   all the desired properties. Uniqueness of such a function  follows again from Lemma \ref{uniqueness in Lorentz spaces for exterior domains}.
 This completes the proof.
\end{proof}

\begin{proof}
[Proof of Proposition \ref{CZ-estimates}]
The proof is exactly the same as that of Proposition \ref{CZ-estimates in exterior domains} and so omitted.
\end{proof}

\section{Boundeness and higher integrability of weak solutions}
\label{Sec4}

In this section, we establish several existence and regularity results for weak solutions of the Dirichlet problem \eqref{bvp} and its dual problem \eqref{bvp-dual}.

Existence of weak solutions is easily deduced from Lemma \ref{th1-general}, by applying the Lax-Milgram theorem.

\begin{lemma}\label{th4-existence} Let $\Om $ be any domain in $\R^n$, $n \ge 3$. Assume that $\mathbf{b} \in L^{n,\I}(\Om;\R^n)$ and $\div \mathbf{b} \ge 0$ in $\Om$.
\begin{enumerate}[(i)]
 \item For every ${\mathbf F} \in L^{2}(\Om ;\R^n ) $, there exists a unique weak solution $u $ of \eqref{bvp} with $f=\div {\mathbf F}$. Moreover, we have
\begin{equation}
\label{energy-estimate}
\|\nb u \|_{L^2 (\Om )} \le \|{\mathbf F} \|_{L^{2}(\Om )}.
\end{equation}
 \item For every ${\mathbf G} \in L^{2}(\Om ;\R^n ) $, there exists a unique weak solution $v $ of \eqref{bvp-dual} with $g=\div {\mathbf G}$. Moreover, we have
$$
\|\nb v \|_{L^2 (\Om )} \le \|{\mathbf G} \|_{L^{2}(\Om )}.
$$
\end{enumerate}
\end{lemma}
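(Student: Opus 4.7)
The plan is to establish both parts by a direct application of the Lax--Milgram theorem on the Hilbert space $H := \hat{D}_0^{1,2}(\Om)$ equipped with the inner product $(u,v)_H = \int_\Om \nb u \cdot \nb v \, dx$; by definition of $\hat{D}_0^{1,2}(\Om)$ and the Sobolev embedding, this is indeed a Hilbert space (and in the exterior-domain case, the associated norm $\|\nb \cdot\|_{L^2}$ controls the $L^{2^*}$-norm). For part~(i), consider the bilinear form $B(u,v) = \int_\Om \bke{\nb u - u\bb} \cdot \nb v\, dx$ and the linear functional $L(\phi) = - \int_\Om \bF \cdot \nb \phi \, dx$ on $H$.

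First I would check boundedness of $B$: the bilinear estimate in Lemma~\ref{th1-general} with $p=2$ gives $\norm{u\bb}_{L^2(\Om)} \le C \norm{\bb}_{L^{n,\I}(\Om)} \norm{\nb u}_{L^2(\Om)}$, hence
\[
 |B(u,v)| \le \bke{1 + C\norm{\bb}_{L^{n,\I}(\Om)}} \norm{\nb u}_{L^2(\Om)} \norm{\nb v}_{L^2(\Om)}
\]
for all $u,v \in H$. Coercivity is the crucial point but is already packaged in Lemma~\ref{th1-general}: since $\div \bb \ge 0$ in $\Om$, that lemma yields $-\int_\Om (u\bb)\cdot \nb u\, dx \ge 0$ for every $u \in \hat{D}_0^{1,2}(\Om)$, and therefore
\[
 B(u,u) = \norm{\nb u}_{L^2(\Om)}^2 - \int_\Om (u\bb)\cdot \nb u\, dx \ \ge \ \norm{\nb u}_{L^2(\Om)}^2 .
\]
The functional $L$ is bounded on $H$ by Cauchy--Schwarz: $|L(\phi)| \le \norm{\bF}_{L^2(\Om)} \norm{\nb \phi}_{L^2(\Om)}$. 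By Lax--Milgram there is a unique $u \in H$ with $B(u,\phi)=L(\phi)$ for every $\phi \in H$; restricting to $\phi \in C_c^1(\Om) \subset H$ shows $u$ is a weak solution of \eqref{bvp}. The a priori estimate \eqref{energy-estimate} follows by taking $\phi = u$ (allowed since $u\bb \in L^2(\Om;\R^n)$ by Lemma~\ref{th1-general}, so the identity extends from $C_c^1$ to $H$ by density):
\[
 \norm{\nb u}_{L^2(\Om)}^2 \le B(u,u) = L(u) \le \norm{\bF}_{L^2(\Om)} \norm{\nb u}_{L^2(\Om)} .
\]
Uniqueness is part of Lax--Milgram; alternatively, if $u_1,u_2$ are two solutions, then $w = u_1-u_2$ satisfies $B(w,\phi)=0$ for $\phi \in C_c^1(\Om)$, hence (by density and the bound $\norm{u\bb}_{L^2} \lesssim \norm{\nb u}_{L^2}$) for $\phi = w$, giving $\norm{\nb w}_{L^2}^2 \le B(w,w) = 0$.

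Part~(ii) is obtained by exactly the same argument applied to the transposed form $\widetilde{B}(v,\psi) := B(\psi,v)$ on $H$: boundedness is unchanged, and coercivity uses $\widetilde{B}(v,v) = B(v,v) \ge \norm{\nb v}_{L^2}^2$ by the same nonnegativity ingredient from Lemma~\ref{th1-general}. I do not anticipate a real obstacle here; the only point that deserves a line of care is the extension of the test-function identity from $C_c^1(\Om)$ to $H$, which is routine by density in $\hat{D}_0^{1,2}(\Om)$ together with the bilinear bound $\norm{u\bb}_{L^2} \le C \norm{\bb}_{L^{n,\I}} \norm{\nb u}_{L^2}$ that ensures each term in $B$ is continuous in its arguments on $H$.
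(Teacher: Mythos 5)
Your argument is correct and is essentially the paper's own proof: both invoke the Lax--Milgram theorem on $\hat{D}_0^{1,2}(\Om)$ with the same bilinear form, obtaining boundedness from the $L^{n,\infty}$ bilinear estimate and coercivity from the nonnegativity $-\int_\Om (u\bb)\cdot\nb u\,dx \ge 0$ in Lemma~\ref{th1-general}, and both handle the density step needed to pass from $C_c^1(\Om)$ test functions to all of $\hat{D}_0^{1,2}(\Om)$. The only cosmetic difference is that you phrase part~(ii) explicitly via the transposed form $\widetilde B(v,\psi)=B(\psi,v)$, whereas the paper simply states that the proof is the same.
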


\begin{proof} Let ${\mathbf F} \in L^{2}(\Om ;\R^n ) $ be given. By Lemma \ref{th1-general}, the bilinear form
$$
B(u,v)= \int_\Om \bke{\nb u - u \mathbf{b}} \cdot \nb
v \, dx
$$
is well-defined, bounded, and coercive on $\hat{D}_0^{1,2}(\Om )$. Hence it follows from the Lax-Milgram theorem that there exists a unique $u \in \hat{D}_0^{1,2}(\Om )$ such that
\begin{equation}\label{weak for-LM}
B(u,v) =-\int_\Om {\mathbf F} \cdot \nb v \, dx
\end{equation}
for all $v \in \hat{D}_0^{1,2}(\Om )$. This function $u$ is a weak solution of \eqref{bvp} with $f=\div {\mathbf F}$ in the sense of Definition \ref{def2.1}. Moreover, taking $v =u$ in (\ref{weak for-LM}), we have
\[
\int_\Omega |\nabla u|^2 \, dx \le B(u,u) =-\int_\Om {\mathbf F} \cdot \nb u \, dx,
\]
from which the estimate \eqref{energy-estimate} follows by the Cauchy-Schwartz inequality.

It remains to prove uniqueness of a weak solution.
 Let $w $ be a weak solution of \eqref{bvp} with $f=\div {\mathbf F}$. Given $v \in \hat{D}_0^{1,2}(\Om )$, let $\{v_k\}$ be a sequence in $C_c^\infty (\Om )$ converging to $v$ in $\hat{D}_0^{1,2}(\Om )$. Then by Lemma \ref{th1-general} again,
\begin{align*}
 B(w,v ) & = \lim_{k \to\infty} B\left(w, v_k \right) \\
&= - \lim_{k \to\infty} \int_\Om {\mathbf F} \cdot \nb v_k \, dx= -\int_\Om {\mathbf F} \cdot \nb v \, dx .
\end{align*}
Hence $w$ satisfies (\ref{weak for-LM}) for all $v \in \hat{D}_0^{1,2}(\Om )$. By the uniqueness assertion of the Lax-Milgram theorem, we conclude that $w=u$. This completes the proof of the first part (i). The proof of the second one is exactly the same and so omitted.
\end{proof}

\subsection{Boundedness and higher integrability}

First, by standard iteration techniques, we prove boundedness of weak solutions of the problems  \eqref{bvp} and \eqref{bvp-dual} on bounded domains.

\begin{lemma}\label{th4-boundedness} Let $\Om $ be any bounded domain in $\R^n$, $n \ge 3$. Assume that $\mathbf{b} \in L^{n,\I}(\Om;\R^n)$, $\div \mathbf{b} \ge 0$ in $\Om$, and $n<p< \infty$.
\begin{enumerate}[(i)]
 \item For every ${\mathbf F} \in L^{p}(\Om ;\R^n ) $, there exists a unique weak solution $u $ of \eqref{bvp} with $f=\div {\mathbf F}$. Moreover, we have
\begin{equation}\label{global bound}
u \in L^\infty (\Om ) \quad\mbox{and}\quad \|u\|_{L^\infty (\Om )} \le C(n,p ) ({\rm diam}\, \Om )^{1-n/p} \|{\mathbf F}\|_{L^p (\Omega )} .
\end{equation}
 \item For every ${\mathbf G} \in L^{p}(\Om ;\R^n ) $, there exists a unique weak solution $v $ of \eqref{bvp-dual} with $g=\div {\mathbf G}$. Moreover, we have
$$%
v \in L^\infty (\Om ) \quad\mbox{and}\quad \|v\|_{L^\infty (\Om )} \le C(n,p ) ({\rm diam}\, \Om )^{1-n/p} \|{\mathbf G}\|_{L^p (\Omega )} .
$$%
\end{enumerate}
\end{lemma}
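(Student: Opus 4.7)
The plan is to obtain existence and uniqueness directly from Lemma \ref{th4-existence} (applicable since $L^p(\Om)\subset L^2(\Om)$ for bounded $\Om$), and to derive the $L^\infty$ bound by a Stampacchia--De Giorgi iteration. The hypotheses $p>n$ and $\div\mathbf{b}\ge 0$ play complementary roles: the former makes the iteration self-improving, while the latter keeps the drift from obstructing the energy absorption.

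For part (i), let $u$ be the weak solution and test \eqref{eq1.3} with $\phi=(u-k)_+\in\hat{D}_0^{1,2}(\Om)$ for $k\ge 0$. On $A(k):=\{u>k\}$, writing $u=(u-k)+k$ recasts the drift contribution as
$$
-\int_\Om u\,\mathbf{b}\cdot\nabla(u-k)_+\,dx = -\int_\Om \mathbf{b}\cdot\nabla\bkt{\tfrac12(u-k)_+^2 + k(u-k)_+}dx.
$$
The function inside the gradient is a nonnegative element of $\hat{D}_0^{1,2}(\Om)$, so, after a standard density argument approximating it by nonnegative $C_c^1(\Om)$ functions, the hypothesis $\div\mathbf{b}\ge 0$ forces this integral to be nonnegative; it can therefore be discarded. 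Cauchy--Schwarz then yields $\int_{A(k)}|\nabla u|^2\,dx\le\int_{A(k)}|\mathbf{F}|^2\,dx$.

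Applying Sobolev to $(u-k)_+$ and H\"older with exponents $p/2$ and $p/(p-2)$ gives
$$
\norm{(u-k)_+}_{L^{2^*}(\Om)}^2\le C(n)\,\norm{\mathbf{F}}_{L^p(\Om)}^2\,|A(k)|^{1-2/p},
$$
where $2^*=2n/(n-2)$. Since for $h>k$ Chebyshev on $A(h)\subset A(k)$ gives $|A(h)|(h-k)^{2^*}\le\norm{(u-k)_+}_{L^{2^*}}^{2^*}$, we conclude
$$
|A(h)|\le C(n,p)\,\norm{\mathbf{F}}_{L^p(\Om)}^{2^*}\,(h-k)^{-2^*}\,|A(k)|^{\alpha}, \qquad \alpha:=\tfrac{2^*}{2}\bke{1-\tfrac{2}{p}}.
$$
The hypothesis $p>n$ is exactly what forces $\alpha>1$. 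Stampacchia's lemma then yields $u\le d$ a.e.\ with $d\le C(n,p)\,\norm{\mathbf{F}}_{L^p(\Om)}\,|\Om|^{(\alpha-1)/2^*}$; a short arithmetic check gives $n(\alpha-1)/2^*=1-n/p$, and $|\Om|\le C_n(\mathrm{diam}\,\Om)^n$ delivers the stated bound. Applying the same argument to $-u$, a weak solution with data $-\mathbf{F}$, produces the matching lower bound. Part (ii) is proved identically: testing \eqref{bvp-dual-wk} with $\psi=(v-k)_+$, the drift term reduces to $\int_\Om\mathbf{b}\cdot\nabla\bkt{\tfrac12(v-k)_+^2}dx\le 0$ by $\div\mathbf{b}\ge 0$, again with exactly the sign the energy estimate needs.

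The only genuinely technical point is the justification of the sign of the drift term, i.e.\ extending the hypothesis $\div\mathbf{b}\ge 0$ from nonnegative $C_c^1(\Om)$ test functions to nonnegative elements of $\hat{D}_0^{1,2}(\Om)$; this is a routine density step via mollification and cutoff. Everything else is bookkeeping around the classical De Giorgi--Stampacchia scheme.
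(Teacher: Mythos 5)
Your proposal is correct, and it takes a genuinely different route for part (i) than the paper does. The paper proves part (i) by Moser's iteration (testing with power-type functions of the truncated solution $T_N(u)+K$, leading to a reverse H\"older chain $\|w\|_{L^{\gamma\chi q}}\lesssim\gamma\|w\|_{L^{\gamma q}}$), and then switches to a Stampacchia level-set iteration for part (ii) after remarking that Moser's test functions do not transport through the dual drift term $\int\psi\,\bb\cdot\nb v$. You instead run the De Giorgi--Stampacchia level-set iteration uniformly for \emph{both} parts, and the observation that makes this work---that the drift contribution after testing with $(u-k)_+$ (resp.\ $(v-k)_+$) collapses to $\int_\Om\bb\cdot\nb[\text{nonnegative function}]\,dx$, which has the right sign under $\div\bb\ge 0$---is exactly correct for both \eqref{bvp} and \eqref{bvp-dual}. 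This is arguably cleaner than the paper's treatment, since it removes the asymmetry between (i) and (ii) and avoids Moser altogether. Your arithmetic ($\alpha>1\iff p>n$ and $n(\alpha-1)/2^*=1-n/p$) checks out, and the final constant depends only on $n,p$ as required.

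One small imprecision worth flagging: you describe $\tfrac12(u-k)_+^2 + k(u-k)_+$ as ``a nonnegative element of $\hat{D}_0^{1,2}(\Om)$,'' but before boundedness of $u$ is established, $(u-k)_+^2$ need not lie in $W^{1,2}$ (its gradient $(u-k)_+\nb(u-k)_+$ is a priori only in $L^{n/(n-1)}$). The fix is routine and is essentially what the paper's Lemma \ref{th1-general} already does: split the drift term into $-\int(u-k)_+\bb\cdot\nb(u-k)_+\,dx$, which is $\ge 0$ by Lemma \ref{th1-general} applied directly to $(u-k)_+\in W_0^{1,2}(\Om)$, and $-k\int\bb\cdot\nb(u-k)_+\,dx$, which is $\ge 0$ by approximating the nonnegative $W_0^{1,2}$ function $(u-k)_+$ by nonnegative $C_c^\infty$ functions (using $\bb\in L^2(\Om)$ on the bounded domain to pass to the limit in $\int\bb\cdot\nb\phi_j\,dx$). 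With that decomposition the density step you defer is genuinely routine, and the rest of your argument needs no change.
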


\begin{proof}[Proof of Lemma \ref{th4-boundedness} (i)] By scaling, we may assume that ${\rm diam}\, \Om =1$. Let ${\mathbf F} \in L^{p}(\Om ;\R^n ) $ be given. Then since $\Omega$ is bounded and $ p>2$, it follows from  Lemma \ref{th4-existence} that  there exists a unique weak solution $u$ of \eqref{bvp}, which satisfies
$$
\|\nb u \|_{L^2 (\Om )} \le \|{\mathbf F}\|_{L^2 (\Omega )} \le C( p )K,
$$
where $K = \|{\mathbf F}\|_{L^{p}(\Om )}$. We may assume that $K>0$.
To prove boundedness of $u$, we first consider the positive part of $u$, $u^+ = \max \{u,0 \}$. First, from the proof of Lemma \ref{th4-existence}, we recall that
\begin{equation}\label{weak-form-02}
\int_\Om \nb u \cdot \nb \phi \, dx = \int_\Om \left( u \mathbf{b} - \mathbf{F} \right) \cdot \nb
 \phi \, dx \quad \text{for all } \phi \in W^{1,2}_{0}(\Om) .
\end{equation}

For a fixed number $N>K $, let $w=T_N (u) +K$, where $T_N :\R \to [0,N]$ is defined by $T_N (t)= 0$ for $t<0$, $T_N (t)=t$ for $0 \le t \le N$, and $T_N (t)=N$ for $t>N$. Since $T_N$ is piecewise linear and $T_N (0)=0$, we easily show that $w -K  =T_N (u)\in W_0^{1,2}(\Om )$ and $\nb w = \chi_{\{0< u < N \}} \nb u$, by adapting the proofs of \cite[Lemmas 7.5, 7.6]{GilTru}.
Next, for a number $\beta \ge 1$, let $G$ be a $C^1$-function on $[0,\infty )$ such that $G(t)=t^\beta -K^\beta$ for $0 \le t \le N+K$ and $G'(t)= G'(N+K)$ for $t >N+K$.
Then since $G'$ is bounded on $[0,\infty )$ and $G(K)=0$, it follows that $G(w) \in W_0^{1,2}(\Om )$ and $\nb G(w)= \beta w^{\beta-1} \nb w =\chi_{\{0<u<N\}} \beta w^{\beta-1} \nb u$.
Hence taking $\phi =G(w)$ in (\ref{weak-form-02}), we obtain
\begin{equation}\label{weak-form-test2}
\int_\Om w^{\beta -1} |\nb w|^2 \,dx = \int_\Om w^{\beta -1} (w-K) \mathbf{b} \cdot \nb w \, dx - \int_\Om w^{\beta -1} \mathbf{F} \cdot \nb w \, dx .
\end{equation}
Let us write
$$
w^{\beta -1} (w-K) \nb w = \nb H(w) ,
$$
where
$$
H(t)= \frac 1{\beta +1} \left( t^{\beta +1} -K^{\beta +1} \right) -\frac{K}{\beta}\left( t^{\beta } -K^{\beta } \right)
$$
for $t>0$. Since $H'(t) = (t-K)t^{\beta-1}$, it follows that $H(t)>0$ for all $t \neq K$.
By L'Hospital's rule,%
\begin{align*}
\lim_{t \to K^{+}} \frac{H(t)^{1/2}}{t-K}&= \lim_{t \to K^{+}} \left[H(t)^{1/2}\right]' =\left\{\lim_{t \to K^{+}} \frac{[H'(t)]^2}{4 H(t)}\right\}^{1/2}\\
&= \left[\lim_{t \to K^{+}} \frac 12 H''(t) \right]^{1/2} = \left(\frac 12 K^{\beta -1}\right)^{1/2},
\end{align*}
which shows that $H^{1/2}$ is a $C^1$-function on $ [K,\infty )$. Since $H^{1/2} (K)=0$, $K \le w \le K+N$ in $\Om$, and $w-K \in W_0^{1,2}(\Om )$, it follows that $ H(w)^{1/2} \in W_0^{1,2}(\Om )$. Hence recalling that $\div {\mathbf b} \ge 0$ in $\Omega$, we deduce from (\ref{weak-form-test2}) and Lemma \ref{th1-general}
that
$$
\int_\Om w^{\beta -1} |\nb w|^2 \,dx \le - \int_\Om w^{\beta -1} \mathbf{F} \cdot \nb w \, dx
$$
and so
$$
\int_\Om w^{\beta -1} |\nb w|^2 \,dx \le \int_\Om w^{\beta -1} | \mathbf{F}|^2 \, dx .
$$

Iterating this inequality, which does not involve $\bb$, we will derive the $L^\infty$-estimate (\ref{global bound}).
Note first that $w^{(\beta +1)/2}-K^{(\beta +1)/2} \in W_0^{1,2}(\Om )$. Hence by Sobolev's inequality,
\begin{align*}
 \left\| w^{(\beta +1)/2}-K^{(\beta +1)/2} \right\|_{L^{2^*} (\Om )}^2
& \le 4 C \int_\Om \left|\nb w^{(\beta +1)/2} \right|^2 \, dx \\
& = C(\beta+1)^2 \int_\Om w^{\beta -1} |\nb w|^2 \,dx \\
& \le C(\beta+1)^2 \int_\Om w^{\beta -1} | \mathbf{F}|^2 \, dx.
\end{align*}
Since $K = \|{\mathbf F}\|_{L^{p}(\Om )}\le w$, we have
\begin{align*}
 \left\| w^{(\beta +1)/2}-K^{(\beta +1)/2} \right\|_{L^{2^*} (\Om )}^2 &
 \le C (\beta+1)^2 \int_\Om w^{\beta +1} \left|\frac{\mathbf{F}}{K}\right|^2 \, dx \\
& \le C (\beta+1)^2 \left\| w^{\beta +1} \right\|_{L^{p/{(p-2)}}(\Om )} .
\end{align*}
Recalling again that $K \le w$, we easily obtain
$$
 \left\| w^{(\beta +1)/2} \right\|_{L^{2^*} (\Om )}
 \le C (\beta+1) \left\|w^{\beta +1} \right\|_{L^{p/{(p-2)}}(\Om )}^{1/2}
$$
or equivalently
$$
 \left\| w \right\|_{L^{\gamma \chi q} (\Om )}^{\gamma}
 \le C \gamma \left\| w \right\|_{L^{\gamma q}(\Om )}^{\gamma},
$$
where
$$
\gamma =\frac{\beta +1}2 \ge 1, \quad  q =\frac{2p}{p-2}< 2^* ,\quad \chi = \frac{n(p-2)}{p(n-2)}>1 ,
$$
and   $C>1 $ is a constant depending  only on $n$ and $p$. Now, taking $\gamma =1, \chi, ..., \chi^{m-1} $, we have
$$
\left\| w \right\|_{L^{ \chi^m q} (\Om )} \le \prod_{k=0}^{m-1} \left( C \chi^k \right)^{1/\chi^k} \left\| w \right\|_{L^{ q}(\Om )}\le C^* \left\| w \right\|_{L^{ q}(\Om )}
$$
for each $m \in \N$, where
$$
C^*= \prod_{k=0}^{\infty} \left( C \chi^k \right)^{1/\chi^k} =C^{\sum_{k=0}^\infty \chi^{-k}}\chi^{\sum_{k=0}^\infty k \chi^{-k}} <\infty .
$$
Letting $m \to \infty$, we deduce that
$$
\left\| T_N (u)+ K \right\|_{L^{\infty} (\Om )} = \left\|w \right\|_{L^{\I } (\Om )}\le C^* \left\| T_N (u)+K \right\|_{L^{ q}(\Om )}
$$
Therefore, letting $N\to\infty$, we conclude that
$$
\left\|u^+ \right\|_{L^{\infty} (\Om )} \le C \left( \| u^+ \|_{L^{2^*}(\Om )} + K \right) \le C(n,p  ) \|{\mathbf F}\|_{L^{p}(\Om )}.
$$
Similarly, the negative part $u^-$ satisfies the same estimate. This completes the proof of  Part (i) of  the lemma.
\end{proof}

\begin{proof}[Proof of Lemma \ref{th4-boundedness} (ii)]
By scaling, we may assume that ${\rm diam}\, \Om =1$.
Let ${\mathbf G} \in L^p (\Om ; \R^n )$ be given. Without loss of generality, we may assume by linearity that $\|{\mathbf G}\|_{L^p (\Omega )} \le 1$. Then  by Lemma \ref{th4-existence} and its proof, there exists a unique weak solution $v$ of \eqref{bvp-dual}, which satisfies
\begin{equation}\label{weak-form-02-dual}
\int_\Om \nb v \cdot \nb \psi \, dx = \int_\Om \left( \psi \mathbf{b} \cdot \nabla v - \mathbf{G} \cdot \nabla \psi \right) \, dx \quad \text{for all } \psi \in W^{1,2}_{0}(\Om)
\end{equation}
and
$$%
\|\nb v \|_{L^2 (\Om )} \le \|{\mathbf G}\|_{L^2 (\Omega )} \le C(  p  ).
$$%

The proof of Lemma \ref{th4-boundedness} (i) by Moser's iteration method can not be easily adapted  to prove an $L^\infty$-estimate for $v$. (For example, the integral $\int \psi \bb \cdot \nb v$ with $\psi  =T_N(v)$ cannot be simplified as $\int u \bb \cdot \nb \phi$ with $\phi  =T_N(u)$.)
 Instead, we  utilize  an iteration argument in \cite{KK} based on a lemma due to Stampacchia \cite{Sta}.
For each $l \ge 0$, define
\[
 H_l (t) =\begin{cases}
 \ t -l & \text{if } t \ge l , \\
 \ t + l & \text{if } t \le - l , \\
 \ \,\,\,\,\, 0 & \text{if }  |t| < l.
\end{cases}
\]
Note that $H_l (v) \in W_0^{1,2}(\Omega )$, $H_l (v)=0$ on $A_l^c$, and $\nabla H_l (v) =\nabla v$ in $A_l$, where $A_l = \{x\in \Om : |v(x)| > l\}$. Hence taking $\psi =H_l (v)$ in (\ref{weak-form-02-dual}) and using Lemma \ref{th1-general}, we have
\begin{align*}
\int_\Om |\nabla H_l (v) |^2 \, dx &= \int_\Om H_l (v) \mathbf{b} \cdot \nabla H_l (v) \, dx - \int_\Om \mathbf{G} \cdot \nabla H_l (v) \, dx \\
& \le -\int_\Om \mathbf{G} \cdot \nabla H_l (v) \, dx \\
&\le \|\mathbf{G}\|_{L^p (\Om )} \| \nabla H_l (v) \|_{L^{p'}(\Om)}\\
& \le    \| \nabla H_l (v) \|_{L^{p'}(A_l )}.
\end{align*}
By the H\"{o}lder and Sobolev inequalities, we thus have
$$
\| H_l (v) \|_{L^{2^*}(\Om)} \le C(n   )|A_l |^{1/{p'} -1/2} .
$$
Noting now that if $0 \le l < h$, then $A_h \subset A_l$ and $|H_l (v) |\ge h-l$ on $A_h$, we deduce that
$$
(h-l) |A_h|^{1/2^*} \le C(n  )|A_l |^{1/{p'} -1/2}
$$
for $0 \le l < h$. Iterating this inequality, we can show that $|A_C | =0 $, that is, $\|v\|_{L^\infty (\Om)} \le C$ for some constant $C=C(n, p  )$; see \cite[Subsection 5.2]{KK} for more details. The proof of Lemma \ref{th4-boundedness} (ii) is complete.
\end{proof}
\medskip

Moser-type arguments can be then used to prove higher integrability of weak solutions of the problems \eqref{bvp} and \eqref{bvp-dual}.

\begin{lemma}\label{th4-higher integrability} Let $\Om $ be any bounded domain in $\R^n$, $n \ge 3$.
Assume that $\mathbf{b} \in L^{n ,\I}(\Om;\R^n)$, $\div \mathbf{b} \ge 0$ in $\Om$, and $2 \le p<n$.
\begin{enumerate}[(i)]
 \item For every ${\mathbf F} \in L^{p}(\Om ;\R^n ) $, there exists a unique weak solution $u$ of \eqref{bvp} with $f=\div {\mathbf F}$. Moreover, we have
\begin{equation}\label{Lp*-estimate for u}
 u \in L^{p^*}(\Om ) \quad\mbox{and}\quad \|u\|_{L^{p^*} (\Om )} \le C(n,p ) \|{\mathbf F}\|_{L^{p}(\Om )}.
\end{equation}
 \item For every ${\mathbf G} \in L^{p}(\Om ;\R^n ) $, there exists a unique weak solution $v$ of \eqref{bvp-dual} with $g=\div {\mathbf G}$. Moreover, we have
$$
 v \in L^{p^*}(\Om ) \quad\mbox{and}\quad \|v\|_{L^{p^*} (\Om )} \le C(n,p ) \|{\mathbf G}\|_{L^{p}(\Om )}.
$$
\end{enumerate}
\end{lemma}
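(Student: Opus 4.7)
The plan is to first obtain existence and uniqueness of the weak solutions, and then derive the $L^{p^{*}}$-integrability by a single-step Moser-type argument with a carefully chosen exponent. Since $\Om$ is bounded and $p\ge 2$, the hypotheses $\mathbf{F},\mathbf{G}\in L^p(\Om;\R^n)$ imply $\mathbf{F},\mathbf{G}\in L^2(\Om;\R^n)$, so existence and uniqueness of the weak solutions $u,v\in \hat{D}_0^{1,2}(\Om)=W_0^{1,2}(\Om)$ follow directly from Lemma \ref{th4-existence}. For the integrability claims I fix the exponent
\[
\alpha=\frac{p(n-2)}{2(n-p)}\ge 1,
\]
which by a direct calculation satisfies both $\alpha\cdot 2^{*}=p^{*}$ and $(2\alpha-2)p/(p-2)=p^{*}$; the endpoint $p=2$ reduces to the energy estimate combined with Sobolev embedding.

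For Part (i), I would adapt the iteration step in the proof of Lemma \ref{th4-boundedness}(i), but target $L^{p^{*}}$ instead of $L^{\infty}$. It suffices to estimate $u^{+}$ (the case of $u^{-}$ is symmetric). For $M>0$ set $u_{M}=\min(u^{+},M)\in W_0^{1,2}(\Om)\cap L^{\infty}(\Om)$ and test the weak formulation (\ref{weak-form-02}) against $\phi=u_{M}^{2\alpha-1}\in W_0^{1,2}(\Om)\cap L^{\infty}(\Om)$. By the Sobolev chain rule, $\nb\phi=(2\alpha-1)u_{M}^{2\alpha-2}\nb u_{M}$ with $\nb u_{M}=\chi_{\{0\le u\le M\}}\nb u$, and the pointwise identity
\[
u\cdot u_{M}^{2\alpha-2}\nb u_{M}=\nb\bigl(u_{M}^{2\alpha}/(2\alpha)\bigr)
\]
(trivial on $\{0\le u\le M\}$ where $u=u_{M}$, and both sides vanishing elsewhere because $\nb u_{M}=0$) converts the drift term into $\tfrac{2\alpha-1}{2\alpha}\int_{\Om}\mathbf{b}\cdot\nb u_{M}^{2\alpha}\,dx$, which is nonpositive by the hypothesis $\div\mathbf{b}\ge 0$ once extended from $C_c^{1}$ to the nonnegative Sobolev test $u_{M}^{2\alpha}\in W_0^{1,2}(\Om)\cap L^{\infty}(\Om)$ by density. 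Young's inequality on the $\mathbf{F}$-term then yields
\[
\int_{\Om}u_{M}^{2\alpha-2}|\nb u_{M}|^{2}\,dx\le\int_{\Om}u_{M}^{2\alpha-2}|\mathbf{F}|^{2}\,dx.
\]
Sobolev's inequality applied to $u_{M}^{\alpha}\in W_0^{1,2}(\Om)$ on the left, and H\"{o}lder's inequality with exponents $p/(p-2)$ and $p/2$ on the right, give
\[
\|u_{M}\|_{L^{p^{*}}}^{2\alpha}\le C(n,p)\,\|u_{M}\|_{L^{p^{*}}}^{2\alpha-2}\|\mathbf{F}\|_{L^{p}}^{2},
\]
by the choice of $\alpha$. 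Since $u_{M}\in L^{\infty}(\Om)$ on the bounded domain $\Om$, the factor $\|u_{M}\|_{L^{p^{*}}}^{2\alpha-2}$ is finite and can be cancelled, giving $\|u_{M}\|_{L^{p^{*}}}\le C(n,p)\|\mathbf{F}\|_{L^{p}}$ uniformly in $M$; Fatou's lemma as $M\to\infty$ closes the estimate.

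For Part (ii) the scheme is identical, but the test function must be chosen so that the convection term $\mathbf{b}\cdot\nb v$ becomes a full divergence after pairing. I would take $\psi=F_{M}(v)$, where $F_{M}:\R\to\R$ is the $C^{1}$ odd function with $F_{M}'(t)=(2\alpha-1)\min(|t|,M)^{2\alpha-2}$ and $F_{M}(0)=0$; this is bounded Lipschitz, so $\psi\in W_0^{1,2}(\Om)\cap L^{\infty}(\Om)$. Its antiderivative $H_{M}(t)=\int_{0}^{t}F_{M}(s)\,ds$ is even and nonnegative, so testing (\ref{weak-form-02-dual}) converts the drift contribution into $\int_{\Om}\mathbf{b}\cdot\nb H_{M}(v)\,dx\le 0$, and the remaining computation proceeds as in Part (i) with $v_{M}=T_{M}(v)$ and $|v_{M}|^{\alpha}$ in place of $u_{M}$ and $u_{M}^{\alpha}$, yielding $\|v_{M}\|_{L^{p^{*}}}\le C\|\mathbf{G}\|_{L^{p}}$ and then the claim after $M\to\infty$. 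The main technical obstacle in both parts is the rigorous extension of the weak hypothesis $\div\mathbf{b}\ge 0$, which a priori applies only to nonnegative $C_c^{1}$ test functions, to the Sobolev test functions $u_{M}^{2\alpha}$ and $H_{M}(v)$; this I would handle by approximation in $W_0^{1,2}(\Om)\cap L^{\infty}(\Om)$ by nonnegative $C_c^{\infty}(\Om)$ functions, using the bilinear estimate of Lemma \ref{th1-general} (together with $\mathbf{b}\in L^{n,\infty}(\Om)\subset L^{2}(\Om)$ on the bounded domain) to pass to the limit in the pairing $\int_{\Om}\mathbf{b}\cdot\nb\varphi_{k}\,dx$.
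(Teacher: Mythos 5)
Part (i) is essentially correct and takes a mildly different route: you truncate the solution itself, $u_M=\min(u^+,M)$, and close with Fatou as $M\to\infty$, whereas the paper first reduces to $\mathbf F\in C_c^\infty$, uses Lemma \ref{th4-boundedness} to get $u\in L^\infty$, tests with a $C^1$ modification of $t\mapsto t^\beta$, and concludes by density. Both approaches give the same one-step Moser estimate; yours avoids invoking the $L^\infty$ lemma but requires the small extra manoeuvre of cancelling the finite factor $\|u_M\|_{L^{p^*}}^{2\alpha-2}$.

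Part (ii) has a genuine gap. First, $F_M$ is Lipschitz but \emph{not} bounded: since $F_M'(t)=(2\alpha-1)M^{2\alpha-2}>0$ for $|t|>M$, $F_M$ grows linearly at infinity. More importantly, you assert that $H_M(v)\in W_0^{1,2}(\Om)\cap L^\infty(\Om)$ so that the nonnegativity of $\div\bb$ can be extended to it by density; neither inclusion holds when $v$ is merely in $W_0^{1,2}(\Om)$. Indeed $H_M(t)$ grows like $t^2$ at infinity while $v$ is a priori only in $L^{2^*}(\Om)$, so $H_M(v)\notin L^\infty(\Om)$; and $\nb H_M(v)=F_M(v)\nb v$ with $F_M(v)\in L^{2^*}$ and $\nb v\in L^2$ gives only $\nb H_M(v)\in L^{2n/(n+2)}$, not $L^2$, so $H_M(v)\notin W_0^{1,2}(\Om)$. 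Consequently the approximation by nonnegative $C_c^\infty(\Om)$ functions in $W_0^{1,2}\cap L^\infty$ that you propose cannot be run, and for $n=3,4$ the pairing $\int_\Om \bb\cdot\nb H_M(v)\,dx$ is not even obviously absolutely convergent.

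The clean fix is the device the paper uses repeatedly in Section~\ref{Sec4} and~\ref{Sec5}: instead of extending $\div\bb\ge 0$ directly to the nonlinear test function, write the drift contribution as $2\int_\Om z\,\bb\cdot\nb z\,dx$ with $z\in W_0^{1,2}(\Om)$ and invoke Lemma~\ref{th1-general}. Here $z=\sqrt{H_M}(v)$ does the job: a direct computation shows $\bigl(\sqrt{H_M}\bigr)'(t)=F_M(t)/\bigl(2\sqrt{H_M(t)}\bigr)$ is bounded (it behaves like $t^{\alpha-1}$ near $0$, with $\alpha\ge 1$, and tends to a constant as $t\to\infty$), so $\sqrt{H_M}$ is Lipschitz on $\R$ with $\sqrt{H_M}(0)=0$, whence $z\in W_0^{1,2}(\Om)$ and Lemma~\ref{th1-general} gives $\int_\Om\bb\cdot\nb H_M(v)\,dx\le 0$ at once. (Incidentally, the same trick with $z=u_M^\alpha$ also streamlines Part~(i) and removes the need for the explicit mollification of $u_M^{2\alpha}$.) Alternatively you could follow the paper's route for Part~(ii): reduce to $\mathbf G\in C_c^\infty$, obtain $v\in L^\infty$ from Lemma~\ref{th4-boundedness}(ii), test with modified powers of $v^\pm$ exactly as in Part~(i), and conclude by density.
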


\begin{remark}
 The lemma does not assert that $\nb u$ or $\nb v$ is in $L^p (\Omega ;\R^n )$ for $2<p<n$.
\end{remark}

\begin{proof}
By Lemma \ref{th4-existence}, it remains to prove the $L^{p^*}$-estimates for weak solutions.

Assume that ${\mathbf F} \in C_c^{\infty}(\Om ;\R^n )$. Then by Lemma \ref{th4-boundedness},
there exists a unique weak solution $u\in W_0^{1,2}(\Om )\cap L^\infty (\Om )$ of \eqref{bvp} with $f=\div {\mathbf F}$. Let $w=u^+$, the positive part of $u$. Next, for a number $\beta \ge 1$, let $G$ be a $C^1$-function on $[0,\infty )$ such that $G(t)=t^\beta $ for $0 \le t \le N = \|u\|_{L^\infty (\Om )}$ and $G'(t)= G'(N)$ for $t >\|u\|_{L^\infty (\Om )}$.
Then since $G'$ is bounded on $[0,\infty )$ and $G(0)=0$, it follows that $w^\beta = G(w) \in W_0^{1,2}(\Om )$ and $\nb G(w)= \beta w^{\beta-1} \nb w =\chi_{\{u>0 \}} \beta w^{\beta-1} \nb u$.
Hence taking $\phi =w^\beta $ in (\ref{weak-form-02}), we obtain
$$
\int_\Om w^{\beta -1} |\nb w|^2 \,dx = \int_\Om w^{\beta } \mathbf{b} \cdot \nb w \, dx - \int_\Om w^{\beta -1} \mathbf{F} \cdot \nb w \, dx .
$$
Note that
$$
w^{\beta } \nb w = \frac 1{\beta +1} \nb \left( w^{\beta +1} \right) \quad\mbox{and}\quad w^{(\beta +1)/2} \in W_0^{1,2}(\Om ).
$$
Hence by Lemma \ref{th1-general},   we have
$$
\int_\Om w^{\beta -1} |\nb w|^2 \,dx \le - \int_\Om w^{\beta -1} \mathbf{F} \cdot \nb w \, dx
$$
and so
$$
\int_\Om w^{\beta -1} |\nb w|^2 \,dx \le \int_\Om w^{\beta -1} | \mathbf{F}|^2 \, dx .
$$
By Sobolev's and H\"{o}lder's inequalities,
\begin{align*}
 \left\| w^{(\beta +1)/2} \right\|_{L^{2^*} (\Om )}^2
& \le 4 C \int_\Om \left|\nb \left( w^{(\beta +1)/2} \right) \right|^2 \, dx \\
& = C(\beta+1)^2 \int_\Om w^{\beta -1} |\nb w|^2 \,dx \\
& \le C(\beta+1)^2 \int_\Om w^{\beta -1} | \mathbf{F}|^2 \, dx\\
& \le C (\beta+1)^2 \left\| w^{\beta -1} \right\|_{L^{p/{(p-2)}}(\Om )} \left\|| \mathbf{F}|^2 \right\|_{L^{p/2}(\Om )}
\end{align*}
for some $C=C(n)$.
Setting $\gamma =(\beta +1)/2 \ge 1$, we have
$$
\left\| w \right\|_{L^{ 2^* \gamma } (\Om )}^{\gamma} \le C \gamma \left\| w \right\|_{L^{2(\gamma -1)p /(p-2)}(\Om )}^{ \gamma -1 } \left\| \mathbf{F} \right\|_{L^{p}(\Om )} .
$$
 Note that if $\gamma = p^* /2^* = p(n-2)/2(n-p)$, then
$$
 \gamma \ge 1  \quad\mbox{and}\quad 2^* \gamma = \frac{2(\gamma -1)p}{p-2}= p^*.
$$
Hence taking $\gamma = p^* /2^*$, we obtain
$$
 \left\|u^+ \right\|_{L^{p^*} (\Om )} = \left\| w \right\|_{L^{ p^* } (\Om )} \le C (n,p ) \left\| \mathbf{F} \right\|_{L^{p}(\Om )}.
$$
Similarly, the negative part $u^-$ satisfies the same estimate.

We has shown that for each ${\mathbf F} \in C_c^{\infty}(\Om ;\R^n )$, there exists a unique weak solution $u $ of \eqref{bvp} with $f=\div {\mathbf F}$
satisfying the $L^{p^*}$-estimate (\ref{Lp*-estimate for u}). This enables us to complete the proof of Part (i), by a standard density argument. Part (ii) can be proved by the same argument.
\end{proof}

\medskip
By a cut-off technique based on Lemmas  \ref{th4-boundedness} and  \ref{th4-higher integrability}, we can  show boundedness   of weak solutions of the problems  \eqref{bvp} and \eqref{bvp-dual} on exterior domains.
\begin{lemma}\label{th4-boundedness-exterior}
 Let $\Om $ be any exterior  domain in $\R^n$, $n \ge 3$. Assume that $\mathbf{b} \in L^{n ,\I}(\Om;\R^n)$, $\div \mathbf{b} \ge 0$ in $\Om$, and $n<p< \infty$.
\begin{enumerate}[(i)]
 \item For every ${\mathbf F} \in L^{2}(\Om ;\R^n ) \cap L^{p}(\Om ;\R^n ) $, there exists a unique weak solution $u  $ of \eqref{bvp} with $f=\div {\mathbf F}$. Moreover, we have
$$
  u  \in L^\infty  (\Om )   \quad\mbox{and}\quad \|u\|_{L^{\infty} (\Om )} \le C\left(\|{\mathbf F}\|_{L^{2}(\Om )}+  \|{\mathbf F}\|_{L^{p}(\Om )} \right),
$$
where $C= C  ( n,p , \Om , \|\mathbf{b}\|_{L^{n ,\I}(\Om)}   ) $.
 \item For every ${\mathbf G} \in L^{2}(\Om ;\R^n ) \cap L^{p}(\Om ;\R^n ) $, there exists
 a unique  weak solution $v  $ of \eqref{bvp-dual} with $g=\div {\mathbf G}$. Moreover, we have
$$
 v  \in L^\infty  (\Om )\quad\mbox{and}\quad \|v\|_{L^{\infty} (\Om )} \le C \left(\|{\mathbf G}\|_{L^{2}(\Om )}+  \|{\mathbf G}\|_{L^{p}(\Om )} \right),
$$
where $C= C  ( n,p , \Om , \|\mathbf{b}\|_{L^{n ,\I}(\Om)}   ) $.
\end{enumerate}
\end{lemma}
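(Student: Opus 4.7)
The plan is to combine the existence provided by Lemma \ref{th4-existence} with a Stampacchia-type iteration—analogous to that used in the proof of Lemma \ref{th4-boundedness}(ii)—carried out directly on the exterior domain. The starting observation is that Lemma \ref{th4-existence}(i) already produces the unique weak solution $u\in \hat{D}_0^{1,2}(\Om)$ of \eqref{bvp} with $\|\nabla u\|_{L^2}\le \|\mathbf{F}\|_{L^2}$, and the Sobolev inequality on $\hat{D}_0^{1,2}(\Om)$ yields $\|u\|_{L^{2^*}}\le C\|\mathbf{F}\|_{L^2}$. Crucially, this $L^{2^*}$-bound forces the super-level sets $A_l=\{u>l\}$ to have finite measure for every $l>0$, which is precisely what is needed to run Stampacchia's iteration without having to truncate spatially.

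For Part (i), I would test \eqref{bvp} against $\phi=(u-l)^+$, which lies in $\hat{D}_0^{1,2}(\Om)$ since $\phi=0$ on $\partial\Om$, $|\phi|\le u^+\in L^{2^*}$, and $\nabla\phi=\chi_{A_l}\nabla u\in L^2$. Writing $u=\phi+l$ on $A_l$, the drift contribution splits as
\begin{equation*}
\int_\Om(u\mathbf{b})\cdot\nabla\phi \;=\; \int_\Om \phi\,\mathbf{b}\cdot\nabla\phi \;+\; l\int_\Om \mathbf{b}\cdot\nabla\phi.
\end{equation*}
The first summand is $\le 0$ by the second part of Lemma \ref{th1-general}; the second summand is $\le 0$ by a density argument extending $-\int\mathbf{b}\cdot\nabla\psi\ge 0$ from nonnegative $\psi\in C^1_c(\Om)$ to the nonnegative $\hat{D}_0^{1,2}$-function $\phi$. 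Cauchy--Schwarz, H\"older on $A_l$ with exponent $p$, and Sobolev on $\hat{D}_0^{1,2}(\Om)$ then yield
\begin{equation*}
\|(u-l)^+\|_{L^{2^*}(\Om)} \;\le\; C\|\mathbf{F}\|_{L^p(\Om)}\,|A_l|^{1/p'-1/2}.
\end{equation*}
Combined with Chebyshev $|A_h|\le (h-l)^{-2^*}\|(u-l)^+\|_{L^{2^*}}^{2^*}$ this gives the Stampacchia iteration inequality $|A_h|\le C(h-l)^{-2^*}\|\mathbf{F}\|_{L^p}^{2^*}|A_l|^\alpha$ with $\alpha=2^*(1/p'-1/2)>1$, the strict inequality being equivalent to $p>n$. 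Stampacchia's lemma, applied starting from $l_0\sim\|\mathbf{F}\|_{L^2}$ (at which $|A_{l_0}|$ is bounded by a dimensional constant coming from $\|u\|_{L^{2^*}}\le C\|\mathbf{F}\|_{L^2}$), then delivers $\|u^+\|_{L^\infty}\le C(\|\mathbf{F}\|_{L^2}+\|\mathbf{F}\|_{L^p})$. The same argument applied to $-u$ bounds $u^-$. For Part (ii), the proof of Lemma \ref{th4-boundedness}(ii) transfers verbatim with $W_0^{1,2}(\Om)$ replaced throughout by $\hat{D}_0^{1,2}(\Om)$; the test function $\psi=H_l(v)$ belongs to $\hat{D}_0^{1,2}(\Om)$ because $v|_{\partial\Om}=0$, $|H_l(v)|\le |v|\in L^{2^*}$, and $\nabla H_l(v)=\chi_{\{|v|>l\}}\nabla v\in L^2$.

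The main technical obstacle is justifying $l\int_\Om \mathbf{b}\cdot\nabla\phi\le 0$ for the nonnegative truncation $\phi=(u-l)^+\in \hat{D}_0^{1,2}(\Om)$: one must extend the distributional hypothesis $\div\mathbf{b}\ge 0$ from nonnegative $C^1_c(\Om)$-test functions to nonnegative elements of $\hat{D}_0^{1,2}(\Om)$. This is a density argument—approximate $\phi$ in $\hat{D}_0^{1,2}$ by $\phi_k\in C_c^\infty(\Om)$, replace by $\phi_k^+$, then mollify to recover nonnegative $C_c^\infty$ functions—but requires care on the unbounded domain; the finiteness of $|A_l|$ is what makes the estimate $\mathbf{b}\cdot\nabla\phi\in L^{2n/(n+2)}$ integrable so that the passage to the limit is legitimate. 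Uniqueness in both parts is immediate by applying Lemma \ref{th4-existence} to the difference of two weak solutions, each being in $\hat{D}_0^{1,2}(\Om)$ by Definition \ref{def2.1}. As an alternative consistent with the hint, one may localize $\tilde u=\eta_R u$ to the bounded domain $\Om_{2R}$ and bootstrap with Lemma \ref{th4-higher integrability} before closing with Lemma \ref{th4-boundedness}; this route is more cumbersome because for large $p$ one must chain several embeddings $L^q\hookrightarrow W^{-1,s}$ to convert the lower-order cutoff error into a divergence-form $L^p$ datum on $\Om_{2R}$.
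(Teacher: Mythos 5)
Your route is genuinely different from the paper's. You propose to run a Stampacchia iteration \emph{directly on the exterior domain}, starting the iteration from a level $l_0\sim\|\mathbf F\|_{L^2}$ at which $|A_{l_0}|$ is controlled by $\|u\|_{L^{2^*}}\le C\|\mathbf F\|_{L^2}$, whereas the paper localizes: it multiplies $u$ by a cutoff $\eta$, views $\eta u$ as a weak solution on the bounded set $\Om_{2R}$, bootstraps its integrability via Lemmas~\ref{th4-higher integrability} and~\ref{th4-boundedness}, and then does a second localization on unit balls near infinity to control the tail. If your argument went through, it would be shorter and would even give a constant depending only on $n,p$ (not on $\|\mathbf b\|_{L^{n,\infty}}$), which is stronger than the stated lemma. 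However, Part~(i) of your proposal contains a genuine gap.

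The gap is the step $l\int_\Om\mathbf b\cdot\nabla(u-l)^+\,dx\le 0$. You correctly note that $|A_l|<\infty$ makes this integral well-defined; but that does \emph{not} legitimize the limit passage in your density argument. Approximating $\phi=(u-l)^+$ in $\hat D_0^{1,2}(\Om)$ by $\phi_k\in C_c^\infty(\Om)$ and passing to $\phi_k^+$, the sets $\{\phi_k>0\}$ need not lie in any fixed finite-measure set, and since $\mathbf b\in L^{n,\infty}(\Om)\not\subset L^2(\Om)$ on an exterior domain, the convergence $\nabla\phi_k^+\to\nabla\phi$ in $L^2$ does not yield $\int\mathbf b\cdot\nabla\phi_k^+\to\int\mathbf b\cdot\nabla\phi$. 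A plain spatial cutoff $\eta_R\phi$ fails too: the error term $\int\phi\,\mathbf b\cdot\nabla\eta_R$ is bounded only by $\|\phi\mathbf b\|_{L^2(B_{2R}\setminus B_R)}\|\nabla\eta_R\|_{L^2}$ and $\|\nabla\eta_R\|_{L^2}\sim R^{n/2-1}\to\infty$. The structural obstruction is that Lemma~\ref{th1-general} verifies the sign condition only for $\int\mathbf b\cdot\nabla(\psi^2)\,dx$ with $\psi\in\hat D_0^{1,2}(\Om)$, i.e.\ for test functions that are squares; $(u-l)^+$ is not a square, and this is exactly why the paper's bounded-domain Lemma~\ref{th4-boundedness}(i) introduces the auxiliary function $H$ with $H^{1/2}\in C^1([K,\infty))$, $H^{1/2}(K)=0$. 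To repair your argument you would need the same device: set $\psi_\e=\sqrt{(u-l)^++\e}-\sqrt{\e}\in\hat D_0^{1,2}(\Om)$, apply Lemma~\ref{th1-general} to get $\int\mathbf b\cdot\nabla(\psi_\e^2)\le 0$, note $\nabla(\psi_\e^2)=\bigl(1-\sqrt{\e}/\sqrt{(u-l)^++\e}\bigr)\nabla(u-l)^+$ is dominated by $|\nabla(u-l)^+|$ and supported on $A_l$, and pass to the limit $\e\to0$ by dominated convergence using $\mathbf b\cdot\nabla(u-l)^+\in L^1(\Om)$. (Part~(ii) is less problematic, since there Lemma~\ref{th1-general} applies directly to $\int H_l(v)\,\mathbf b\cdot\nabla H_l(v)\,dx$, though the iteration must start from a level $l_0\sim\|\mathbf G\|_{L^2}$ rather than $l_0=0$ since $|\Om|=\infty$.)
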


\begin{proof} Let ${\mathbf F} \in L^{2}(\Om ;\R^n ) \cap L^{p}(\Om ;\R^n ) $ be given. Then by  Lemma \ref{th4-existence},   there exists a unique weak solution $u$ of \eqref{bvp}  with $f=\div {\mathbf F}$, which satisfies
\begin{equation}\label{Linfty-bound for u-0}
\frac{1}{C(n)} \| u\|_{L^{2^*}(\Om )} \le \|\nb u \|_{L^2 (\Om )} \le \|{\mathbf F}\|_{L^2 (\Omega )} .
\end{equation}

We shall derive an $L^\infty$-estimate of $u$, by means of  a cut-off technique.
Choose  $R_0 >0$ so large that $\Omega^c \subset B_{R_0 } =B_{R_0}(0)$
and write $\Om_R = \Om \cap B_{R}$ for $R>R_0$.

We first show that $u \in L^{\infty}(\Om_{R})$ for any $R>R_0$.
Given  $R>R_0$, we fix a  cut-off function $\eta \in C_c^\infty (B_{2R}; [0,1])$   with  $\eta =1$ on $B_{R}$ and define $\overline{u}=\eta u$.  Then  by a direct calculation, we deduce that   $\overline{u}$ satisfies
\[
-\De \overline{u} + \div (\overline{u} \bb) =  \div (\eta  {\mathbf F} -2u\nb \eta  )  +u \De\eta + (u\bb - {\mathbf F})\cdot \nb \eta
\]
in $\Om_{2R}$, that is,
\begin{align*}
\int_{\Om_{2R}} \left( \nb \overline{u} - \overline{u}\mathbf{b} \right) \cdot \nb
\phi \, dx & =  \int_{\Om_{2R}} u \bke{2\nb \eta  \cdot \nb \phi + \phi \De \eta  + \phi \mathbf{b} \cdot \nb \eta  } \, dx\\
&\qquad   -  \int_{\Om_{2R}} \left( \eta  {\mathbf F} \cdot \nabla \phi + \phi {\mathbf F} \cdot \nb \eta  \right)   \, dx
\end{align*}
for all $\phi \in C_c^1 (\Om_{2R})$.
By H\"{o}lder's inequality,
\[
\left| \int_{\Om_{2R}} \left( \eta  {\mathbf F} \cdot \nabla \phi + \phi {\mathbf F} \cdot \nb \eta  \right)   \, dx \right| \le C(  \eta ) \|{\mathbf F}\|_{L^{p}(\Om_{2R} )}\| \phi \|_{W^{1,p'} (\Om_{2R} )}
\]
for all $\phi \in W_0^{1,p'}(\Om_{2R} )$.  Suppose now that $u \in L_{loc}^{q} (\overline{\Om})$ for some $   q  \in [2^*  ,  \infty  )  $.
Then since $1 <    q'  < n$, it follows from Lemma \ref{th1-general} that
$$
 \left| \int_{\Om_{2R}} u \bke{ 2\nb \eta  \cdot \nb \phi +\phi \De \eta  + \phi \mathbf{b} \cdot \nb \eta  } \, dx\right| \le C \|u \|_{L^{q} (\Om_{2R} )}\| \phi \|_{W^{1,q'} (\Om_{2R} )}
$$
for all $\phi \in W_0^{1,q'}(\Om_{2R} )$, where $C= C(n,  q,   \eta , \|\mathbf{b}\|_{L^{n ,\I}(\Om)} )$.
Define $\overline{q}= q^*$ if $q<n$,  $\overline{q}=2n $ if $q  = n$, and  $\overline{q}= \infty $ if $q >n$. Then  by Lemmas \ref{dual-char}, \ref{th4-boundedness}, and  \ref{th4-higher integrability}, there exists a unique $ {w}\in W_0^{1,2}(\Om_{2R} ) \cap L^{\overline{q}}(\Om_{2R})$ such that
\[
-\De w + \div (w \bb)=   \div (\eta  {\mathbf F} -2u\nb \eta  )  +u \De\eta + (u\bb - {\mathbf F})\cdot \nb \eta  \quad\mbox{in}\,\,\Om_{2R}
\]
and
$$
\|w\|_{L^{\overline{q}}(\Om_{2R})} \le C\left(\|{\mathbf F}\|_{L^{p}(\Om_{2R} )}+ \|u \|_{L^{q} (\Om_{2R} )} \right) ,
$$
where $C= C(n, p, q, \Om , R, \eta , \|\mathbf{b}\|_{L^{n ,\I}(\Om)} ) $.
Since $ \overline{u} , {w} \in W_0^{1,2 }(\Om_{2R} )$, it follows from the uniqueness assertion of Lemma \ref{th4-existence}
 that $\overline{u} = {w}$ on $\Omega_{2R}$. This proves that
$$
 u \in L^{\overline{q}}(\Omega_{R}) \quad\mbox{and}\quad \|u\|_{L^{\overline{q}}(\Om_{R})} \le C \left(\|{\mathbf F}\|_{L^{p}(\Om_{2R} )}+ \|u \|_{L^{q} (\Om_{2R} )} \right),
$$
where $C= C(n, p,  q, \Om , R, \eta , \|\mathbf{b}\|_{L^{n ,\I}(\Om)} )$.
Since $R>R_0$ can be arbitrarily large, it follows that   $u \in    L_{\loc}^{\overline{q}}(\overline{\Omega})$. This argument can be repeated finitely many times to show that $u$ is locally bounded on $\overline{\Om}$; that is, by a bootstrap argument starting from  $q =2^*$, we can deduce that $u \in L_{\loc}^{\infty}(\overline{\Omega}) $ and
\begin{equation}\label{Linfty-bound for u-1}
   \|u\|_{L^{\infty}(\Om_{R})} \le C\left(\|{\mathbf F}\|_{L^{p}(\Om_{2R} )}+ \|u \|_{L^{2^*} (\Om_{2R} )} \right)
\end{equation}
for any $R>R_0$, where $C=  C(n, p, \Om , R , \|\mathbf{b}\|_{L^{n ,\I}(\Om)} ) $.

We next  show that $u \in L^\infty  (\Om )$.
Let $x_0 \in \Om$ be any point with $|x_0|>R_0 + 2$. Choosing a  cut-off function $\zeta \in C_c^\infty (B_{2}(x_0 ); [0,1])$   with  $\zeta =1$ on $B_{1}(x_0 )$, we define ${\overline{u}} = u \zeta  $.
Then ${\overline{u}}    \in W_0^{1,2}(B_2 (x_0 )) $   satisfies
\[
-\De \overline{u} + \div (\overline{u} \bb) =  \div (\zeta  {\mathbf F} -2u\nb \zeta  )  +u \De\zeta + (u\bb - {\mathbf F})\cdot \nb \zeta
\]
in $B_{2}(x_0 )$.  Hence by the same bootstrap argument as above,  we can deduce that
\begin{equation}\label{Linfty-bound for u-2}
   \|u\|_{L^{\infty}(B_{1}(x_0 ) )}   \le C  \left(\|{\mathbf F}\|_{L^{p}(B_{2}(x_0 ) )}+ \|u \|_{L^{2^*} (B_{2}(x_0 ) )} \right),
\end{equation}
where $C= C(n, p,  \|\mathbf{b}\|_{L^{n ,\I}(\Om)})$ is independent of  any $x_0 \in \Om$ with $|x_0|>R_0 + 2$.
Combining (\ref{Linfty-bound for u-0}), (\ref{Linfty-bound for u-1}), and (\ref{Linfty-bound for u-2}), we complete the proof of the first part of the lemma. The second part can be proved by exactly the same argument.
\end{proof}

\subsection{Existence and uniqueness for more singular $\mathbf b$}

While the $L^\infty$-estimates in Lemma \ref{th4-boundedness-exterior} involve the $L^{n,\infty}$-norm of $\mathbf{b}$, those of  Lemma \ref{th4-boundedness} do not depend on $\mathbf{b}$ at all.
This enables us to prove existence of bounded weak solutions under a weaker condition on $\mathbf{b}$, if the domain $\Om$ is bounded.

\begin{proposition}\label{th4-boundedness-L1 drift} Let $\Om $ be any bounded domain in $\R^n$, $n \ge 3$.
Assume that $\mathbf{b} \in L_{\loc}^{1}(\Om;\R^n)$, $\div \mathbf{b} \ge 0$ in $\Om$, and $n < p< \infty$.
\begin{enumerate}[(i)]
 \item For each ${\mathbf F} \in L^{p}(\Om ;\R^n ) $, there exists at least one weak solution $u \in W_0^{1,2}(\Om )\cap L^\infty (\Om )$ of \eqref{bvp}  with $f=\div {\mathbf F}$ satisfying the estimate
$$
\|\nabla u \|_{L^2 (\Om )} + \|u\|_{L^\infty (\Om )} \le C(n,p, {\rm diam}\, \Om ) \|{\mathbf F}\|_{L^p (\Omega )}.
$$
 \item Assume in addition  that $\mathbf{b} \in L_{\loc}^{2}(\Om;\R^n)$. Then for each ${\mathbf G} \in L^{p}(\Om ;\R^n ) $, there exists at least one weak solution $v\in W_0^{1,2}(\Om )\cap L^\infty (\Om )$ of \eqref{bvp-dual} with $g=\div {\mathbf G}$ satisfying the estimate%
$$%
\|\nabla v \|_{L^2 (\Om )} + \|v\|_{L^\infty (\Om )} \le C(n,p, {\rm diam}\, \Om )\|{\mathbf G}\|_{L^p (\Omega )}.
$$%
\end{enumerate}
\end{proposition}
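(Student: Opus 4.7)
The plan is to approximate the rough drift $\mathbf{b}$ by smooth divergence-preserving drifts on an interior exhaustion of $\Om$, apply Lemma \ref{th4-boundedness} on each approximate problem to obtain uniform $W^{1,2}_0\cap L^\infty$ bounds that do not depend on the drift, and then extract a weak limit that solves the original equation.

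For the approximation, let $\rho\in C_c^\infty(B_1(0))$ be a standard nonnegative radial mollifier with $\int\rho=1$, and set $\rho_\eps(x)=\eps^{-n}\rho(x/\eps)$ and $\Om_k=\{x\in\Om:\dist(x,\partial\Om)>1/k\}$. On $\Om_k$, define $\mathbf{b}_k=\mathbf{b}*\rho_{1/k}$; this is well-defined since $\mathbf{b}\in L^1_\loc(\Om)$, and $\mathbf{b}_k\in L^\infty(\Om_k)\subset L^{n,\I}(\Om_k)$. Since $\div\mathbf{b}$ is a nonnegative distribution on $\Om$ (hence a nonnegative Radon measure), $\div\mathbf{b}_k=(\div\mathbf{b})*\rho_{1/k}\ge 0$ on $\Om_k$. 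By Lemma \ref{th4-boundedness}(i) applied on $\Om_k$ with drift $\mathbf{b}_k$ and data $\div(\mathbf{F}|_{\Om_k})$, there exists a weak solution $u_k\in W^{1,2}_0(\Om_k)\cap L^\infty(\Om_k)$ of $-\De u_k+\div(u_k\mathbf{b}_k)=\div\mathbf{F}$ satisfying
$$\|u_k\|_{L^\infty(\Om_k)}\le C(n,p)\,({\rm diam}\,\Om)^{1-n/p}\|\mathbf{F}\|_{L^p(\Om)},$$
a bound independent of $k$ and of $\mathbf{b}_k$. Combined with the energy identity from Lemma \ref{th4-existence}, which gives $\|\nabla u_k\|_{L^2(\Om_k)}\le|\Om|^{1/2-1/p}\|\mathbf{F}\|_{L^p(\Om)}$, extending each $u_k$ by zero to $\Om$ yields $\{u_k\}\subset W^{1,2}_0(\Om)\cap L^\infty(\Om)$ with uniform bounds.

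Then, by weak compactness a subsequence satisfies $u_k\rightharpoonup u$ weakly in $W^{1,2}_0(\Om)$ and weak-$*$ in $L^\infty(\Om)$, and Rellich--Kondrachov upgrades this along a further subsequence to strong $L^2(\Om)$ convergence and pointwise a.e. convergence. For any $\phi\in C^1_c(\Om)$ with $K=\spt\phi\Subset\Om$, we have $K\subset\Om_k$ for all large $k$ and $\mathbf{b}_k\to\mathbf{b}$ in $L^1(K)$ by the standard property of mollification. The weak form for $u_k$ reads
$$\int_\Om\nabla u_k\cdot\nabla\phi\,dx-\int_K u_k\mathbf{b}_k\cdot\nabla\phi\,dx=-\int_\Om\mathbf{F}\cdot\nabla\phi\,dx,$$
and the first integral converges by weak convergence. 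For the drift integral, the family $\{u_k\mathbf{b}_k\cdot\nabla\phi\}$ is equi-integrable on $K$ (as $\{u_k\}$ is bounded in $L^\infty$ and $\{\mathbf{b}_k\}$ is convergent, hence equi-integrable, in $L^1(K)$), and $u_k\to u$ a.e., so Vitali's convergence theorem yields $\int_K u_k\mathbf{b}_k\cdot\nabla\phi\to\int_K u\mathbf{b}\cdot\nabla\phi$. Hence $u$ is a weak solution of \eqref{bvp} and inherits the asserted estimate by lower semicontinuity of norms. Part (ii) runs on exactly the same scheme using Lemma \ref{th4-boundedness}(ii); the only modification is in the drift limit $\int_K\psi\mathbf{b}_k\cdot\nabla v_k\,dx$, handled by weak-strong pairing: the strengthened hypothesis $\mathbf{b}\in L^2_\loc(\Om)$ gives $\psi\mathbf{b}_k\to\psi\mathbf{b}$ strongly in $L^2(K)$, which couples with $\nabla v_k\rightharpoonup\nabla v$ weakly in $L^2(\Om)$.

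The main obstacle is passing to the limit in the drift term, since $\mathbf{b}$ has no control beyond $L^1_\loc$ and the approximating $\mathbf{b}_k$ have no uniform bound in any $L^p$ with $p>1$; this forces Vitali's theorem in part (i) and the strengthened $L^2_\loc$ hypothesis in part (ii). Preserving $\div\mathbf{b}_k\ge 0$ under approximation is instead automatic because mollification commutes with $\div$ and preserves positivity of measures, and the uniform $L^\infty$ and $H^1_0$ bounds supplied by Lemma \ref{th4-boundedness} are independent of the drift, which is what makes the compactness argument go through.
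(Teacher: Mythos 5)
Your argument is correct and follows essentially the same route as the paper: exhaust $\Om$ by interior subdomains, mollify $\mathbf b$ to preserve $\div \mathbf b \geq 0$, invoke Lemma \ref{th4-boundedness} for $\mathbf{b}$-independent $W^{1,2}_0 \cap L^\infty$ bounds, extend by zero, and pass to a weak/weak-$*$ limit. The paper states that the limit is "easily checked" to be a weak solution; your Vitali argument for part (i) and the weak--strong pairing for part (ii) are precisely the details that make that check rigorous, and they match the intended mechanism.
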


\begin{remark}
Uniqueness of such weak solutions is not asserted in the proposition, due to the weak regularity of $\mathbf{b}$.
\end{remark}
\begin{remark}
A similar existence result was obtained by Kontovourkis \cite[Lemmas 2.2.9, 2.2.10]{Ko} when $\div \mathbf{b} = 0$ in $\Om$.
\end{remark}

\begin{proof}  Let $\{\Om_k \}$ be a sequence of bounded $C^1$-subdomains of $\Om$ such that $\overline{\Om}_k \subset \Om_{k+1}$ for each $k$ and $\Om =\cup_{k=1}^\infty \Om_k$ (see \cite[Proposition 8.2.1]{Da} e.g.). By mollification, we choose a sequence $\{{\mathbf b}_k \}$ in $L^n(\Om;\R^n )$ such that $\div \mathbf{b}_k \ge 0$ in $\Om_k$ and ${\mathbf b}_k \to {\mathbf b}$ in $L_{\loc}^{1}(\Om;\R^n)$. Then by Lemmas \ref{th4-existence} and \ref{th4-boundedness}, there exists
$u_k \in W_0^{1,2}(\Om_k )\cap L^{\infty}(\Om_k )$ such that
$$
\int_{\Om_k} \bke{\nb u_k - u_k \mathbf{b}_k} \cdot \nb
\phi \, dx = -\int_{\Om_k} {\mathbf F} \cdot \nb \phi \, dx \quad \text{for all } \phi \in C^1_{c}(\Om_k )
$$
and
$$
\|\nb u_k \|_{L^2 (\Om_k )} + \|u_k \|_{L^{\infty} (\Om_k )}\le C(n,p , {\rm diam}\, \Om ) \|{\mathbf F}\|_{L^{p}(\Om )}.
$$
Extend $u_k$ to $\Om$ by defining $u_k=0$ outside $\Om_k$. Then $\{u_k \}$ is a bounded sequence in $W_0^{1,2}(\Om )$ as well as $L^{\infty}(\Om )$. Hence by standard compactness results in Sobolev and Lebesgue spaces, we may assume that $u_k \to u$ weakly in $W_0^{1,2}(\Om )$ and weakly-$*$ in $L^{\infty}(\Om )$.
It is easily checked that the limit $u$ is indeed a weak solution of \eqref{bvp} satisfying the desired estimates. This completes the first part of the proposition. The second part can be proved similarly.
\end{proof}

\medskip
The estimates in Lemmas \ref{th4-existence} and \ref{th4-higher integrability} do not depend on the domain $\Om$ nor on the drift $\mathbf{b}$.
Hence, adapting the proof of Proposition \ref{th4-boundedness-L1 drift}, we easily  obtain the following existence result for any  domain in $\R^n$, whose proof is omitted.

\begin{proposition}\label{th4-existence-L1} Let $\Om $ be any domain in $\R^n$, $n \ge 3$. Assume that $\mathbf{b} \in L_{\loc}^{1}(\Om;\R^n)$, $\div \mathbf{b} \ge 0$ in $\Om$, and $2 \le p<n$.
\begin{enumerate}[(i)]
 \item
Assume in addition that $\mathbf{b} \in L_{\loc}^{np'/(n+p')}(\Om;\R^n)$. Then for every ${\mathbf F} \in L^{2}(\Om ;\R^n ) \cap L^{p}(\Om ;\R^n ) $, there exists at least one weak solution $u \in \hat{D}_0^{1,2}(\Om )\cap L^{p^*}(\Om )$ of \eqref{bvp} with $f=\div {\mathbf F}$ satisfying the estimates
$$
\|\nb u \|_{L^2 (\Om )} \le \|{\mathbf F} \|_{L^{2}(\Om )}\quad\mbox{and}\quad \|u\|_{L^{p^*} (\Om )} \le C(n,p ) \|{\mathbf F}\|_{L^{p}(\Om )}.
$$
 \item Assume in addition that $\mathbf{b} \in L_{\loc}^{2}(\Om;\R^n)$.
 Then for every ${\mathbf G} \in L^{2}(\Om ;\R^n ) \cap L^{p}(\Om ;\R^n ) $, there exists at least one weak solution $v \in \hat{D}_0^{1,2}(\Om )\cap L^{p^*}(\Om )$ of \eqref{bvp-dual} with $g=\div {\mathbf G}$ satisfying the estimates
$$
\|\nb v \|_{L^2 (\Om )} \le \|{\mathbf G} \|_{L^{2}(\Om )}\quad\mbox{and}\quad \|v\|_{L^{p^*} (\Om )} \le C(n,p ) \|{\mathbf G}\|_{L^{p}(\Om )}.
$$
\end{enumerate}
\end{proposition}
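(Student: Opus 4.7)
My plan is to imitate the exhaustion--mollification argument used for Proposition \ref{th4-boundedness-L1 drift}, replacing the boundedness estimates from Lemmas \ref{th4-boundedness} by the Dirichlet-energy and $L^{p^*}$-estimates from Lemmas \ref{th4-existence} and \ref{th4-higher integrability}. First I would pick an exhaustion $\{\Om_k\}$ of $\Om$ by bounded $C^1$ subdomains with $\overline{\Om_k}\subset \Om_{k+1}$, and standard mollifications $\mathbf{b}_k$ of $\mathbf{b}$ that are smooth on a neighborhood of $\overline{\Om_k}$, satisfy $\div\mathbf{b}_k\ge 0$ there (mollification preserves distributional nonnegativity), and converge to $\mathbf{b}$ in $L^1_\loc(\Om;\R^n)$. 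For part (i), mollification also gives $\mathbf{b}_k\to\mathbf{b}$ in $L^{np'/(n+p')}_\loc$, and for part (ii), $\mathbf{b}_k\to\mathbf{b}$ in $L^2_\loc$.

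Next, since $\mathbf{b}_k\in L^\infty(\Om_k;\R^n)\subset L^{n,\infty}(\Om_k;\R^n)$, Lemmas \ref{th4-existence} and \ref{th4-higher integrability} yield a unique weak solution $u_k\in\hat{D}_0^{1,2}(\Om_k)\cap L^{p^*}(\Om_k)$ of the problem on $\Om_k$ with drift $\mathbf{b}_k$ and data $\div\mathbf{F}$, satisfying
\[
\|\nb u_k\|_{L^2(\Om_k)}\le \|\mathbf{F}\|_{L^2(\Om)},\qquad \|u_k\|_{L^{p^*}(\Om_k)}\le C(n,p)\|\mathbf{F}\|_{L^p(\Om)}.
\]
Extending $u_k$ by zero to $\Om$ gives a bounded sequence in $\hat{D}_0^{1,2}(\Om)\cap L^{p^*}(\Om)$; passing to a subsequence I obtain $u_k\rightharpoonup u$ weakly in $\hat{D}_0^{1,2}(\Om)$, weakly in $L^{p^*}(\Om)$, and, by Rellich on each compact set, strongly in $L^s_\loc(\Om)$ for every $s<2^*$ (after a further diagonal extraction, also a.e.\ on $\Om$). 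Interpolating between the strong $L^s_\loc$-convergence ($s<2^*$) and the uniform $L^{p^*}$-bound upgrades the strong convergence to every $s<p^*$. The limit $u$ inherits both estimates by lower semicontinuity.

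The key step is to pass to the limit in
\[
\int_{\Om}\bke{\nb u_k-u_k\mathbf{b}_k}\cdot\nb\phi\,dx=-\int_\Om \mathbf{F}\cdot\nb\phi\,dx\qquad (\phi\in C^1_c(\Om)),
\]
once $k$ is so large that $\spt\phi\subset \Om_k$. The linear term is handled by weak convergence of $\nb u_k$. The main obstacle is the drift term $\int u_k\mathbf{b}_k\cdot\nb\phi\,dx$, which I would treat by Vitali's theorem on $K=\spt\phi$: a.e.\ convergence $u_k\mathbf{b}_k\to u\mathbf{b}$ holds along the subsequence, and for any measurable $E\subset K$, H\"older's inequality with exponents $p^*$ and $q=np'/(n+p')=(p^*)'$ gives
\[
\int_E |u_k\mathbf{b}_k|\,dx\le \|u_k\|_{L^{p^*}(K)}\|\mathbf{b}_k\|_{L^q(E)}\le C\|\mathbf{F}\|_{L^p(\Om)}\|\mathbf{b}_k\|_{L^q(E)},
\]
and since $\mathbf{b}_k\to\mathbf{b}$ in $L^q(K)$, the family $\{|\mathbf{b}_k|^q\}$ is uniformly integrable on $K$, whence $\{u_k\mathbf{b}_k\}$ is equi-integrable on $K$. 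Vitali then yields $u_k\mathbf{b}_k\to u\mathbf{b}$ in $L^1(K)$, and the limiting identity is verified; this also shows $u\mathbf{b}\in L^1_\loc(\Om;\R^n)$ so $u$ is a weak solution in the sense of Definition \ref{def2.1}.

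For part (ii) I would run the same scheme on $v_k$, using the assumption $\mathbf{b}\in L^2_\loc$ to justify the convergence of the drift term in the dual formulation \eqref{bvp-dual-wk}. Here $\nb v_k\rightharpoonup\nb v$ weakly in $L^2_\loc$ and $\psi\mathbf{b}_k\to\psi\mathbf{b}$ strongly in $L^2(\spt\psi)$ for any test function $\psi\in C^1_c(\Om)$, so $\int \nb v_k\cdot(\psi\mathbf{b}_k)\,dx\to\int\nb v\cdot(\psi\mathbf{b})\,dx$, completing the proof.
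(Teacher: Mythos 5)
Your proposal is correct and follows exactly the exhaustion--mollification scheme the paper intends (the authors state the proof is obtained by adapting that of Proposition~\ref{th4-boundedness-L1 drift} and omit it). Your Vitali-theorem argument for the drift term in part (i), pairing the uniform $L^{p^*}$ bound on $u_k$ with the strong $L^{np'/(n+p')}_{\loc}=L^{(p^*)'}_{\loc}$ convergence of $\mathbf{b}_k$, is a sound way to fill in the ``easily checked'' limit passage, and the weak--strong pairing you use in part (ii) with $\mathbf{b}\in L^2_{\loc}$ is likewise correct.
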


Note that Proposition \ref{th4-existence-L1} is valid for any domain because the constants in the $L^{p^*}$-estimates of  Lemma
 \ref{th4-higher integrability} do not depend on $\Om$. In contrast, for Proposition \ref{th4-boundedness-L1 drift}, $\Om$ should be bounded because the constants in the $L^\infty$-estimates of Lemmas
\ref{th4-boundedness} and \ref{th4-boundedness-exterior} depend on $\Om$.

\medskip

As an application of Propositions \ref{th4-boundedness-L1 drift} and \ref{th4-existence-L1}, we obtain the following result, which has been already proved by Zhikov \cite{Zhi} and Kontovourkis \cite{Ko} for the case when $\div \mathbf{b} =0$ (see also \cite[Theorem 1.3]{KK}).

\begin{proposition}\label{th4-boundedness-L2 drift} Let $\Om $ be any bounded domain in $\R^n$, $n \ge 3$. Assume that $\mathbf{b} \in L^{2}(\Om;\R^n)$ and $\div \mathbf{b} \ge 0$ in $\Om$.

\begin{enumerate}[(i)]
 \item For each $g \in W^{-1,2}(\Om )$, there exists a unique weak solution $v $ of \eqref{bvp-dual}.
 \item Assume in addition that ${\rm div}\, \mathbf{b} \in L^{2n /(n+2)}(\Om)$.
 Then for each $f \in W^{-1,2}(\Om )$, there exists a unique weak solution $u$ of \eqref{bvp}.
\end{enumerate}

\end{proposition}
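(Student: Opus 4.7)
The plan is to combine a drift-smoothing approximation with a Moser-type truncation of the energy identity; the hypothesis $\div\bb \in L^{2n/(n+2)}$ in (ii) will be used to recast \eqref{bvp} in non-divergence form, since divergence form alone does not tolerate bounded $H_0^1$ test functions when $\bb$ is only in $L^2(\Omega;\R^n)$.

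For existence I would exhaust $\Omega$ by smooth subdomains $\Omega_k \Subset \Omega_{k+1} \Subset \Omega$ and mollify $\bb$ on each $\Omega_k$ at a scale below $\dist(\Omega_k,\partial \Omega_{k+1})$ to obtain $\bb_k \in C^\infty$ satisfying $\div\bb_k = (\div\bb)*\rho_{\epsilon_k} \ge 0$ and $\bb_k \to \bb$ in $L^2_{\loc}(\Omega)$. Lemma \ref{th4-existence}, applicable since each $\bb_k$ lies in $L^\infty \subset L^{n,\infty}$, yields weak solutions $u_k, v_k \in H_0^1(\Omega_k)$ which I extend by zero to $\Omega$. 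Smoothness of $\bb_k$ makes testing with $u_k$ itself legitimate, and the identity $\int u_k \bb_k \cdot \nabla u_k = -\tfrac12 \int u_k^2 \div\bb_k \le 0$ (with its analogue for $v_k$) provides the uniform bound $\|\nabla u_k\|_{L^2} \le \|f\|_{W^{-1,2}}$. Extracting a weak $H_0^1$-limit and combining Rellich compactness with the strong $L^2_{\loc}$ convergence of $\bb_k$ then allows passage to the limit in every term of the weak formulation tested against $\phi \in C_c^\infty(\Omega)$.

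For uniqueness in (i) let $v \in H_0^1(\Omega)$ solve \eqref{bvp-dual} with $g = 0$. I would first extend the weak formulation from $\psi \in C_c^1$ to $\psi \in H_0^1 \cap L^\infty$ by approximating $\psi$ with uniformly bounded $C_c^\infty$-functions and using $\bb \cdot \nabla v \in L^1$ together with dominated convergence. Testing with $\psi = T_N(v)$, the truncation at level $N$, and setting $\Phi_N(t) = \int_0^t T_N(s)\,ds \ge 0$, the convection term becomes $\int \bb \cdot \nabla \Phi_N(v)$, which is nonpositive because the distributional inequality $\div\bb \ge 0$ extends, by yet another density argument on the $L^2$-drift, to nonnegative $H_0^1$-test functions. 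Hence $\int_{\{|v|<N\}} |\nabla v|^2 \le 0$, and $v\equiv 0$ on letting $N \to \infty$.

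The case (ii) is more delicate because $u\bb \in L^{n/(n-1)}$ cannot be paired against $\nabla \psi \in L^2$, so the divergence-form weak equation resists direct truncation. This is exactly what $\div\bb \in L^{(2^*)'}(\Omega)$ repairs: pairing the distribution $\div\bb$ with $u\phi \in H_0^1 \cap L^{2^*}$ yields the integration-by-parts identity
\[
\int u\bb \cdot \nabla \phi = -\int \phi\bb \cdot \nabla u - \int \phi u\,\div\bb,
\]
which, by density, is valid for all $\phi \in H_0^1(\Omega)\cap L^\infty(\Omega)$ and recasts \eqref{bvp} as the non-divergence weak equation $\int \nabla u \cdot \nabla\phi + \int \phi\bb\cdot\nabla u + \int \phi u\,\div\bb = \bka{f,\phi}$. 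Testing with $\phi = T_N(u)$ when $f = 0$, rewriting $\int T_N(u)\bb\cdot\nabla u = -\int \Phi_N(u)\,\div\bb$, and exploiting the pointwise inequality $tT_N(t) \ge \Phi_N(t) \ge 0$ leads to
\[
\int_{\{|u|<N\}} |\nabla u|^2 + \int \bigl(u T_N(u) - \Phi_N(u)\bigr)\,\div\bb\,dx = 0,
\]
with both terms nonnegative, so again $u \equiv 0$ after $N\to\infty$. The principal obstacle throughout is precisely this failure of the divergence-form weak formulation on bounded $H_0^1$ test functions when $\bb$ is only in $L^2$; the extra integrability of $\div\bb$ in (ii) is what pays for the integration by parts that circumvents it.
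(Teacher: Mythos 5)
Your proof is correct in outline, and the existence step coincides with the paper's (Proposition~\ref{th4-existence-L1} is exactly your mollification--exhaustion argument). The uniqueness argument, however, is genuinely different from the paper's. The paper proves uniqueness by transposition: given a solution of the homogeneous dual problem \eqref{bvp-dual}, it invokes Proposition~\ref{th4-boundedness-L1 drift} to produce a \emph{bounded} weak solution $u$ of \eqref{bvp} with test data $f\in C_c^\infty$, and then carries out a double approximation ($u$ approximated in $W_0^{1,2}$ and weakly-$*$ in $L^\infty$, $v$ approximated in $W_0^{1,2}$) to show that $\int fv=0$ for all $f$. You instead run a Stampacchia truncation directly: test with $T_N(v)$ (resp. $T_N(u)$), identify the drift term as $\int\bb\cdot\nb\Phi_N(\cdot)$ and use $\div\bb\ge0$ (resp. the pointwise inequality $tT_N(t)\ge\Phi_N(t)\ge0$), then let $N\to\infty$. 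Each route has a cost: the paper front-loads the work into the $L^\infty$-estimate of Lemma~\ref{th4-boundedness} and Proposition~\ref{th4-boundedness-L1 drift}, which keeps the uniqueness proof short but forces it to solve the adjoint problem; your route is self-contained and does not need any $L^\infty$-bound, but requires you to justify two density steps that you flag but do not carry out in detail --- extending the weak formulations \eqref{eq1.3} and \eqref{bvp-dual-wk} from $C^1_c$ to test functions in $W_0^{1,2}(\Om)\cap L^\infty(\Om)$ (which needs uniformly bounded $C_c^\infty$ approximants of $T_N(v)$, constructible via a smoothed truncation $T_N^\e(v_k)$ with $v_k\to v$ in $W_0^{1,2}$), and extending the inequality $\div\bb\ge0$ from nonnegative $C^1_c$ to nonnegative $W_0^{1,2}$ test functions (which needs nonnegative smooth approximants, obtained by mollifying positive parts). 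You also correctly isolate the role of $\div\bb\in L^{2n/(n+2)}(\Om)=L^{(2^*)'}(\Om)$ in part (ii), using it to convert \eqref{bvp} into the non-divergence weak form $\int\nb u\cdot\nb\phi+\int\phi\,\bb\cdot\nb u+\int u\phi\,\div\bb=\langle f,\phi\rangle$, which is what makes the truncation admissible for the primal problem; this parallels the role the same hypothesis plays in the paper's integration-by-parts identity, though applied in a different way.
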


\begin{proof} By Lemma \ref{dual-char} and Proposition \ref{th4-existence-L1}, there exists at least one weak solution of each of the problems  \eqref{bvp} and \eqref{bvp-dual}. It thus remains to prove uniqueness of weak solutions.

Let $v \in W_0^{1,2}(\Om ) $ be a weak solution of \eqref{bvp-dual} with the trivial data $g=0$; hence
\begin{equation}\label{eq1.3-00}
\int_\Om \nb v \cdot \bke{\nb \psi - \psi \mathbf{b}} \, dx = 0 \quad \text{for all } \psi \in C^\infty_{c}(\Om).
\end{equation}
Let $f \in C_c^\infty (\Om )$ be given. Then by
Proposition \ref{th4-boundedness-L1 drift}, there exists $u \in W_0^{1,2}(\Om )\cap L^\infty (\Om )$ such that
\begin{equation}\label{eq1.3-000}
\int_\Om \bke{\nb u - u \mathbf{b}} \cdot \nb
\phi \, dx = \int_\Om f \phi \, dx \quad\mbox{for all}\,\, \phi \in C^\infty_{c}(\Om).
\end{equation}
Moreover, by the proof of Proposition \ref{th4-boundedness-L1 drift}, we may assume that  there is a sequence $\{ u_k \}$ in $C_c^\infty (\Om ) $ that converges to $u$ weakly in $W_0^{1,2}(\Om)$ and weakly-$*$ in $L^\infty (\Om )$.
Let $\{v_k \}$ be any sequence in $C_c^\infty (\Om)$ converging to $v$ in $W_0^{1,2}(\Om)$. Then by (\ref{eq1.3-000}) and (\ref{eq1.3-00}), we have
\begin{align*}
 \int_\Om fv \, dx & =\lim_{k\to\infty} \int_\Om f v_k \, dx \\
 &= \lim_{k\to\infty} \int_\Om \bke{\nb u - u \mathbf{b}} \cdot \nb
v_k \, dx \\
 & = \int_\Om \bke{\nb u - u \mathbf{b}} \cdot \nb
v \, dx \\
& =\lim_{k\to\infty} \int_\Om \bke{\nb u_k - u_k \mathbf{b}} \cdot \nb
v \, dx =0.
\end{align*}
Since $f\in C_c^\infty (\Om )$ is arbitrary, we conclude that $v=0$ in $\Om$. This completes the  proof of Part (i).

The above argument does not work for uniqueness of weak solutions   of the problem \eqref{bvp}, because the last calculation can not be justified even though the dual problem \eqref{bvp-dual} has   weak solutions $v$  in  $W_0^{1,2}(\Om ) \cap L^\infty (\Om )$.  %
To prove Part (ii), we assume in addition that ${\rm div}\, \mathbf{b} \in L^{2n /(n+2)}(\Om)$, so that
\[
 - \int_\Om \bke{   \psi \mathbf{b}} \cdot \nb \phi + ( \phi \mathbf{b} ) \cdot \nb \psi
  \, dx
= \int_\Om    \psi  \phi  \div \bb  \, dx
\]
for all $\phi    \in C_c^\infty (\Om )$ and $\psi \in W_0^{1,2}(\Om )$.  Let   $u \in W_0^{1,2}(\Om ) $ be a weak solution of \eqref{bvp} with the trivial data $f=0$.  Given $g\in C_c^\infty (\Om )$, let $v \in W_0^{1,2}(\Om )\cap L^\infty (\Om )$ be a weak solution of  \eqref{bvp-dual}. Then choosing sequences $\{ u_k \}$ and $\{v_k \}$ in $C_c^\infty (\Om ) $ such that $u_k \to u$ in $W_0^{1,2}(\Om)$,  $v_k \to v$ weakly in $W_0^{1,2}(\Om)$, and  $v_k \to v$ weakly-$*$ in $L^\infty (\Om )$, we have
\begin{align*}
 \int_\Om g u \, dx & =\lim_{k\to\infty} \int_\Om g u_k \, dx \\
&= \lim_{k\to\infty} \int_\Om \nb v  \cdot \bke{\nb u_k  - u_k  \mathbf{b}}  \, dx \\
&= \lim_{k\to\infty} \int_\Om \nb v  \cdot  \nb u_k  +  ( v \mathbf{b} )  \cdot \nabla u_k +   u_k v \div  \bb \,  dx \\
& =\int_\Om \nb v  \cdot  \nb u   +  ( v \mathbf{b} )  \cdot \nabla u  +   u  v \div  \bb \,  dx \\
& =\lim_{k\to\infty} \int_\Om \nb v_k  \cdot  \nb u   +  ( v_k \mathbf{b} )  \cdot \nabla u  +   u  v_k \div  \bb \,  dx \\
 &= \lim_{k\to\infty} \int_\Om \bke{\nb u - u \mathbf{b}} \cdot \nb
v_k \, dx  =0.
\end{align*}
This completes the proof of Part (ii).
\end{proof}

\section{Global H\"{o}lder regularity of weak solutions}
\label{Sec5}

Assume that $\mathbf{b} \in L^{n,\I}(\Om;\R^n)$ and  $\div \mathbf{b} \ge 0$ in $\Om$. Then
by the basic inequality (\ref{weak-basic-ineq}) for weak spaces, we deduce that if $1 \le q<n$, then
\[
\sup_{\rho>0} \,  \rho^{1-n/q} \left(\int_{B_\rho (x_0 ) \cap \Om} |\bb |^q \, dx \right)^{1/q} \le  C(q,n) \|\bb\|_{L^{n,\infty}(\Om )},
\]
that is,   the drift $\bb$ belongs to the Morrey space $M_{q}^{n/q-1}(\Om ;\R^n)$. Hence it follows from interior regularity results due to  Nazarov and Uraltseva \cite{NU} that weak solutions   of \eqref{bvp-dual} are locally  H\"{o}lder  continuous on $\Om$.
 However H\"{o}lder regularity of solutions of  \eqref{bvp} is unclear; See Remark \ref{holder regularity for u} below.

In this  section, we shall show that   weak solutions  of \eqref{bvp-dual} are even   H\"{o}lder  continuous  up to the boundary $\partial\Om$; thus   they are \emph{globally} H\"{o}lder continuous on $\Om$.

\subsection{Weak Harnack inequalities}

We first prove the weak Harnack inequalities for weak solutions of \eqref{bvp-dual}, from which   H\"{o}lder regularity will be deduced.

Throughout this subsection, let $\Om $ be a bounded Lipschitz domain in $\R^n$, $n \ge 3$, and let $x_0 $ be a fixed point in $\overline{\Omega}$. For $0<R<\infty$, we then write
$$
B_R =B_R (x_0 )  \quad\mbox{and}\quad \Omega_R =\Omega \cap B_R .
$$

\begin{lemma}[Weak Harnack inequality]
\label{Harnack inequality}
Assume that $\mathbf{b} \in L^{n,\I}(\Om;\R^n)$, $\div \mathbf{b} \ge 0$ in $\Om$, and ${\mathbf G} \in L^{p}(\Om ; \R^n)$ for some $n<p<\infty$.
Let $v  $ be a  nonnegative function in $W^{1,2}(\Om_{4R} ) \cap L^\infty (\Om_{4R})$ satisfying
$$
-\Delta v - {\mathbf b} \cdot \nabla v \ge  \div {\mathbf G} \quad\mbox{(weakly) in}\,\,\Omega_{4R}
$$
in the sense that
\begin{equation}\label{weak-form-0200-dual}
\int_{\Om_{4R}} \nabla v \cdot \left( \nabla \psi -\psi \bb \right) \, dx  \ge  -\int_{\Om_{4R}}{\mathbf G} \cdot \nabla \psi \, dx
\end{equation}
for all nonnegative $\psi \in W_0^{1,2} (\Om_{4R})$.
Then there are  numbers $0<  p_0 =p_0 (n, \|{\mathbf b}\|_{L^{n,\infty}(\Omega )} )<1$ and $C =C(n,p, \|{\mathbf b}\|_{L^{n,\infty}(\Omega )} )>1$ such that
there hold the following estimates: %
\begin{enumerate}[(i)]
 \item If $B_{4R}(x_0 ) \subset \Omega$, then
$$
\left( R^{- {n}} \int_{B_{2R}} |v |^{p_0} \, dx \right)^{1/p_0} \le C \left(\inf_{B_R } v + R^{ 1-n/p} \|{\mathbf G}\|_{ L^p (\Omega_{4R})  } \right).
$$
 \item If $x_0 \in \partial\Omega$ and $v \ge m$ on $ B_{4R}(x_0 ) \cap \partial\Omega$ for some constant $m \ge 0$, then
$$
\left( R^{- {n} } \int_{B_{2R}} |v_m |^{p_0} \, dx \right)^{1/p_0} \le C \left(\inf_{B_R } v_m + R^{ 1-n/p} \|{\mathbf G}\|_{L^p (\Omega_{4R})} \right),
$$
where
$$
v_m = \left\{ \begin{array}{rl}
 \inf \,\{v , m \} \quad
 \mbox{in}\,\, \Omega \qquad \, \,\,\\[4pt]
 m \qquad \mbox{on}\,\, {{\mathbb R}^n} \setminus \Omega .
\end{array}\right.
$$
\end{enumerate}
\end{lemma}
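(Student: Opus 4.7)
The plan is to carry out a Moser-type argument adapted to the drift in $L^{n,\infty}$, combining a negative-power iteration that yields an $L^\infty$-bound for $\bar v^{-1}$ on $B_R$ with a logarithmic estimate and the John--Nirenberg inequality to bridge positive and negative powers of $\bar v$. Throughout I set $\bar v = v + k$, where $k = R^{1-n/p}\|\mathbf G\|_{L^p(\Omega_{4R})}$ (plus a vanishing constant if $k=0$), so that $\bar v \ge k$ and the contribution of $\mathbf G$ in the subsequent estimates can be absorbed into the second term on the right-hand side of the desired inequality.

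In the first step I derive a Caccioppoli estimate by testing \eqref{weak-form-0200-dual} with $\psi = \eta^2 \bar v^{\beta}$, where $\eta$ is a radial cutoff concentrated in $B_{4R}$ and $\beta < -1$. Writing $w = \bar v^{(\beta+1)/2}$, this produces an inequality of the form $\int \eta^2 |\nabla w|^2 \lesssim (1 + |\beta|^{-1})\int |\nabla \eta|^2 w^2 + (\text{drift}) + (\text{data})$. The drift term $\tfrac{2}{\beta+1}\int \eta^2 w\, \bb\cdot \nabla w$ is estimated via H\"older's inequality in Lorentz spaces (Lemma~\ref{holder inequalies}) and the Sobolev--Lorentz embedding (Lemma~\ref{refined Sobolev}) applied to $\eta w\in W_0^{1,2}(B_{4R})$, giving a bound $C\|\bb\|_{L^{n,\infty}}|\beta+1|^{-1}\,\|\nabla(\eta w)\|_{L^2}\|\eta \nabla w\|_{L^2}$. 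For $|\beta+1|$ exceeding a fixed multiple of $\|\bb\|_{L^{n,\infty}}$, this is partially absorbed into the left-hand side; the data contribution is handled using the lower bound $\bar v \ge k$. Feeding the resulting Caccioppoli into the Sobolev inequality and iterating over nested balls in the standard Moser manner gives, for every $p_0>0$,
\[
\sup_{B_R} \bar v^{-1} \le C \left( R^{-n} \int_{B_{2R}} \bar v^{-p_0}\, dx \right)^{1/p_0},
\]
with $C$ depending on $n, p, p_0$ and $\|\bb\|_{L^{n,\infty}}$.

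Next I establish a BMO bound for $\log \bar v$ on $B_{2R}$. Testing \eqref{weak-form-0200-dual} with $\psi = \eta^2/\bar v$ and rearranging yields
\[
\int \eta^2 |\nabla \log \bar v|^2\, dx \le C\int |\nabla \eta|^2\, dx + C\int \eta^2 |\bb|\,|\nabla \log \bar v|\, dx + (\text{data}),
\]
in which the drift integral is absorbed via Cauchy--Schwarz combined with Sobolev--Lorentz on $\eta$. Applying this with cutoffs at all scales $\rho \in (0, 2R]$ shows that $\log \bar v$ has BMO seminorm on $B_{2R}$ bounded by a constant depending only on $n, p, \|\bb\|_{L^{n,\infty}}$; John--Nirenberg then furnishes $p_0 = p_0(n, \|\bb\|_{L^{n,\infty}}) > 0$ and $C$ with
\[
\left( R^{-n} \int_{B_{2R}} \bar v^{p_0} \right)^{1/p_0}\left( R^{-n} \int_{B_{2R}} \bar v^{-p_0} \right)^{1/p_0}\le C.
\]
Combining with the $L^\infty$-bound of the previous step at this $p_0$ gives $\left( R^{-n}\int_{B_{2R}} \bar v^{p_0} \right)^{1/p_0} \le C \inf_{B_R} \bar v$, which is the claimed interior estimate after unwinding $\bar v = v + k$.

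For the boundary case $x_0 \in \partial\Omega$, the function $v_m - m$ vanishes on $B_{4R}\cap \partial\Omega$ and so extends by zero to a function in $W^{1,2}(B_{4R})$; extending $\bb$ and $\mathbf G$ by zero outside $\Omega$, one verifies that $v_m$ still satisfies the supersolution inequality on the full ball $B_{4R}$ (splitting nonnegative test functions according to $\{v<m\}$ and $\{v\ge m\}$ and using the original inequality on $\Omega$ together with the trivial inequality where $v_m$ is constant). The interior argument then applies verbatim to $v_m$ on $B_{4R}$. The main technical obstacle throughout is that $\bb\in L^{n,\infty}$ is scale-critical and cannot be made small by localization, so the absorption of the drift term in the Caccioppoli and logarithmic estimates requires the sharp Sobolev--Lorentz embedding rather than the ordinary Sobolev inequality, and forces $p_0$ to depend on $\|\bb\|_{L^{n,\infty}}$.
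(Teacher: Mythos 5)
Your overall architecture matches the paper's: test with $\psi=\eta^2\bar v^\beta$, $\beta<-1$, to set up a Moser iteration on negative powers; test with $\psi=\eta^2/\bar v$ to obtain a BMO bound for $\log\bar v$; apply John--Nirenberg to bridge the positive and negative powers. The log-estimate and John--Nirenberg parts are fine. However, there is a genuine gap in how you handle the drift in the negative-power iteration.

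You estimate the drift integral directly as
\[
\left|\frac{2}{\gamma}\int \eta^2 w\,\bb\cdot\nabla w\right|
\le \frac{C\|\bb\|_{L^{n,\infty}}}{|\gamma|}\,\|\nabla(\eta w)\|_{L^2}\,\|\eta\nabla w\|_{L^2},\qquad \gamma=\beta+1,
\]
and claim this can be absorbed into the left-hand side by taking $|\gamma|$ large relative to $\|\bb\|_{L^{n,\infty}}$. But the coefficient of $\|\eta\nabla w\|_{L^2}^2$ on the left-hand side coming from the Caccioppoli identity is $\tfrac{|\beta|}{\gamma^2}$, which \emph{also} decays like $1/|\gamma|$. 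After rescaling, absorption requires $C\|\bb\|_{L^{n,\infty}}\le |\beta|/|\gamma|$, and since $|\beta|/|\gamma|\to 1$ as $|\beta|\to\infty$, this is a smallness condition on $\|\bb\|_{L^{n,\infty}}$ that you cannot arrange; the Moser iteration must push $|\beta|\to\infty$. The coercivity hypothesis $\div\bb\ge 0$ is never invoked in your argument, and without it the lemma is in fact false (scaling-critical drifts cannot be absorbed). The paper gets around this by writing $\eta^2 H(w)\nabla w=\nabla(\eta^2 F(w))-2\eta F(w)\nabla\eta$ with $F\le 0$, noting that $\eta|F(w)|^{1/2}\in W_0^{1,2}$, and using $\div\bb\ge 0$ (via Lemma~\ref{th1-general}) to conclude that $-\int\bb\cdot\nabla(\eta^2F(w))\le 0$. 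This kills the dangerous part of the drift; what remains is $2\int\eta F(w)\,\bb\cdot\nabla\eta$, which only involves the cut-off gradient $\nabla\eta$ (not $\eta\nabla w$) and is absorbed harmlessly by Young's inequality and Sobolev--Lorentz, with no smallness condition.

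A secondary issue: in the boundary case you propose to extend $v_m=\min(v,m)$ by $m$ across $\partial\Omega$ and claim it remains a supersolution of the \emph{inhomogeneous} inequality on all of $B_{4R}$. The constant $m$ is a supersolution only of the homogeneous equation; when $\mathbf G\ne 0$ the minimum $v_m$ need not satisfy $-\Delta v_m-\bb\cdot\nabla v_m\ge\div\mathbf G$ on the set $\{v\ge m\}$. The paper avoids this by instead modifying the test function to $H(w)=w^\beta-\overline K^\beta$ (so that $\eta^2H(w)\in W_0^{1,2}(\Omega_{4R})$) and verifying $H(w)\nabla v=H(w)\nabla w$, rather than trying to propagate the supersolution inequality across $\partial\Omega$.
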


\begin{proof}%
Set $\delta =1 -n/p$ and choose any $K>0$ with $R^{\delta} \|{\mathbf G}\|_{L^p (\Omega_{4R})} \le K$.
Let us define
$$
 w =\left\{ \begin{array}{rl}
 v +K \quad\,\,\,\,
 \mbox{if}\,\, x_0 \in \Omega \, \,\,\,\, \\[4pt]
 v_m +K \quad \mbox{if}\,\, x_0 \in \partial \Omega\,\,
 \end{array}\right.
 \quad\mbox{and}\quad
 H(w) = \left\{ \begin{array}{rl}
 w^\beta \qquad \,\,\,\,
 \mbox{if}\,\, x_0 \in \Omega \, \,\,\,\, \\[4pt]
 w^\beta -\overline{K}^\beta \quad \mbox{if}\,\, x_0 \in \partial \Omega ,
 \end{array}\right.
$$
where $\overline{K}=m+K$ and $\beta$ is a negative real number.
Next, we fix a cut-off function $ \eta \in C_c^\infty (B_{4R} ;[0,1] )$. Then since $0< K \le w \le \|v\|_{L^\infty (\Om )}+ K$ on $\Omega$ and $H(w)=0$ %
on $\partial\Omega$ if $x_0 \in \partial\Omega$, it follows that $0 \le  \eta^2 H(w) \in W_0^{1,2}(\Omega )$. Moreover, since $w=v-K$ if either $x_0 \in \Omega$ or $v \le m$, and $H(w)=H(\overline{K} )=0$ if $x_0 \in \partial \Omega$ and $v>m$, it follows that $H(w)\nb v = H(w)\nb w$. Hence taking $\psi = \eta^2 H(w) $ in (\ref{weak-form-0200-dual}), we have
$$
\int_{\Omega} \nabla v \cdot \nabla \left( \eta^2 H(w) \right)\, dx \ge  \int_{\Omega} \left( \eta^2 H (w) {\mathbf b}\right) \cdot \nabla w \, dx - \int_\Omega {\mathbf G} \cdot \nabla \left( \eta^2 H(w) \right) \, dx .
$$
Noting then that
$$
\nabla \left( \eta^2 H(w) \right) = \beta \eta^2 w^{\beta-1}\nabla w + 2 \eta H(w) \nabla \eta ,
$$
$$
\nabla v \cdot \nabla \left( \eta^2 H(w) \right) = \beta \eta^2 w^{\beta-1}|\nabla w |^2 + 2 \eta H(w) \nabla w \cdot \nabla \eta ,
$$
$$
\left| 2 \eta H(w) \nabla w \cdot \nabla \eta \right| \le 2 \eta w^\beta |\nabla w| | \nabla \eta |\le \frac{|\beta|}4 \eta^2 w^{\beta-1} |\nabla w|^2 + \frac{4}{|\beta|} w^{\beta +1} |\nabla \eta|^2 ,
$$
and
$$
\left| {\mathbf G} \cdot \nabla \left( \eta^2 H(w) \right)\right| \le \frac{|\beta|}4 \eta^2 w^{\beta-1} |\nabla w|^2 + \frac{2}{|\beta|} w^{\beta +1} |\nabla \eta|^2 + 2|\beta| \eta^2 w^{\beta -1} |{\mathbf G}|^2,
$$
we obtain
$$
\begin{aligned}
\frac{|\beta|}2 \int_{\Omega} \eta^2 w^{\beta-1} |\nabla w|^2 \, dx
&\le - \int_{\Omega} \left( \eta^2 H (w) {\mathbf b}\right) \cdot \nabla w \, dx \\
&\quad + \frac{6}{|\beta|} \int_{\Omega} w^{\beta +1} |\nabla \eta|^2 \, dx + 2|\beta| \int_\Om \eta^2 w^{\beta -1} |{\mathbf G}|^2 \, dx.
\end{aligned}
$$
Since $0< K$ and $R^{\delta} \|{\mathbf G}\|_{L^p (\Omega )} \le K \le w$,  we also  have
\begin{equation}\label{est-first}
\begin{aligned}
\frac{|\beta|}2 \int_{\Omega} \eta^2 w^{\beta-1} |\nabla w|^2 \, dx &\le \frac{6}{|\beta|} \int_{\Omega} w^{\beta +1} |\nabla \eta|^2 \, dx\\
& \quad + 2 R^{-2 \delta}|\beta| \left\| \ \eta^2 w^{\beta +1} \right\|_{L^{p/(p-2)}(\Om )} \\
 &\quad - \int_{\Omega} \left( \eta^2 H (w) {\mathbf b}\right) \cdot \nabla w \, dx .
\end{aligned}
\end{equation}

Suppose that $\beta < -1$. Then defining
$$
F(t) = \left\{ \begin{array}{rl}
 \frac 1{\beta+1} t^{\beta+1} \qquad \qquad \qquad \,\, \,\,\,\,
 \mbox{if}\,\, x_0 \in \Omega \, \, \,\,\, \\[4pt]
 \frac 1{\beta+1} \left(t^{\beta+1} -\overline{K}^{\beta+1} \right) - \overline{K}^\beta \left(t-\overline{K} \right) \quad\mbox{if}\,\, x_0 \in \partial \Omega ,
 \end{array}\right.
$$
 we can write
$$
 \eta^2 H (w) \nabla w
 = \nabla \left( \eta^2 F(w) \right) - 2 \eta F(w) \nabla \eta .
$$
Obviously, if $x_0 \in \Omega$, then
\begin{equation}\label{Fw-est}
0 \le -F(w) \le \frac 1{|\beta+1| } w^{\beta+1} \quad\mbox{and}\quad \eta |F(w)|^{1/2} \in W_0^{1,2}(\Omega ).
\end{equation}
This property also holds when $x_0 \in \partial\Omega$. Indeed, since $\beta < -1$ and $F'(t) = t^\beta -{\overline K}^\beta > 0$ for $0< t < \overline{K}$, it follows that $ \frac 1{\beta+1} t^{\beta+1} < F(t) < F(\overline{K})=0$ for $0< t < \overline{K}$. Moreover, $(-F )^{1/2}$ is a $C^1$-function on $(0, \overline{K}]$ because
\begin{align*}
\lim_{t \to ({\overline K})^{-}} \frac{[-F(t)]^{1/2}}{t-{\overline K}} &= \lim_{t \to ({\overline K})^{-}} \left\{ [-F(t)]^{1/2} \right\} ' = - \left( \lim_{t \to ({\overline K})^{-}} \frac{[F'(t)]^2}{-4 F(t)} \right)^{1/2}\\
& = - \left(- \lim_{t \to ({\overline K})^{-}} \frac 12 F''(t) \right)^{1/2} = - \left(\frac {|\beta|}2 \overline{K}^{\beta -1}\right)^{1/2} .
\end{align*}
Hence, recalling again that ${\rm div}\, {\mathbf b} \ge 0$ in $\Omega$ and $\nabla w =0$ on ${{\mathbb R}^n} \setminus \Omega$ if $x_0 \in \partial\Omega$, we deduce from (\ref{est-first}), \eqref{Fw-est}, and Lemma \ref{th1-general} that
$$
\begin{aligned}
& \frac{|\beta|}2 \int_{B_{4R}} \eta^2 w^{\beta-1} |\nabla w|^2 \, dx \\
&\qquad\le \frac{6}{|\beta|} \int_{B_{4R}} w^{\beta +1} |\nabla \eta|^2 \, dx + 2 R^{-2 \delta}|\beta| \left\| \ \eta^2 w^{\beta +1} \right\|_{L^{p/(p-2)}(B_{4R} )} \\
&\qquad \qquad + \frac 2{|\beta +1|} \int_{B_{4R}} \eta w^{\beta+1} |\nabla \eta | |{\mathbf b}|\, dx ,
\end{aligned}
$$
where ${\mathbf b}$ is extended to $\R^n$ by defining zero outside $\Om$.
Let us now define $\gamma =\beta +1$ and $\overline{w} =w^{ \gamma/ 2} $.
Then since
$$
 |\nabla \overline{w} |^2= \frac {1}4 \gamma^2 w^{\beta-1} | \nabla w|^2
\quad\mbox{and}\quad
|\nabla (\eta \overline{w})|^2 \le 2 \eta^2 |\nabla \overline{w}|^2 + 2 \overline{w}^2 |\nabla \eta |^2 ,
$$
we obtain
$$
\begin{aligned}
& \frac{ |\beta|}{\gamma^2} \int_{B_{4R}} |\nabla (\eta \overline{w})|^2 \, dx \\
& \quad \le \left( \frac{6}{|\beta|} +\frac{2 |\beta|}{\gamma^2} \right) \int_{B_{4R}} \overline{w}^2 |\nabla \eta|^2 \, dx + 2 R^{-2 \delta}|\beta| \left\| \eta^2 \overline{w}^2 \right\|_{L^{p/(p-2)}(B_{4R} )} \\
&\qquad + \frac 2{|\gamma|} \int_{B_{4R}} \eta \overline{w}^2 |\nabla \eta | |{\mathbf b}|\, dx .
\end{aligned}
$$
Noting that $|\gamma|< |\beta|$, we have
$$
\begin{aligned}
\|\nb (\eta \overline{w} ) \|_{L^{2}(B_{4R} )}^2 & \le 8 \| \overline{w} \nabla \eta\|_{L^2 (B_{4R})}^2 +2 R^{-2 \delta}|\gamma|^2 \left\| \eta \overline{w} \right\|_{L^{2p/(p-2)}(B_{4R} )}^2
 \\
& \qquad + 2 \int_{B_{4R}} \eta \overline{w}^2 |\nabla \eta | |{\mathbf b}|\, dx .
\end{aligned}
$$
But by H\"{o}lder's inequality, Sobolev's inequality, and Lemma \ref{th1-general},
\begin{align*}
& 2 R^{-2 \delta}|\gamma|^2 \left\| \eta \overline{w} \right\|_{L^{2p/(p-2)}(B_{4R} )}^2 \\
&\qquad\le 2 R^{-2 (1-n/p)}|\gamma|^2 \left\| \eta \overline{w} \right\|_{L^{2}(B_{4R} )}^{2 (1-n/p)} \left\| \eta \overline{w} \right\|_{L^{2^*}(B_{4R} )}^{2n/p}\\
 & \qquad \le \frac 14 \left\| \nb ( \eta \overline{w} ) \right\|_{L^{2}(B_{4R} )}^2 + C R^{-2 }|\gamma|^{2p/(p-n)} \left\| \eta \overline{w} \right\|_{L^{2}(B_{4R} )}^2
\end{align*}
and
$$
\begin{aligned}
 2 \int_{B_{4R}} \eta \overline{w}^2 |\nabla \eta | |{\mathbf b}|\, dx
 & \le C \|{\mathbf b}\|_{L^{n,\infty}(\Omega )} \|\overline{w} \nabla \eta \|_{L^2 (\Om )} \|\nabla (\eta \overline{w}) \|_{L^2 (\Om )} \\
& \le \frac{1}{4}\|\nabla (\eta \overline{w}) \|_{L^2 (B_{4R} )}^2 + C \|{\mathbf b}\|_{L^{n,\infty}(\Omega )}^2 \|\overline{w} \nabla \eta \|_{L^2 (B_{4R} )}^2 .
\end{aligned}
$$
Hence using Sobolev's inequality again, we obtain
\begin{equation}\label{ineq for w}
\begin{aligned}
\| \eta \overline{w} \|_{L^{2^*}(B_{4R} )}^2 & \le C \| \overline{w} \nabla \eta\|_{L^2 (B_{4R})}^2 + C R^{-2 }|\gamma|^{2p/(p-n)} \left\| \eta \overline{w} \right\|_{L^{2}(B_{4R} )}^2
 \\
& \qquad + C \|{\mathbf b}\|_{L^{n,\infty}(\Omega )}^2 \|\overline{w} \nabla \eta \|_{L^2 (B_{4R} )}^2.
\end{aligned}
\end{equation}

Suppose now that $0<r_1 <r_2 \le 2R$, $\eta =1$ in $B_{r_1}$ and $\eta =0$ in $B_{r_2}^c$.
Then since ${\rm supp}\, \nb \eta \subset B_{r_2}\setminus B_{r_1}$ and $|\nabla \eta| \le C/(r_2 -r_1 )$, it follows from (\ref{ineq for w}) that
$$%
\| \overline{w}\|_{L^{2^*} (B_{r_1} ) } \le C \left( \frac{1+ \|{\mathbf b}\|_{L^{n,\infty}(\Omega )}}{r_2 -r_1 } + \frac{|\gamma|^{p/(p-n)}}{R} \right)
 \| \overline{w} \|_{L^2 (B_{r_2} ) } .
$$%
Recalling that $\overline{w} =w^{\gamma /2}$ and $\gamma =\beta +1 < 0$, we finally deduce that
\begin{equation}\label{local ineq for -}
 \left\|w \right\|_{L^{-|\gamma|} (B_{r_2} ) } \le \left[ C \left( \frac{1+ \|{\mathbf b}\|_{L^{n,\infty}(\Omega )}}{r_2 -r_1 } + \frac{|\gamma|^{\sigma }}R \right) \right]^{\frac 2{|\gamma|}} \| w\|_{ L^{ -|\gamma| \chi }(B_{r_1} ) }
\end{equation}
for every $\gamma <0 $, where $\chi = n/(n-2)$ and $\sigma = p/(p-n)$.

Given any number $0< p_0 <1 $,
let us define
$$
p_k = {p}_0 \chi^{k} \quad \mbox{and}\quad r_{k}=R+2^{-k} R \quad\mbox{for all}\,\, k \in \N \cup \{0\}.
$$
Then taking $\gamma=- p_k $, $r_1 =r_{k+1} $ and $r_2 =r_k$ in (\ref{local ineq for -}), we obtain
$$
\left\|w \right\|_{L^{-p_k}(B_{r_k} ) } \le \left[ \left( \frac {C}R \right) \left(2 \chi^{\sigma }\right)^{ k } \right]^{\frac 2{p_0} \chi^{-k} }\|w\|_{L^{-p_{k+1}}( B_{r_{k+1}} ) }
$$
for each $k \ge 0$, where $C=C( n,p,p_0 , \|{\mathbf b}\|_{L^{n,\infty}(\Omega )} )$. By iteration, we thus obtain
$$
\left\|w \right\|_{L^{-p_0} (B_{2R} ) }\le \prod_{j=0}^k \left[ \left( \frac {C}R \right) \left(2 \chi^{\sigma }\right)^{ j }\right]^{\frac 2{p_0} \chi^{-j} }\|w\|_{L^{-p_{k+1}}(B_{r_{k+1}} ) }
$$
for each $k \ge 0$. Noting that
$$
\begin{aligned}
 \prod_{j=0}^k \left[ \left( \frac {C}R \right) \left(2 \chi^{\sigma }\right)^{ j } \right]^{\frac 2{p_0} \chi^{-j } }
& \quad \le \left( \frac {C}R \right)^{\sum_{j=0}^\infty \frac 2{p_0} \chi^{-j}} \left( 2 \chi^{\sigma } \right)^{\sum_{j=0}^\infty \frac{2}{p_0} j \chi^{-j} } \\
&\quad \le C R^{- n/p_0} ,
\end{aligned}
$$
we deduce that
$$
\left\|w \right\|_{L^{-p_0 }(B_{2R} ) }\le C R^{- n/p_0}\|v\|_{L^{-p_{k+1}}( B_{r_{k+1}} )}
$$
for all $k \ge 1$. Hence letting $k \to \infty$, we have
\begin{equation}\label{inf-bound for v}
R^{ n/p_0} \left\|w \right\|_{L^{- p_0}( B_{2R} ) }\le C \left( \inf_{\Omega_R} w \right)
\end{equation}
for some $C=C(n,p, p_0 , \|{\mathbf b}\|_{L^{n,\infty}(\Om ) })$.

\medskip

Next, taking $\beta =-1$ in (\ref{est-first}), we obtain
$$
\begin{aligned}
\frac{1}2 \int_{B_{4R}} \eta^2 w^{-2} |\nabla w|^2 \, dx &\le 6 \int_{B_{4R}} |\nabla \eta|^2 \, dx+ 2 R^{-2\delta} \left\|\eta^2 \right\|_{L^{p/(p-2)}(B_{4R} )}\\
&\quad - \int_{\Omega} \left( \eta^2 H (w) {\mathbf b}\right) \cdot \nabla w \, dx .
\end{aligned}
$$
 Let us define $\overline{w} =\ln w $, so that $\nabla \overline{w} =w^{-1} \nabla w $.
Noting then that
$$
\begin{aligned}
- \int_{\Omega} \left( \eta^2 H (w) {\mathbf b}\right) \cdot \nabla w \, dx
& \le \int_{\Omega_{4R}} \eta^2 w^{-1} |{\mathbf b}| |\nabla w| \, dx \\
& \le C \|{\mathbf b}\|_{L^{n,\infty}(\Omega )} \| \nabla \eta \|_{L^2 (B_{4R} )} \|\eta \nabla \overline{w} \|_{L^2 (B_{4R} )} ,
\end{aligned}
$$
 we deduce that
\begin{equation*}%
 \| \eta \nabla \overline{w} \|_{L^{2}(B_{4R} )} \le C \left( \| \nabla \eta\|_{L^2 (B_{4R} )} + R^{-\delta} \left\|\eta \right\|_{L^{\frac {2p}{p-2}}(B_{4R} )} \right)
\end{equation*}
for some $C=C(n, \|{\mathbf b}\|_{L^{n,\infty}(\Om ) })$.
Choosing a cut-off function $\eta \in C_c^\infty (B_{4R};[0,1])$ such that $\eta =1$ on $B_{2R}$, we have
\begin{equation}\label{L2-bound of w}
\|\nabla \overline{w} \|_{L^2 (B_{2R} )} \le C \left[ \frac{R^{n/2}}R+ R^{ \frac np -1} \left(R^n \right)^{\frac{p-2}{2p}} \right] = 2 C R^{\frac n2 -1}
\end{equation}
 for some $C=C(n, \|{\mathbf b}\|_{L^{n,\infty}(\Om ) })$ independent of $p$.

Let $B_\rho (y)$ be any ball with $B_\rho (y) \cap B_{2R} \neq \emptyset$.
If $\rho \ge 2R/3$, then by (\ref{L2-bound of w}),
$$
\int_{B_\rho (y)\cap B_{2R}} |\nabla \overline{w} |\, dx \le C R^{\frac n2} \|\nabla \overline{w}\|_{L^2 ( B_{2R} )}\le C \rho^{n-1} .
$$
 If $0< \rho < 2R/3$, then $ \overline{B}_{2\rho} (y) \subset B_{4R}$; hence choosing a cut-off function $\eta \in C_c^\infty ( {B}_{2\rho }(y); [0,1])$ such that $\eta =1$ on $B_{\rho}(y)$, we have
$$
 \|\nabla \overline{w} \|_{L^2 (B_\rho (y) )}
\le C \left[ \frac {\rho^{n/2}}{\rho} + R^{\frac np -1}\left(\rho^n \right)^{\frac{p-2}{2p}} \right] \le C {\rho}^{\frac n2 -1}
$$
and so
$$
\int_{B_\rho (y)\cap B_{2R}} |\nabla \overline{w} |\, dx \le C \rho^{\frac n2} \|\nabla \overline{w}\|_{L^2 (B_\rho (y) )} \le C\rho^{n-1} $$
for some $C=C(n, \|{\mathbf b}\|_{n,\infty})$. Therefore, by the classical John-Nirenberg estimate (see \cite[Theorem 7.21]{GilTru} e.g.), there exist constants $C =C(n)>0$, $w_0 \in \R$, and $ 0< p_0 = p_0 (n, \|{\mathbf b}\|_{L^{n,\infty}(\Om )})<1 $ such that
$$
\int_{B_{2R}} \exp \left( p_0 \left| \overline{w} -w_0 \right| \right) \,d x \le C R^n .
$$
Note that
$$
\begin{aligned}
&\left( \int_{B_{2R}} w^{p_0} \, dx \right) \left( \int_{B_{2R}} w^{-p_0} \, dx \right)\\
& \qquad = \left(\int_{B_{2R}} \exp \left(p_0 \overline{w} \right) \, dx \right) \left( \int_{B_{2R}} \exp \left( -p_0 \overline{w} \right) \, dx \right)\\
& \qquad \le  (C R^n )^2 \exp \left( p_0 w_0 \right)\exp \left( - p_0 w_0 \right) =C^2 R^{2n}
\end{aligned}
$$
and so
\begin{equation}\label{ineq for p0}
\|w\|_{L^{p_0} ( B_{2R} ) } \le C (n,  \norm{\bb}_{L^{n,\infty}(\Om)}  ) R^{{2n}/{p_0}} \|w\|_{L^{-p_0}(B_{2R} ) } .
\end{equation}
Therefore, combining (\ref{ineq for p0}) and (\ref{inf-bound for v}), we complete the proof of Lemma \ref{Harnack inequality}. \end{proof}

\subsection{H\"{o}lder regularity}

By a standard argument based on Lemma \ref{Harnack inequality}, we can prove   H\"{o}lder regularity of weak solutions of \eqref{bvp-dual}.

\begin{proposition}\label{th4-0} Let $\Om $ be a bounded Lipschitz domain in $\R^n$, $n \ge 3$. Assume that $\mathbf{b} \in L^{n, \I}(\Om;\R^n)$,  $\div \mathbf{b} \ge 0$ in $\Om$, and ${\mathbf G} \in L^{p}(\Om ;\R^n )$ for some $n< p< \infty$. Let  $v \in W_0^{1,2}(\Om )$ be  a weak solution of \eqref{bvp-dual} with $g =\div {\mathbf G}$. Then
\begin{equation}\label{holder estimate}
v \in C^\al(\overline{\Om}) \quad\mbox{and}\quad \|v\|_{C^\al(\overline{\Om})} \le
C \|{\mathbf G}\|_{L^{p} (\Omega )}
\end{equation}
for some constants $0< \alpha \le 1-n/p$ and $C>0$, depending only on $n, p, \Omega $, and $ \|{\mathbf b}\|_{L^{n,\I}(\Om )} $.
\end{proposition}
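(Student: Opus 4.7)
The strategy is the classical oscillation-decay argument, with the weak Harnack inequality of Lemma~\ref{Harnack inequality} as the key analytic tool. Since $p > n$, Lemma~\ref{th4-boundedness}(ii) first gives $v \in L^\infty(\Om)$ with $\|v\|_{L^\infty(\Om)} \le C\|\mathbf{G}\|_{L^p(\Om)}$, so all auxiliary functions constructed from $v$ are bounded and Lemma~\ref{Harnack inequality} applies.

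For $x_0 \in \overline{\Om}$ and $R > 0$, write $M(R) = \sup_{\Om_R} v$, $m(R) = \inf_{\Om_R} v$, and $\omega(R) = M(R) - m(R)$. The two functions $w_1 = M(4R) - v$ and $w_2 = v - m(4R)$ are nonnegative on $\Om_{4R}$ and satisfy the differential inequality \eqref{weak-form-0200-dual} with $\mathbf{G}$ replaced by $-\mathbf{G}$ and $\mathbf{G}$, respectively. For interior balls $B_{4R}(x_0) \subset \Om$, I would apply Lemma~\ref{Harnack inequality}(i) to both. Since the pointwise identity $w_1 + w_2 \equiv \omega(4R)$ forces $\max(w_1, w_2) \ge \omega(4R)/2$, integration yields
\[
\left(R^{-n}\int_{B_{2R}} w_1^{p_0}\, dx\right)^{1/p_0} + \left(R^{-n}\int_{B_{2R}} w_2^{p_0}\, dx\right)^{1/p_0} \ge c_0\,\omega(4R).
\]
Combined with $\inf_{B_R} w_1 = M(4R) - M(R)$ and $\inf_{B_R} w_2 = m(R) - m(4R)$, adding the two weak Harnack bounds produces $\omega(R) \le \gamma\,\omega(4R) + C R^{1-n/p}\|\mathbf{G}\|_{L^p(\Om)}$ with $\gamma \in (0,1)$ independent of $R$.

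For a boundary point $x_0 \in \partial\Om$, I would invoke Lemma~\ref{Harnack inequality}(ii) with $m = M(4R) \ge 0$ for $w_1$ and $m = -m(4R) \ge 0$ for $w_2$, both signs holding because $v = 0$ on $\partial\Om$ forces $m(4R) \le 0 \le M(4R)$. The truncated functions $(w_i)_{m_i}$ coincide with their prescribed boundary values on $\R^n \setminus \Om$, so their sum equals $\omega(4R)$ identically on $B_{2R} \setminus \Om$; since $\Om$ is Lipschitz, $|B_{2R}(x_0) \setminus \Om| \ge c|B_{2R}|$, which preserves the lower bound on the weak Harnack left-hand sides and delivers the same decay recursion. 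Iterating on the geometric sequence $R_k = 4^{-k}R_0$ via a standard iteration lemma (e.g.\ \cite[Lemma~8.23]{GilTru}) produces $\osc(v,\Om_R) \le C\bigl(R^\al + R^{1-n/p}\|\mathbf{G}\|_{L^p(\Om)}\bigr)$ for some $\al \in (0,\,1-n/p]$, which together with the $L^\infty$ bound on $v$ gives \eqref{holder estimate}.

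The main technical obstacle is the boundary case: the truncation introduced in Lemma~\ref{Harnack inequality}(ii) destroys the clean pointwise identity $w_1 + w_2 = \omega(4R)$ on $\Om$, but the identity is recovered on $\R^n \setminus \Om$, and the Lipschitz geometry of $\partial\Om$ is precisely what guarantees a positive-measure region of $B_{2R}$ outside $\Om$ on which the desired lower bound holds. Apart from this bookkeeping at the boundary, the argument is routine Moser--De~Giorgi iteration.
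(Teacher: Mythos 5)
Your proposal is correct and follows essentially the same route as the paper's proof: the $L^\infty$ bound from Lemma~\ref{th4-boundedness}, the two-sided application of Lemma~\ref{Harnack inequality} to $M(4R)-v$ and $v-m(4R)$ (with signs $\mp\mathbf{G}$), the Lipschitz measure-density condition $|B_{2R}\setminus\Om|\ge cR^n$ for the boundary case, and the iteration lemma~\cite[Lemma~8.23]{GilTru}. Your explicit observation that $\max(w_1,w_2)\ge\omega(4R)/2$ to justify summing the two weak Harnack inequalities, and your remark that the truncated functions sum to $\omega(4R)$ on $B_{2R}\setminus\Om$, simply make explicit what the paper leaves implicit in the phrase ``adding two inequalities.''
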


\begin{proof}
By the $L^\infty$-bound in Lemma \ref{th4-boundedness}, it remains to prove the global H\"{o}lder regularity and estimate for $v$. To do this, we need to assume that $\Om$ is a bounded Lipschitz domain in $\R^n$. Then there are constants $R_0 =R_0 (\Om ) >0$ and $C =C(\Om )>0$ such that
\begin{equation}\label{regularity for omega}
\frac{\left|B_{2R}(x_0) \setminus \Omega \right|}{R^n} \ge \frac 1C \quad\mbox{for all}\,\, x_0 \in \partial\Omega , \,\, 0< R \le R_0 /4 .
\end{equation}

We first prove the interior result. Suppose that $B_{R_0}= B_{R_0}(x_0 ) \subset {\Omega}$ and $0<R \le R_0 /4$. For $0< r \le 4R$, we define
$$
M_r= \sup_{B_{r}} v \quad\mbox{and} \quad m_r =\inf_{B_r} v .
$$
Then since $w =M_{4R} -v$ and $w= v-m_{4R}$ are nonnegative bounded weak solutions of
$$
-\Delta w - {\mathbf b} \cdot \nabla w = \mp  \div {\mathbf G} \quad\mbox{in}\,\, B_{4R},
$$
it follows from the inequality (i) in Lemma \ref{Harnack inequality} that there are numbers $0< p_0 = p_0 (n, \|{\mathbf b}\|_{L^{n,\infty}(\Om ) })< 1 $ and $C =C(n,p, \|{\mathbf b}\|_{L^{n,\infty}(\Omega )} )>1$ such that
$$
R^{-n/p_0} \| M_{4R}-v \|_{L^{ p_0}( B_{2R} )} \le C \left( M_{4R}-M_R + R^\delta \|{\mathbf G}\|_{L^{p} (\Omega )}   \right)
$$
and
$$
R^{-n/p_0} \| v - m_{4R} \|_{L^{p_0}( B_{2R} ) }\le C \left( m_R -m_{4R} + R^\delta \|{\mathbf G}\|_{L^{p} (\Omega )}\right) ,
$$
where $\delta = 1-n/p$. Adding two inequalities, we have
$$
M_{4R} -m_{4R} \le C \left( M_{4R}-m_{4R} -M_R +m_R + R^\delta \|{\mathbf G}\|_{L^{p} (\Omega )}\right)
$$
and so
$$
M_R -m_R \le \gamma \left( M_{4R}-m_{4R}\right) + R^\delta \|{\mathbf G}\|_{L^{p} (\Omega )},
$$
where $0< \gamma =1-C^{-1} <1 $. Hence by a standard iteration lemma (see \cite[Lemma 8.23]{GilTru} e.g.), there are constants $0< \alpha =\alpha (n,p,\|{\mathbf b}\|_{L^{n,\infty}(\Omega )} ) \le 1-n/p$ and $C =C(n,p,\Om , \|{\mathbf b}\|_{L^{n,\infty}(\Omega )}  )>1$ such that
$$
{\rm osc}_{B_R} v = M_R -m_R \le C R^\alpha \left( {\rm osc}_{B_{R_0}} v + \|{\mathbf G}\|_{L^{p} (\Omega )} \right)
$$
for all $R\le R_0 /4$.

 To prove the boundary result, we assume that $x_0 \in \partial \Omega$ and $0<R \le R_0 /4$. For $0< r \le 4R$, we define
$$
M_r= \sup_{\Omega \cap B_{r}(x_0 )} v \quad\mbox{and} \quad m_r =\inf_{\Omega \cap B_{r}(x_0 )} v .
$$
Then since $v=0$ on $\partial\Omega$, it follows that $m_r \le 0 \le M_r$ for all $r \le 4R$.
Moreover, since $w =M_{4R} -v$ and $w= v-m_{4R}$ are nonnegative bounded weak solutions of
$$
-\Delta w - {\mathbf b} \cdot \nabla w = \mp  \div {\mathbf G} \quad\mbox{in}\,\, \Omega \cap B_{4R} (x_0 ),
$$
it follows from the inequality (ii) in Lemma \ref{Harnack inequality} that
$$
M_{4R} \left(\frac{\left|B_{2R}(x_0) \setminus \Omega \right|}{R^n} \right)^{1/p_0} \le C \left( M_{4R}-M_R + R^\delta \|{\mathbf G}\|_{L^{p} (\Omega )}\right)
$$
and
$$
-m_{4R} \left(\frac{\left|B_{2R}(x_0) \setminus \Omega \right|}{R^n} \right)^{1/p_0} \le C \left( m_R -m_{4R} + R^\delta\|{\mathbf G}\|_{L^{p} (\Omega )} \right) ,
$$
where $0< p_0 = p_0 (n, \|{\mathbf b}\|_{L^{n,\infty}(\Om ) })< 1 $ and $C =C(n,p, \|{\mathbf b}\|_{L^{n,\infty}(\Omega )} )>1$. Recalling the regularity condition (\ref{regularity for omega}) on $\Omega$ and adding two inequalities, we have
$$
M_{4R} -m_{4R} \le C \left( M_{4R}-m_{4R} -M_R +m_R + R^\delta \|{\mathbf G}\|_{L^{p} (\Omega )} \right)
$$
and so
$$
M_R -m_R \le \gamma \left( M_{4R}-m_{4R}\right) + R^\delta \|{\mathbf G}\|_{L^{p} (\Omega )} ,
$$
where $0< \gamma =1-C^{-1} <1 $. Hence by a standard iteration lemma, there are constants $0< \alpha =\alpha (n,p, \Om , \|{\mathbf b}\|_{L^{n,\infty}(\Omega )} ) \le 1-n/p$ and $C =C(n,p, \Om , \|{\mathbf b}\|_{L^{n,\infty}(\Omega )} )>1$ such that
$$
{\rm osc}_{\Omega \cap B_R (x_0 ) } v = M_R -m_R \le C R^\alpha \left( {\rm osc}_{\Omega \cap B_{R_0}(x_0 )} v + \|{\mathbf G}\|_{L^{p} (\Omega )} \right)
$$
for all $R\le R_0 /4$. Combining the interior and boundary estimates, we complete the proof of
the proposition.
\end{proof}

\begin{remark}\label{holder regularity for u}
 As shown in Proposition \ref{th4-0}, H\"{o}lder regularity of weak solutions $v$ of   \eqref{bvp-dual} is  deduced from the weak Harnack inequalities in Lemma \ref{Harnack inequality}. But this approach seems not to work for  weak solutions $u$ of  \eqref{bvp}. Here we explain one difficulty.

Assume that $\bb \in L^{n, \infty} (\Om; \R^n )$,  $\div \bb \ge 0$ in $\Om$, and $\mathbf{F}\in L^p (\Om ; \R^n )$ for some $p>n$. Then by Lemma \ref{th4-boundedness}, there exists a unique bounded weak solution  $u  $     of  \eqref{bvp} with $f = \div \mathbf{F} $. If  $ B_{4R} = B_{4R} (x_0 )\subset \Om$, let $M_{4R} =\sup_{B_{4R}} u$ and $m_{4R} =\inf_{B_{4R}} u$. Then   the functions $w_1 = u - m_{4R}$  and $w_2 =M_{4R} -u$  are nonnegative and satisfy
\[
-\Delta w_i +\div (w_i \bb )= \div    {\mathbf{F}}_i   \quad\mbox{in}\,\,B_{4R}    ,
\]
where $ {\mathbf{F}}_1  = \mathbf{F}  - m_{4R}  \bb$ and $ {\mathbf{F}}_2  =  M_{4R}   \bb - \mathbf{F}$. Recall again that $\div \bb \ge 0$ in $\Om$. Hence both $w_1$ and $w_2$   satisfy
\begin{equation}\label{u-inequality for weak harnack}
-\Delta w_i +\div (w_i \bb ) \ge \pm  \div    \mathbf{F}\quad\mbox{in}\,\,B_{4R} ,
\end{equation}
only when $m_{4R} \le 0 \le M_{4R} $,   which holds if and only if there exists at least one $x \in B_{4R}(x_0 )$ such that $u(x )=0$. But we do not know how to prove such a property for a weak solution  $u$ of  \eqref{bvp} under the only assumption that $\mathbf{F}\in L^p (\Om ; \R^n )$.
This is one reason why we can not  derive   interior H\"{o}lder estimates for $u$ from the weak Harnack inequalities. Furthermore, we have not successfully obtained the weak Harnack inequalities for functions $w_i$ satisfying  (\ref{u-inequality for weak harnack}) yet, even under the additional assumption that $\div u \in L^{n/2, \infty}(\Om )$. 
\end{remark}

\section{Proofs of Theorems \ref{th4-q version} and \ref{th4-q version-strong}}
\label{Sec6}

In this section we prove Theorems \ref{th4-q version} and \ref{th4-q version-strong} which are two main results in the paper for  existence, uniqueness, and regularity of $p$-weak solutions of the problems \eqref{bvp} and \eqref{bvp-dual}. The domain $\Om$  can be either a bounded or exterior domain in $\R^n$, $n \ge 3$. It is assumed that $\Om$ is of  the class $C^{1}$ to start with and then  $C^{1,1}$ for second derivative estimates.

\subsection{Proofs of Theorems \ref{th4-q version} and \ref{th4-q version-strong} for bounded domains}

To prove Theorem \ref{th4-q version} for the case when $\Omega$ is bounded and $2 < p<n$, it suffices to prove the following result.

\begin{proposition}\label{existence-qweak} Let $\Om $ be a bounded $C^1$-domain in $\R^n$, $n \ge 3$. Assume that $\mathbf{b} \in L^{n,\I}(\Om;\R^n)$, $\div \mathbf{b} \ge 0$ in $\Om$, and $2 \le p<n$.
\begin{enumerate}[(i)]
\item For each ${\mathbf F} \in L^{p}(\Om; \R^n )$, there exists a unique $p$-weak solution $u $ of \eqref{bvp} with $f= \div {\mathbf F}$. Moreover, we have
$$
 \|\nb u \|_{L^p (\Om )} \le C (n,p ,\Om ) \left( 1+ \|{\mathbf b}\|_{L^{n,\I} (\Om )} \right) \|{\mathbf F}\|_{L^{p}(\Om )}.
$$
\item For each ${\mathbf G} \in L^{p'}(\Om; \R^n )$, there exists a unique $p'$-weak solution $v $ of \eqref{bvp-dual} with $g= \div {\mathbf G}$. Moreover, we have
$$
 \|\nb v \|_{L^{p'} (\Om )} \le C (n,p ,\Om ) \left( 1+ \|{\mathbf b}\|_{L^{n,\I} (\Om )} \right) \|{\mathbf G}\|_{L^{p'}(\Om )}.
$$
\end{enumerate}
\end{proposition}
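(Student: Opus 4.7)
The plan is to follow the strategy outlined in the introduction, combining Moser iteration, H\"{o}lder's inequality in Lorentz spaces, the Calderon-Zygmund estimate in weak spaces, and the Marcinkiewicz interpolation theorem. Since $\Om$ is bounded and $p\ge 2$, any $\mathbf{F}\in L^p(\Om;\R^n)$ also belongs to $L^2(\Om;\R^n)$, so Lemma \ref{th4-existence} already produces a unique weak solution $u\in W_0^{1,2}(\Om)$ of \eqref{bvp} with $f=\div\mathbf{F}$, settling $p=2$ at once. For $2<p<n$ I would first invoke Lemma \ref{th4-higher integrability} to obtain the higher integrability $u\in L^{p^*}(\Om)$ together with $\|u\|_{L^{p^*}}\le C\|\mathbf{F}\|_{L^p}$.

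Next, since $\mathbf{b}\in L^{n,\I}(\Om;\R^n)$ and $u\in L^{p^*}(\Om)\subset L^{p^*,\I}(\Om)$ with $1/p^*+1/n=1/p$, H\"{o}lder in Lorentz spaces (Lemma \ref{holder inequalies}) gives $u\mathbf{b}\in L^{p,\I}(\Om;\R^n)$ with $\|u\mathbf{b}\|_{L^{p,\I}}\le C\|\mathbf{b}\|_{L^{n,\I}}\|u\|_{L^{p^*}}$. Rewriting \eqref{bvp} as $-\De u=\div(\mathbf{F}-u\mathbf{b})$ in $\Om$, with right-hand side the divergence of an $L^{p,\I}$ field, Proposition \ref{CZ-estimates} provides a unique $\tilde u\in W_0^{1,p,\I}(\Om)$ solving this Poisson equation and satisfies $\|\nb\tilde u\|_{L^{p,\I}}\le C(1+\|\mathbf{b}\|_{L^{n,\I}})\|\mathbf{F}\|_{L^p}$. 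Because $\Om$ is bounded and $p>2$, the inclusion $L^{p,\I}(\Om)\subset L^2(\Om)$ from \eqref{Lebesgue-Lorentz inclusion} forces $\tilde u\in W_0^{1,2}(\Om)$, and the uniqueness part of Lemma \ref{th4-existence} identifies $\tilde u=u$. Hence $\nb u\in L^{p,\I}(\Om;\R^n)$ with the same bound.

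To upgrade from weak $L^p$ to $L^p$, I would apply Lemma \ref{marcinkiewicz-inter} to the linear solution operator $T:\mathbf{F}\mapsto\nb u$, which is well defined on $L^2(\Om;\R^n)$ by Lemma \ref{th4-existence}. The energy estimate yields $\|T\mathbf{F}\|_{L^{2,\I}}\le\|T\mathbf{F}\|_{L^2}\le\|\mathbf{F}\|_{L^2}$, while the previous paragraph, rerun with any fixed $p_1\in(p,n)$ in place of $p$, yields $\|T\mathbf{F}\|_{L^{p_1,\I}}\le C_{p_1}\|\mathbf{F}\|_{L^{p_1}}$. Interpolating between the endpoints $p_0=2$ and $p_1$ then delivers $\|\nb u\|_{L^p}\le C(n,p,\Om)(1+\|\mathbf{b}\|_{L^{n,\I}})\|\mathbf{F}\|_{L^p}$. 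Uniqueness is immediate: any $p$-weak solution with trivial data lies in $W_0^{1,p}(\Om)\subset W_0^{1,2}(\Om)$ and must vanish by Lemma \ref{th4-existence}.

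For Part (ii) I would argue by duality. By Part (i), the map $u\mapsto B(u,\cdot)$ is a bounded isomorphism $W_0^{1,p}(\Om)\to W^{-1,p}(\Om)$, where $B$ is the bilinear form \eqref{bilinear form}, bounded on $W_0^{1,p}(\Om)\times W_0^{1,p'}(\Om)$ by the trilinear estimate in Lemma \ref{th1}. Taking adjoints, $v\mapsto B(\cdot,v)$ is then a bounded isomorphism $W_0^{1,p'}(\Om)\to W^{-1,p'}(\Om)$; given $\mathbf{G}\in L^{p'}(\Om;\R^n)$, setting $g=\div\mathbf{G}\in W^{-1,p'}(\Om)$ produces the unique $p'$-weak solution $v\in W_0^{1,p'}(\Om)$ of \eqref{bvp-dual} with the claimed estimate. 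The main obstacle I anticipate is the clean identification of the Calderon-Zygmund solution $\tilde u$ with the Lax-Milgram solution $u$ in the weak-$L^p$ stage; this step relies crucially on the bounded-domain inclusion $L^{p,\I}(\Om)\subset L^2(\Om)$, and this is precisely why the present scheme is confined to $p\ge 2$, consistent with the need to invoke the stronger hypothesis \eqref{coercivity cond-b-stronger} in the complementary range $n/(n-1)<p<2$.
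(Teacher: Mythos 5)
Your proposal is correct and follows essentially the same route as the paper: Lax--Milgram for $p=2$, Moser iteration (Lemma \ref{th4-higher integrability}) for $u\in L^{p^*}$, H\"older in Lorentz spaces for $u\mathbf{b}\in L^{p,\infty}$, the Calderon--Zygmund result in Lorentz spaces (Proposition \ref{CZ-estimates}) for $\nabla u\in L^{p,\infty}$, then Marcinkiewicz interpolation between the $L^2$ endpoint and a $p_1\in(p,n)$ endpoint, and duality for Part~(ii). Two small remarks: the identification of the Calderon--Zygmund solution $\tilde u$ with $u$ is really a uniqueness statement for the Poisson problem (Lemma \ref{CZ-estimates0}, or Lemma \ref{th4-existence} with $\mathbf{b}\equiv 0$), since $\tilde u$ does not solve \eqref{bvp} with drift; and your abstract adjoint-isomorphism formulation of Part~(ii) is a slightly cleaner packaging of the paper's direct a priori estimate via testing against the solution of the primal problem, but the two are the same duality argument.
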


\begin{proof}
 For the case $p=2$, the proposition was already proved by Lemma \ref{th4-existence}. Moreover, it follows from Lemma \ref{th4-existence} that
for each $\mathbf{F} \in L^{2}(\Om; \R^n )$ there exists a unique weak solution $u=T({\mathbf F})$ of \eqref{bvp} with $f =\div \mathbf{F}$. Define $S(\mathbf{F})= \nb T(\mathbf{F})$ for all $\mathbf{F} \in L^{2}(\Om; \R^n )$. Then by Lemma \ref{th4-existence}, $S$ is a bounded linear operator from $L^{2}(\Om; \R^n )$ into $L^{2}(\Om; \R^n )$.

Suppose that $2 < p<n$, $\mathbf{F} \in L^{p}(\Om; \R^n )$, and $u=T({\mathbf F})$. Then by Lemma \ref{th4-higher integrability}, we deduce that $u \in L^{p^*}(\Om )$ and $\|u \|_{L^{p^*}(\Om)}\le C \|\mathbf{F}\|_{L^p (\Om )}$.
Moreover, since $\mathbf{b} \in L^{n,\I}(\Om;\R^n)$, it follows from Lemma \ref{holder inequalies} that $u \mathbf{b} \in L^{p,\I}(\Om;\R^n)$ and $\|u \mathbf{b}\|_{L^{p,\I}(\Om )} \le C \|u \|_{L^{p^*}(\Om)} \|{\mathbf b}\|_{L^{n,\I} (\Om )}$.
 Note that $-\De u = \div \left(\mathbf{F}- u \mathbf{b} \right)$ in $\Om$. Hence by Proposition \ref{CZ-estimates}, we deduce that
$S(\mathbf{F})= \nb u \in L^{p,\I}(\Om;\R^n)$ and $ \|S(\mathbf{F}) \|_{L^{p,\I}(\Om )} \le C \left( 1+ \|{\mathbf b}\|_{L^{n,\I} (\Om )} \right) \|{\mathbf F}\|_{L^{p}(\Om )}$.
This implies, in particular, that $S$ is bounded from $L^{p}(\Om; \R^n )$ into $L^{p,\I}(\Om; \R^n )$.

We have shown that $S$ is a bounded linear operator from $L^{p}(\Om; \R^n )$ into $L^{p,\I}(\Om; \R^n )$ and its operator norm is bounded above by $C \left( 1+ \|{\mathbf b}\|_{L^{n,\I} (\Om )} \right)$ for every $2<p<n$. Therefore, by Lemma \ref{marcinkiewicz-inter} (the Marcinkiewicz interpolation theorem), we conclude that for every $2<p<n$, $S$ is bounded from $L^{p}(\Om; \R^n )$ into $L^{p}(\Om; \R^n )$ and its operator norm is bounded above by $C \left( 1+ \|{\mathbf b}\|_{L^{n,\I} (\Om )} \right)$.
This proves Part (i) of the proposition.

To prove Part (ii) of the proposition, it suffices to derive some a priori estimate for solutions.
 Suppose that $\mathbf{G} \in L^{p'} (\Om; \R^n )$ and $v$ is a $p'$-weak solution of \eqref{bvp-dual} with $g =\div \mathbf{G}$. Let $\mathbf{F} \in L^{p} (\Om; \R^n )$ be given. Then by Part (i) of the proposition,
there exists a unique $p$-weak solution $u $ of \eqref{bvp} with $f= \div {\mathbf F}$ satisfying
$ \|\nb u \|_{L^{p} (\Om )} \le C (n,p ,\Om ) \left( 1+ \|{\mathbf b}\|_{L^{n,\I} (\Om )} \right) \|{\mathbf F}\|_{L^{p}(\Om )}.$
Using Lemma \ref{th1} (or Lemma \ref{th1-general}), we easily deduce that
$$
\int_\Om \left( \nb u - u \mathbf{b} \right) \cdot\nb \phi \, dx = - \int_\Om \mathbf{F} \cdot \nabla \phi \, dx \quad\mbox{for all}\,\, \phi \in \hat{D}^{1,p'}_{0}(\Om).
$$
Taking $\phi =v$, we thus have
\begin{align*}
- \int_\Om \mathbf{F} \cdot \nabla v \, dx &= \int_\Om \nb v \cdot \left( \nb u - u \mathbf{b} \right) \, dx \\
&= - \int_\Om \mathbf{G} \cdot \nabla u \, dx \\
&\le \| \mathbf{G} \|_{L^{p'} (\Om )} \| \nabla u \|_{L^{p}(\Om )}\\
&\le C (n,p ,\Om ) \left( 1+ \|{\mathbf b}\|_{L^{n,\I} (\Om )} \right) \| \mathbf{F} \|_{L^p (\Om )} \|{\mathbf G}\|_{L^{p'}(\Om )}.
\end{align*}
Since $\mathbf{F} \in L^{p} (\Om; \R^n )$ is arbitrary, it follows from the Riesz representation theorem that
$$
\|\nabla v \|_{L^{p'} (\Om )} \le C (n,p ,\Om ) \left( 1+ \|{\mathbf b}\|_{L^{n,\I} (\Om )} \right) \| \mathbf{G} \|_{L^{p'} (\Om )}.
$$
Uniqueness of a $p'$-weak solution of \eqref{bvp-dual}
immediately follows from this a priori estimate. Moreover, by a density argument, we easily deduce existence of a $p'$-weak solution of \eqref{bvp-dual}. This completes the proof.
\end{proof}

\medskip
We are now ready to prove Theorems \ref{th4-q version} and \ref{th4-q version-strong} for bounded domains

\begin{proof}
[Proof of Theorem \ref{th4-q version} for bounded domains]
For the case $ 2\le p < n$, the theorem immediately follows from Lemma \ref{th4-existence} and Proposition \ref{existence-qweak}.
In particular, it follows from Lemma \ref{th4-existence} that
for each $\mathbf{G} \in L^{2}(\Om; \R^n )$ there exists a unique weak solution $v=T({\mathbf G})$ of \eqref{bvp-dual} with $g =\div \mathbf{G}$. Define $S(\mathbf{G})= \nb T(\mathbf{G})$ for all $\mathbf{G} \in L^{2}(\Om; \R^n )$. Then by Lemma \ref{th4-existence}, $S$ is a bounded linear operator from $L^{2}(\Om; \R^n )$ into $L^{2}(\Om; \R^n )$.

Suppose next that $ n/(n-1) < p< 2 $ and $q=p'$. To derive some a priori estimates, let us suppose that $\mathbf{G} \in L^{q}(\Om; \R^n )$ and $v=T({\mathbf G})$. Then by Lemma \ref{th4-higher integrability}, we deduce that $v \in L^{q^*}(\Om )$ and $\|v \|_{L^{q^*}(\Om)}\le C \|\mathbf{G}\|_{L^{q} (\Om )}$.
Moreover, since $\mathbf{b} \in L^{n,\I}(\Om;\R^n)$ and $ {\rm div}\, \mathbf{b} \in L^{n/2,\I}(\Om)$, it follows from Lemma \ref{holder inequalies} that
$$
\|v \mathbf{b}\|_{L^{q,\I}(\Om )} \le C \|v \|_{L^{q^*}(\Om)} \|{\mathbf b}\|_{L^{n,\I} (\Om )}
$$
and
$$ \|v \, {\rm div}\, \mathbf{b} \|_{L^{nq/(n+q),\I}(\Om )} \le C \|v \|_{L^{q^*}(\Om)} \|{\rm div}\, {\mathbf b}\|_{L^{n/2,\I} (\Om )} .
$$
Note that
\[
-\De v = \div \mathbf{G}+\mathbf{b} \cdot \nabla v = \div (\mathbf{G}+v \mathbf{b} ) - v \, {\rm div}\, \mathbf{b}
\]
in $\Om$. But since $1< nq/(n+q)<n/2$, it follows from Lemma \ref{sol-Laplace in Rn} (ii) that there exists ${\mathbf G}_0 \in L^{q,\I}(\Om ;\R^n )$ such that
$$
- v \, {\rm div}\, \mathbf{b} = {\rm \div} \, \mathbf{G}_0 \quad\mbox{and}\quad \| \mathbf{G}_0 \|_{L^{q,\I}(\Om )} \le C \|v \|_{L^{q^*}(\Om)} \|{\rm div}\, {\mathbf b}\|_{L^{n/2,\I} (\Om )} .
$$
Hence by Proposition \ref{CZ-estimates}, we deduce that
$$
S(\mathbf{G})= \nb v \in L^{q,\I}(\Om;\R^n)
$$
and
$$
\|S(\mathbf{G}) \|_{L^{q,\I}(\Om )} \le C(n,p,\Omega ) M_{\mathbf b} \|{\mathbf G}\|_{L^{q}(\Om )},
$$
 where $M_{\mathbf b} = 1+ \|{\mathbf b}\|_{L^{n,\I} (\Om )}+ \|{\rm div}\, {\mathbf b}\|_{L^{n/2,\I} (\Om )} $.
This implies, in particular, that $S$ is bounded from $L^{q}(\Om; \R^n )$ into $L^{q,\I}(\Om; \R^n )$.

We have shown that $S$ is a bounded linear operator from $L^{q}(\Om; \R^n )$ into $L^{q,\I}(\Om; \R^n )$ and its operator norm is bounded above by $C(n,q',\Omega )M_{\mathbf b}$ for every $2<q<n$. Therefore, by the Marcinkiewicz interpolation theorem  (Lemma \ref{marcinkiewicz-inter}), we conclude that for every $2<q<n$, $S$ is bounded from $L^{q}(\Om; \R^n )$ into $L^{q}(\Om; \R^n )$ and its operator norm is bounded above by $C(n,q',\Omega )M_{\mathbf b}$.
This completes the proof of Part (ii) of the theorem for the case $ n/(n-1) < p< 2 $. Part (i) can be then deduced from Part (ii) by a duality argument as in the proof of Proposition \ref{existence-qweak}.
\end{proof}

\medskip

\begin{proof}
[Proof of Theorem \ref{th4-q version-strong} for bounded domains]
Suppose that $1<q<n/2$. Recall from the Calderon-Zygmund and Sobolev inequalities that if
 ${\mathbf F} = - \nabla N(f)$, where $N(f)$  is the Newtonian potential of $f$ over $\Om$,
then $f= {\rm div} \,{\mathbf F} $ in $\Om$ and
$\|{\mathbf F}\|_{L^{q^*} (\Omega )} \le C \|f\|_{L^q (\Om )}$. Moreover, since $n/(n-1)<q^*< n$, it follows from Proposition \ref{existence-qweak} that for each $f \in L^{q}(\Om ) $, there exists a unique $q^*$-weak solution $u= T(f)$ of \eqref{bvp}, satisfying $\|u \|_{W^{1,q^*} (\Om )} \le C(n,q, \Omega ) M_{\mathbf b}\|f\|_{L^{q}(\Om )}$.
Note that $-\De u = f- \mathbf{b} \cdot \nb u - \left( {\rm div}\, {\mathbf b} \right) u $ in $\Om$. Hence by Lemma \ref{holder inequalies} and Proposition \ref{CZ-estimates}, we deduce
 that $u \in W^{2, q, \I}(\Om;\R^n)$ and
$ \|u \|_{W^{2,q,\I}(\Om)} \le C(n,q ,\Omega )M_{\mathbf b}^2 \|f\|_{L^{q}(\Om )} $.
 We have shown that the linear mapping $T $ is bounded from $L^{q}(\Om )$ into
$W^{2,q,\I}(\Om )$ and its norm is bounded above by $C(n,q , \Omega )M_{\mathbf b}^2$ for every $1<q<n/2$. Therefore, by the Marcinkiewicz interpolation theorem  (Lemma \ref{marcinkiewicz-inter}),
we conclude that $T $ is a bounded linear operator from $L^{q}(\Om )$ into $W^{2,q}(\Om )$ with norm bounded above by $C(n,q , \Omega )M_{\mathbf b}^2$ for every $1<q<n/2$. This completes the proof of Part (i) of the theorem. Part (ii) can be proved by exactly the same argument.
 \end{proof}

\subsection{Uniqueness results for exterior problems}

Theorem \ref{th4-q version} has been completely proved for the case when $\Om$ is a bounded domain. For exterior domains, we first prove its uniqueness assertion. In fact, making essential use of the results of Theorem \ref{th4-q version} for bounded domains, we   prove the following uniqueness result for weak solutions of the problem \eqref{bvp} on exterior domains.
 We will also prove a uniqueness result for weak solutions of the problem \eqref{bvp-dual} on exterior domains in Lemma \ref{th4-unique-q version-dual}.

\begin{lemma}\label{th4-unique-q version} Let $\Om $ be an exterior $C^1$-domain in $\R^n$, $n \ge 3$. Assume that $\mathbf{b} \in L^{n,\I}(\Om;\R^n)$ and $\div \mathbf{b} \ge 0$ in $\Om$. Assume in addition that
\begin{enumerate}[(i)]
\item $u \in W_{\loc}^{1,r}(\overline{\Om})$ for some $r> n'  $, $u =0$ on $\partial \Om$;
\item $u \in L^{p_1 } (\Om ) + L^{p_2 } (\Om )$ for some $p_1 , p_2$ satisfying
 $ n'  <  p_1 \le p_2 <\infty$;
\item ${\rm div}\, {\mathbf b} \in L^{n/2,\I} (\Om )$ if $r < 2$ or $p_1 < 2$; and
\item $u$ satisfies
\begin{equation}\label{eq1.3-012}
\int_\Om \bke{\nb u - u \mathbf{b}} \cdot \nb
\phi \, dx = 0 \quad \text{for all } \phi \in C^1_{c}(\Om).
\end{equation}
\end{enumerate}
 Then $u =0$ identically on $\Om$.
\end{lemma}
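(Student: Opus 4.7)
The plan is to prove that $u\in\hat D_0^{1,2}(\Om)$, so that the uniqueness part of Lemma~\ref{th4-existence} (applied with $\mathbf{F}=\mathbf{0}$) forces $u\equiv 0$. Two ingredients are needed: local regularity up to the boundary, and decay of $u$ at infinity.

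First I would bootstrap local regularity. Pick $R$ with $\Om^c\subset B_R$ and $\zeta\in C_c^\infty(B_{R+2};[0,1])$ with $\zeta=1$ on $B_{R+1}$; then $w=\zeta u$ vanishes on $\partial(\Om\cap B_{R+2})$ and satisfies
\[
 -\Delta w + \div(w\mathbf{b}) = g+\div\mathbf{G}\quad\text{on }\Om\cap B_{R+2},
\]
with $g,\mathbf{G}$ supported in the annulus $\{R+1\le|x|\le R+2\}$ and built from $u$, $\nabla u$ and $u\mathbf{b}$. Theorem~\ref{th4-q version} applied to this bounded $C^1$-domain (already established in the previous subsection) then improves the regularity of $w$, and hence of $u$ on $\Om\cap B_{R+1}$; interior balls are handled by the same cutoff device. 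Iterating, and using hypothesis~(iii) precisely when the Sobolev exponent crosses $p=2$ from below, yields $u\in W^{1,q}_{\loc}(\overline\Om)$ for every $1<q<n$.

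Next I would adapt the cutoff-plus-Newtonian-potential scheme from the proof of Lemma~\ref{uniqueness in Lorentz spaces for exterior domains}. Pick $\eta\in C_c^\infty(\R^n;[0,1])$ with $\eta=1$ on a neighborhood of $\Om^c$ and set $v=(1-\eta)u$, extended by $0$ to $\R^n$ (which is legitimate since $u\in W^{1,q}_{\loc}(\overline\Om)$ and $u|_{\partial\Om}=0$); write $\bar{\mathbf{b}}$ for the zero extension of $\mathbf{b}$. A direct calculation gives
\[
 -\Delta v + \div(v\bar{\mathbf{b}}) = f_0\quad\text{on }\R^n,
\]
where $f_0:=2\nabla\eta\cdot\nabla u+u\Delta\eta-u\mathbf{b}\cdot\nabla\eta\in C_c^\infty(\R^n)$, while hypothesis~(ii) yields $v\in L^{p_1}(\R^n)+L^{p_2}(\R^n)$. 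Let $N$ be the Newtonian potential of $f_0$; by Lemma~\ref{sol-Laplace in Rn}, $N\in D^{2,s}(\R^n)\cap D^{1,s^*}(\R^n)\cap L^{(s^*)^*}(\R^n)$ for every $1<s<n/2$, and the choice $s=2n/(n+2)$ in particular puts $N\in D^{1,2}(\R^n)\cap L^{2^*}(\R^n)$. The difference $\psi:=v-N$ obeys
\[
 -\Delta\psi+\div(\psi\bar{\mathbf{b}})=-\div(N\bar{\mathbf{b}})\quad\text{on }\R^n,
\]
and combining H\"older in Lorentz (Lemma~\ref{holder inequalies}) with iterated Calderon-Zygmund estimates from Lemma~\ref{sol-Laplace in Rn} should confine $\psi$ to $L^{q_1}(\R^n)+L^{q_2}(\R^n)$ for some $1<q_1\le q_2<\infty$. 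A Liouville-type argument as in the proof of Lemma~\ref{uniqueness in Lorentz spaces for exterior domains} then forces $\psi\equiv 0$, so $v=N\in D^{1,2}(\R^n)\cap L^{2^*}(\R^n)$. Since $u=v$ outside $\supp\eta$ and $u$ has the local regularity established above with $u|_{\partial\Om}=0$, this yields $u\in\hat D_0^{1,2}(\Om)$, and Lemma~\ref{th4-existence} concludes.

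The main technical obstacle is the Liouville step. Unlike the drift-free setting of Lemma~\ref{uniqueness in Lorentz spaces for exterior domains}, where the analogous difference is harmonic and standard Liouville applies at once, here the right-hand side of the equation for $\psi$ carries the critical $L^{n,\infty}$-scaling of $\mathbf{b}$, so a naive Riesz-potential bootstrap only preserves rather than improves integrability. The strict inequality $p_1>n'$ in hypothesis~(ii) is used precisely to break this critical scaling and bring $\psi$ into a Liouville-admissible class.
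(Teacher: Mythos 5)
Your first step, bootstrapping local regularity on cutoff bounded domains via Theorem~\ref{th4-q version}, is in the same spirit as the paper. However, the paper needs more than $u\in W^{1,q}_{\loc}(\overline\Om)$: it also establishes $u\in L^p(\Om)$ globally for every $p\in[p_2,\infty]$ (in particular $u\in L^\infty(\Om)$) by running the same bootstrap uniformly on translated unit balls $B_1(x_0)$ far from the obstacle. Your formulation keeps only local information and gives no decay at infinity, which is essential for the conclusion.

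The second step has a genuine gap. The function $\psi:=v-N$ does not solve a homogeneous equation: it solves $-\Delta\psi+\div(\psi\bar{\mathbf b})=-\div(N\bar{\mathbf b})$, so even a Liouville theorem for the drift operator would not force $\psi\equiv 0$. To make $\psi$ solve the homogeneous equation you would have to subtract a particular solution $W$ of $-\Delta W+\div(W\bar{\mathbf b})=f_0$ on all of $\R^n$ rather than the Newtonian potential; but producing such a $W$ in the right decay class, and then invoking a Liouville theorem for $-\Delta+\div(\cdot\,\bar{\mathbf b})$ on $\R^n$ with $\bar{\mathbf b}\in L^{n,\infty}$, requires precisely the uniqueness assertion of this lemma applied to $\Om=\R^n$. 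In the paper, existence and uniqueness on exterior domains (including $\R^n$) are established in Proposition~\ref{existence-qweak-exterior} by \emph{using} Lemma~\ref{th4-unique-q version}, so the scheme you propose is circular. You acknowledge the Liouville step as the "main technical obstacle" but leave it unresolved; it is not a refinement that can be supplied later, it is the whole difficulty. (A smaller point: $f_0=2\nabla\eta\cdot\nabla u+u\Delta\eta-u\mathbf b\cdot\nabla\eta$ is not $C_c^\infty(\R^n)$ since $u$ has only limited Sobolev regularity and $\mathbf b$ is only weak~$L^n$; this is fixable, but it further weakens the available estimates on $N$.)

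The paper avoids any Liouville argument. After the local bootstrap and after showing $u\in L^p(\Om)$ for all $p\ge p_2$, it tests~\eqref{eq1.3-012} with $\phi$ approximating $\eta_k^2 (u^+)^\beta$ where $\eta_k(x)=\eta(x/k)$ is a large-scale cutoff, and uses $\div\mathbf b\ge 0$ to discard the sign-definite drift contribution. This produces a Caccioppoli-type estimate
\[
\bigl\|\eta_k (u^+)^{(\beta+1)/2}\bigr\|_{L^{2^*}(\Om)} \le \frac{C}{k}\,\bigl\|u^+\bigr\|_{L^{\beta+1}(B_{2k}\setminus B_k)}^{(\beta+1)/2}.
\]
Choosing $\beta+1\ge p_2$ so that $u^+\in L^{\beta+1}(\Om)$ makes the right side vanish as $k\to\infty$, hence $u^+\equiv 0$, and similarly $u^-\equiv 0$. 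This direct energy estimate at infinity is what your proposal is missing.
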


\begin{proof} Choose  $R_0 >0$ so large that $\Omega^c \subset B_{R_0 } =B_{R_0}(0)$ and define $\Om_{R} = \Om \cap B_{R}$ for $R>R_0$.

We first show that $u \in L_{\loc}^{\infty}(\overline{\Omega}) \cap W_{\loc}^{1,2}(\overline{\Omega}) $. Given  $R>R_0$, we fix a  cut-off function $\eta \in C_c^\infty (B_{2R}; [0,1])$   with  $\eta =1$ on $B_{R}$ and define $\overline{u}=\eta u$.  Then  $\overline{u} $ belongs to $W_0^{1,r}(\Om_{2R} )$ and satisfies
\[
-\De \overline{u} + \div (\overline{u} \bb) =  - \div ( 2u\nb \eta  )  +u \De\eta + (u\bb   )\cdot \nb \eta  \quad\mbox{in}\,\, \Om_{2R};
\]
that is,
$$
\int_{\Om_{2R}} \left( \nb \overline{u} - \overline{u}\mathbf{b} \right) \cdot \nb
\phi \, dx = \int_{\Om_{2R}} u \bke{ 2\nb \eta  \cdot \nb \phi +\phi \De \eta  + \phi \mathbf{b} \cdot \nb \eta  } \, dx
$$
for all $\phi \in C_c^1 (\Om_{2R})$.
   Suppose now that $u \in L_{loc}^{q} (\overline{\Om})$ for some $   q  \in [p_1 ,  \infty  )  $.
Then since $1 <    q'  \le p_1 ' < n$, it follows from Lemma \ref{th1-general} that
$$
 \left| \int_{\Om_{2R}} u \bke{ 2\nb \eta  \cdot \nb \phi +\phi \De \eta  + \phi \mathbf{b} \cdot \nb \eta  } \, dx\right| \le C \|u \|_{L^{q} (\Om_{2R} )}\| \phi \|_{W^{1,q'} (\Om_{2R} )}
$$
for all $\phi \in W_0^{1,q'}(\Om_{2R} )$, where $C= C(n,  q,  \eta ,  \mathbf{b}  )$.
Define $\overline{q}= q^*$ if $q<n$,  $\overline{q}=2n $ if $q  = n$, and  $\overline{q}= \infty $ if $q >n$. Then  by Lemma \ref{dual-char}, Theorem \ref{th4-q version} (for bounded domains, when $n'< q\le n$), and Lemma \ref{th4-boundedness} (when $q>n$),
there exists a unique $ {w}\in W_0^{1,q_1}(\Om_{2R} ) \cap L^{\overline{q}}(\Om_{2R})$, where $q_1 =\min (2,q)$, such that
\[
-\De w + \div (w \bb)=   - \div ( 2u\nb \eta  )  +u \De\eta + (u\bb )\cdot \nb \eta  \quad\mbox{in}\,\,\Om_{2R}
\]
and
$$
\|w\|_{L^{\overline{q}}(\Om_{2R})} \le C(n,  q,  \Om , R ,  \eta ,  \mathbf{b}  )  \|u \|_{L^{q} (\Om_{2R} )} .
$$
Since $q_0 := \min (r,q_1 )> n' $ and $ \overline{u} , {w} \in W_0^{1,q_0 }(\Om_{2R} )$, it follows from the uniqueness assertion of Theorem \ref{th4-q version} (for bounded domains)
 that $\overline{u} = {w}$ on $\Omega_{2R}$. This proves that
$$
 u \in L^{\overline{q}}(\Omega_{R}) \quad\mbox{and}\quad \|u\|_{L^{\overline{q}}(\Om_{R})} \le C(n,  q,  \Om , R ,  \eta ,  \mathbf{b}  )  \|u \|_{L^{q} (\Om_{2R} )} .
$$
Since $R>R_0$ can be arbitrarily large, it follows that   $u \in    L_{\loc}^{\overline{q}}(\overline{\Omega})$.
This argument can be repeated finitely many times to show that $u \in L_{loc}^\infty (\overline{\Om})$; that is,  by a bootstrap argument starting from  $q =p_1$, we can deduce that
$$%
u \in L^{\infty}(\Om_{R}) \quad\mbox{and}\quad    \|u\|_{L^{\infty}(\Om_{R})} \le C(n,   p_1 , \Om ,  R ,  \mathbf{b}  )  \|u \|_{L^{p_1} (\Om_{2R} )}
$$%
for all $R>R_0$. Moreover, since $u \in L_{loc}^2 (\overline{\Om})$, it follows from Theorem \ref{th4-q version} (for bounded domains) that   $u \in W_{\loc}^{1,2}(\overline{\Omega}) $.

We next show that $u \in L^p (\Om )$ for any $p \in [p_2 , \infty ]$.
Let $x_0 \in \Om$ be any point with $|x_0|>R_0 + 2$. Choosing a  cut-off function $\zeta \in C_c^\infty (B_{2}(x_0 ); [0,1])$   with  $\zeta =1$ on $B_{1}(x_0 )$, we define ${\overline{u}} = u \zeta  $.
Then ${\overline{u}}    \in W_0^{1,2}(B_2 (x_0 )) $   satisfies
\[
-\De \overline{u} + \div (\overline{u} \bb) =   - \div ( 2u\nb \zeta  )  +u \De\zeta + (u\bb   )\cdot \nb \zeta \quad\mbox{in}\,\, B_{2}(x_0 ).
\]
Hence by the same bootstrap  argument as above,  we can deduce that
$$
   \|u\|_{L^{\infty}(B_{1}(x_0 ) )}   \le C(n,   p_1 ,  \mathbf{b}   )     \|u \|_{L^{p_1} (B_{2}(x_0 ) )}  .
$$
Since $u \in L^{p_1}(\Om)+L^{p_2}(\Om)$ and $p_1 \le p_2$, we have
\[
\norm{u}_{L^\I(B_1(x_0))}  \le C \norm{u}_{L^{p_1}(B_2(x_0))}
 \le C\norm{u}_{L^{p_1}(\Om)+L^{p_2}(\Om)}<\infty,
\]
where $C = C(n, p_1, p_2 , \mathbf{b} ) $ is independent of any $x_0 \in \Om$ with $|x_0|>R_0 +2$. Hence it follows  that  $u \in L^\infty(\Om)$. To show that $u \in L^{p_2}(\Om )$, we decompose $u=u_1+u_2$, where $u_1 \in L^{p_1}(\Om)$ and $u_2 \in L^{p_2}(\Om)$.
If   $S= \{ x \in \Om : |u_1(x)|\ge 1\}$, then
\EQN{
\int_{\Om \setminus S} |u_1|^{p_2} \, dx  \le \int_{\Om \setminus S} |u_1|^{p_1} \, dx \le \int_{\Om} |u_1|^{p_1} \, dx < \infty.
}
Moreover,  since
\[
|S| \le \int_\Om | u_1|^{p_1} \, dx
\]
by Chebyshev's inequality,   we have
\begin{align*}
\|u_1 \|_{L^{p_2}(S )}  & \le \|u_2\|_{L^{p_2}( S)} + \|u \|_{L^{p_2}(S)}\\
 &\le \|u_2\|_{L^{p_2}( S)} + \|u \|_{L^{\infty}(S)}|S|^{1/p_2} <\infty .
\end{align*}
This  shows that  $u_1 \in L^{p_2}(\Om)$. Hence it follows that  $u = u_1 +u_2 \in L^{p_2}(\Om)$.
By H\"older's inequality,
we conclude  that  $u\in L^{p}(\Om)$ for any $p \in [p_2,\I]$.

We finally show that $u =0$ on $\Om$.
For a fixed number $\beta \ge 1$, let $w= u^+$ and $G(w) = w^{\beta}$. Since $u \in W_{\loc}^{1,2}(\overline{\Omega}) \cap L^{\infty}(\Omega)$, we easily show that $G(w) \in W_{\loc}^{1,2}(\overline{\Omega}) $.
 Let $k \in \N $ be so large that $\Om^c \subset B_k$. Fix a  cut-off function $\eta \in C_c^\infty (B_{2}; [0,1])$   with  $\eta =1$ on $B_{1}$ and define $\eta_k (x) = \eta (x/k)$.
Then since $\eta_k G(w) \in W_0^{1,2}(\Om_{2k})$, there is a sequence $\{\phi_j \}$ in $C_c^\infty (\Om_{2k})$ such that $\phi_j \to \eta_k G( w)$ in $W^{1,2}(\Om_{2k})$ as $j \to \infty$. Taking $\phi = \eta_k \phi_j$ in (\ref{eq1.3-012}) and letting $j\to\infty$, we have
$$
\int_{\Om} \nb u \cdot \nb \left(\eta_k^2 w^{\beta} \right) \, dx = \int_{\Om} ( u \mathbf{b} ) \cdot \nb \left(\eta_k^2 w^{\beta} \right) \, dx .
$$
Since $\nb w = \chi_{\{u> 0 \}} \nb u$ and $\div {\mathbf b} \ge 0$ in $\Om$, it follows that
\begin{align*}
 \nabla u \cdot \nb \left(\eta_k^2 w^{\beta} \right) & = \beta \eta_k^2 w^{\beta-1} |\nabla w|^2 + 2 \eta_k w^{\beta} \nabla \eta_k \cdot \nabla w \\
 &\ge \frac{\beta}2 \eta_k^2 w^{\beta-1} |\nabla w|^2 - \frac{2}{\beta}  w^{\beta+1} |\nabla \eta_k |^2
\end{align*}
and
\begin{align*}
\int_{\Om} u \mathbf{b} \cdot \nb
\left(\eta_k^2 w^\beta \right) \, dx &= \int_{\Om} w \mathbf{b} \cdot \nb
\left(\eta_k^2 w^\beta \right) \, dx \\
&= \frac{1}{\beta +1} \int_{\Om} \mathbf{b} \cdot \left[ \beta \nb \left( \eta_k^2 w^{\beta +1} \right) + 2 \eta_k w^{\beta +1} \nabla \eta_k \right] \, dx\\
 & \le \frac{2 }{\beta +1} \int_{\Om} \eta_k w^{\beta +1} \nabla \eta_k \cdot \mathbf{b} \, dx.
\end{align*}
Hence, letting $\overline{w}=w^{(\beta +1)/2}$, we have
$$
\frac{2 \beta}{(\beta+1)^2} \int_\Omega \eta_k^2 |\nabla \overline{w} |^2 \, dx \le \frac{2 }{\beta +1} \int_{\Om} \eta_k \overline{w}^2 | \nabla \eta_k | | \mathbf{b} |\, dx + \frac{2}{\beta} \int_{\Om} \overline{w}^2 | \nabla \eta_k |^2 \, dx .
$$
By Lemma \ref{th1-general},
\begin{align*}
\int_{\Om} \eta_k \overline{w}^2 | \nabla \eta_k | | \mathbf{b} |\, dx &\le C(n) \| \mathbf{b} \|_{L^{n,\infty}(\Om)} \|\nabla (\eta_k \overline{w})\|_{L^2 (\Omega )} \| \overline{w} \nb \eta_k \|_{L^2 (\Omega)} \\
&\le \frac{\beta}{2(\beta+1)} \|\nabla (\eta_k \overline{w})\|_{L^2 (\Omega )}^2 + C(n , \beta , \mathbf{b} ) \| \overline{w} \nb \eta_k \|_{L^2 (\Omega)}^2 .
\end{align*}
Therefore, using the Sobolev inequality, we obtain
\begin{equation}\label{local L2}
 \|\eta_k \overline{w} \|_{L^{2^*}(\Om )}^2 + \|\nabla (\eta_k \overline{w})\|_{L^2 (\Omega )}^2 \le C \left( n, \beta , {\mathbf b} \right) \| \overline{w} \nb \eta_k \|_{L^2 (\Omega)}^2 .
\end{equation}
Recall now that $w =u^+ \in L^p (\Om )$ for any $p \in [p_2 , \infty ]$. Take   any $\beta \ge  1$ with $\beta +1 \ge   p_2$.
Then by (\ref{local L2}) and  H\"{o}lder's inequality, we have
\begin{align*}
\|\eta_k \overline{w} \|_{L^{2^*}(\Om )} & \le C \| \overline{w} \nb \eta_k \|_{L^2 (\Omega)} \\
& \le \frac{C}{k} \| \overline{w} \|_{L^2 (B_{2k}\setminus B_k )}   =    \frac{C}{k}   \| w  \|_{L^{\beta+1} (B_{2k}\setminus B_k )}^{(\beta+1)/2}  .
\end{align*}
Letting $k\to \infty$, we deduce that   $\overline{w} =0$ on $\Om$.
It follows from the definitions of $\overline{w}$ and $w$ that $u^+ = 0$ on $\Om$. By a similar argument, we can show that $u^- =0$ on $\Omega$. This completes the proof.
\end{proof}

\medskip

Using Proposition \ref{existence-qweak}   and  Lemma \ref{th4-higher integrability} instead of Theorem \ref{th4-q version} (for bounded domains),  we can also obtain the following uniqueness result for the dual problem \eqref{bvp-dual}, under a weaker assumption on $\bb$ than Lemma \ref{th4-unique-q version} (we do not assume that
$\div \bb  \in L^{n/2,\I} (\Om )$ when $r < 2$ or $p_1 < 2$).

\begin{lemma}\label{th4-unique-q version-dual} Let $\Om $ be an exterior $C^1$-domain in $\R^n$, $n \ge 3$. Assume that $\mathbf{b} \in L^{n,\I}(\Om;\R^n)$ %
 and $\div \mathbf{b} \ge 0$ in $\Om$. Assume in addition that
\begin{enumerate}[(i)]
\item $v \in W_{\loc}^{1,r}(\overline{\Om})$ for some $r> n' $, $v =0$ on $\partial \Om$;
\item $v \in L^{p_1 } (\Om ) + L^{p_2 } (\Om )$ for some $p_1 , p_2$ satisfying
 $ n' <  p_1 \le p_2 <\infty$; and
\item $v$ satisfies
\begin{equation}\label{eq1.3-012-dual}
\int_\Om \nb v \cdot \bke{\nb \psi - \psi \mathbf{b}} \, dx = 0 \quad \text{for all } \psi \in C^1_{c}(\Om).
\end{equation}
\end{enumerate}
 Then $v =0$ identically on $\Om$.
\end{lemma}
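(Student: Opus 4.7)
The plan is to mirror the proof of Lemma \ref{th4-unique-q version}, substituting at each bounded-subdomain step the primal well-posedness result with its dual counterpart. The key observation is that every dual-problem result we need---Proposition \ref{existence-qweak} (ii) in the range $n' < q \le 2$, Lemma \ref{th4-higher integrability} (ii) in $2 \le q < n$, and Lemma \ref{th4-boundedness} (ii) for $q > n$---requires only $\div \bb \ge 0$, whereas the primal Theorem \ref{th4-q version} (i) used in the original argument needed the additional control $\div \bb \in L^{n/2,\I}$ in the range $n' < p < 2$. Dropping that hypothesis is precisely the gain we want to capture.

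Fix $R_0 > 0$ with $\Om^c \subset B_{R_0}$, pick $\eta \in C_c^\infty(B_{2R};[0,1])$ with $\eta \equiv 1$ on $B_R$ for $R > R_0$, and set $\bar v = \eta v$. Testing (\ref{eq1.3-012-dual}) against $\eta \phi$ with $\phi \in C_c^1(\Om_{2R})$ and integrating by parts shows that $\bar v \in W_0^{1,r}(\Om_{2R})$ satisfies
\[
\int_{\Om_{2R}} \nb \bar v \cdot (\nb \phi - \phi \bb) \, dx = \int_{\Om_{2R}} \left[ 2 v \nb\eta \cdot \nb\phi + v\phi\,\Delta\eta - v\phi\,\bb \cdot \nb\eta \right] dx.
\]
Suppose by induction that $v \in L^q_\loc(\overline\Om)$ for some $q \in [p_1, \infty)$. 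By Lemma \ref{th1-general} the right-hand side extends to a bounded linear functional on $W_0^{1,q'}(\Om_{2R})$ of norm $\le C \|v\|_{L^q(\Om_{2R})}$, and by Lemma \ref{dual-char} it has the form $\div \mathbf{G}$ for some $\mathbf{G} \in L^q(\Om_{2R}; \R^n)$. Proposition \ref{existence-qweak} (ii), Lemma \ref{th4-higher integrability} (ii), or Lemma \ref{th4-boundedness} (ii) then produces (according to the size of $q$) a unique $w \in W_0^{1, q_1}(\Om_{2R}) \cap L^{\bar q}(\Om_{2R})$ solving the same dual problem, with $q_1 = \min(2, q)$ and $\bar q$ equal to $q^*$, $2n$, or $\infty$ as in the original proof. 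Since $\bar v$ and $w$ both lie in $W_0^{1, q_0}(\Om_{2R})$ with $q_0 = \min(r, q_1) > n'$, the uniqueness of Proposition \ref{existence-qweak} (ii) forces $\bar v = w$, so $v \in L^{\bar q}_\loc(\overline\Om)$. Finitely many iterations from $q = p_1$ give $v \in L^\infty_\loc(\overline\Om) \cap W^{1,2}_\loc(\overline\Om)$, and the same bootstrap applied to unit-ball cutoffs around points $x_0 \in \Om$ with $|x_0| > R_0 + 2$, combined with the splitting argument from the original proof, upgrades this to $v \in L^\infty(\Om) \cap L^p(\Om)$ for every $p \in [p_2, \infty]$.

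To finish, adapt the Moser-type energy estimate of the original lemma to the dual weak form. Fix $\beta \ge \max(1, p_2 - 1)$, set $w = v^+$, $\bar w = w^{(\beta+1)/2}$, and $\eta_k(x) = \eta(x/k)$ with $k$ large enough that $\Om^c \subset B_k$. A standard density approximation permits testing (\ref{eq1.3-012-dual}) against $\psi = \eta_k^2 w^\beta$, yielding
\[
\int_\Om \nb v \cdot \nb(\eta_k^2 w^\beta) \, dx = \int_\Om \eta_k^2 w^\beta \, \bb \cdot \nb v \, dx.
\]
On the right-hand side $w^\beta \nb v = (\beta+1)^{-1} \nb(w^{\beta+1})$ on $\{v > 0\}$; integrating by parts against the nonnegative function $\eta_k^2 w^{\beta+1}$ and invoking $\div \bb \ge 0$ discards the interior term with favourable sign. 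Expanding $\nb(\eta_k^2 w^\beta)$ on the left-hand side and combining Young's inequality with Lemma \ref{th1-general} to control the remaining drift-involving residual, one obtains the Caccioppoli-type bound
\[
\|\eta_k \bar w\|_{L^{2^*}(\Om)} \le C(n, \beta, \bb)\, \|\bar w \nb\eta_k\|_{L^2(\Om)} \le \frac{C}{k}\, \|w\|_{L^{\beta+1}(B_{2k} \setminus B_k)}^{(\beta+1)/2},
\]
and tail-vanishing of $w \in L^{\beta+1}(\Om)$ (from the previous step) sends the right-hand side to $0$ as $k \to \infty$, forcing $v^+ = 0$. Applying the same argument to $-v$ gives $v^- = 0$.

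The main obstacle is purely technical: one must verify that the bootstrap step closes when the primal Theorem \ref{th4-q version} is replaced by Proposition \ref{existence-qweak} (ii), Lemma \ref{th4-higher integrability} (ii), and Lemma \ref{th4-boundedness} (ii). The delicate range is $2 \le q < n$, where the dual uniqueness is known only in $W_0^{1,2}$ with an added $L^{q^*}$ tail, rather than in $W_0^{1,q}$ as in the primal theorem. This weaker statement nevertheless suffices, because the comparison $\bar v = w$ in the bootstrap lives inside $W_0^{1, q_1}(\Om_{2R})$ with $q_1 = \min(2, q) \le 2$, which is precisely the uniqueness class of Proposition \ref{existence-qweak} (ii)---and crucially, that proposition is proved without the $L^{n/2,\I}$ divergence bound.
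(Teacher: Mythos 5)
Your proposal is correct and follows essentially the same route as the paper: cut off to bounded subdomains, bootstrap integrability by replacing the primal existence inputs with Proposition \ref{existence-qweak}~(ii), Lemma~\ref{th4-higher integrability}~(ii), and Lemma~\ref{th4-boundedness}~(ii), and close with the dual version of the Caccioppoli argument testing against $\eta_k^2 (v^+)^\beta$. You also correctly identify the reason the $\div\bb\in L^{n/2,\infty}$ hypothesis can be dropped here, which is exactly the observation the paper makes.
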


\begin{proof} Choose  $R_0 >0$ so large that $\Omega^c \subset B_{R_0 } =B_{R_0}(0)$ and define $\Om_{R} = \Om \cap B_{R}$ for $R>R_0$.

We first show that $v \in L_{\loc}^{\infty}(\overline{\Omega}) \cap W_{\loc}^{1,2}(\overline{\Omega}) $. If $\eta \in C_c^\infty (B_{2R}; [0,1])$ is a cut-off function with  $\eta =1$ on $B_{R}$, where   $R>R_0$, then  $\overline{v}=\eta v$ belongs to $W_0^{1,r}(\Om_{2R} )$ and satisfies
\[
-\De \overline{v} -  \bb \cdot \nabla \overline{v} =  - \div ( 2v\nb \eta  )  +v \De\eta - (v\bb   )\cdot \nb \eta  \quad\mbox{in}\,\, \Om_{2R}.
\]
Suppose   that $v \in L_{loc}^{q} (\overline{\Om})$ for some $   q  \in [p_1 ,  \infty  )  $.
Then since $1 <    q'    < n$, it follows from Lemma \ref{th1-general} that
$$
 \left| \int_{\Om_{2R}} v \bke{ 2\nb \eta  \cdot \nb \psi +\psi \De \eta  - \psi \mathbf{b} \cdot \nb \eta  } \, dx\right| \le C \|v \|_{L^{q} (\Om_{2R} )}\| \psi \|_{W^{1,q'} (\Om_{2R} )}
$$
for all $\phi \in W_0^{1,q'}(\Om_{2R} )$, where $C= C(n,  q,  \eta ,  \mathbf{b}  )$.
Define $\overline{q}= q^*$ if $q<n$,  $\overline{q}=2n $ if $q  = n$, and  $\overline{q}= \infty $ if $q >n$. Then  by Lemma \ref{dual-char}, Proposition \ref{existence-qweak} (when $n'<q <2$), Lemma \ref{th4-higher integrability} (when $2 \le q \le n$), and Lemma \ref{th4-boundedness} (when $q>n$),   we can  deduce   that
$$
 v \in L^{\overline{q}}(\Omega_{R}) \quad\mbox{and}\quad \|v\|_{L^{\overline{q}}(\Om_{R})} \le C(n,  q,  \Om , R ,  \eta ,  \mathbf{b}  )  \|v \|_{L^{q} (\Om_{2R} )} .
$$
Hence  by a bootstrap argument starting from  $q =p_1$, we can show that
$$%
v \in L^{\infty}(\Om_{R}) \quad\mbox{and}\quad    \|v\|_{L^{\infty}(\Om_{R})} \le C(n,   p_1 , \Om ,  R ,  \mathbf{b}  )  \|v \|_{L^{p_1} (\Om_{2R} )}
$$%
for all $R>R_0$. Since $v \in L_{loc}^2 (\overline{\Om})$, it   follows from Proposition \ref{existence-qweak} that   $v \in W_{\loc}^{1,2}(\overline{\Omega}) $.
Adapting the proof of Lemma \ref{th4-unique-q version}, we can also deduce that
$v \in L^p (\Om )$ for any $p \in [p_2 , \infty ]$.
Moreover, it can be shown that if $w =v^{+}$ and $\beta \ge 1$, then
$$
\int_{\Om} \nb v \cdot \nb \left(\eta_k^2 w^{\beta} \right) \, dx = \int_{\Om} \left(\eta_k^2 w^{\beta} \mathbf{b} \right)  \cdot \nb v \, dx ,
$$
where  $\eta_k$ is  the   cut-off function   in the proof of Lemma \ref{th4-unique-q version}.
Note that
\begin{align*}
\int_{\Om} \left(\eta_k^2 w^{\beta} \mathbf{b} \right)  \cdot \nb v  \, dx &= \int_{\Om} \left(\eta_k^2 w^{\beta} \mathbf{b} \right)  \cdot \nb w  \, dx \\
&= \frac{1}{\beta +1} \int_{\Om} \mathbf{b} \cdot \left[   \nb \left( \eta_k^2 w^{\beta +1} \right)  - 2 \eta_k w^{\beta +1} \nabla \eta_k \right] \, dx\\
 & \le -\frac{2 }{\beta +1} \int_{\Om} \eta_k w^{\beta +1} \nabla \eta_k \cdot \mathbf{b} \, dx.
\end{align*}
Hence following exactly   the same argument as in the proof of    Lemma \ref{th4-unique-q version}, we can complete the proof.
\end{proof}

\subsection{Proofs of Theorems \ref{th4-q version} and \ref{th4-q version-strong} for exterior domains}

To prove Theorem \ref{th4-q version} for the case when $\Omega$ is exterior and $2 \le p<n$, it suffices to prove the following result.

\begin{proposition}\label{existence-qweak-exterior} Let $\Om $ be an exterior $C^1$-domain in $\R^n$, $n \ge 3$. Assume that $\mathbf{b} \in L^{n,\I}(\Om;\R^n)$, $\div \mathbf{b} \ge 0$ in $\Om$, and $2 \le p<n$.
\begin{enumerate}[(i)]
\item For each ${\mathbf F} \in L^{p}(\Om; \R^n )$, there exists a unique $p$-weak solution $u $ of \eqref{bvp} with $f= \div {\mathbf F}$. Moreover, we have
$$
 \|\nb u \|_{L^p (\Om )} \le C (n,p ,\Om ) \left( 1+ \|{\mathbf b}\|_{L^{n,\I} (\Om )} \right) \|{\mathbf F}\|_{L^{p}(\Om )}.
$$
\item For each ${\mathbf G} \in L^{p'}(\Om; \R^n )$, there exists a unique $p'$-weak solution $v $ of \eqref{bvp-dual} with $g= \div {\mathbf G}$. Moreover, we have
$$
 \|\nb v \|_{L^{p'} (\Om )} \le C (n,p ,\Om ) \left( 1+ \|{\mathbf b}\|_{L^{n,\I} (\Om )} \right) \|{\mathbf G}\|_{L^{p'}(\Om )}.
$$
\end{enumerate}
\end{proposition}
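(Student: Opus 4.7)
\textbf{Proof plan for Proposition \ref{existence-qweak-exterior}.}

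The strategy is to mirror the bounded-domain argument of Proposition \ref{existence-qweak}, replacing the bounded-domain Calderon-Zygmund estimate (Proposition \ref{CZ-estimates}) by its exterior counterpart (Proposition \ref{CZ-estimates in exterior domains}) and invoking the exterior uniqueness results (Lemmas \ref{uniqueness in Lorentz spaces for exterior domains} and \ref{th4-unique-q version}) in place of the bounded-domain uniqueness.

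The case $p=2$ of Part (i) follows directly from Lemma \ref{th4-existence}, which is valid on any domain; let $T$ denote the resulting solution operator $\mathbf{F} \mapsto u$ and set $S(\mathbf{F}) := \nabla T(\mathbf{F})$, so that $S \colon L^2(\Om;\R^n) \to L^2(\Om;\R^n)$ is bounded. For $2<p<n$ and $\mathbf{F} \in L^2(\Om;\R^n) \cap L^p(\Om;\R^n)$, set $u=T(\mathbf{F})$. Lemma \ref{th4-higher integrability} (whose statement and proof are independent of the domain) yields $u \in L^{p^*}(\Om)$ with $\|u\|_{L^{p^*}(\Om)} \le C\|\mathbf{F}\|_{L^p(\Om)}$, and Lemma \ref{holder inequalies} then gives $u\mathbf{b} \in L^{p,\infty}(\Om;\R^n)$ with norm bounded by $C\|\mathbf{b}\|_{L^{n,\infty}(\Om)}\|\mathbf{F}\|_{L^p(\Om)}$. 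Since $-\Delta u = \div(\mathbf{F} - u\mathbf{b})$ in $\Om$, Proposition \ref{CZ-estimates in exterior domains}(i) applied with $q=\infty$ produces a unique $w \in D_0^{1,p,\infty}(\Om) \cap L^{p^*,\infty}(\Om)$ satisfying the same weak equation. The harmonic difference $u-w$ lies in $W^{1,2}_{\loc}(\overline{\Om})$, vanishes on $\partial\Om$, and belongs to $L^{2^*}(\Om)+L^{p^*,\infty}(\Om)$, so Lemma \ref{uniqueness in Lorentz spaces for exterior domains} forces $u=w$. Hence $\nabla u = S(\mathbf{F}) \in L^{p,\infty}(\Om;\R^n)$ with norm at most $C(1+\|\mathbf{b}\|_{L^{n,\infty}(\Om)})\|\mathbf{F}\|_{L^p(\Om)}$, showing that $S \colon L^p \to L^{p,\infty}$ boundedly for every $p \in (2,n)$. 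Combined with the $L^2\to L^2$ bound, the Marcinkiewicz interpolation theorem (Lemma \ref{marcinkiewicz-inter}) upgrades this to $S \colon L^p \to L^p$ with the desired operator norm.

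For arbitrary $\mathbf{F} \in L^p(\Om;\R^n)$, a density argument --- approximating by $\mathbf{F}_k \in C_c^\infty(\Om;\R^n) \subset L^2 \cap L^p$ and passing to the limit in the uniform bounds on $u_k = T(\mathbf{F}_k)$ --- produces a $p$-weak solution $u$ of \eqref{bvp} satisfying the stated estimate. Uniqueness of such a solution follows from Lemma \ref{th4-unique-q version}, since any $p$-weak solution $u \in \hat{D}_0^{1,p}(\Om)$ with $2 \le p<n$ verifies all its hypotheses (take $r=p$ and $p_1=p_2=p^*$; note $p>n'$ so $\div \mathbf{b} \in L^{n/2,\infty}(\Om)$ is not required). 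Part (ii) is then obtained by duality exactly as in the proof of Proposition \ref{existence-qweak}: test the equation for arbitrary $\mathbf{F} \in L^p(\Om;\R^n)$ against the candidate $v$, use Part (i) and the Riesz representation theorem to recover the $L^{p'}$-bound for $\nabla v$, and appeal to Lemma \ref{th4-unique-q version-dual} for uniqueness.

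The main technical obstacle is the identification $u=w$: the abstract energy solution produced by Lax-Milgram must be recognized as the Lorentz-space solution delivered by the Calderon-Zygmund theory, even though they a priori live in different spaces. This identification rests entirely on the Laplace-equation uniqueness Lemma \ref{uniqueness in Lorentz spaces for exterior domains} together with the inclusion of both $L^{2^*}(\Om)$ and $L^{p^*,\infty}(\Om)$ into a sum $L^{q_1,\infty}(\Om)+L^{q_2,\infty}(\Om)$ with $1<q_1 \le q_2<\infty$, which verifies its hypotheses. Once this is secured, the remainder of the proof is a direct transcription of the bounded-domain argument.
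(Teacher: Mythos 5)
Your proof is essentially correct and follows the same route as the paper (get an $L^{p^*}$ bound on $u$, pass through the exterior Calderon--Zygmund estimate in Lorentz spaces, identify the Lax--Milgram solution with the Lorentz-space solution via the exterior uniqueness lemma, then interpolate and conclude by density and Lemma~\ref{th4-unique-q version}). Two small imprecisions are worth flagging, both easily fixed.

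First, you cite Lemma~\ref{th4-higher integrability} for the $L^{p^*}$ bound and remark that ``its statement and proof are independent of the domain.'' The statement of that lemma assumes $\Om$ bounded, and its proof goes through Lemma~\ref{th4-boundedness}, whose $L^\I$ bound carries a factor of $(\operatorname{diam}\Om)^{1-n/p}$ and is thus useless for exterior $\Om$. The correct reference for the $L^{p^*}$ estimate on an exterior (or arbitrary) domain is Proposition~\ref{th4-existence-L1}, which is derived from Lemma~\ref{th4-higher integrability} by exhausting $\Om$ by bounded subdomains and using that the constant in the resulting $L^{p^*}$ estimate is domain-independent.

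Second, your interpolation step is phrased loosely: from $S\colon L^2\to L^2$ and $S\colon L^p\to L^{p,\I}$, Lemma~\ref{marcinkiewicz-inter} gives $S\colon L^q\to L^q$ only for $2<q<p$, not at $q=p$ itself. Since your weak bound holds for every $p\in(2,n)$, this is repaired simply by choosing the endpoint strictly larger than the target exponent. Note also that Lemma~\ref{marcinkiewicz-inter} requires $S$ to be defined on the sum $L^2+L^p$, not only on the intersection, so the density extension of $S$ should logically precede the interpolation (or one should run the Marcinkiewicz argument directly on $L^2\cap L^p$, observing that the truncations used in its proof keep a function in that intersection). The paper sidesteps both of these issues in a slightly cleaner way: it first defines $T_{p_i}$ on all of $L^{p_i}$ by density, uses Lemma~\ref{th4-unique-q version} to check $T_{p_1}=T_{p_2}$ on $L^{p_1}\cap L^{p_2}$, extends to $L^{p_1}+L^{p_2}$, and then interpolates between the two weak bounds at $p_1<p<p_2$ strictly within $(2,n)$. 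Your version is equivalent once the two points above are tightened.
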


\begin{proof}
 For the case $p=2$, the theorem was already proved in  Lemma \ref{th4-existence}.

Assume that $2 < p < n$ and ${\mathbf F} \in L^{2}(\Om; \R^n ) \cap L^{p}(\Om; \R^n )$. Then by Lemma \ref{th4-existence} and Proposition \ref{th4-existence-L1}, there exists a unique weak solution $u $ of \eqref{bvp} with $f=\div {\mathbf F}$ satisfying
$\|u\|_{L^{p^*} (\Om )} \le C(n,p ) \|{\mathbf F}\|_{L^{p}(\Om )}.$
Since $-\De u = \div \left(\mathbf{F}- u \mathbf{b} \right)$ in $\Om$, it follows from Lemmas \ref{holder inequalies}, \ref{uniqueness in Lorentz spaces for exterior domains}, and Proposition \ref{CZ-estimates in exterior domains} that $\nb u \in L^{p,\I}(\Om;\R^n)$ and
\begin{equation}\label{existence-qweak-exterior-1}
 \|\nabla u \|_{L^{p,\I}(\Om )} \le C(n,p,\Omega ) \left( 1+ \|{\mathbf b}\|_{L^{n,\I} (\Om )} \right) \|{\mathbf F}\|_{L^{p}(\Om )}.
\end{equation}
Hence by a standard density argument, we deduce that for each ${\mathbf F} \in L^{p}(\Om; \R^n )$, there exists a unique solution $u = T_p ({\mathbf F} ) \in D_0^{1,p,\infty}(\Om ) \cap L^{p^*} (\Om ) $ of \eqref{bvp} with $f=\div {\mathbf F}$ satisfying \eqref{existence-qweak-exterior-1}.
Uniqueness of such a solution follows from Lemma \ref{th4-unique-q version}.

Choose any $p_1 , p_2$ with  $2 < p_1 < p < p_2 < n$. For each $i=1,2$, we  define $S_i (\mathbf{F})= \nb T_{p_i} (\mathbf{F})$ for all  $\mathbf{F} \in L^{p_i}(\Om; \R^n )$. Then $S_i$ is a bounded linear operator from $L^{p_i}(\Om; \R^n )$ into $L^{p_i, \infty }(\Om; \R^n )$ and its operator norm is bounded above by $C(n,p_i ,\Omega ) \left( 1+ \|{\mathbf b}\|_{L^{n,\I} (\Om )} \right) $.
Moreover,  it follows from Lemma \ref{th4-unique-q version} that $S_1 =S_2$ on $L^{p_1}(\Om; \R^n ) \cap L^{p_2}(\Om; \R^n )$. Hence the operators $S_i$ can be extended uniquely to a linear operator $S$ from $L^{p_1}(\Om; \R^n ) + L^{p_2}(\Om; \R^n )$ into $L^{p_1, \I}(\Om; \R^n ) + L^{p_2 , \I}(\Om; \R^n )$.
Therefore, by the Marcinkiewicz interpolation theorem (Lemma \ref{marcinkiewicz-inter}),  we conclude that $S $ is a bounded linear operator from $L^{p}(\Om; \R^n ) $ into $L^{p}(\Om; \R^n ) $ and its operator  norm is bounded above by $C(n,p,\Omega ) \left( 1+ \|{\mathbf b}\|_{L^{n,\I} (\Om )} \right)$. This proves Part (i) of the proposition.
Part (ii) can be proved by using the same duality argument as in the proof of Proposition \ref{existence-qweak}.
\end{proof}

\begin{proof}[Proof of Theorem \ref{th4-q version} for exterior domains]
For the case $2 \le p <n$, Part (ii) of the theorem immediately follows from Proposition \ref{existence-qweak-exterior}. Moreover, for the remaining case $ n'<p<2$, Part (ii)  can be proved by adapting the proof of Proposition \ref{existence-qweak-exterior} but using Lemma \ref{th4-unique-q version-dual} instead of Lemma \ref{th4-unique-q version}. This completes the proof of Part (ii) of the theorem. Part (i) then follows from Part (ii) and Lemmas \ref{th4-unique-q version} and \ref{th4-unique-q version-dual} by a duality argument.
\end{proof}

\medskip

\begin{proof}
[Proof of Theorem \ref{th4-q version-strong} for exterior domains]
Choose any $q_1 , q_2$ such that $1<q_1 < q<q_2 <n/2$.
Let $i=1,2$ be fixed. Then since $n/(n-1)< q_i^* < n$, it follows from Theorem \ref{th4-q version} that for each $\mathbf{F} \in L^{q_i^*}(\Om ; \R^n )$ there exists a unique $q_i^*$-weak solution $u =T_{q_i^*}({\mathbf F})$ of \eqref{bvp} with $f = \div \mathbf{F} $, satisfying $\|\nabla u \|_{L^{q_i^*}(\Om )} \le C(n,q_i , \Om )\|\mathbf{F} \|_{L^{q_i^*}(\Om )}$. Given $f \in L^{q_i}(\Om ) $, let $\mathbf{F} =-\nb N(f)$ and $u = T_{q_i^*}({\mathbf F})$, where $N(f)$ is the Newtonian potential of $f$ over $\Om$. Then it follows from the Calderon-Zygmund theory that $\mathbf{F} \in L^{q_i^*}(\Om )$, $\|\mathbf{F} \|_{L^{q_i^*}(\Om )} \le C \|f\|_{L^{q_i}(\Om )}$, and $f =\div \mathbf{F} $ in $\Om$. Moreover, since $-\De u = f- \mathbf{b} \cdot \nb u - \left( {\rm div}\, {\mathbf b} \right) u $ in $\Om$,
it follows from Lemma \ref{holder inequalies} and Proposition \ref{CZ-estimates in exterior domains} %
 that $\nabla^2 u \in L^{q_i , \I}(\Om;\R^{n^2})$ and
$ \|\nabla^2 u \|_{L^{q_i ,\I}(\Om)} \le C(n,q_i , \Om )  M_{\mathbf b}^2 \|f\|_{L^{q_i }(\Om )} $.

Let $\alpha$ be a multi-index with $|\alpha |=2$. For each $i=1,2$, we now define $\tilde{T}_i (f)= D^\alpha T_{q_i^*}( \nb N(f))$ for all $f \in L^{q_i}(\Om ) $.
Then by the uniqueness assertion of Theorem \ref{th4-q version}, we deduce that $\tilde{T}_1 = \tilde{T}_2$ on $L^{q_1 }(\Om ) \cap L^{q_2 }(\Om )$. Hence there exists a unique linear operator $\tilde{T}$ from $ L^{q_1}(\Om ) + L^{q_2}(\Om )$ into $ L^{q_1,\I}(\Om ) + L^{q_2,\I}(\Om )$
that extends both $\tilde{T}_1$ and $\tilde{T}_2$. Moreover, since $\tilde{T}_i$ is bounded from $ L^{q_i}(\Om ) $ into $L^{q_i , \I}(\Om) $ with norm bounded above by $C(n,q_i , \Om ) M_{\mathbf b}^2 $, it follows from the Marcinkiewicz interpolation theorem (Lemma \ref{marcinkiewicz-inter}) that $\tilde{T}$ is bounded from $ L^{q}(\Om ) $ into $L^{q} (\Om ) $ and its norm is bounded above by $C(n,q , \Om ) M_{\mathbf b}^2$.
This proves Part (i) of the theorem. Part (ii) can be proved by the same argument.
 \end{proof}

\section{Proofs of Theorems \ref{th4}, \ref{th5}, and \ref{th6}}
\label{Sec7}

 In this section, we shall first prove Theorem \ref{th4} on the  further regularity  of weak solutions of the dual problem \eqref{bvp-dual}, by using the H\"{o}lder regularity result in Proposition \ref{th4-0}. Then uniqueness of very weak solutions of \eqref{bvp-dual} will be established by a duality argument (Theorems \ref{th5} and \ref{th6}).
The domain $\Om$  is a bounded or exterior domain in $\R^n , n \ge 3$, of class $C^{1,1}$.

\begin{proof}[Proof of Theorem \ref{th4}]
We first prove the $W^{1,n+\e}$-regularity result (i). Assume that $g \in W^{-1,p}(\Om)$ and $n<p<\infty$.
Then it follows from Proposition \ref{th4-0} that
\begin{equation}\label{holder estimate-2}
v \in C^\al(\overline{\Om}) \quad\mbox{and}\quad \|v\|_{C^\al(\overline{\Om})} \le C \|g\|_{W^{-1,p} (\Omega )}
\end{equation}
for some constants $0< \alpha \le 1-n/p$ and $C>0$, depending only on $n, p, \Omega $ and $ \|{\mathbf b}\|_{L^{n,\I}(\Om )} $. Fix any $r > 1$ such that
$$
\max\left\{ \frac{(1-\al)n}{2-\alpha}, \frac n{2(n-1)}\right\} < r< \frac n2 .
$$
Then since $g \in W^{-1,p}(\Omega ) \subset W^{-1,2r}(\Omega )$ and $n/(n-1)< 2r < n$,
it follows from Theorem \ref{th4-q version} that
$$
v \in W_0^{1,2r}(\Omega )\quad\mbox{and}\quad
\norm{\nabla v}_{L^{2r}(\Omega )} \le C \|g\|_{W^{-1,p} (\Omega )}.
$$
Moreover, since $ |\mathbf{b}| \in L^{n,\infty}(\Omega ) \subset L^{2r} (\Omega )$, we have
$$
 \| \mathbf{b} \cdot \nabla v \|_{L^{r} (\Omega ) } \le \|\mathbf{b}\|_{L^{2r}(\Omega ) } \|\nabla v\|_{L^{2r} (\Omega )} \le C \|\mathbf{b}\|_{L^{n,\infty}(\Omega ) } \|g\|_{W^{-1,p} (\Omega )}.
$$
Hence by Lemma \ref{CZ-estimates0},
there exist $v_1 \in W_0^{1,p}(\Omega )$ and $v_2 \in W_0^{1,r}(\Om ) \cap W^{2, r }(\Om)$ such that
$$
v=v_1 +v_2 , \quad -\De v_1 =g , \quad -\De v_2 = \mathbf{b} \cdot \nabla v,
$$
and
$$%
 \| v_1 \|_{W^{1,p}(\Om )} + \| v_2 \|_{W^{2,r }(\Om)} \le C \|g\|_{W^{-1,p} (\Omega )}.
$$%
Since $\alpha \le 1-n/p$, it follows from (\ref{holder estimate-2}) and the Morrey embedding theorem that
$$
\| v_2 \| _{C^\al (\overline{\Omega})} \le \| v \| _{C^\al (\overline{\Omega})} + C \| v_1 \|_{W^{1,p}(\Om )} \le C \|g\|_{W^{-1,p} (\Omega )}.
$$
Hence by the Miranda-Nirenberg inequality (Lemma \ref{MNineq}),
$$
\norm{\nb v_2}_{L^s(\Omega )} \le C \left( \| v_2 \|_{W^{2,r }(\Om)}+ \norm{v_2
}_{C^\al (\overline{\Omega})} \right)\le C \norm{g}_{W^{-1,p} (\Omega ) } ,
$$
where $s =\frac {(2-\al)r}{1-\al}$. Note that $s>n$.
Therefore, taking
$$
\e = \min \{ p ,s \} -n >0 ,
$$
 we conclude that
\begin{equation}\label{gradient Lr-estimate}
 \nabla v \in L^{n+\e}(\Om ; \R^n ) \quad\mbox{and}\quad \|\nb v \|_{L^{n+\e}(\Om)} \le C \norm{g}_{W^{-1,q} (\Omega ) }.
\end{equation}

To prove the $W^{2,n/2+\delta}$-regularity result (ii), we assume that $g \in L^q (\Omega )$ and $n/2 < q<n$. Then it follows from the Sobolev embedding theorem that $g \in W^{-1, q^*}(\Om )$ and $n<q^*< \infty$. It was already shown that $v \in W^{1,n+\e}(\Om)$.
Since $ |\mathbf{b}| \in L^{n,\infty}(\Omega ) $, it follows that
$$
\mathbf{b}\cdot \nabla v \in L^s (\Omega ) \quad\mbox{for any}\,\, s< \frac{n(n+\e )}{2n+\e}.
$$
Hence recalling again that $-\De v =g + \mathbf{b}\cdot \nabla v$ in $\Om$, we deduce from Lemma \ref{CZ-estimates} that
$$
v \in W^{2,s}(\Omega ) \quad\mbox{for any}\,\, s< \min \left\{\frac{n(n+\e )}{2n+\e} , q \right\}.
$$
The proof is complete.
\end{proof}

\begin{proof}[Proof of Theorem \ref{th5}] Let $r =(n/2+\delta)'$ be the H\"{o}lder conjugate of $ n/2+\delta $, where $\delta$ is the constant in Theorem \ref{th4} (ii) corresponding to $q=n/2 +1$.
Since $0< \delta \le 1$, it follows that $n/2 < r' < n$ and $n ' < r < (n/2)'$.

Suppose that $u \in L^{r} (\Omega )$ satisfies (\ref{eq1.3-0}). To show that $u=0$ on $\Om$, let $g \in C_c^\infty (\Om )$ be given. Then by Theorem \ref{th4},
there exists $\phi \in W^{1,r'}_0(\Om)\cap W^{2,r'}(\Om)$ such that
$-\De \phi - \bb \cdot \nb \phi = g$ in $\Om $.
Choose a sequence $\{ \phi_k \}$ in $ C^2 (\overline{\Om})$ such that $\phi_k =0$ on $\partial\Om$ and $\phi_k \to \phi$ in $W^{2,r'}(\Om)$. Then by Lemma \ref{th1}, we have
\begin{align*}
\int_\Omega u g \, dx & = - \int_\Om u \bke{ \De \phi + \mathbf{b} \cdot \nb
\phi} \, dx \\
 & =- \lim_{k \to \I} \int_\Om u \bke{ \De \phi_k + \mathbf{b} \cdot \nb
\phi_k } \, dx = 0.
\end{align*}
 Since $g \in C_c^\infty (\Om )$ is
arbitrary, it follows that $u=0$ in $\Omega$. This completes the proof of Part (i) of the theorem.

To prove Part (ii), let $f \in W^{-1, n'-}(\Om)$ be given. %
Choose any $p$ such that $1<p< n'$ and $r < p^* < (n')^*  = (n/2)'$. Then since $n/2 < (p^* )' < n$, it follows from Theorem \ref{th4} that there exists $n/2 < s < (p^* )' $ such that for every $g \in L^{(p^* )'}(\Om )$ there exists a unique weak solution $v =L g$ in $W_0^{1,s}(\Om ) \cap W^{2,s}(\Om )$ of \eqref{bvp-dual}. By Theorem \ref{th4} and the Sobolev embedding theorem, we also deduce that $L$ is a bounded linear operator from $L^{(p^* )'}(\Om )$ into $W_0^{1,s^*}(\Om ) $.
Moreover, since $  (s^* )' < n'$, it follows that
$f \in W^{-1,  (s^* )'}(\Om)$. Hence  the mapping $g \mapsto \langle f , Lg \rangle$ is a bounded linear functional on $L^{(p^* )'}(\Om )$. Therefore, by the Riesz representation theorem, there exists a unique $u \in L^{p^* }(\Om )$ such that
\[
\int_\Omega u g \, dx = \langle f , Lg \rangle \quad\mbox{for all} \,\, g \in L^{(p^* )'}(\Om ).
\]

Now, given $\phi \in C^2 (\overline{\Om})$ with $\phi |_{\partial \Om}=0$, we take $g = -\De \phi - b \cdot \nb \phi$. Then since $\phi =Lg $, we have
\[
- \int_\Om u \bke{ \De \phi + \mathbf{b} \cdot \nb
\phi} \, dx = \langle f , \phi \rangle .
\]
Hence it follows from Part (i) that $u$ is a unique very weak solution in $L^{p^* }(\Om )$ of \eqref{bvp}.
On the other hand, by Lemma \ref{holder inequalies} and Proposition \ref{CZ-estimates}, there exists a unique $\overline{u} \in W_0^{1,p, \infty}(\Om )$ such that
\[
- \int_\Om \overline{u} \De \phi
 \, dx = \langle f , \phi \rangle + \int_\Om ( u \mathbf{b} ) \cdot \nb
\phi \, dx
\]
for all $\phi \in C_c^1 (\Om )$. Note that $w=u-\overline{u}$ is a very weak solution in $L^r (\Om )$ of the Laplace equation in $\Om$ with trivial data. Hence by duality, it follows from Lemma \ref{CZ-estimates0} that $u =\overline{u} \in W_0^{1,p, \infty}(\Om )$. This completes the proof of Part (ii), because $p$ can be arbitrarily close to $n'$.
\end{proof}

\medskip

\begin{proof}[Proof of Theorem \ref{th6}] By Lemma \ref{th4-unique-q version}, it suffices to show that $u \in W_{\loc}^{1,r}(\overline{\Omega})$ for some $r>n' $ and $u=0$ on $\partial \Om$.
Choose $\eta \in C_c^\infty (B_{2}; [0,1])$ such that $\eta =1$ on $B_{1}$. For $R>0$, we define $\eta_R (x) = \eta (x/R)$. Fix $R>0$ so large that $\Omega^c \subset B_{R } =B_R(0)$, and define $\overline{u}=\eta_R u$. Then it follows from the condition (i) that $ \overline{u}\in L^{n/(n-2), \I}(\Omega_{2R})
$. Moreover, by a direct calculation,
$$
\int_{\Om_{2R}} \overline{u} \bke{ \De \phi + \mathbf{b} \cdot \nb
\phi} \, dx = - \int_{\Om_{2R}} u \bke{ \phi \De \eta_R + 2\nb \eta_R \cdot \nb \phi + \phi \mathbf{b} \cdot \nb \eta_R } \, dx
$$
for all $\phi \in C^2 (\overline{\Om}_{2R})$ with $\phi|_{\partial\Omega_{2R}}=0 $. Choose any number $r$ such that $ n'  <r< (n/2)' =(n')^*$. Then by Lemma \ref{th1-general}, we have
$$
 \int_{\Om_{2R}} \left| u \bke{ \phi \De \eta_R + 2\nb \eta_R \cdot \nb \phi + \phi \mathbf{b} \cdot \nb \eta_R } \right|\, dx \le C \|u \|_{L^r (\Om_{2R} )}\| \phi \|_{W^{1,r'} (\Om_{2R} )}
$$
for all $\phi \in W_0^{1,r'}(\Om_{2R} )$, where $C= C(r, \eta_R , \mathbf{b})$. Hence by Theorem \ref{th4-q version}, there exists a unique $w\in W_0^{1,r}(\Om_{2R} )$ such that
$$
\int_{\Om_{2R}} w \bke{ \De \phi + \mathbf{b} \cdot \nb
\phi} \, dx = - \int_{\Om_{2R}} u \bke{ \phi \De \eta_R + 2\nb \eta_R \cdot \nb \phi + \phi \mathbf{b} \cdot \nb \eta_R} \, dx
$$
for all $\phi \in C^2 (\overline{\Om}_{2R})$ with $\phi|_{\partial\Omega_{2R}}=0 $. Since
$W_0^{1,r}(\Om_{2R} ) \subset L^{r^*}(\Omega_{2R})$ and $r^*> (n')^*= n/(n-2)$, it follows that $w \in L^{n/(n-2), \I}(\Omega_{2R})$. Therefore, by Theorem \ref{th5}, we deduce that $\overline{u}=w$ on $\Omega_{2R}$. This proves that $u \in W^{1,r}(\Omega_{2R})$ and $u=0$ on $\partial \Om$.
Since $R>0$ is arbitrarily large, it follows that $u \in W_{\loc}^{1,r}(\overline{\Omega}) $.
\end{proof}

\section*{Acknowledgments}
Part of this work was done when Tsai visited the Center of Advanced Study in Theoretical Sciences (CASTS) of National Taiwan University, and the Center of Mathematical Sciences and Applications (CMSA) of Harvard University.
Their kind hospitality and supports are gratefully acknowledged.

\end{document}